\tikzstyle{tikzfig}=[baseline=-0.25em,scale=0.5]
\tikzstyle{none}=[inner sep=0mm]
\newcommand{\tikzfig}[1]{%
{\tikzstyle{every picture}=[tikzfig]
\IfFileExists{#1.tikz}
  {\input{#1.tikz}}
  {%
    \IfFileExists{./figures/#1.tikz}
      {\input{./figures/#1.tikz}}
      {\tikz[baseline=-0.5em]{\node[draw=red,font=\color{red},fill=red!10!white] {\textit{#1}};}}%
  }}%
}
\tikzstyle{every loop}=[]
\newtheorem{theorem}{Theorem}[section]
\newtheorem{lemma}[theorem]{Lemma}
\newtheorem{example}[theorem]{Example}
\newtheorem{corollary}[theorem]{Corollary}
\newtheorem{conjecture}[theorem]{Conjecture}
\newtheorem{proposition}[theorem]{Proposition}
\newtheorem{assumption}[theorem]{Assumption}
\def\pn{\par\smallskip\noindent}
\newenvironment{myproof}[1] {\pn {\em Proof of {#1}.}}{\hfill $\Box$ \vskip 0.2truein}
\DeclareMathAlphabet\mathbfcal{OMS}{cmsy}{b}{n}
\newcommand{\D}{\mathbb{D}}
\newcommand{\E}{\mathbb{E}}
\newcommand{\I}{\mathbb{I}}
\newcommand{\J}{\mathbb{J}}
\newcommand{\N}{\mathbb{N}}
\newcommand{\R}{\mathbb{R}}
\newcommand{\LL}{\mathbb{L}}
\newcommand{\SI}{\mathbb{S}}
\newcommand{\HI}{\mathbb{H}}
\newcommand{\TT}{\mathbb{T}}
\newcommand{\U}{\mathbb{U}}
\newcommand{\V}{\mathbb{V}}
\newcommand{\W}{\mathbb{W}}
\newcommand{\X}{\mathbb{X}}
\newcommand{\Z}{\mathbb{Z}}
\newcommand{\ba}{\mathbf{a}}
\newcommand{\bb}{\mathbf{b}}
\newcommand{\bc}{\mathbf{c}}
\newcommand{\be}{\mathbf{e}}
\newcommand{\bi}{\mathbf{i}}
\newcommand{\bj}{\mathbf{j}}
\newcommand{\bx}{\mathbf{x}}
\newcommand{\by}{\mathbf{y}}
\newcommand{\bz}{\mathbf{z}}
\newcommand{\bu}{\mathbf{u}}
\newcommand{\bv}{\mathbf{v}}
\newcommand{\bw}{\mathbf{w}}
\newcommand{\BD}{\mathbf{D}}
\newcommand{\BP}{\mathbf{P}}
\newcommand{\BQ}{\mathbf{Q}}
\newcommand{\BT}{\mathbf{T}}
\newcommand{\BU}{\mathbf{U}}
\newcommand{\BV}{\mathbf{V}}
\newcommand{\BS}{\mathbf{S}}
\newcommand{\BO}{\mathbf{O}}
\newcommand{\BX}{\mathbf{X}}
\newcommand{\CD}{\mathbf{D}}
\newcommand{\CE}{\mathbf{E}}
\newcommand{\CF}{\mathbf{F}}
\newcommand{\CX}{\mathbf{X}}
\newcommand{\CY}{\mathbf{Y}}
\newcommand{\CZ}{\mathbf{Z}}
\newcommand{\CU}{\mathbf{U}}
\newcommand{\CT}{\mathbf{T}}
\newcommand{\CS}{\mathbf{S}}
\newcommand{\CO}{\mathbf{O}}
\newcommand{\CL}{\mathbf{L}}
\newcommand{\CG}{\mathbf{G}}
\newcommand{\CH}{\mathbf{H}}
\newcommand{\HH}{\textnormal{H}}
\newcommand{\T}{\textnormal{T}}
\newcommand{\sign}{\textnormal{sign}}
\newcommand{\conv}{\operatorname{conv}}
\newcommand{\spn}{\operatorname{sp}}
\newcommand{\proj}{\operatorname{p}}
\newcommand{\Prob}{\operatorname{Pr}}
\newcommand{\Exp}{\operatorname{Ex}}
\def\multiset#1#2{\ensuremath{\mleft(\kern-.3em\mleft(\genfrac{}{}{0pt}{}{#1}{#2}\mright)\kern-.3em\mright)}}
\newcommand{\bigcomp}{%
  \DOTSB
  \mathop{\vphantom{\sum}\mathpalette\bigcomp@\relax}%
  \slimits@
}
\newcommand{\bigcomp@}[2]{%
  \begingroup\m@th
  \sbox\z@{$#1\sum$}%
  \setlength{\unitlength}{0.9\dimexpr\ht\z@+\dp\z@}%
  \vcenter{\hbox{%
    \begin{picture}(1,1)
    \bigcomp@linethickness{#1}
    \put(0.5,0.5){\circle{1}}
    \end{picture}%
  }}%
  \endgroup
}
\newcommand{\bigcomp@linethickness}[1]{%
  \linethickness{%
      \ifx#1\displaystyle 2\fontdimen8\textfont\else
      \ifx#1\textstyle 1.65\fontdimen8\textfont\else
      \ifx#1\scriptstyle 1.65\fontdimen8\scriptfont\else
      1.65\fontdimen8\scriptscriptfont\fi\fi\fi 3
  }%
}
\let\originalmiddle=\middle
\def\middle#1{\mathrel{}\originalmiddle#1\mathrel{}}
\pgfplotsset{compat=1.16}
\tikzset{viewport/.style 2 args={
    x={({cos(-#1)*1cm},{sin(-#1)*sin(#2)*1cm})},
    y={({-sin(-#1)*1cm},{cos(-#1)*sin(#2)*1cm})},
    z={(0,{cos(#2)*1cm})}
}}
\pgfplotsset{only foreground/.style={
    restrict expr to domain={rawx*\CameraX + rawy*\CameraY + rawz*\CameraZ}{-0.05:100},
}}
\pgfplotsset{only background/.style={
    restrict expr to domain={rawx*\CameraX + rawy*\CameraY + rawz*\CameraZ}{-100:0.05}
}}
\def\addFGBGplot[#1]#2;{
    \addplot3[#1,only background, opacity=0.25] #2;
    \addplot3[#1,only foreground] #2;
}
\pgfmathsetmacro\xx{1/sqrt(2)}
\pgfmathsetmacro\xy{1/sqrt(6)}
\pgfmathsetmacro\zy{sqrt(2/3)}
\def\tikz@lib@cuboid@get#1{\pgfkeysvalueof{/tikz/cuboid/#1}}
\def\tikz@lib@cuboid@setup{%
   \pgfmathsetlengthmacro{\vxx}%
      {\tikz@lib@cuboid@get{xscale}*cos(\tikz@lib@cuboid@get{xangle})*1cm}
   \pgfmathsetlengthmacro{\vxy}%
      {\tikz@lib@cuboid@get{xscale}*sin(\tikz@lib@cuboid@get{xangle})*1cm}
   \pgfmathsetlengthmacro{\vyx}%
      {\tikz@lib@cuboid@get{yscale}*cos(\tikz@lib@cuboid@get{yangle})*1cm}
   \pgfmathsetlengthmacro{\vyy}%
      {\tikz@lib@cuboid@get{yscale}*sin(\tikz@lib@cuboid@get{yangle})*1cm}
   \pgfmathsetlengthmacro{\vzx}%
      {\tikz@lib@cuboid@get{zscale}*cos(\tikz@lib@cuboid@get{zangle})*1cm}
   \pgfmathsetlengthmacro{\vzy}%
      {\tikz@lib@cuboid@get{zscale}*sin(\tikz@lib@cuboid@get{zangle})*1cm}
}
\def\tikz@lib@cuboid@draw#1--#2--#3\pgf@stop{%
    \begin{scope}[join=bevel,x={(\vxx,\vxy)},y={(\vyx,\vyy)},z={(\vzx,\vzy)}]
       \begin{scope}[canvas is yz plane at x=#1]
          \draw[cuboid/all faces,cuboid/edges,cuboid/right face] 
                (0,0) -- ++(#2,0) -- ++(0,-#3) -- ++(-#2,0) -- cycle;
          \draw[cuboid/all grids,cuboid/right grid] (0,0) grid (#2,-#3);
       \end{scope}
       \begin{scope}[canvas is xy plane at z=0]
          \draw[cuboid/all faces,cuboid/edges,cuboid/front face] 
                (0,0) -- ++(#1,0) --  ++(0,#2) -- ++(-#1,0) -- cycle;
          \draw[cuboid/all grids,cuboid/front grid] (0,0) grid (#1,#2);
       \end{scope}
       \begin{scope}[canvas is xz plane at y=#2]
          \draw[cuboid/all faces,cuboid/edges,cuboid/top face] 
                (0,0) -- ++(#1,0) --  ++(0,-#3) -- ++(-#1,0) -- cycle;
          \draw[cuboid/all grids,cuboid/top grid] (0,0) grid (#1,-#3);
       \end{scope}
       \draw[cuboid/hidden edges] (0,#2,-#3) -- (0,0,-#3) -- (0,0,0) 
                (0,0,-#3) -- ++(#1,0,0);
       \begin{scope}[canvas is yz plane at x=#1]
          \draw[cuboid/all faces,cuboid/right face,cuboid/edges,fill opacity=0] 
                (0,0) -- ++(#2,0) -- ++(0,-#3) -- ++(-#2,0) -- cycle;
       \end{scope}
       \begin{scope}[canvas is xy plane at z=0]
          \draw[cuboid/all faces,cuboid/front face,cuboid/edges,fill opacity=0] 
                (0,0) -- ++(#1,0) --  ++(0,#2) -- ++(-#1,0) -- cycle;
       \end{scope}
       \begin{scope}[canvas is xz plane at y=#2]
          \draw[cuboid/all faces,cuboid/top face,cuboid/edges,fill opacity=0] 
                (0,0) -- ++(#1,0) --  ++(0,-#3) -- ++(-#1,0) -- cycle;
       \end{scope}
       \path (0,#2,0) coordinate (-left top front)
                      coordinate (-left front top)
                      coordinate (-top left front)
                      coordinate (-top front left)
                      coordinate (-front top left)
                      coordinate (-front left top);
       \path (0,#2,-#3) coordinate (-left top rear)
                        coordinate (-left rear top)
                        coordinate (-top left rear)
                        coordinate (-top rear left)
                        coordinate (-rear top left)
                        coordinate (-rear left top);
       \path (0,0,-#3) coordinate (-left bottom rear)
                       coordinate (-left rear bottom)
                       coordinate (-bottom left rear)
                       coordinate (-bottom rear left)
                       coordinate (-rear bottom left)
                       coordinate (-rear left bottom);
       \path (0,0,0) coordinate (-left bottom front)
                     coordinate (-left front bottom)
                     coordinate (-bottom left front)
                     coordinate (-bottom front left)
                     coordinate (-front bottom left)
                     coordinate (-front left bottom);
       \path (#1,#2,0) coordinate (-right top front)
                       coordinate (-right front top)
                       coordinate (-top right front)
                       coordinate (-top front right)
                       coordinate (-front top right)
                       coordinate (-front right top);
       \path (#1,#2,-#3) coordinate (-right top rear)
                         coordinate (-right rear top)
                         coordinate (-top right rear)
                         coordinate (-top rear right)
                         coordinate (-rear top right)
                         coordinate (-rear right top);
       \path (#1,0,-#3) coordinate (-right bottom rear)
                        coordinate (-right rear bottom)
                        coordinate (-bottom right rear)
                        coordinate (-bottom rear right)
                        coordinate (-rear bottom right)
                        coordinate (-rear right bottom);
       \path (#1,0,0) coordinate (-right bottom front)
                      coordinate (-right front bottom)
                      coordinate (-bottom right front)
                      coordinate (-bottom front right)
                      coordinate (-front bottom right)
                      coordinate (-front right bottom);
       \coordinate (-left center) at (0,.5*#2,-.5*#3);
       \coordinate (-right center) at (#1,.5*#2,-.5*#3);
       \coordinate (-top center) at (.5*#1,#2,-.5*#3);
       \coordinate (-bottom center) at (.5*#1,0,-.5*#3);
       \coordinate (-front center) at (.5*#1,.5*#2,0);
       \coordinate (-rear center) at (.5*#1,.5*#2,-#3);
       \coordinate (-center) at (.5*#1,.5*#2,-.5*#3);
       \path (0,#2,-.5*#3) coordinate (-left top center) 
                           coordinate (-top left center);
       \path (.5*#1,#2,-#3) coordinate (-top rear center)
                            coordinate (-rear top center);
       \path (#1,#2,-.5*#3) coordinate (-right top center)
                            coordinate (-top right center);
       \path (.5*#1,#2,0) coordinate (-top front center)
                          coordinate (-front top center);
       \path (0,0,-.5*#3) coordinate (-left bottom center) 
                           coordinate (-bottom left center);
       \path (.5*#1,0,-#3) coordinate (-bottom rear center)
                            coordinate (-rear bottom center);
       \path (#1,0,-.5*#3) coordinate (-right bottom center)
                            coordinate (-bottom right center);
       \path (.5*#1,0,0) coordinate (-bottom front center)
                          coordinate (-front bottom center);
       \path (0,.5*#2,0) coordinate (-left front center) 
                           coordinate (-front left center);
       \path (0,.5*#2,-#3) coordinate (-left rear center)
                            coordinate (-rear left center);
       \path (#1,.5*#2,0) coordinate (-right front center)
                            coordinate (-front right center);
       \path (#1,.5*#2,-#3) coordinate (-right rear center)
                          coordinate (-rear right center);
    \end{scope}
}
\tikzset{
  pics/cuboid/.style = {
    setup code = \tikz@lib@cuboid@setup,
    background code = \tikz@lib@cuboid@draw#1\pgf@stop
  },
  pics/cuboid/.default={1--1--1},
  cuboid/.is family,
  cuboid,
  all faces/.style={fill=white},
  all grids/.style={draw=none},
  front face/.style={},
  front grid/.style={},
  right face/.style={},
  right grid/.style={},
  top face/.style={},
  top grid/.style={},
  edges/.style={},
  hidden edges/.style={draw=none},
  xangle/.initial=0,
  yangle/.initial=90,
  zangle/.initial=210,
  xscale/.initial=1,
  yscale/.initial=1,
  zscale/.initial=0.5
}
\newcommand{\tikzcuboidreset}{
\tikzset{cuboid,
  all faces/.style={fill=white},
  all grids/.style={draw=none},
  front face/.style={},
  front grid/.style={},
  right face/.style={},
  right grid/.style={},
  top face/.style={},
  top grid/.style={},
  edges/.style={},
  hidden edges/.style={draw=none},
  xangle=0,
  yangle=90,
  zangle=210,
  xscale=1,
  yscale=1,
  zscale=0.5
}
}
\newcommand{\tikzcuboidset}{\@ifstar\tikzcuboidset@star\tikzcuboidset@nostar} 
\newcommand{\tikzcuboidset@nostar}[1]{\tikzcuboidreset\tikzset{cuboid,#1}}
\newcommand{\tikzcuboidset@star}[1]{\tikzset{cuboid,#1}}
\newif\ifnobrackets
\renewcommand\@cite[2]{\ifnobrackets\else[\fi{#1\if@tempswa , #2\fi}\ifnobrackets\else]\fi\nobracketsfalse}
\begin{document}

\title{On decomposability and subdifferential of the tensor nuclear norm}

\author{
Jiewen GUAN
\thanks{Department of Systems Engineering and Engineering Management, The Chinese University of Hong Kong, Shatin, Hong Kong. Email: seemjwguan@gmail.com}
    \and
Bo JIANG
\thanks{School of Information Management and Engineering, Shanghai University of Finance and Economics, Shanghai 200433, China. Email: isyebojiang@gmail.com}
    \and
Zhening LI
\thanks{School of Mathematics and Physics, University of Portsmouth, Portsmouth PO1 3HF, United Kingdom. Email: zheningli@gmail.com}
}

\date{\today}

\maketitle

\begin{abstract}

We study the decomposability and the subdifferential of the tensor nuclear norm. Both concepts are well understood and widely applied in matrices but remain unclear for higher-order tensors. We show that the tensor nuclear norm admits a full decomposability over specific subspaces and determine the largest possible subspaces that allow the full decomposability. We derive novel inclusions of the subdifferential of the tensor nuclear norm and study its subgradients in a variety of subspaces of interest. All the results hold for tensors of an arbitrary order. As an immediate application, we establish the statistical performance of the tensor robust principal component analysis, the first such result for tensors of an arbitrary order.

\vspace{0.25cm}
    
\noindent {\bf Keywords:} Tensor nuclear norm, tensor spectral norm, decomposability, subdifferential, subspace, robust principal component analysis, exact recovery, random tensor

\vspace{0.25cm}
    
\noindent {\bf Mathematics Subject Classification (2020):} 15A60, 15A69, 62B10, 68Q25, 90C25
    
\end{abstract}

\section{Introduction}

The matrix nuclear norm, as the convex envelope of the matrix rank, has found a wide range of applications, particularly in mathematical optimization to search for low-rank matrices.
Its essential properties underlying the applications are the decomposability and the explicit characterization of the subdifferential. Specifically, the decomposability states that
\begin{equation}\label{eq:matrixdecomp}
\|\BT+\BS\|_*=\|\BT\|_*+\|\BS\|_*\text{ if $\BT^{\T}\BS=\BO$ and $\BT\BS^{\T}=\BO$}
\end{equation}
and the explicit characterization of the subdifferential states that
\begin{equation}\label{eq:matrixpartial}
\partial\|\BT\|_* = \bigl\{\BU\BV^{\T}+\BX:\BX^{\T}\BU=\BO,\, \BX\BV=\BO,\, \|\BX\|_\sigma\le 1\bigr\},
\end{equation}
where $\BT=\BU\BD\BV^{\T}$ is a compact singular value decomposition (SVD) and $\|\bullet\|_*$ and $\|\bullet\|_\sigma$ denote the nuclear norm and the spectral norm, respectively.
They play a critical role in many matrix optimization problems and statistical learning, such as 
compressed sensing~\cite[Corollary~5]{negahban2011estimation},
matrix recovery and completion~\cite{candes2009exact,candes2010power}, 
multivariate regression~\cite[Corollary~3]{negahban2011estimation}, 
phase retrieval~\cite[Chapter~10.4]{wainwright2019high}, 
recommender systems~\cite[Example~10.2]{wainwright2019high}, 
and robust principal component analysis (PCA)~\cite{candes2011robust};
see a recent book~\cite{wainwright2019high} and the references therein for more examples.

With the rapid developments of data sciences in various fields, the recent decade has witnessed a significant increase in research activities in tensors, the higher-order generalization of matrices. In the same spirit to matrices, the tensor nuclear norm is widely recognized as a convex surrogate for the intrinsic complexity of a tensor~\cite{derksen2016nuclear}
and has triggered many applications in statistical learning such as tensor completion~\cite{yuan2016tensor}, tensor regression~\cite{raskutti2019convex}, and tensor robust PCA~\cite{driggs2019tensor}.
However, the nice properties of the matrix nuclear norm,~\eqref{eq:matrixdecomp} and~\eqref{eq:matrixpartial}, do not carry over directly to the tensor nuclear norm. This has made many low-rank tensor optimization problems difficult and statistical implications on low-rank tensors unclear. 
In fact, the only known result on the decomposability of the tensor nuclear norm is the so-called weak decomposability~\cite[Lemma~1]{raskutti2019convex} that only works for third-order tensors. Although it has found applications in tensor regression~\cite{raskutti2019convex}, it does not completely address the entire issue. For the subdifferential of the tensor nuclear norm, only two limited inclusions,~\cite[Lemma~1]{yuan2016tensor} and~\cite[Theorem~1]{yuan2017incoherent}, have been proposed. While these inclusions have been applied to analyze the statistical performance of tensor completion~\cite{yuan2016tensor,yuan2017incoherent}, they are still unsatisfactory from the perspective of tensor analysis since many obvious subgradients have been excluded.

The main reason behind the full decomposability and the subdifferential characterization of the matrix nuclear norm is that every matrix admits an SVD. As a contrast, only a very special class of tensors, 
\textcolor{black}{such as orthogonally decomposable tensors~\cite{zhang2001rank,sturmfels2016tensors,robeva2017singular,hashemi2018spectral}}, 
admits 
\textcolor{black}{SVDs~\cite[Definition~4.1]{chen2009tensor}}. While this 
unfortunate fact is primarily responsible for
the gaps of the properties, it indeed makes the problems subtler and more interesting. This paper aims to offer a more in-depth understanding of the decomposability and the subdifferential of the tensor nuclear norm. 

As the first main result, we find that the tensor nuclear norm can in fact be fully decomposable if we carefully choose the tensor subspaces in which $\CT$ and $\CS$ reside as those in~\eqref{eq:matrixdecomp}. It also directly points out why only a weak decomposability is possible for the subspace considered in~\cite[Lemma~1]{raskutti2019convex}. Our full decomposability applies to tensors of an arbitrary order, leading to a very natural generalization of the matrix case in~\eqref{eq:matrixdecomp}. Moreover, we optimize the subspaces that allow the full decomposability and determine the largest possible such subspace pairs.
The study has also resulted \textcolor{black}{in} a dual byproduct,
the decomposability of the tensor spectral norm, running in parallel to the tensor nuclear norm.

The full decomposability of the tensor nuclear norm offers a tool to study its subdifferential. As the next main result, we propose novel subdifferential inclusions of the tensor nuclear norm. They strictly enlarge the inclusion proposed in~\cite[Theorem~1]{yuan2017incoherent} that is the only known subdifferential inclusion for tensors of an arbitrary order. In particular, we show that a full spectral ball, $\bigl\{\BX:\|\BX\bigr\|_\sigma\le 1\}$, can be imposed in~\eqref{eq:matrixpartial} rather than $\|\BX\|_\sigma\le \frac{2}{d(d-1)}$ imposed in~\cite[Theorem~1]{yuan2017incoherent}, where $d$ is the order of the tensor. Our study indicates that there is no universal way to explicitly characterize the subdifferential of the tensor nuclear norm as that of the matrix nuclear norm in~\eqref{eq:matrixpartial}, supported by various approximations of the subdifferential and several interesting examples. Moreover, we investigate subgradients of the tensor nuclear norm and derive various bounds for the inclusion and exclusion of the subdifferential in all relevant subspaces. 

We believe that these developments can be important to applications, no matter in theory or in practice. As a precursor, we propose an immediate application to analyze the statistical performance of the nuclear-norm-based tensor robust PCA that aims to recover a planted low-rank ground-truth tensor superposed by a sparse corruption. In the matrix case, the robust PCA is already a remarkable instance of modern compressed sensing and has found many interesting applications such as video surveillance, face recognition, and community detection; see, e.g.,~\cite{candes2011robust,jiang2018low,wright2022high} and the references therein. A key component underlying its outstanding performance is the nuclear norm minimization that tends to reveal low-rank solutions.
To the best of our knowledge, the statistical performance of the tensor robust PCA
has only been established for third-order tensors~\cite{driggs2019tensor}. Our result applies to tensors of an arbitrary order and exactly recovers the matrix case~\cite[Theorem~1.1]{candes2011robust} when $d=2$. Moreover, it enjoys some looser conditions for the exact recovery than those required in~\cite{driggs2019tensor} when $d=3$. One interesting insight from our analysis suggests that the conditions for the exact recovery of the tensor robust PCA of every order are highly likely to be identical.

The rest of this paper is organized as follows. We start with uniform notations, main concepts, and important properties of tensor operations and norms in Sections~\ref{sec:prepare}. As an essential part to the main results, we introduce tensor subspaces, projections, and tensor norms in subspaces in Section~\ref{sec:subspaces}, presented in a self-contained way for readers with minimal background knowledge. The decomposability of the tensor nuclear norm is discussed in Section~\ref{sec:decomp} along with the decomposability of the tensor spectral norm. The subdifferential of the tensor nuclear norm is discussed in Section~\ref{sec:subdiff}. As an application, the statistical performance of the tensor robust PCA is analyzed in Section~\ref{sec:TRPCA}. Finally, we conclude this paper with some remarks and future research directions in Section~\ref{sec:conclusion}.

\section{Preparations}\label{sec:prepare}

To support a better understanding of the theoretical results in later sections, we compile this section on uniform notations, necessary concepts and important properties of tensor operations, and tensor spectral and nuclear norms.

\subsection{Basic notations}

Throughout this paper, we uniformly adopt lowercase letters (e.g., $x$), boldface lowercase letters (e.g., $\bx=(x_i)$), boldface capital letters 
(e.g., $\CX=(x_{i_1 i_2 \dots i_d})$) 
to denote scalars, vectors, and $d$th order tensors (including matrices) with $d\ge2$, respectively. Denote $\R^{n_1\times n_2\times \dots\times n_d}$ to be the space of $d$th order real tensors of dimension $n_1\times n_2\times\dots \times n_d$. The same notation applies to a vector space and a matrix space when $d = 1$ and $d = 2$, respectively. Without loss of generality, we assume that $2\le n_1\le n_2\le \dots \le n_d$
and that the order of the tensor space, $d$, is a fixed parameter. All Greek letters are used to denote constants. In particular, $\theta>0$ and $\kappa>0$ are some sufficiently small and sufficiently large constants that are not made explicit in Section~\ref{sec:TRPCA}, respectively.

The Frobenius inner product of two tensors $\CT,\CS\in\R^{n_1 \times n_2 \times \dots \times n_d}$ is defined as 
$$
\langle\CT, \CS\rangle:=\sum_{i_1=1}^{n_1} \sum_{i_2=1}^{n_2} \dots \sum_{i_d=1}^{n_d} t_{i_1 i_2 \dots i_d} s_{i_1 i_2 \dots i_d}.
$$
Its induced Frobenius norm is naturally defined as $\|\CT\|_2:=\sqrt{\langle\CT, \CT\rangle}$. When $d=1$, the Frobenius norm reduces to the Euclidean norm of a vector. In a similar vein, we may define the $\ell_p$-norm of a tensor (also known as the H\"{o}lder $p$-norm) for $1\le p\le \infty$ by viewing the tensor as a vector, i.e.,
$$
\|\CT\|_p:=\mleft( \sum_{i_1=1}^{n_1}\sum_{i_2=1}^{n_2} \dots\sum_{i_d=1}^{n_d} |t_{i_1i_2\dots i_d}|^p\mright)^{\frac{1}{p}}.
$$
Three specific H\"{o}lder $p$-norms are used in this paper, namely $\|\CT\|_1=\sum_{i_1=1}^{n_1}\sum_{i_2=1}^{n_2} \dots \sum_{i_d=1}^{n_d}|t_{i_1 i_2 \dots i_d}|$, $\|\CT\|_2$ as the Frobenius norm, and $\|\CT\|_\infty$ as the largest entry of $\CT$ in absolute value.

All blackboard bold capital letters denote sets, such as $\R^n$, the Euclidean unit sphere $\SI^n:=\{\bx\in\R^n:\|\bx\|_2=1\}$ embedded in $\R^n$, and the set of positive integers $\N$. We denote $\{\be_1,\be_2,\dots,\be_n\}$ to be the standard basis of $\R^n$. 
For any $n\in\N$, we let $[n]:=\{1,2,\dots,n\}$. We denote $\I^d:=\{\bi\in\N^d:i_k\in[n_k]\,\forall\,k\in[d]\}$ to be the set of entry indices for the tensor space $\R^{n_1\times n_2\times \dots\times n_d}$. As a result, the set $\{\be_{i_1}\otimes\be_{i_2}\otimes\dots\otimes\be_{i_d}:\bi\in\I^d\}$ becomes the standard basis of $\R^{n_1\times n_2\times \dots\times n_d}$, where $\otimes$ denotes the vector outer product.

Finally, two most frequently used operations of sets are $\proj$ for the orthogonal projection and $\spn$ for the span. Some other notations are self-explanatory, including $\dim$ for the dimension of a space, $\conv$ for the convex hull of a set, $\Prob$ for the probability measure, and $\Exp$ for the expectation.

\subsection{Tensor operations}

A tensor $\CT\in\R^{n_1 \times n_2 \times \dots \times n_d}$ has $d$ modes. Fixing entry indices of $d-1$ modes except mode $k$ results \textcolor{black}{in} a vector in $\R^{n_k}$, called a mode-$k$ fiber. For matrices, mode-$1$ fibers are columns and mode-$2$ fibers are rows. The mode-$k$ matricization of $\CT$, denoted by $\BT_{(k)}\in\R^{n_k\times\prod_{i\ne k}n_i}$, is to arrange mode-$k$ fibers to be the columns of the resulting matrix. The mode-$k$ product between $\CT$ and a matrix $\BX\in\R^{n\times n_k}$, denoted by $\CT\times_k \BX\in\R^{n_1\times \dots \times n_{k-1}\times n\times n_{k+1}\times \dots\times n_d}$, changes every mode-$k$ fiber of $\CT$, say $\bv\in\R^{n_k}$, to $\BX\bv\in\R^n$, a mode-$k$ fiber of $\CT\times_k \BX$; 
\textcolor{black}{in other words},
$$
\CS=\CT\times_k \BX \iff \BS_{(k)}=\BX \BT_{(k)}.
$$
The mode-$k$ contraction of $\CT$ by a vector $\bv\in\R^{n_k}$ is a tensor $\CT\times_k \bv\in\R^{n_1\times \dots \times n_{k-1}\times n_{k+1}\times \dots\times n_d}$ of order $d-1$ under the same mode-$k$ product rule by treating $\bv$ as a $1\times n_k$ matrix.

A rank-one tensor (also called a simple tensor) is a tensor that can be written as outer products of vectors, e.g., $\bx_1\otimes \bx_2\otimes\dots\otimes\bx_d$.
It is easy to verify that 
$$\mleft\|\bx_1\otimes \bx_2\otimes\dots\otimes\bx_d\mright\|_2=\prod_{k=1}^d\|
\textcolor{black}{\bx_k}
\|_2.$$
Any tensor $\CT\in\R^{n_1 \times n_2 \times \dots \times n_d}$ uniquely defines a multilinear form
$$
    \CT(\bx_1, \bx_2, \dots, \bx_d):=\langle\CT, \bx_1 \otimes \bx_2 \otimes \dots \otimes \bx_d\rangle=\CT\times_1\bx_1\times_2\bx_2\dots\times_d\bx_d
$$
of vector entries $(\bx_1, \bx_2, \dots, \bx_d)$ where $\bx_k\in\R^{n_k}$ for $k\in[d]$. 
If any vector entry, say $\bx_1$, is missing and replaced by a $\bullet$, then 
\begin{equation}\label{eq:specdot}
\CT (\bullet,\bx_2,\bx_3,\dots,\bx_d) = \CT\times_2\bx_2\times_3\bx_3\dots\times_d\bx_d\in \R^{n_1}
\end{equation}
becomes a vector. Similarly, $\CT(\bullet, \bullet,\bx_3,\bx_4,\dots ,\bx_d) \in \R^{n_1\times n_2}$ is a matrix, and so on.
For a thorough introduction to tensor operations, we refer interested readers to Kolda and Bader~\cite{kolda2009tensor}, Ballard and Kolda~\cite{TensorTextbook}, and Nie~\cite[Chapter~11]{nie2023moment}.

\subsection{Tensor spectral norm and nuclear norm}\label{sec:def}

Given a tensor $\CT\in\R^{n_1\times n_2\times\dots\times n_d}$, its spectral norm is defined as
    \begin{equation}\label{eq:spec}
    \|\CT\|_\sigma:=\max \bigl\{\langle\CT, \bx_1 \otimes \bx_2 \otimes \dots \otimes \bx_d\rangle:\|\bx_k\|_2=1\,\forall\, k\in[d]\bigr\},
    \end{equation}
and its nuclear norm is defined as
    \begin{equation}
    \|\CT\|_*:=\min \mleft\{\sum_{i=1}^r|\lambda_i|:
    \CT=\sum_{i=1}^r \lambda_i\, \bx_1^i \otimes \bx_2^i \otimes \dots \otimes \bx_d^i,\,\|\bx_k^i\|_2=1\,
    \forall\, i\in[r] \mbox{ and } k\in[d],\,r \in \N\mright\}; \label{eq:nuclear}
    \end{equation}
see, e.g.,~\cite{lim2005singular,qi2005eigenvalues,friedland2018nuclear} for more details.
A decomposition $\sum_{i=1}^r \lambda_i\, \bx_1^i \otimes \bx_2^i \otimes \dots \otimes \bx_d^i$ of $\CT$ in~\eqref{eq:nuclear} {\color{black} is commonly known as a CANDECOMP/PARAFAC (CP) decomposition in the literature; see, e.g.,~\cite[Section~3]{kolda2009tensor}. A CP decomposition of $\CT$} that attains $\|\CT\|_*$ is called a nuclear decomposition.
These two definitions, {\color{black}\eqref{eq:spec} and~\eqref{eq:nuclear}}, reduce to the matrix spectral norm and nuclear norm when $d=2$. However, unlike their matrix counterparts, computing the tensor spectral norm~\cite{he2010approximation} and nuclear norm~\cite{friedland2018nuclear} are both NP-hard when $d\ge3$.

Written in terms of the multilinear form in~\eqref{eq:spec}, the tensor spectral norm enjoys a nice property, i.e.,
\begin{align}
\|\CT\|_\sigma&=\max_{\|\bx_k\|_2=1\,\forall\, k\in[d]} \CT(\bx_1,\bx_2, \dots,\bx_d) \nonumber\\ 
&=\max_{\|\bx_k\|_2=1\,\forall\, k\in[d-1]} \|\CT(\bx_1,\bx_2, \dots,\bx_{d-1},\bullet)\|_2 \nonumber\\ 
&=\max_{\|\bx_k\|_2=1\,\forall\, k\in[d-2]} \|\CT(\bx_1,\bx_2, \dots,\bx_{d-2},\bullet,\bullet)\|_\sigma \label{eq:specextend}\\
&=\max_{\|\bx_k\|_2=1\,\forall\, k\in[d-3]} \|\CT(\bx_1,\bx_2, \dots,\bx_{d-3},\bullet,\bullet,\bullet)\|_\sigma, \nonumber
\end{align}
and so on. {\color{black}In particular, the second equality follows immediately from~\eqref{eq:specdot} and the Cauchy-Schwarz inequality. The third equality follows from the fact that $\CT(\bx_1,\bx_2, \dots,\bx_{d-2},\bullet,\bullet) = \CT\times_1\bx_1\times_2\bx_2\dots\times_{d-2}\bx_{d-2}\in \R^{n_{d-1}\times n_d}$ and the definition of matrix spectral norm, i.e.,~\eqref{eq:spec} for $d=2$. Similar reasoning applies to the last equality.}

The tensor spectral norm is the dual norm of the tensor nuclear norm, and 
\textcolor{black}{vice versa}.
\begin{lemma}\label{lma:norm-duality}
If two tensors $\CT,\CS\in\R^{n_1\times n_2\times\dots\times n_d}$, then
    $$
 \langle\CT,\CS\rangle \le \|\CT\|_\sigma\|\CS\|_*.
    $$
Moreover,
    $$
    \|\CT\|_\sigma =\max _{\|\CZ\|_* \le 1}\langle\CT, \CZ\rangle \mbox{ and }
    \|\CT\|_* =\max _{\|\CZ\|_\sigma \le 1}\langle\CT, \CZ\rangle.
    $$
\end{lemma}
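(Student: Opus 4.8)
The plan is to establish the three assertions in turn: the Cauchy--Schwarz-type inequality first, then its two sharp dual reformulations, of which only the last requires anything beyond bookkeeping with decompositions.

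\emph{The inequality $\langle\CT,\CS\rangle\le\|\CT\|_\sigma\|\CS\|_*$.} First I would record the elementary observation that $|\CT(\bx_1,\dots,\bx_d)|\le\|\CT\|_\sigma$ whenever $\|\bx_k\|_2=1$ for all $k$: this is immediate from \eqref{eq:spec}, since replacing $\bx_1$ by $-\bx_1$ negates $\CT(\bx_1,\dots,\bx_d)$, so both it and its negative are dominated by the maximum defining $\|\CT\|_\sigma$. Next, take a nuclear decomposition $\CS=\sum_{i=1}^r\lambda_i\,\bx_1^i\otimes\dots\otimes\bx_d^i$ with $\|\bx_k^i\|_2=1$ and $\sum_{i=1}^r|\lambda_i|=\|\CS\|_*$ (such a decomposition exists because the minimum in \eqref{eq:nuclear} is attained; alternatively one may use an $\varepsilon$-suboptimal decomposition and let $\varepsilon\downarrow0$ at the end). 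Bilinearity of the Frobenius product then gives
$$\langle\CT,\CS\rangle=\sum_{i=1}^r\lambda_i\,\CT(\bx_1^i,\dots,\bx_d^i)\le\sum_{i=1}^r|\lambda_i|\,\bigl|\CT(\bx_1^i,\dots,\bx_d^i)\bigr|\le\sum_{i=1}^r|\lambda_i|\,\|\CT\|_\sigma=\|\CT\|_\sigma\|\CS\|_*.$$

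\emph{The identity $\|\CT\|_\sigma=\max_{\|\CZ\|_*\le1}\langle\CT,\CZ\rangle$.} For ``$\ge$'' the first part gives $\langle\CT,\CZ\rangle\le\|\CT\|_\sigma\|\CZ\|_*\le\|\CT\|_\sigma$ for every $\CZ$ with $\|\CZ\|_*\le1$. For ``$\le$'', let unit vectors $(\bx_1,\dots,\bx_d)$ attain the maximum in \eqref{eq:spec} and set $\CZ^\star:=\bx_1\otimes\dots\otimes\bx_d$; its one-term decomposition shows $\|\CZ^\star\|_*\le1$, so $\max_{\|\CZ\|_*\le1}\langle\CT,\CZ\rangle\ge\langle\CT,\CZ^\star\rangle=\|\CT\|_\sigma$. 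The maximum is genuinely attained because the nuclear-norm unit ball is compact (a closed bounded unit ball of a norm on a finite-dimensional space) and $\langle\CT,\cdot\rangle$ is continuous.

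\emph{The identity $\|\CT\|_*=\max_{\|\CZ\|_\sigma\le1}\langle\CT,\CZ\rangle$, and the main obstacle.} The bound ``$\ge$'' follows again from the first part, now with the roles of $\CT$ and $\CS$ interchanged: $\langle\CT,\CZ\rangle\le\|\CZ\|_\sigma\|\CT\|_*\le\|\CT\|_*$ whenever $\|\CZ\|_\sigma\le1$. The reverse inequality is the only genuinely nontrivial step, and it is where convex duality must enter. I would argue as follows: since $\|\cdot\|_*$ is a norm on $\R^{n_1\times\dots\times n_d}$ (homogeneity, the triangle inequality, and positive-definiteness are read off \eqref{eq:nuclear}, the last because unit rank-one tensors span the space), the Hahn--Banach theorem supplies a tensor $\CZ$ with $\langle\CT,\CZ\rangle=\|\CT\|_*$ and $\langle\CS,\CZ\rangle\le\|\CS\|_*$ for all $\CS$. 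It then remains to identify this dual unit ball: the condition ``$\langle\CS,\CZ\rangle\le\|\CS\|_*$ for all $\CS$'' is equivalent to its restriction to unit rank-one $\CS$ (these satisfy $\|\CS\|_*\le1$, and conversely every $\CS$ is a nonnegative combination of unit rank-one tensors whose coefficients sum to $\|\CS\|_*$), hence to ``$\langle\bx_1\otimes\dots\otimes\bx_d,\CZ\rangle\le1$ for all unit $\bx_k$'', hence to $\|\CZ\|_\sigma\le1$ by \eqref{eq:spec}. Therefore the $\CZ$ produced above has $\|\CZ\|_\sigma\le1$, giving $\max_{\|\CZ\|_\sigma\le1}\langle\CT,\CZ\rangle\ge\|\CT\|_*$, with attainment from compactness of the spectral-norm unit ball. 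Equivalently, one may phrase this last step through the bipolar theorem, noting that $\|\cdot\|_*$ is precisely the gauge of $\conv\{\bx_1\otimes\dots\otimes\bx_d:\|\bx_k\|_2=1\ \forall k\}$ and that the support function of this compact, convex, symmetric, full-dimensional set is exactly $\|\cdot\|_\sigma$. The essential difficulty is thus confined to this separation/duality argument; everything else is direct manipulation of decompositions.
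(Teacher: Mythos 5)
The paper itself does not prove this lemma; it defers to the literature (``A proof can be found in, e.g., [Lemma~21]{lim2013blind}''), so there is no in-paper proof to compare against. Your argument is a correct, self-contained filling of that gap: part (1) is the standard bound via a nuclear decomposition and the symmetry of the spectral maximum under negation; part (2) identifies the dual of the spectral norm from the rank-one certificate; and part (3) invokes Hahn--Banach/bipolar duality together with the identification of the dual unit ball of $\|\cdot\|_*$ as the spectral ball. That identification is the only subtle point and you handle it correctly by observing that the nuclear unit ball is the convex hull of unit rank-one tensors, so a linear functional bounded by $1$ on those is bounded by $1$ on the whole ball.

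One small imprecision worth tightening: you justify positive-definiteness of $\|\cdot\|_*$ by saying ``unit rank-one tensors span the space.'' That fact shows the nuclear norm is \emph{finite} (the feasible set in \eqref{eq:nuclear} is nonempty), not that it is positive-definite. Positive-definiteness follows most directly from $\|\CS\|_2\le\|\CS\|_*$, which in turn comes from the triangle inequality applied to any decomposition: $\|\CS\|_2\le\sum_i|\lambda_i|\,\|\bx_1^i\otimes\cdots\otimes\bx_d^i\|_2=\sum_i|\lambda_i|$. Inserting one sentence to that effect closes the only loose end; everything else is sound.
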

A proof can be found in, e.g.,~\cite[Lemma~21]{lim2013blind}. An optimal tensor $\CZ$ of either of the above two optimization problems is called a dual certificate of $\CT$. In addition, for any nuclear decomposition $\CT=\sum_{i=1}^r \lambda_i\, \bx_1^i \otimes \bx_2^i \otimes \dots \otimes \bx_d^i$ with $\lambda_i>0$ for $i\in[r]$ and any dual certificate $\CZ$ with $\|\CZ\|_\sigma=1$ and $\|\CT\|_*=\langle \CT,\CZ\rangle$, one always has $\mleft\langle \CZ, \bx_1^i \otimes \bx_2^i \otimes \dots \otimes \bx_d^i\mright\rangle=1$ for any $i\in[r]$; see~\cite[Lemma~4.1]{friedland2018nuclear}.

There are some trivial bounds relating the two norms to the H\"{o}lder $p$-norms mentioned earlier, i.e.,
\begin{equation}\label{eq:trivial}
\|\CT\|_\infty\le\|\CT\|_\sigma\le\|\CT\|_2\le\|\CT\|_*\le\|\CT\|_1
\end{equation}
for any tensor $\CT$; see, e.g.,~\cite[Proposition~4.2]{chen2020tensor}.

\section{\textcolor{black}{Tensor} subspaces}\label{sec:subspaces}

The essential reason resulting \textcolor{black}{in} the full decomposability of the tensor nuclear norm lies in properly chosen tensor subspaces. In this section, we introduce tensor subspaces and elaborate their notations as a key step to appreciate the main results. To provide a better picture of the tensor subspaces, we also discuss orthogonal transformations and projections for tensors. Finally, we prove key properties of the tensor spectral norm and nuclear norm over subspaces. These properties set a foundation for the main theoretical developments. The section is presented in a self-contained way for readers with minimal background knowledge.


\subsection{Subspaces}

We consider the general tensor space $\R^{n_1 \times n_2 \times \dots \times n_d}$ of order $d$. Although a tensor space is also a vector space, we exclusively use $\TT$ and $\U$ for tensor subspaces and $\V$ for vector subspaces. In particular, we use $\V_k$ to denote a subspace of $\R^{n_k}$ that corresponds to mode $k$ of the tensor space.


Given vector subspaces $\V_k\subseteq\R^{n_k}$ for $k\in[d]$, we denote
$$
\TT\bigl((\V_k)_{k=1}^d\bigr)=\TT(\V_1,\V_2,\dots,\V_d):=\spn\mleft(\bigotimes_{k=1}^d \V_k\mright)=\spn(\V_1\otimes\V_2\otimes \dots\otimes\V_d)
$$
to be the tensor subspace spanned by $(\V_1,\V_2,\dots,\V_d)$. Let $\V_k^\perp$ be the orthogonal complement of $\V_k$, i.e., $\V_k\oplus\V_k^\perp=\R^{n_k}$, where $\oplus$ stands for the direct sum. Given an index set $\I\subseteq[d]$, we denote
$$
    \V_k^\I:=
    \begin{dcases}
        \V_k^\perp & k\in\I \\
        \V_k & k\notin \I.
    \end{dcases}
$$
For example, we always have $\V_k^{\varnothing}=\V_k$ and $\V_k^{[d]}=\V_k^\perp$ for $k\in[d]$. For any $\I\subseteq[d]$, we call
$$
    \TT^\I\bigl((\V_k)_{k=1}^d\bigr):=\TT\bigl((\V_k^\I)_{k=1}^d\bigr)= \spn\mleft(\bigotimes_{k=1}^d \V_k^\I\mright)
$$
a basic subspace of $\R^{n_1 \times n_2 \times \dots \times n_d}$ defined by $(\V_1,\V_2,\dots,\V_d)$. In particular, we have 
$
\TT^\varnothing\bigl((\V_k)_{k=1}^d\bigr)=\TT\bigl((\V_k)_{k=1}^d\bigr)=\spn(\V_1\otimes\V_2\otimes \dots\otimes\V_d)
$
and
$
\TT^{[d]}\bigl((\V_k)_{k=1}^d\bigr)=\TT\bigl((\V_k^\perp)_{k=1}^d\bigr)=\spn(\V_1^\perp\otimes\V_2^\perp\otimes \dots\otimes\V_d^\perp).
$
It is also easy to see that
$$
    \TT^{[d]\setminus\I}\bigl((\V_k)_{k=1}^d\bigr)=\TT^\I\bigl((\V_k^\perp)_{k=1}^d\bigr).
$$
Given $\V_k\subseteq\R^{n_k}$ for $k\in[d]$, there are exactly $2^d$ basic subspaces of $\R^{n_1 \times n_2 \times \dots \times n_d}$ and
$$
\bigoplus_{\I\subseteq[d]}\TT^\I\bigl((\V_k)_{k=1}^d\bigr)=\R^{n_1 \times n_2 \times \dots \times n_d}.
$$

The following $\U$ subspaces are particularly important in our discussions, i.e.,
\begin{align*}
    \U^\I\bigl((\V_k)_{k=1}^d\bigr)&:=\bigoplus_{\I\subseteq\J\subseteq[d]}\TT^\J\bigl((\V_k)_{k=1}^d\bigr)
    =\spn\mleft(\mleft\{\bigotimes_{k=1}^d \bv_k:\bv_k\in \V_k^\perp\,\forall\,k\in\I,\, \bv_k\in \R^{n_k}\,\forall\,k\notin\I\mright\}\mright),\\
        \U_\I\bigl((\V_k)_{k=1}^d\bigr)&:=\U^\I\bigl((\V_k^\perp)_{k=1}^d\bigr)
    =\spn\mleft(\mleft\{\bigotimes_{k=1}^d \bv_k:\bv_k\in \V_k\,\forall\,k\in\I,\, \bv_k\in \R^{n_k}\,\forall\,k\notin\I\mright\}\mright). 
\end{align*}
For example, we have 
$\U^\varnothing\bigl((\V_k)_{k=1}^d\bigr)=\U_\varnothing\bigl((\V_k)_{k=1}^d\bigr)=\R^{n_1 \times n_2 \times \dots \times n_d}$, $
\U^{[d]}\bigl((\V_k)_{k=1}^d\bigr)=\TT\bigl((\V_k^\perp)_{k=1}^d\bigr)$, and $\U_{[d]}\bigl((\V_k)_{k=1}^d\bigr)=\TT\bigl((\V_k)_{k=1}^d\bigr)$.

As an illustrating example, if $\V_1$ is a subspace of $\R^{n_1}$ and $\V_2$ is a subspace of $\R^{n_2}$, we have in the matrix space $\R^{n_1\times n_2}$ that
\begin{align*}
&\TT^\varnothing(\V_1,\V_2)=\spn(\V_1\otimes\V_2), &&\U^\varnothing(\V_1,\V_2)=\spn(\R^{n_1}\otimes\R^{n_2}), &&\U_\varnothing(\V_1,\V_2)=\spn(\R^{n_1}\otimes\R^{n_2}), \\
&\TT^{\{1\}}(\V_1,\V_2)=\spn(\V_1^\perp\otimes\V_2), &&\U^{\{1\}}(\V_1,\V_2)=\spn(\V_1^\perp\otimes\R^{n_2}), &&\U_{\{1\}}(\V_1,\V_2)=\spn(\V_1\otimes\R^{n_2}), \\
&\TT^{\{2\}}(\V_1,\V_2)=\spn(\V_1\otimes\V_2^\perp), &&\U^{\{2\}}(\V_1,\V_2)=\spn(\R^{n_1}\otimes\V_2^\perp),&&\U_{\{2\}}(\V_1,\V_2)=\spn(\R^{n_1}\otimes\V_2),\\
&\TT^{\{1,2\}}(\V_1,\V_2)=\spn(\V_1^\perp\otimes\V_2^\perp),  &&\U^{\{1,2\}}(\V_1,\V_2)=\spn(\V_1^\perp\otimes\V_2^\perp), &&\U_{\{1,2\}}(\V_1,\V_2)=\spn(\V_1\otimes\V_2).
\end{align*}

For a given tensor $\CT\in\R^{n_1 \times n_2 \times \dots \times n_d}$, we denote $\spn_k(\CT)\subseteq\R^{n_k}$ to be the subspace spanned by the columns of $\BT_{(k)}$, the mode-$k$ matricization of $\CT$. In particular, $\spn_1(\BT)$ is the column space of $\BT$ and $\spn_2(\BT)$ is the row space of $\BT$ if $\BT$ is a matrix. We adopt the following shorthand notations for the tensor subspaces defined by $\CT$,
$$
\TT^\I(\CT):=\TT^\I\bigl((\spn_k(\CT))_{k=1}^d\bigr),~\U^\I(\CT):=\U^\I\bigl((\spn_k(\CT))_{k=1}^d\bigr),\text{ and }\U_\I(\CT):=\U_\I\bigl((\spn_k(\CT))_{k=1}^d\bigr).
$$
We also simply denote $\TT(\CT):=\TT^{\varnothing}(\CT)$.


\subsection{Orthogonal transformations and projections}\label{sec:orthognoal}

Orthogonal transformations preserve many matrix properties, in particular to this paper, keeping the spectral norm and nuclear norm unchanged. Given any matrix $\BT\in\R^{n_1\times n_2}$, there is an orthogonal matrix $\BP\in\R^{n_1\times n_1}$ such that the nonzero entries of $\BP\BT$ only appear in the first $r_1$ rows where $r_1=\dim\bigl(\spn_1(\BT\bigr))$. There is also an orthogonal matrix $\BQ\in\R^{n_2\times n_2}$ such that the nonzero entries of $\BT\BQ$ only appear in the first $r_2$ columns where $r_2=\dim\bigl(\spn_2(\BT)\bigr)$. 
\textcolor{black}{It is}
obvious that $r_1=r_2$, the rank of $\BT$.

In fact, the same can be applied to tensors as well. Given a tensor $\CT\in\R^{n_1\times n_2\times\dots\times n_d}$ and a mode $k\in[d]$, there is an orthogonal matrix $\BP_k\in\R^{n_k\times n_k}$, such that the entries of $\CT\times_k\BP_k$ are nonzero only when its mode-$k$ index is at most $r_k$, where $r_k=\dim\bigl(\spn_k(\CT)\bigr)$. Essentially, any mode-$k$ fiber of $\CT$, say $\bv\in\R^{n_k}$, changes to $\BP_k\bv\in\R^{n_k}$ whose last $n_k-r_k$ entries become zeros. Therefore, by repeatedly applying mode-$k$ products with orthogonal matrices $\BP_1,\BP_2,\dots,\BP_d$, we obtain a tensor $\CS=\CT\times_1\BP_1\times_2\BP_2\dots\times_d\BP_d$ such that $s_{i_1i_2\dots i_d}=0$ if there exists a mode index $k$ with $i_k>\dim\bigl(\spn_k(\CT)\bigr)$. Besides, this transformation is completely reversible, in particular, $\CT=\CS\times_1\BP_1^\T\times_2\BP_2^\T\dots\times_d\BP_d^\T$.
This orthogonal transformation for tensors is more like an expanded Tucker decomposition~\cite{kolda2009tensor} whose core tensor is a shrunk version of $\CS$ by deleting peripheral zero entries. While we may not explicitly apply this fact in our study, bearing this in mind makes many properties of tensor subspaces easier to understand. For example, when we consider a subspace $\V_k$ or $\spn_k(\CT)$ of $\R^{n_k}$ with $\dim(\V_k)=r_k$, we can simply treat $\V_k$ as $\R^{r_k}\times\{0\}^{n_k-r_k}$ and $\V_k^\perp$ as $\{0\}^{r_k}\times\R^{n_k-r_k}$.

In fact, the matrix SVD makes the above even better. Since $r_1=r_2$, SVD results \textcolor{black}{in} the top-left $r_1\times r_1$ submatrix of $\CS$ to be diagonal with all of the singular values. With that structure, the decomposability of the matrix nuclear norm is straightforward and the subdifferential of the matrix nuclear norm admits an explicit and complete representation; see~\eqref{eq:matrixdecomp} and~\eqref{eq:matrixpartial}. However, the diagonal structure does not hold for tensors in general, and this makes the results of decomposability and subdifferential of the tensor nuclear norm unsatisfactory. 

Given a subspace $\V$ of $\R^n$, the orthogonal projection operator $\proj_{\V}$ of a vector or a set of vectors in $\R^{n}$ is frequently used in this paper. The outer products of projection operators, in particular, $\bigotimes_{k=1}^d\proj_{\V_k}$ with $\V_k$ being a subspace of $\R^{n_k}$ for $k\in[d]$, of a tensor $\CT\in\R^{n_1\times n_2\times \dots \times n_d}$ is defined as follows: Given any rank-one decomposition $\CT=\sum_{i=1}^r \bx^i_1 \otimes \bx^i_2 \otimes \dots \otimes \bx^i_d$, 
$$
\mleft(\bigotimes_{k=1}^d\proj_{\V_k}\mright)(\CT)
:= \proj_{\TT((\V_k)_{k=1}^d)}(\CT)
= \sum_{i=1}^r \proj_{\V_1}(\bx^i_1) \otimes \proj_{\V_2}(\bx^i_2) \otimes \dots \otimes \proj_{\V_d}(\bx^i_d).
$$
It is easy to check that the projection is invariant with respect to rank-one decompositions. In fact, if we let $\BP_k\in\R^{n_k\times n_k}$ be the projection matrix of $\proj_{\V_k}$ for $k\in[d]$, then it is not hard to show that 
$$
\mleft(\bigotimes_{k=1}^d\proj_{\V_k}\mright)(\CT)=\CT\times_1\BP_1\times_2\BP_2\dots\times_d\BP_d,
$$
\textcolor{black}{known as the multilinear orthogonal projection; see, e.g.,~\cite[Section~2.6]{de2008tensor} and~\cite[Section~3]{vannieuwenhoven2012new}, as well as the discussions therein.
In other words}, $\CT\times_k\BP_k$ projects all mode-$k$ fibers of $\CT$ onto the subspace $\V_k$. Therefore, we can take $\bigotimes_{k=1}^d\proj_{\V_k}$ as performing vector projections $d$ times, in any order of the modes $1,2,\dots,d$, as mode products allow swapping.

We state a property of the projection $\bigotimes_{k=1}^d\proj_{\V_k}$ below without proof.
\begin{lemma}
    If $\V_k$ is a subspace of $\R^{n_k}$ for $k\in[d]$, then the projection $\bigotimes_{k=1}^d \proj_{\V_k}$ is self-adjoint. Moreover, 
    \begin{equation*}
        \TT\bigl((\V_k)_{k=1}^d\bigr)=\bigl\{\CT\in\R^{n_1\times n_2\times \dots \times n_d}:\spn_k(\CT)\subseteq\V_k\,\forall\,k\in[d]\bigr\}.
    \end{equation*}
\end{lemma}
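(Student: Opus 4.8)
The plan is to treat the two assertions separately, with both resting on the description of $\bigotimes_{k=1}^d\proj_{\V_k}$ as a composition of mode products already recorded in the excerpt. Write $\BP_k\in\R^{n_k\times n_k}$ for the orthogonal projection matrix onto $\V_k$, so that $\BP_k^\T=\BP_k=\BP_k^2$ and $\bigl(\bigotimes_{k=1}^d\proj_{\V_k}\bigr)(\CT)=\CT\times_1\BP_1\times_2\BP_2\dots\times_d\BP_d$.

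For self-adjointness, the one fact I need is the adjointness of a single mode product: for any tensors and any conformable matrix $\BX$, $\langle\CS\times_k\BX,\CR\rangle=\langle\CS,\CR\times_k\BX^\T\rangle$. This is immediate from matricization, since $\langle\CS\times_k\BX,\CR\rangle=\langle\BX\BS_{(k)},\BR_{(k)}\rangle=\tr\bigl(\BS_{(k)}^\T\BX^\T\BR_{(k)}\bigr)=\langle\BS_{(k)},\BX^\T\BR_{(k)}\rangle=\langle\CS,\CR\times_k\BX^\T\rangle$. Applying this identity once per mode to $\langle\CT\times_1\BP_1\dots\times_d\BP_d,\CS\rangle$, peeling off $\BP_k$ and using $\BP_k^\T=\BP_k$ (the order of peeling is irrelevant because mode products in distinct modes commute), I obtain $\langle\bigl(\bigotimes_{k}\proj_{\V_k}\bigr)(\CT),\CS\rangle=\langle\CT,\bigl(\bigotimes_{k}\proj_{\V_k}\bigr)(\CS)\rangle$, which is exactly the self-adjointness claim.

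For the set equality, the inclusion $\TT\bigl((\V_k)_{k=1}^d\bigr)\subseteq\{\CT:\spn_k(\CT)\subseteq\V_k\ \forall k\in[d]\}$ is the easy direction. The map $\CT\mapsto\BT_{(k)}$ is linear, and for a rank-one tensor $\bv_1\otimes\dots\otimes\bv_d$ the columns of its mode-$k$ matricization are scalar multiples of $\bv_k$, hence $\spn_k(\bv_1\otimes\dots\otimes\bv_d)\subseteq\spn\{\bv_k\}$. Writing an arbitrary element of $\TT\bigl((\V_k)_{k=1}^d\bigr)$ as a finite linear combination of rank-one tensors whose $k$th factors lie in $\V_k$, and using that the column space of a sum of matrices is contained in the sum of the column spaces, gives $\spn_k(\CT)\subseteq\V_k$ for every $k$.

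For the reverse inclusion, suppose $\spn_k(\CT)\subseteq\V_k$ for all $k\in[d]$. The columns of $\BT_{(k)}$ are precisely the mode-$k$ fibers of $\CT$, which therefore all lie in $\V_k$; since $\BP_k$ fixes every vector of $\V_k$, we get $\BP_k\BT_{(k)}=\BT_{(k)}$, i.e.\ $\CT\times_k\BP_k=\CT$, for each $k$. Iterating over all modes yields $\CT=\CT\times_1\BP_1\dots\times_d\BP_d=\bigl(\bigotimes_{k}\proj_{\V_k}\bigr)(\CT)$, and by the definition of $\bigotimes_{k}\proj_{\V_k}$ in the excerpt (equivalently, because this operator coincides with $\proj_{\TT((\V_k)_{k=1}^d)}$) the right-hand side lies in $\TT\bigl((\V_k)_{k=1}^d\bigr)$, so $\CT\in\TT\bigl((\V_k)_{k=1}^d\bigr)$. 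I do not expect a genuine obstacle here; the only points needing care are the bookkeeping that mode products in distinct modes commute, so they may be applied one mode at a time, and the observation that the hypothesis $\spn_k(\CT)\subseteq\V_k$ is exactly the statement that $\BP_k$ acts as the identity on every mode-$k$ fiber of $\CT$.
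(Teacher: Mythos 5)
Your proof is correct; the paper explicitly states this lemma ``without proof,'' so there is nothing to compare against, and your argument is the natural routine verification one would expect. One small remark: for the self-adjointness claim your direct computation via the mode-product adjointness identity $\langle\CS\times_k\BX,\CR\rangle=\langle\CS,\CR\times_k\BX^\T\rangle$ is the right choice --- it is self-contained, whereas leaning on the paper's parenthetical identification of $\bigotimes_{k}\proj_{\V_k}$ with $\proj_{\TT((\V_k)_{k=1}^d)}$ would be circular, since that identification is itself only justified once one knows the operator is a self-adjoint idempotent whose range equals the set you characterize in the second half. In the reverse inclusion, the cleaner of your two parenthetical justifications is the first one: $\bigl(\bigotimes_{k}\proj_{\V_k}\bigr)(\CT)$ is by the rank-one formula a sum of elementary tensors whose $k$-th factors lie in $\V_k$, hence visibly in $\TT\bigl((\V_k)_{k=1}^d\bigr)$; you need not invoke the coincidence with $\proj_{\TT((\V_k)_{k=1}^d)}$ at all.
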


Finally, the norm of an operator $\proj$, in particular a projection or arithmetic operations of some projections in this paper, is defined as 
$$
\|\proj\|:=\max\bigl\{\|\proj(\CT)\|_2:\|\CT\|_2=1,\,\CT\in\R^{n_1\times n_2\times \dots \times n_d}\bigr\}.
$$
This includes the case of vector spaces when $d=1$.

\subsection{Tensor spectral and nuclear norms over subspaces}

The definitions of the tensor spectral norm and nuclear norm in Section~\ref{sec:def} involve working with all unit vectors. Instead, the following results state that, to evaluate the two norms of a tensor in $\TT\bigl((\V_k)_{k=1}^d\bigr)$, it suffices to work with vectors in the subspaces $\V_k$'s.
\begin{lemma}\label{thm:spec-subspace}
If $\V_k$ is a subspace of $\R^{n_k}$ for $k\in[d]$ and $\CT\in\TT\bigl((\V_k)_{k=1}^d\bigr)$, then
    $$
    \|\CT\|_\sigma=\max\mleft\{\mleft\langle\CT, \bigotimes_{k=1}^d\bx_k\mright\rangle:\bx_k\in\V_k\cap\SI^{n_k}\,\forall\,k\in[d]\mright\}.
    $$
Hence, $\bigl\|\proj_{\TT((\V_k)_{k=1}^d)}(\CT)\bigr\|_\sigma\le\|\CT\|_\sigma$ for any $\CT\in\R^{n_1\times n_2\times \dots \times n_d}$.
\end{lemma}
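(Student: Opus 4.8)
The plan is to prove the two claims in sequence, deriving the ``Hence'' part as an immediate corollary of the main equality. For the main equality, the inequality $\|\CT\|_\sigma\ge\max\{\langle\CT,\bigotimes_k\bx_k\rangle:\bx_k\in\V_k\cap\SI^{n_k}\}$ is trivial, since the right-hand side is a maximum over a subset of the feasible set in the definition~\eqref{eq:spec}. So the real content is the reverse inequality: that the maximizing unit vectors can be taken inside the subspaces $\V_k$. First I would take a maximizer $(\bx_1,\dots,\bx_d)$ for the unconstrained problem~\eqref{eq:spec}, with each $\|\bx_k\|_2=1$, so that $\langle\CT,\bigotimes_k\bx_k\rangle=\|\CT\|_\sigma$. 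Write $\bx_k=\bu_k+\bw_k$ where $\bu_k=\proj_{\V_k}(\bx_k)\in\V_k$ and $\bw_k=\proj_{\V_k^\perp}(\bx_k)\in\V_k^\perp$.

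The key step is to exploit the hypothesis $\CT\in\TT\bigl((\V_k)_{k=1}^d\bigr)$, which by the preceding lemma means $\spn_k(\CT)\subseteq\V_k$ for every $k$; equivalently, if $\BP_k$ is the orthogonal projection matrix onto $\V_k$, then $\CT\times_k\BP_k=\CT$ for all $k$, and hence $\CT\times_1\BP_1\times_2\dots\times_d\BP_d=\CT$. Since $\langle\CT,\bigotimes_k\bx_k\rangle=\CT(\bx_1,\dots,\bx_d)$ and mode products with $\BP_k$ are self-adjoint, I compute
$$
\CT(\bx_1,\dots,\bx_d)=\bigl(\CT\times_1\BP_1\times_2\dots\times_d\BP_d\bigr)(\bx_1,\dots,\bx_d)=\CT(\BP_1\bx_1,\dots,\BP_d\bx_d)=\CT(\bu_1,\dots,\bu_d).
$$
Thus the multilinear form already only sees the projected components. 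Now $\|\bu_k\|_2\le\|\bx_k\|_2=1$ for each $k$. If some $\bu_k=\bzero$, then $\CT(\bu_1,\dots,\bu_d)=0\le\|\CT\|_\sigma$, and the claimed maximum is trivially $\ge 0$ unless $\|\CT\|_\sigma=0$, in which case every feasible value is $0$ as well — so I may assume each $\bu_k\ne\bzero$. Rescale to $\hat\bx_k:=\bu_k/\|\bu_k\|_2\in\V_k\cap\SI^{n_k}$; then by multilinearity $\CT(\bu_1,\dots,\bu_d)=\bigl(\prod_k\|\bu_k\|_2\bigr)\,\CT(\hat\bx_1,\dots,\hat\bx_d)$. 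Since $\prod_k\|\bu_k\|_2\le 1$, we get $\|\CT\|_\sigma=\CT(\bu_1,\dots,\bu_d)\le\CT(\hat\bx_1,\dots,\hat\bx_d)$, and the right-hand side is a feasible value for the constrained maximum, which is in turn $\le\|\CT\|_\sigma$ by the trivial direction. Hence all inequalities are equalities, proving the claim.

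For the ``Hence'' statement, let $\CT\in\R^{n_1\times n_2\times\dots\times n_d}$ be arbitrary and set $\CS:=\proj_{\TT((\V_k)_{k=1}^d)}(\CT)=\CT\times_1\BP_1\times_2\dots\times_d\BP_d$, which lies in $\TT\bigl((\V_k)_{k=1}^d\bigr)$. Applying the equality just proved to $\CS$, and using that each $\bx_k$ in the maximum lies in $\V_k$ so that $\BP_k\bx_k=\bx_k$, we get $\|\CS\|_\sigma=\max\{\langle\CS,\bigotimes_k\bx_k\rangle:\bx_k\in\V_k\cap\SI^{n_k}\}=\max\{\langle\CT,\bigotimes_k\BP_k\bx_k\rangle\}=\max\{\langle\CT,\bigotimes_k\bx_k\rangle:\bx_k\in\V_k\cap\SI^{n_k}\}\le\|\CT\|_\sigma$, where the last inequality again uses that this is a maximum over a subset of the feasible set in~\eqref{eq:spec}.

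The only real obstacle is the bookkeeping around degenerate cases (some $\bu_k=\bzero$, or $\|\CT\|_\sigma=0$): one must check that rescaling the projected vectors to unit norm never decreases the value of the multilinear form, which hinges precisely on $\prod_k\|\bu_k\|_2\le 1$ together with multilinearity. Everything else is a direct manipulation of mode products and the self-adjointness of $\bigotimes_k\proj_{\V_k}$, both already recorded in the excerpt.
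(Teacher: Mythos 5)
Your proof is correct and follows essentially the same route as the paper's: project the maximizing unit vectors onto the $\V_k$'s via the self-adjointness of $\bigotimes_k\proj_{\V_k}$, observe the norms of the projections are at most $1$, rescale, and then derive the ``Hence'' part by applying the equality to $\proj_{\TT((\V_k)_{k=1}^d)}(\CT)$. The only stylistic difference is that your degenerate-case remark is slightly roundabout — the cleanest statement is simply that $\bu_k=\bzero$ forces $\|\CT\|_\sigma=\CT(\bu_1,\dots,\bu_d)=0$, hence $\CT=\CO$ and both sides vanish — which is what the paper does.
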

\begin{proof} 
Let $\|\CT\|_\sigma=\langle\CT, \bigotimes_{k=1}^d\by_k\rangle$, where $\by_k\in\SI^{n_k}$ for $k\in[d]$. Since $\CT\in\TT\bigl((\V_k)_{k=1}^d\bigr)$, we have $\bigl(\bigotimes_{k=1}^d\proj_{\V_k}\bigr)(\CT)=\CT$. As $\bigotimes_{k=1}^d\proj_{\V_k}$ is self-adjoint, we further have
$$\|\CT\|_\sigma
=\mleft\langle\mleft(\bigotimes_{k=1}^d\proj_{\V_k}\mright)(\CT), \bigotimes_{k=1}^d\by_k\mright\rangle=\mleft\langle\CT, \mleft(\bigotimes_{k=1}^d\proj_{\V_k}\mright)\mleft(\bigotimes_{k=1}^d\by_k\mright)\mright\rangle
  =\mleft\langle\CT, \bigotimes_{k=1}^d\proj_{\V_k}(\by_k)\mright\rangle.$$
  
If there is some $\proj_{\V_k}(\by_k)={\bf 0}$, then $\|\CT\|_\sigma=0$. This makes $\CT=\CO$ and the equality trivially holds. Otherwise, we have $\bigl\|\proj_{\V_k}(\by_k)\bigr\|_2\ne 0$ for every $k\in[d]$. By noticing that $\bigl\|\proj_{\V_k}(\by_k)\bigr\|_2\le \|\by_k\|_2=1 $, we have
    \begin{align*}
    \|\CT\|_\sigma&=\mleft\langle\CT, \bigotimes_{k=1}^d\proj_{\V_k}(\by_k)\mright\rangle
    \\& =\mleft\langle\CT, \bigotimes_{k=1}^d\frac{\proj_{\V_k}(\by_k)}{\bigl\|\proj_{\V_k}(\by_k)\bigr\|_2}\mright\rangle \prod_{k=1}^d\bigl\|\proj_{\V_k}(\by_k)\bigr\|_2\\
    &\le\max \mleft\{\mleft\langle\CT, \bigotimes_{k=1}^d\bx_k\mright\rangle:\bx_k\in\V_k\cap\SI^{n_k}\,\forall\,k\in[d]\mright\}\\
    &\le\max \mleft\{\mleft\langle\CT, \bigotimes_{k=1}^d\bx_k\mright\rangle:\bx_k\in\SI^{n_k}\,\forall\,k\in[d]\mright\}\\
    &=\|\CT\|_\sigma,
    \end{align*}
    implying the validity of the equality. 

    Finally, for any $\CT\in\R^{n_1\times n_2\times \dots \times n_d}$, as $\proj_{\TT((\V_k)_{k=1}^d)}(\CT)\in\TT\bigl((\V_k)_{k=1}^d\bigr)$, we have
    \begin{align*}
    \bigl\|\proj_{\TT((\V_k)_{k=1}^d)}(\CT)\bigr\|_\sigma&=\max \mleft\{\mleft\langle\mleft(\bigotimes_{k=1}^d\proj_{\V_k}\mright)(\CT), \bigotimes_{k=1}^d\bx_k\mright\rangle:\bx_k\in\V_k\cap\SI^{n_k}\,\forall\,k\in[d]\mright\}\\
    &=\max \mleft\{\mleft\langle\CT, \bigotimes_{k=1}^d\proj_{\V_k}(\bx_k)\mright\rangle:\bx_k\in\V_k\cap\SI^{n_k}\,\forall\,k\in[d]\mright\}\\
    &=\max \mleft\{\mleft\langle\CT, \bigotimes_{k=1}^d\bx_k\mright\rangle:\bx_k\in\V_k\cap\SI^{n_k}\,\forall\,k\in[d]\mright\}\\
    &\le \max \mleft\{\mleft\langle\CT, \bigotimes_{k=1}^d\bx_k\mright\rangle:\bx_k\in\SI^{n_k}\,\forall\,k\in[d]\mright\}\\
    &=\|\CT\|_\sigma,
    \end{align*}
completing the final piece.
\end{proof}

The orthogonal transformation of tensors discussed in Section~\ref{sec:orthognoal} makes the above statement more intuitive. If $\proj_{\V_k}$ simply keeps the first several entries unchanged and zeroing the remainder for every $k\in[d]$, to maximize $\langle\CT, \bigotimes_{k=1}^d\bx_k\rangle$ for a tensor $\CT\in\TT\bigl((\V_k)_{k=1}^d\bigr)$, an optimal $\bx_k$ must not have a nonzero entry outside $\V_k$ for every $k\in[d]$. {\color{black}We remark that the Frobenius-norm analogue of the second part of Lemma~\ref{thm:spec-subspace}, i.e., $\bigl\|\proj_{\TT((\V_k)_{k=1}^d)}(\CT)\bigr\|_2\le\|\CT\|_2$, has been established in~\cite[(3.2)]{vannieuwenhoven2012new}.}

The following is the dual version of Lemma~\ref{thm:spec-subspace}. 

\begin{lemma}\label{prop:nuclear-U-equiv}
If $\V_k$ is a subspace of $\R^{n_k}$ for $k\in[d]$ and $\CT\in\TT\bigl((\V_k)_{k=1}^d\bigr)$, then
\begin{align*}
\|\CT\|_*
=\min\mleft\{\sum_{i=1}^r|\lambda_i|:\CT=\sum_{i=1}^r\lambda_i\bigotimes_{k=1}^d\bx_k^i,\, \bx_k^i\in\V_k\cap\SI^{n_k}\,
    \forall\, i\in[r]  \mbox{ and }k\in[d],\,r \in \N\mright\}.
\end{align*}
Furthermore, there exists a dual certificate $\CZ\in\TT\bigl((\V_k)_{k=1}^d\bigr)$ with $\|\CZ\|_\sigma=1$ such that $\langle \CT,\CZ\rangle=\|\CT\|_*$.
\textcolor{black}{Besides, $\bigl\|\proj_{\TT((\V_k)_{k=1}^d)}(\CT)\bigr\|_*\le\|\CT\|_*$ for any $\CT\in\R^{n_1\times n_2\times \dots \times n_d}$.}
\end{lemma}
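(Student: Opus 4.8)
The plan is to mirror the proof of Lemma~\ref{thm:spec-subspace} on the primal (decomposition) side, and then obtain the restricted dual certificate by projecting an unrestricted one onto $\TT\bigl((\V_k)_{k=1}^d\bigr)$. For the first identity, one inequality is immediate: restricting the unit vectors $\bx_k^i$ in~\eqref{eq:nuclear} to lie in $\V_k$ only shrinks the feasible set, so the restricted minimum is at least $\|\CT\|_*$. For the reverse inequality, I would start from a nuclear decomposition $\CT=\sum_{i=1}^r\lambda_i\bigotimes_{k=1}^d\bx_k^i$ with $\|\bx_k^i\|_2=1$ and $\sum_{i=1}^r|\lambda_i|=\|\CT\|_*$, and apply the self-adjoint projection $\bigotimes_{k=1}^d\proj_{\V_k}=\proj_{\TT((\V_k)_{k=1}^d)}$. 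Since $\CT\in\TT\bigl((\V_k)_{k=1}^d\bigr)$, the projection fixes $\CT$, whence $\CT=\sum_{i=1}^r\lambda_i\bigotimes_{k=1}^d\proj_{\V_k}(\bx_k^i)$. Discarding the summands in which some factor $\proj_{\V_k}(\bx_k^i)$ vanishes and renormalizing the surviving factors to unit vectors $\by_k^i:=\proj_{\V_k}(\bx_k^i)/\bigl\|\proj_{\V_k}(\bx_k^i)\bigr\|_2\in\V_k\cap\SI^{n_k}$, I absorb the scalars $\prod_{k=1}^d\bigl\|\proj_{\V_k}(\bx_k^i)\bigr\|_2\le1$ into new coefficients $\mu_i$. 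This produces a decomposition of $\CT$ into unit rank-one tensors drawn from $\bigotimes_{k=1}^d\V_k$ with $\sum_i|\mu_i|\le\sum_i|\lambda_i|=\|\CT\|_*$, so the restricted minimum does not exceed $\|\CT\|_*$, and equality follows. (The case $\CT=\CO$ is trivial.)

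For the dual certificate, I would invoke Lemma~\ref{lma:norm-duality} to pick a tensor $\CZ_0$ with $\|\CZ_0\|_\sigma\le1$ and $\langle\CT,\CZ_0\rangle=\|\CT\|_*$, and set $\CZ:=\proj_{\TT((\V_k)_{k=1}^d)}(\CZ_0)=\bigl(\bigotimes_{k=1}^d\proj_{\V_k}\bigr)(\CZ_0)$, which lies in $\TT\bigl((\V_k)_{k=1}^d\bigr)$ by construction. The ``Hence'' part of Lemma~\ref{thm:spec-subspace} gives $\|\CZ\|_\sigma\le\|\CZ_0\|_\sigma\le1$, while self-adjointness of the projection together with $\CT\in\TT\bigl((\V_k)_{k=1}^d\bigr)$ yields $\langle\CT,\CZ\rangle=\bigl\langle\proj_{\TT((\V_k)_{k=1}^d)}(\CT),\CZ_0\bigr\rangle=\langle\CT,\CZ_0\rangle=\|\CT\|_*$. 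Finally, if $\CT\ne\CO$, applying Lemma~\ref{lma:norm-duality} to the pair $(\CZ,\CT)$ gives $\|\CT\|_*=\langle\CT,\CZ\rangle\le\|\CZ\|_\sigma\|\CT\|_*$, forcing $\|\CZ\|_\sigma\ge1$ and hence $\|\CZ\|_\sigma=1$; if $\CT=\CO$, then $\langle\CT,\CZ\rangle=0=\|\CT\|_*$ for every $\CZ$, and any rank-one unit tensor assembled from vectors in the $\V_k$'s serves as the certificate.

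I do not anticipate a genuine obstacle. The only points requiring care are the bookkeeping in the renormalization step --- removing the vanishing factors and verifying $\sum_i|\mu_i|\le\sum_i|\lambda_i|$ --- and the final pinch $\|\CZ\|_\sigma\ge1$, which relies on $\|\CT\|_*>0$ and therefore on a separate (trivial) treatment of $\CT=\CO$.
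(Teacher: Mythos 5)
Your proof is correct, and for the nuclear-norm equality you take a genuinely different --- and considerably shorter --- route than the paper. You start from a nuclear decomposition $\CT=\sum_i\lambda_i\bigotimes_k\bx_k^i$ attaining the minimum in~\eqref{eq:nuclear} (attainment is implicit in the paper's standing use of the term ``nuclear decomposition''), apply the self-adjoint projection $\bigotimes_{k=1}^d\proj_{\V_k}$, which fixes $\CT$, discard any term with a vanishing factor, and renormalize; since each $\bigl\|\proj_{\V_k}(\bx_k^i)\bigr\|_2\le 1$, the new coefficients satisfy $\sum_i|\mu_i|\le\sum_i|\lambda_i|=\|\CT\|_*$, which together with the trivial reverse inequality (the restricted problem has a smaller feasible set) yields the equality. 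The paper instead introduces a new norm $\|\bullet\|_{\HH}$ on $\TT\bigl((\V_k)_{k=1}^d\bigr)$ whose unit ball is $\conv\bigl(\bigotimes_{k=1}^d(\V_k\cap\SI^{n_k})\bigr)$, proves that the rank-one unit generators are extreme points, identifies its dual norm with the spectral norm via Lemma~\ref{thm:spec-subspace}, and then invokes biduality to conclude $\|\bullet\|_{\HH}=\|\bullet\|_*$ on the subspace. Your direct projection argument sidesteps the extreme-point and biduality machinery entirely, at the cost of not producing the intrinsic norm characterization the paper develops along the way. For the dual-certificate claim, your argument --- project an unrestricted certificate into the subspace, bound $\|\CZ\|_\sigma\le1$ by the ``Hence'' part of Lemma~\ref{thm:spec-subspace}, preserve $\langle\CT,\CZ\rangle=\|\CT\|_*$ by self-adjointness and $\CT\in\TT\bigl((\V_k)_{k=1}^d\bigr)$, then pinch $\|\CZ\|_\sigma=1$ via Lemma~\ref{lma:norm-duality} --- is essentially identical to the paper's concluding step.
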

\begin{proof} 
    We assume that $\CT\ne\CO$ as otherwise the results hold trivially. This means that $\dim(\V_k)\ge1$ for every $k\in[d]$ and further $\HI=\bigotimes_{k=1}^d(\V_k\cap\SI^{n_k})$ is nonempty. It is easy to see that $\conv(\HI)$ is convex, compact, and centrally symmetric
    \textcolor{black}{
    with respect to the origin, 
    implying that $\CO\in\conv(\HI)$}.
    Let $\HI_k\subseteq\V_k\cap\SI^{n_k}$ be any normalized basis of $\V_k$ for $k\in[d]$.
    We have
    $$
   \TT\bigl((\V_k)_{k=1}^d\bigr)= \spn\mleft(\bigotimes_{k=1}^d\V_k\mright) =\spn\mleft(\bigotimes_{k=1}^d\HI_k\mright)\subseteq\spn(\HI)\subseteq\spn\bigl(\conv(\HI)\bigr)
   \subseteq\spn\mleft(\bigotimes_{k=1}^d\V_k\mright),$$
    implying that $\spn\bigl(\conv(\HI)\bigr)=\TT\bigl((\V_k)_{k=1}^d\bigr)$. 
    \textcolor{black}{In fact, $\CO$
    is an interior point of $\conv(\HI)$ relative to $\TT\bigl((\V_k)_{k=1}^d\bigr)$. 
    If it is not, then the supporting hyperplane theorem (see, e.g.,~\cite[Section~2.5.2]{boyd2004convex}) implies that there exists a supporting hyperplane to $\conv(\HI)$ at $\CO$. This means that $\conv(\HI)$ is entirely contained in that hyperplane because
    $\conv(\HI)$ is centrally symmetric with respect to $\CO$,
    a contradiction to
    $\spn\bigl(\conv(\HI)\bigr)=\TT\bigl((\V_k)_{k=1}^d\bigr)$.}
    As a result, 
    \textcolor{black}{it follows from~\cite[Proposition~1.1.8]{thompson1996minkowski} that}
    $\conv(\HI)$ must be the unit ball of some norm 
    $\|\bullet\|_{\HH}:\TT\bigl((\V_k)_{k=1}^d\bigr)\rightarrow\R_+$,
    \textcolor{black}{namely the Minkowski functional~\cite[Definition~1.1.7]{thompson1996minkowski} of $\conv(\HI)$}. 
    
    We first claim that $\|\bigotimes_{k=1}^d\bx_k\|_{\HH}=1$ if $\bx_k\in\V_k\cap\SI^{n_k}$ for any $k\in[d]$. As $\bigotimes_{k=1}^d\bx_k\in\HI\subseteq\conv(\HI)$, it is obvious that $\|\bigotimes_{k=1}^d\bx_k\|_{\HH}\le 1$. It then suffices to show that $\bigotimes_{k=1}^d\bx_k$ is an extreme point of $\conv(\HI)$. If this is not the case, then we can rewrite $\bigotimes_{k=1}^d\bx_k=\lambda \CY+(1-\lambda)\CZ$ for some $\lambda\in(0,1)$ and $\CY,\CZ\in\conv(\HI)$ with $\CY\ne\CZ$. By Carathe{\'o}dory's theorem, we can further rewrite 
    $$
    \bigotimes_{k=1}^d\bx_k=\lambda \sum_{i=1}^{r_1}\alpha_i\CY_i+(1-\lambda)\sum_{j=1}^{r_2}\beta_j\CZ_j,
    $$ 
    where $\CY_i,\CZ_j\in\HI$ and $\alpha_i,\beta_j>0$ with $\sum_{i=1}^{r_1}\alpha_i=\sum_{j=1}^{r_2}\beta_j=1$ for every $i\in[r_1]$ and $j\in[r_2]$. 
    There must be some $\CY_i$ or $\CZ_j$ that is not equal to $\bigotimes_{k=1}^d\bx_k$, as otherwise it leads to $\CY=\CZ=\bigotimes_{k=1}^d\bx_k$. Assume without loss of generality that $\CY_1\ne\bigotimes_{k=1}^d\bx_k$. We observe that $\bigotimes_{k=1}^d\bx_k-\lambda\alpha_1\CY_1$ is not a nonnegative multiple of $\lambda\alpha_1\CY_1$, as otherwise it leads to $\CY_1=\bigotimes_{k=1}^d\bx_k$. By the equality condition of the triangle inequality, we have
    $$
    1=\mleft\|\bigotimes_{k=1}^d\bx_k\mright\|_2<\mleft\|\lambda\alpha_1\CY_1\mright\|_2+\mleft\|\bigotimes_{k=1}^d\bx_k-\lambda\alpha_1\CY_1\mright\|_2 \le \lambda \sum_{i=1}^{r_1}\alpha_i\mleft\|\CY_i\mright\|_2+(1-\lambda)\sum_{j=1}^{r_2}\beta_j\mleft\|\CZ_j\mright\|_2=1,
    $$
    leading to a contradiction. 

    By vector scaling, any $\CT\in\TT\bigl((\V_k)_{k=1}^d\bigr)$ can be written as $\sum_{i=1}^r\lambda_i\bigotimes_{k=1}^d\bx_k^i$ where $\bx_k^i\in\V_k\cap\SI^{n_k}$ for any $i\in[r]$ and $k\in[d]$. By the triangle inequality, we have $\|\CT\|_{\HH}\le\sum_{i=1}^r|\lambda_i|\cdot\|\bigotimes_{k=1}^d\bx_k^i\|_{\HH}=\sum_{i=1}^r|\lambda_i|$, and so
    \begin{equation}\label{eq:nuclearsame}
        \|\CT\|_{\HH}\le \min\mleft\{\sum_{i=1}^r|\lambda_i|:\CT=\sum_{i=1}^r\lambda_i\bigotimes_{k=1}^d\bx_k^i,\,\bx_k^i\in\V_k\cap\SI^{n_k}\, \forall\, i\in[r] \mbox{ and } k\in[d] ,\,r \in \N\mright\}.
    \end{equation}
    We next claim that~\eqref{eq:nuclearsame} holds as an equality. Upon scaling, we only need to show that the equality holds for any $\CT\in\TT\bigl((\V_k)_{k=1}^d\bigr)$ with $\|\CT\|_{\HH}=1$. Because $\|\CT\|_{\HH}=1$ implies $\CT\in\conv(\HI)$, it follows that $\CT=\sum_{i=1}^s\mu_i\bigotimes_{k=1}^d\by^i_k$, where $\by^i_k\in\V_k\cap\SI^{n_k}$ and $\mu_i\ge0$ with $\sum_{i=1}^s\mu_i=1$ for $i\in[s]$ and $k\in[d]$. This decomposition is feasible in~\eqref{eq:nuclearsame}. As a result,
    \begin{align*}
    \|\CT\|_{\HH}&\le \min\mleft\{\sum_{i=1}^r|\lambda_i|:\CT=\sum_{i=1}^r\lambda_i\bigotimes_{k=1}^d\bx_k^i,\,\bx_k^i\in\V_k\cap\SI^{n_k}
\, \forall\, i\in[r] \mbox{ and } k\in[d],\,r \in \N\mright\}\\
  &\le \sum_{i=1}^s|\mu_i|=1=\|\CT\|_{\HH}.
    \end{align*}
    
    
    The final step is to claim the equivalence between $\|\bullet\|_{\HH}$ and $\|\bullet\|_*$ over $\TT\bigl((\V_k)_{k=1}^d\bigr)$. To start with, let us denote the dual norm of $\|\bullet\|_{\HH}$ to be $\|\bullet\|_{\HH'}:\TT\bigl((\V_k)_{k=1}^d\bigr)\rightarrow\R_+$, i.e., $\|\CT\|_{\HH'}=\max_{\|\CZ\|_{\HH}\le 1
    }\langle\CT,\CZ\rangle$ for $\CT\in\TT\bigl((\V_k)_{k=1}^d\bigr)$. Given any $\CT\in\TT\bigl((\V_k)_{k=1}^d\bigr)$, let $\CX\in\TT\bigl((\V_k)_{k=1}^d\bigr)$ be such that $\|\CT\|_{\HH'}=\langle\CT,\CX\rangle$ with $\|\CX\|_{\HH}\le 1$, implying that $\CX\in\conv(\HI)$. Thus,  $\CX$ can be written as $\sum_{i=1}^{r_0}\gamma_i\CX_i$, where $\CX_i\in\HI$ and $\gamma_i\ge0$ with $\sum_{i=1}^{r_0}\gamma_i=1$ for any $i\in[r_0]$. This further implies that 
    $$
\|\CT\|_{\HH'}=\langle\CT,\CX\rangle =\mleft\langle\CT,\sum_{i=1}^{r_0}\gamma_i\CX_i\mright\rangle \le\mleft(\sum_{i=1}^{r_0}\gamma_i\mright)\max_{\CZ\in\HI}\langle\CT,\CZ\rangle=\max_{\CZ\in\HI}\langle\CT,\CZ\rangle\le \max_{\|\CZ\|_{\HH}\le 1}\langle\CT,\CZ\rangle=\|\CT\|_{\HH'}.
    $$
    As a result, for any $\CT\in\TT\bigl((\V_k)_{k=1}^d\bigr)$, we have
    \begin{equation*}\label{eq:vstar-spec}
    \|\CT\|_{\HH'}=\max_{\CZ\in\HI}\langle\CT,\CZ\rangle 
    = \max_{\bx_k\in\V_k\cap\SI^{n_k}\,\forall\,k\in[d]}\mleft\langle\CT,\bigotimes_{k=1}^d\bx_k\mright\rangle=\|\CT\|_\sigma,
    \end{equation*}
    where the last equality is due to Lemma~\ref{thm:spec-subspace}. Now, by comparing~\eqref{eq:nuclear} with that~\eqref{eq:nuclearsame} holds as an equality, we have
$$
\|\CT\|_*\le \|\CT\|_{\HH} =\max_{\|\CZ\|_{\HH'}\le 1} \langle\CT,\CZ\rangle =\max_{\|\CZ\|_\sigma\le 1,\,\CZ\in\TT((\V_k)_{k=1}^d)} \langle\CT,\CZ\rangle
\le  \max_{\|\CZ\|_\sigma\le 1} \langle\CT,\CZ\rangle=\|\CT\|_*.
$$

Finally, given any $\CT\in\TT\bigl((\V_k)_{k=1}^d\bigr)$, let $\CS$ be a dual certificate of $\|\CT\|_*$, i.e., $\langle\CT,\CS\rangle=\|\CT\|_*$ and $\|\CS\|_\sigma=1$. 
It follows from Lemma~\ref{thm:spec-subspace} that $\bigl\|\proj_{\TT((\V_k)_{k=1}^d)}(\CS)\bigr\|_\sigma\le\|\CS\|_\sigma=1$. On the other hand, by Lemma~\ref{lma:norm-duality},
$$
\bigl\|\proj_{\TT((\V_k)_{k=1}^d)}(\CS)\bigr\|_\sigma
\ge \frac{\bigl\langle \CT, \proj_{\TT((\V_k)_{k=1}^d)}(\CS)\bigr\rangle}{\|\CT\|_*} 
=\frac{\bigl\langle\proj_{\TT((\V_k)_{k=1}^d)}(\CT),\CS \bigr\rangle}{\|\CT\|_*}=\frac{\langle\CT,\CS\rangle}{\|\CT\|_*}=1.
$$
Therefore, $\proj_{\TT((\V_k)_{k=1}^d)}(\CS)\in\TT\bigl((\V_k)_{k=1}^d\bigr)$ is also a dual certificate of $\|\CT\|_*$.

\textcolor{black}{To show the last claim, we observe from the duality and self-adjointness that for any $\CT\in\R^{n_1\times n_2\times \dots \times n_d}$,
\begin{align*}
    \bigl\|\proj_{\TT((\V_k)_{k=1}^d)}(\CT)\bigr\|_*
    &=\max _{\|\CZ\|_\sigma \le 1}\langle\proj_{\TT((\V_k)_{k=1}^d)}(\CT), \CZ\rangle\\
    &=\max _{\|\CZ\|_\sigma \le 1}\langle\CT, \proj_{\TT((\V_k)_{k=1}^d)}(\CZ)\rangle\\
   & \le \max _{\|\proj_{\TT((\V_k)_{k=1}^d)}(\CZ)\|_\sigma \le 1}\langle\CT, \proj_{\TT((\V_k)_{k=1}^d)}(\CZ)\rangle    \\
   & \le \max _{\|\CY\|_\sigma \le 1}\langle\CT, \CY\rangle\\
   &=\|\CT\|_*,
\end{align*}
where the first inequality is due to $\bigl\|\proj_{\TT((\V_k)_{k=1}^d)}(\CZ)\bigr\|_\sigma\le \|\CZ\|_\sigma \le 1$ by Lemma~\ref{thm:spec-subspace}. 
}
\end{proof}

\section{Decomposability of the tensor nuclear norm}\label{sec:decomp}

The decomposability of the matrix nuclear norm has been well understood. It has given rise to many important results in machine learning and statistical estimation~\cite[Section~10]{wainwright2019high}. Specifically, for two matrices $\BT,\BS\in\R^{n_1\times n_2}$, 
$$
\|\BT+\BS\|_*=\|\BT\|_*+\|\BS\|_*\text{ if $\BT^{\T}\BS=\BO$ and $\BT\BS^{\T}=\BO$}.
$$
In the notation of subspaces, it reads as
$$ 
\|\BT+\BS\|_*=\|\BT\|_*+\|\BS\|_* \text{ for any $\BT\in\TT(\V_1,\V_2)$ and $\BS\in\TT(\V_1^\perp,\V_2^\perp)$},
$$
where $\V_k$ is a subspace of $\R^{n_k}$ for $k\in[2]$.
The main reason behind the above decomposability is that any matrix admits an SVD. After proper orthogonal transformations of the column space and the row space, a matrix becomes a diagonal one whose entries are its singular values. The nuclear norm, as the sum of singular values, automatically admits this type of decomposition.

The diagonalization fails to work for higher-order tensors in general. As a result, generalizing the decomposability to tensors remains unclear and unsatisfactory. To the best of our knowledge, the only known result is a weak decomposability~\cite[Lemma~2.1]{raskutti2019convex} that only works for third-order tensors. Specifically, the weak decomposability states that 
\begin{equation}\label{eq:decomposability-yuan}
    \|\CT+\CS\|_*\ge\|\CT\|_*+\frac{1}{2}\|\CS\|_* \text{ for any $\CT\in\TT\bigl((\V_k)_{k=1}^3\bigr)$ and $\CS\in\bigoplus_{|\I|\ge2,\,\I\subseteq[3]}\TT^\I\bigl((\V_k)_{k=1}^3\bigr)$},
\end{equation}
where $\V_k$ is a subspace of $\R^{n_k}$ for $k\in[3]$.

We remark that this weak decomposability has found successful applications in high-dimensional tensor regression~\cite{raskutti2019convex}. However, the full decomposability, $\|\CT+\CS\|_*=\|\CT\|_*+\|\CS\|_*$, turns out to be impossible over the subspace pair $\TT\bigl((\V_k)_{k=1}^3\bigr)$ and $\bigoplus_{|\I|\ge2,\,\I\subseteq[3]}\TT^\I\bigl((\V_k)_{k=1}^3\bigr)$ considered in~\eqref{eq:decomposability-yuan}. This is evidenced by the following example.
\begin{example}\label{ex:limitation-yuan}
Let $\CT=\be_1\otimes\be_1\otimes\be_1\in\R^{2\times 2\times 2}$ 
and $\CS=\be_1\otimes\be_2\otimes\be_2+\be_2\otimes\be_1\otimes\be_2+\be_2\otimes\be_2\otimes\be_1+\be_2\otimes\be_2\otimes\be_2\in\R^{2\times 2\times 2}$. 
It is obvious that $\CT\in\TT\bigl((\V_k)_{k=1}^3\bigr)$ and $\CS\in\bigoplus_{|\I|\ge2,\,\I\subseteq[3]}\TT^\I\bigl((\V_k)_{k=1}^3\bigr)$, where $\V_k=\spn(\be_1)\subseteq\R^2$ for $k\in[3]$. However,
    $$3.078\approx\mleft\|\CT+\CS\mright\|_*<\mleft\|\CT\mright\|_*+\mleft\|\CS\mright\|_*\approx 1+3.162.$$
\end{example}
The above nuclear norms are computed by a fully polynomial-time approximation scheme designed in~\cite{hu2022complexity}. In fact, we even have $\|\CT+\CS\|_*<\|\CS\|_*$ in Example~\ref{ex:limitation-yuan}, albeit $\CT$ and $\CS$ sit in two mutually orthogonal subspaces. This is a phenomenon that can never happen in the matrix case.

\subsection{A natural decomposability}

The weak but not full decomposability obviously raises a question on the subspace candidate that is orthogonal to $\TT\bigl((\V_k)_{k=1}^3\bigr)$, i.e., why $\bigoplus_{|\I|\ge2,\,\I\subseteq[3]}\TT^\I\bigl((\V_k)_{k=1}^3\bigr)$ has been chosen in~\eqref{eq:decomposability-yuan}. In the matrix space $\R^{n_1\times n_2}$, the diagonalization provides a very clear picture. Among the four basic subspaces induced by $(\V_1,\V_2)$, i.e., $\TT(\V_1,\V_2)$, $\TT^{\{1\}}(\V_1,\V_2)$, $\TT^{\{2\}}(\V_1,\V_2)$, and $\TT^{\{1,2\}}(\V_1,\V_2)$ in Figure~\ref{fig:matrixsubspaces}, the only candidate is $\TT^{\{1,2\}}(\V_1,\V_2)$.
\begin{figure}[!ht]
    \centering
\begin{tikzpicture}[xscale=0.5, yscale=0.25]
	\begin{pgfonlayer}{nodelayer}
		\node [style=none] (0) at (-5, 5) {};
		\node [style=none] (1) at (-5, -5) {};
		\node [style=none] (2) at (5, -5) {};
		\node [style=none] (3) at (5, 5) {};
		\node [style=none] (4) at (0, 5) {};
		\node [style=none] (5) at (0, -5) {};
		\node [style=none] (6) at (5, 0) {};
		\node [style=none] (7) at (-5, 0) {};
		\node [style=none] (8) at (-2.5, 2.5) {$\mathbb{T}^{\{1\}}(\mathbb{V}_1,\mathbb{V}_2)$};
		\node [style=none] (9) at (2.5, 2.5) {$\mathbb{T}^{\{1,2\}}(\mathbb{V}_1,\mathbb{V}_2)$};
		\node [style=none] (10) at (2.5, -2.5) {$\mathbb{T}^{\{2\}}(\mathbb{V}_1,\mathbb{V}_2)$};
		\node [style=none] (11) at (-2.5, -2.5) {$\mathbb{T}(\mathbb{V}_1,\mathbb{V}_2)$};
		\node [style=none] (12) at (-5.75, -2.5) {$\mathbb{V}_1$};
		\node [style=none] (13) at (-2.5, -6.25) {$\mathbb{V}_2$};
		\node [style=none] (14) at (2.5, -6.25) {$\mathbb{V}_2^{\perp}$};
		\node [style=none] (15) at (-5.75, 2.5) {$\mathbb{V}_1^{\perp}$};
	\end{pgfonlayer}
	\begin{pgfonlayer}{edgelayer}
		\draw (0.center) to (3.center);
		\draw (3.center) to (2.center);
		\draw (2.center) to (1.center);
		\draw (1.center) to (0.center);
		\draw (5.center) to (4.center);
		\draw (7.center) to (6.center);
	\end{pgfonlayer}
\end{tikzpicture}
    \caption{The four basic subspaces of $\R^{n_1\times n_2}$.}
    \label{fig:matrixsubspaces}
\end{figure}

In the tensor space $\R^{n_1\times n_2\times n_3}$, however, 
\begin{equation*}
\bigoplus_{|\I|\ge2,\,\I\subseteq[3]}\TT^\I\bigl((\V_k)_{k=1}^3\bigr)=
\TT(\V_1^\perp,\V_2^\perp,\V_3)\oplus\TT(\V_1^\perp,\V_2,\V_3^\perp)\oplus\TT(\V_1,\V_2^\perp,\V_3^\perp)\oplus\TT(\V_1^\perp,\V_2^\perp,\V_3^\perp)
\end{equation*}
includes four out of the eight basic subspaces defined by $(\V_1,\V_2,\V_3)$, each of which \textcolor{black}{is} spanned by at least two $\V_k^\perp$'s. 
While this candidate has only resulted \textcolor{black}{in} a weak decomposability, it has its own theoretical merits that will be discussed later in Section~\ref{sec:subdiff}. By making a compromise on the subspace size, it turns out that the full decomposability is indeed possible for some other subspaces. We start with the most restrictive candidate, $\TT(\V_1^\perp,\V_2^\perp,\V_3^\perp)$, a natural generalization from the matrix case $\TT(\V_1^\perp,\V_2^\perp)$.
For comparison, an example of the subspaces of $\R^{n_1\times n_2\times n_3}$ mentioned earlier is offered in Figure~\ref{fig:subspaces}. 


\begin{figure}[!h]
    \centering
     \subfloat[\normalsize$\TT\bigl((\V_k)_{k=1}^3\bigr)$]{
     \scalebox{1.2}{
        \begin{tikzpicture}
        \tikzcuboidset{all grids/.style={draw=black,thin,step=.25}}
\pic[thin,black] at (4,0,0) {cuboid};
        \tikzcuboidset{all grids/.style={draw=black,thin,step=.25}}
\pic[thin,black] at (4,1.133,0) {cuboid};
        \tikzcuboidset{all grids/.style={draw=black,thin,step=.25}}
\pic[thin,black] at (5.133,0,0) {cuboid};
        \tikzcuboidset{all grids/.style={draw=black,thin,step=.25}}
\pic[thin,black] at (5.133,1.133,0) {cuboid};
        \tikzcuboidset{all grids/.style={draw=black,thin,step=.25},all faces/.style={fill=black!50}}
\pic[thin,black] at (4,0+0.211,1.42) {cuboid};
        \tikzcuboidset{all grids/.style={draw=black,thin,step=.25}}
\pic[thin,black] at (4,1.133+0.211,1.42) {cuboid};
        \tikzcuboidset{all grids/.style={draw=black,thin,step=.25}}
\pic[thin,black] at (5.133,0+0.211,1.42) {cuboid};
        \tikzcuboidset{all grids/.style={draw=black,thin,step=.25}}
\pic[thin,black] at (5.133,1.133+0.211,1.42) {cuboid};

\node[inner sep=0pt, outer sep=0pt, baseline] at (3.7,1.9,1.42) {\textcolor{black!100}{\scalebox{0.83333333}{\normalsize$\V_1^\perp$}}};
\node[inner sep=0pt, outer sep=0pt, baseline] at (3.7,0.75,1.42) {\textcolor{black!100}{\scalebox{0.83333333}{\normalsize$\V_1$}}};
\node[inner sep=0pt, outer sep=0pt, baseline] at (4.55,0.45,1.42) {\textcolor{white!100}{\scalebox{0.83333333}{\normalsize$\V_2$}}};
\node[inner sep=0pt, outer sep=0pt, baseline] at (5.65,0.46,1.42) {\textcolor{black!100}{\scalebox{0.83333333}{\normalsize$\V_2^\perp$}}};
\node[inner sep=0pt, outer sep=0pt, baseline] at (3.95,2.65,1.42) {\textcolor{black!100}{\scalebox{0.83333333}{\normalsize$\V_3$}}};
\node[inner sep=0pt, outer sep=0pt, baseline] at (4.5,3,1.42) {\textcolor{black!100}{\scalebox{0.83333333}{\normalsize$\V_3^\perp$}}};
\end{tikzpicture}}}
    \qquad    
    \subfloat[\normalsize$\TT^{\{1,2,3\}}\bigl((\V_k)_{k=1}^3\bigr)$]{
    \scalebox{1.2}{
        \begin{tikzpicture}
			\tikzcuboidset{all grids/.style={draw=black,thin,step=.25}}
	\pic[thin,black] at (4,0,0) {cuboid};
			\tikzcuboidset{all grids/.style={draw=black,thin,step=.25}}
	\pic[thin,black] at (4,1.133,0) {cuboid};
			\tikzcuboidset{all grids/.style={draw=black,thin,step=.25}}
	\pic[thin,black] at (5.133,0,0) {cuboid};
			\tikzcuboidset{all grids/.style={draw=black,thin,step=.25},all faces/.style={fill=black!50}}
	\pic[thin,black] at (5.133,1.133,0) {cuboid};
			\tikzcuboidset{all grids/.style={draw=black,thin,step=.25}}
	\pic[thin,black] at (4,0+0.211,1.42) {cuboid};
			\tikzcuboidset{all grids/.style={draw=black,thin,step=.25}}
	\pic[thin,black] at (4,1.133+0.211,1.42) {cuboid};
			\tikzcuboidset{all grids/.style={draw=black,thin,step=.25}}
	\pic[thin,black] at (5.133,0+0.211,1.42) {cuboid};
			\tikzcuboidset{all grids/.style={draw=black,thin,step=.25}}
	\pic[thin,black] at (5.133,1.133+0.211,1.42) {cuboid};
	\end{tikzpicture}}}
    \qquad
    \subfloat[\normalsize$\bigoplus_{|\I|\ge2}\TT^\I\bigl((\V_k)_{k=1}^3\bigr)$]{
    \scalebox{1.2}{
        \begin{tikzpicture}
			\tikzcuboidset{all grids/.style={draw=black,thin,step=.25}}
	\pic[thin,black] at (4,0,0) {cuboid};
			\tikzcuboidset{all grids/.style={draw=black,thin,step=.25},all faces/.style={fill=black!50}}
	\pic[thin,black] at (4,1.133,0) {cuboid};
			\tikzcuboidset{all grids/.style={draw=black,thin,step=.25},all faces/.style={fill=black!50}}
	\pic[thin,black] at (5.133,0,0) {cuboid};
			\tikzcuboidset{all grids/.style={draw=black,thin,step=.25},all faces/.style={fill=black!50}}
	\pic[thin,black] at (5.133,1.133,0) {cuboid};
			\tikzcuboidset{all grids/.style={draw=black,thin,step=.25}}
	\pic[thin,black] at (4,0+0.211,1.42) {cuboid};
			\tikzcuboidset{all grids/.style={draw=black,thin,step=.25}}
	\pic[thin,black] at (4,1.133+0.211,1.42) {cuboid};
			\tikzcuboidset{all grids/.style={draw=black,thin,step=.25}}
	\pic[thin,black] at (5.133,0+0.211,1.42) {cuboid};
			\tikzcuboidset{all grids/.style={draw=black,thin,step=.25},all faces/.style={fill=black!50}}
	\pic[thin,black] at (5.133,1.133+0.211,1.42) {cuboid};
	
	\end{tikzpicture}}}
    \caption[The subspaces.]{Subspaces of $\R^{n_1\times n_2\times n_3}$ presented by shaded blocks
    where each block represents a basic subspace of $\R^{n_1\times n_2\times n_3}$ and the union of all eight blocks represents $\R^{n_1\times n_2\times n_3}$ in each subfigure.}
    \label{fig:subspaces}
    \end{figure}



%



The main result in this subsection is as follows.
\begin{theorem}\label{thm:decomposability}
    If $d\ge 2$ and $\V_k$ is a subspace of $\R^{n_k}$ for $k\in[d]$, then the nuclear norm is decomposable over $\TT\bigl((\V_k)_{k=1}^d\bigr)$ and $\TT^{[d]}\bigl((\V_k)_{k=1}^d\bigr)$ of the tensor space $\R^{n_1\times n_2\times\dots\times n_d}$, i.e.,
    \begin{equation*}
        \|\CT+\CS\|_*=\|\CT\|_*+\|\CS\|_* \text{ for any $\CT\in\TT\bigl((\V_k)_{k=1}^d\bigr)$ and $\CS\in\TT^{[d]}\bigl((\V_k)_{k=1}^d\bigr)$}.
    \end{equation*}
\end{theorem}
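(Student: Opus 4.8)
The plan is to get $\|\CT+\CS\|_*\le\|\CT\|_*+\|\CS\|_*$ for free from the triangle inequality and to obtain the reverse inequality by producing a single dual certificate of $\CT+\CS$ that certifies $\|\CT\|_*$ and $\|\CS\|_*$ simultaneously. The cases $\CT=\CO$ or $\CS=\CO$ are trivial, so assume both are nonzero; then $\spn_k(\CT)\subseteq\V_k$ forces $\dim\V_k\ge1$ and $\spn_k(\CS)\subseteq\V_k^\perp$ forces $\dim\V_k^\perp\ge1$ for every $k$, which is what we need for the certificates to exist. Applying the last assertion of Lemma~\ref{prop:nuclear-U-equiv} to $\CT\in\TT\bigl((\V_k)_{k=1}^d\bigr)$ yields $\CZ_1\in\TT\bigl((\V_k)_{k=1}^d\bigr)$ with $\|\CZ_1\|_\sigma=1$ and $\langle\CT,\CZ_1\rangle=\|\CT\|_*$, and applying it to $\CS\in\TT^{[d]}\bigl((\V_k)_{k=1}^d\bigr)=\TT\bigl((\V_k^\perp)_{k=1}^d\bigr)$ yields $\CZ_2\in\TT\bigl((\V_k^\perp)_{k=1}^d\bigr)$ with $\|\CZ_2\|_\sigma=1$ and $\langle\CS,\CZ_2\rangle=\|\CS\|_*$. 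Set $\CZ:=\CZ_1+\CZ_2$. Since a simple tensor $\bigotimes_{k=1}^d\bu_k$ with $\bu_k\in\V_k$ is orthogonal to $\bigotimes_{k=1}^d\bw_k$ with $\bw_k\in\V_k^\perp$ (the inner product factors as $\prod_k\langle\bu_k,\bw_k\rangle=0$), the two subspaces $\TT\bigl((\V_k)_{k=1}^d\bigr)$ and $\TT\bigl((\V_k^\perp)_{k=1}^d\bigr)$ are mutually orthogonal, whence $\langle\CT,\CZ_2\rangle=\langle\CS,\CZ_1\rangle=0$ and therefore $\langle\CT+\CS,\CZ\rangle=\|\CT\|_*+\|\CS\|_*$. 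By Lemma~\ref{lma:norm-duality}, $\|\CT+\CS\|_*\ge\langle\CT+\CS,\CZ\rangle$, so everything reduces to the bound $\|\CZ\|_\sigma\le1$.

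This spectral-norm estimate is the crux of the proof, and it is itself a decomposability statement for the tensor spectral norm dual to the one we are after. To verify it, fix unit vectors $\bx_k\in\SI^{n_k}$ for $k\in[d]$ and split $\bx_k=\bu_k+\bw_k$ with $\bu_k=\proj_{\V_k}(\bx_k)$ and $\bw_k=\proj_{\V_k^\perp}(\bx_k)$, so that $a_k:=\|\bu_k\|_2$ and $b_k:=\|\bw_k\|_2$ satisfy $a_k^2+b_k^2=1$ and $a_k,b_k\le1$. Using that $\bigotimes_{k=1}^d\proj_{\V_k}$ is self-adjoint and fixes $\CZ_1$, one gets $\bigl\langle\CZ_1,\bigotimes_{k=1}^d\bx_k\bigr\rangle=\bigl\langle\CZ_1,\bigotimes_{k=1}^d\bu_k\bigr\rangle\le\|\CZ_1\|_\sigma\prod_{k=1}^d a_k$, and symmetrically $\bigl\langle\CZ_2,\bigotimes_{k=1}^d\bx_k\bigr\rangle\le\|\CZ_2\|_\sigma\prod_{k=1}^d b_k$. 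Adding these and using $a_k,b_k\le1$ to discard all but the first two factors, then Cauchy--Schwarz on those two modes,
$$
\mleft\langle\CZ,\bigotimes_{k=1}^d\bx_k\mright\rangle\le\prod_{k=1}^d a_k+\prod_{k=1}^d b_k\le a_1a_2+b_1b_2\le\sqrt{a_1^2+b_1^2}\,\sqrt{a_2^2+b_2^2}=1.
$$
Maximizing over $\bx_1,\dots,\bx_d$ gives $\|\CZ\|_\sigma\le1$, completing the proof. Note that $d\ge2$ is used precisely here: with a single mode one only obtains $a_1+b_1\le\sqrt2$, which is why order at least two is genuinely needed.

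The routine points I would still spell out are the identity $\langle\CZ_1,\bigotimes_{k=1}^d\bx_k\rangle=\langle\CZ_1,\bigotimes_{k=1}^d\proj_{\V_k}(\bx_k)\rangle$ (move the self-adjoint projection across the inner product and use that it fixes $\CZ_1$) and the orthogonality of the two basic subspaces (the product-of-inner-products identity above). The only place requiring real care is the Cauchy--Schwarz step, i.e.\ the inequality $\prod_{k=1}^d a_k+\prod_{k=1}^d b_k\le1$ when $a_k^2+b_k^2=1$ and $d\ge2$; this is mildly surprising, since the weaker estimate $\prod_{k=1}^d a_k^2+\prod_{k=1}^d b_k^2\le1$ holds for every $d$ but is too weak to conclude, and it is exactly the step that fails at $d=1$.
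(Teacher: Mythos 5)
Your proof is correct and follows essentially the same route as the paper's: build dual certificates $\CZ_1\in\TT\bigl((\V_k)_{k=1}^d\bigr)$ and $\CZ_2\in\TT\bigl((\V_k^\perp)_{k=1}^d\bigr)$ via Lemma~\ref{prop:nuclear-U-equiv}, pair $\CZ_1+\CZ_2$ against $\CT+\CS$, and reduce everything to the spectral-norm bound $\|\CZ_1+\CZ_2\|_\sigma\le 1$ followed by Lemma~\ref{lma:norm-duality}. The only difference is presentational: the paper defers that spectral bound to Theorem~\ref{thm:dualdecomp} (and takes a small detour through a nuclear decomposition of $\CT+\CS$), whereas you prove the needed instance inline with exactly the same projection-and-Cauchy--Schwarz computation that appears in the paper's proof of Theorem~\ref{thm:dualdecomp}.
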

\begin{proof}
Let $\CT=\sum_{i=1}^{r_1}\lambda_i\bigotimes_{k=1}^d\bx_k^i$ be a nuclear decomposition, where $\bx_k^i\in\V_k\cap\SI^{n_k}$ for $i\in[r_1]$ and $k\in[d]$. The existence of such decomposition is guaranteed by Lemma~\ref{prop:nuclear-U-equiv}. We may further assume that $\lambda_i>0$ for any $i\in[r_1]$ simply by flipping signs and removing zeros. Similarly, let $\CS=\sum_{i=1}^{r_2}\mu_i\bigotimes_{k=1}^d\by_k^i$ be a nuclear decomposition, where $\mu_i>0$ for all $i\in[r_2]$ and $\by_k^i\in\V_k^\perp\cap\SI^{n_k}$ for $i\in[r_2]$ and $k\in[d]$. It then follows by Lemma~\ref{prop:nuclear-U-equiv} again that there exist dual certificates $\CX\in\TT\bigl((\V_k)_{k=1}^d\bigr)$ and $\CY\in\TT\bigl((\V_k^\perp)_{k=1}^d\bigr)$ of $\|\CT\|_*$ and $\|\CS\|_*$, respectively. 

    With these preparations, we now show that \begin{equation}\label{eq:TS}
    \CT+\CS=\sum_{i=1}^{r_1}\lambda_i\bigotimes_{k=1}^d\bx_k^i+\sum_{i=1}^{r_2}\mu_i\bigotimes_{k=1}^d\by_k^i
    \end{equation}
    is actually a nuclear decomposition and $\CX+\CY$ is a dual certificate of $\|\CT+\CS\|_*$. It is clear that
  \begin{equation} \label{eq:vertical}
  \mleft\langle\CX+\CY,\bigotimes_{k=1}^d\bx_k^i\mright\rangle=1+\mleft\langle\mleft(\bigotimes_{k=1}^d\proj_{\V_k^\perp}\mright)(\CY),\bigotimes_{k=1}^d\bx_k^i\mright\rangle=1+\mleft\langle\CY,\bigotimes_{k=1}^d\proj_{\V_k^\perp}(\bx_k^i)\mright\rangle=1
    \end{equation}
     for all $i\in[r_1]$. For the same reason, we also have $\langle\CX+\CY,\bigotimes_{k=1}^d\by_k^i\rangle=1$ for any $i\in[r_2]$. Hence, 
     $$
\langle\CT+\CS,\CX+\CY\rangle=\mleft\langle\sum_{i=1}^{r_1}\lambda_i\bigotimes_{k=1}^d\bx_k^i+\sum_{i=1}^{r_2}\mu_i\bigotimes_{k=1}^d\by_k^i,\CX+\CY\mright\rangle=\sum_{i=1}^{r_1}\lambda_i+\sum_{i=1}^{r_2}\mu_i.
     $$
     Since $\CX\in\TT\bigl((\V_k)_{k=1}^d\bigr)$ and $\CY\in \TT\bigl((\V_k^\perp)_{k=1}^d\bigr)$, we actually have 
     $\|\CX+\CY\|_\sigma=\max\bigl\{\|\CX\|_\sigma,\|\CY\|_\sigma\bigr\}=1$, due to a result dual to this decomposability; see Theorem~\ref{thm:dualdecomp} to be presented soon. This means that
     $\|\CT+\CS\|_*\ge\langle\CT+\CS,\CX+\CY\rangle=\sum_{i=1}^{r_1}\lambda_i+\sum_{i=1}^{r_2}\mu_i$ by Lemma~\ref{lma:norm-duality}. On the other hand, by combining~\eqref{eq:TS} with~\eqref{eq:nuclear}, the definition of the nuclear norm, we have $\|\CT+\CS\|_*\le \sum_{i=1}^{r_1}\lambda_i+\sum_{i=1}^{r_2}\mu_i$. Therefore, $\|\CT+\CS\|_*=\sum_{i=1}^{r_1}\lambda_i+\sum_{i=1}^{r_2}\mu_i=\|\CT\|_*+\|\CS\|_*$.
\end{proof}

The dual version of Theorem~\ref{thm:decomposability}, i.e., the decomposability of the tensor spectral norm, is of independent interest. In particular, it has supported the above proof.
\begin{theorem}\label{thm:dualdecomp} 
    If $d\ge 2$ and $\V_k$ is a subspace of $\R^{n_k}$ for $k\in[d]$, then the spectral norm is decomposable over $\TT\bigl((\V_k)_{k=1}^d\bigr)$ and $\TT^{[d]}\bigl((\V_k)_{k=1}^d\bigr)$ of the tensor space $\R^{n_1\times n_2\times\dots\times n_d}$, i.e.,
    \begin{equation*}
        \|\CT+\CS\|_\sigma=\max\bigl\{\|\CT\|_\sigma,\|\CS\|_\sigma\bigr\} \text{ for any $\CT\in\TT\bigl((\V_k)_{k=1}^d\bigr)$ and $\CS\in\TT^{[d]}\bigl((\V_k)_{k=1}^d\bigr)$}.
    \end{equation*}
\end{theorem}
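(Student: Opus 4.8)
The plan is to establish the two inequalities separately. The inequality $\|\CT+\CS\|_\sigma\ge\max\{\|\CT\|_\sigma,\|\CS\|_\sigma\}$ is essentially free: since $\CS\in\TT^{[d]}\bigl((\V_k)_{k=1}^d\bigr)=\TT\bigl((\V_k^\perp)_{k=1}^d\bigr)$ lies in a basic subspace orthogonal to $\TT\bigl((\V_k)_{k=1}^d\bigr)$ (a simple tensor with all factors in the $\V_k$'s is orthogonal to one with all factors in the $\V_k^\perp$'s), we have $\proj_{\TT((\V_k)_{k=1}^d)}(\CT+\CS)=\CT$, so Lemma~\ref{thm:spec-subspace} gives $\|\CT\|_\sigma=\bigl\|\proj_{\TT((\V_k)_{k=1}^d)}(\CT+\CS)\bigr\|_\sigma\le\|\CT+\CS\|_\sigma$, and symmetrically $\|\CS\|_\sigma\le\|\CT+\CS\|_\sigma$.

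The real work is the reverse inequality. I would pick an optimal tuple $\|\CT+\CS\|_\sigma=\langle\CT+\CS,\bigotimes_{k=1}^d\bz_k\rangle$ with $\bz_k\in\SI^{n_k}$, and split each $\bz_k=\bu_k+\bw_k$ orthogonally with $\bu_k=\proj_{\V_k}(\bz_k)$ and $\bw_k=\proj_{\V_k^\perp}(\bz_k)$; write $a_k=\|\bu_k\|_2$ and $b_k=\|\bw_k\|_2$, so $a_k^2+b_k^2=1$ and $a_k,b_k\in[0,1]$. Using the self-adjointness of $\bigotimes_{k=1}^d\proj_{\V_k}$ together with $\bigl(\bigotimes_{k=1}^d\proj_{\V_k}\bigr)(\CT)=\CT$ --- exactly the manipulation already performed inside the proof of Lemma~\ref{thm:spec-subspace} --- I get $\langle\CT,\bigotimes_{k=1}^d\bz_k\rangle=\langle\CT,\bigotimes_{k=1}^d\bu_k\rangle$, and then normalizing each nonzero $\bu_k$ (both sides being $0$ whenever some $\bu_k={\bf 0}$) yields $\langle\CT,\bigotimes_{k=1}^d\bz_k\rangle\le\|\CT\|_\sigma\prod_{k=1}^d a_k$. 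Running the same computation with $\V_k^\perp$ in place of $\V_k$ gives $\langle\CS,\bigotimes_{k=1}^d\bz_k\rangle\le\|\CS\|_\sigma\prod_{k=1}^d b_k$. Adding the two bounds,
$$\|\CT+\CS\|_\sigma\le\|\CT\|_\sigma\prod_{k=1}^d a_k+\|\CS\|_\sigma\prod_{k=1}^d b_k\le\max\bigl\{\|\CT\|_\sigma,\|\CS\|_\sigma\bigr\}\Bigl(\prod_{k=1}^d a_k+\prod_{k=1}^d b_k\Bigr).$$

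What remains is the purely scalar inequality $\prod_{k=1}^d a_k+\prod_{k=1}^d b_k\le1$ for nonnegative reals with $a_k^2+b_k^2=1$, and this is where the hypothesis $d\ge2$ is used: since $a_k,b_k\le1$ we have $\prod_{k=1}^d a_k\le a_1a_2$ and $\prod_{k=1}^d b_k\le b_1b_2$, and Cauchy--Schwarz applied to $(a_1,b_1),(a_2,b_2)\in\R^2$ gives $a_1a_2+b_1b_2\le\sqrt{a_1^2+b_1^2}\,\sqrt{a_2^2+b_2^2}=1$. Combining this with the easy direction finishes the proof; note that this argument never invokes Theorem~\ref{thm:decomposability}, so the forward reference made there is not circular.

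I do not expect a serious obstacle: the argument is a direct ``project, normalize, and estimate'' computation built on Lemma~\ref{thm:spec-subspace}. The one genuinely important point is identifying the right elementary inequality, $\prod_k a_k+\prod_k b_k\le1$, and noticing that it is precisely what forces $d\ge2$ (for $d=1$ it weakens to $a_1+b_1\le\sqrt2$, and indeed the spectral norm of a vector is not decomposable in this sense). The degenerate cases --- some $\proj_{\V_k}(\bz_k)$ or $\proj_{\V_k^\perp}(\bz_k)$ equal to ${\bf 0}$, or $\CT=\CO$ or $\CS=\CO$ --- need only a one-line remark, since the corresponding product is then zero and the bound holds trivially.
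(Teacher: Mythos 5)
Your proof is correct and follows essentially the same route as the paper's: the key step is splitting the optimal vectors $\bz_k$ via $\proj_{\V_k}$ and $\proj_{\V_k^\perp}$, bounding $\langle\CT+\CS,\bigotimes\bz_k\rangle\le\|\CT\|_\sigma\prod a_k+\|\CS\|_\sigma\prod b_k$, and then using the $d\ge2$ hypothesis to drop all but two factors before invoking Cauchy--Schwarz to show $\prod a_k+\prod b_k\le1$, which is exactly the chain of inequalities in the paper's proof. The only cosmetic difference is in the easy direction, where you observe directly that $\proj_{\TT((\V_k)_{k=1}^d)}(\CT+\CS)=\CT$ and invoke the projection-contraction part of Lemma~\ref{thm:spec-subspace}, whereas the paper picks the optimal $\bx_k\in\V_k\cap\SI^{n_k}$ and checks $\langle\CS,\bigotimes\bx_k\rangle=0$; both are one-line applications of the same lemma.
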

\begin{proof}
    Since $\bigl(\bigotimes_{k=1}^d\proj_{\V_k}\bigr)(\CT)=\CT$ and $\bigl(\bigotimes_{k=1}^d\proj_{\V_k^\perp}\bigr)(\CS)=\CS$, we have
    \begin{align*}
    \|\CT+\CS\|_\sigma&=\max_{\bv_k\in\SI^{n_k}\,\forall\,k\in[d]} \mleft\langle\CT+\CS, \bigotimes_{k=1}^d\bv_k\mright\rangle
    \\&=\max_{\bv_k\in\SI^{n_k}\,\forall\,k\in[d]} \mleft(\mleft\langle\mleft(\bigotimes_{k=1}^d\proj_{\V_k}\mright)(\CT), \bigotimes_{k=1}^d\bv_k\mright\rangle+\mleft\langle\mleft(\bigotimes_{k=1}^d\proj_{\V_k^\perp}\mright)(\CS), \bigotimes_{k=1}^d\bv_k\mright\rangle\mright)
    \\&=\max_{\bv_k\in\SI^{n_k}\,\forall\,k\in[d]} \mleft(\mleft\langle\CT, \bigotimes_{k=1}^d\proj_{\V_k}(\bv_k)\mright\rangle+\mleft\langle\CS, \bigotimes_{k=1}^d\proj_{\V_k^\perp}(\bv_k)\mright\rangle\mright)
    \\&\le\max_{\bv_k\in\SI^{n_k}\,\forall\,k\in[d]} \mleft(\|\CT\|_\sigma\prod_{k=1}^d\bigl\|\proj_{\V_k}(\bv_k)\bigr\|_2+\|\CS\|_\sigma\prod_{k=1}^d\bigl\|\proj_{\V_k^\perp}(\bv_k)\bigr\|_2\mright)
    \\&\le\max\bigl\{\|\CT\|_\sigma,\|\CS\|_\sigma\bigr\}\max_{\bv_k\in\SI^{n_k}\,\forall\,k\in[d]} \mleft(\prod_{k=1}^d\bigl\|\proj_{\V_k}(\bv_k)\bigr\|_2+ \prod_{k=1}^d\|\proj_{\V_k^\perp}\bv_k)\|_2\mright)
        \\&\le\max\bigl\{\|\CT\|_\sigma,\|\CS\|_\sigma\bigr\}\max_{\bv_k\in\SI^{n_k}\,\forall\,k\in[2]} \mleft(\prod_{k=1}^2\bigl\|\proj_{\V_k}(\bv_k)\bigr\|_2+\prod_{k=1}^2\bigl\|\proj_{\V_k^\perp}(\bv_k)\bigr\|_2\mright)
    \\&\le\max\bigl\{\|\CT\|_\sigma,\|\CS\|_\sigma\bigr\}\max_{\bv_k\in\SI^{n_k}\,\forall\,k\in[2]} \prod_{k=1}^2\mleft\|\begin{pmatrix}\bigl\|\proj_{\V_k}(\bv_k)\bigr\|_2 \\ \bigl\|\proj_{\V_k^\perp}(\bv_k)\bigr\|_2\end{pmatrix}\mright\|_2\\
    &=\max\bigl\{\|\CT\|_\sigma,\|\CS\|_\sigma\bigr\}.
    \end{align*}

    On the other hand, let $\|\CT\|_\sigma=\langle\CT, \bigotimes_{k=1}^d\bx_k\rangle$ with $\bx_k\in\V_k\cap\SI^{n_k}$ for $k\in[d]$ by Lemma~\ref{thm:spec-subspace}. It is obvious that 
    $$
    \mleft\langle\CS,\bigotimes_{k=1}^d\bx_k\mright\rangle=\mleft\langle\mleft(\bigotimes_{k=1}^d\proj_{\V_k^\perp}\mright)(\CS),\bigotimes_{k=1}^d\bx_k\mright\rangle=\mleft\langle\CS, \bigotimes_{k=1}^d\proj_{\V_k^\perp}(\bx_k)\mright\rangle=0,
    $$
    where the last equality is due to $\bx_k\in\V_k$ for every $k$. This means that
    $
     \|\CT+\CS\|_\sigma\ge \langle \CT+\CS, \bigotimes_{k=1}^d\bx_k\rangle = \|\CT\|_\sigma.    $
    For the same reason, we also have $\|\CT+\CS\|_\sigma\ge\|\CS\|_\sigma$. Therefore, $\|\CT+\CS\|_\sigma\ge\max\bigl\{\|\CT\|_\sigma,\|\CS\|_\sigma\bigr\}$, completing the whole proof.
    \end{proof}

We remark that the matrix case of Theorem~\ref{thm:dualdecomp} 
\textcolor{black}{has been established in~\cite[(2.2.10)]{tropp2015introduction} and as a special case of~\cite[(1.2)]{li2003norm}.}
Both Theorem~\ref{thm:decomposability} and Theorem~\ref{thm:dualdecomp} include and generalize the matrix case on the decomposability of the nuclear norm and spectral norm.
It is also worth noting that the decomposability in both Theorem~\ref{thm:decomposability} and Theorem~\ref{thm:dualdecomp} is full, albeit for third-order tensors the subspace $\TT^{[3]}\bigl((\V_k)_{k=1}^3\bigr)$ is smaller than $\bigoplus_{|\I|\ge2,\,\I\subseteq[3]}\TT^\I\bigl((\V_k)_{k=1}^3\bigr)$ in~\eqref{eq:decomposability-yuan} where
only a weak decomposability is possible. That being said, the subspace $\TT^{[d]}\bigl((\V_k)_{k=1}^d\bigr)$ is still practically useful in, e.g., analyzing the tensor robust PCA in Section~\ref{sec:TRPCA}.

\subsection{An improved decomposability}

It is natural to ask whether the decomposability in Theorem~\ref{thm:decomposability} can be further improved, in the sense that the two subspaces $\TT\bigl((\V_k)_{k=1}^d\bigr)$ and $\TT^{[d]}\bigl((\V_k)_{k=1}^d\bigr)$ can be enlarged without destroying the full decomposability of the nuclear norm. The answer is indeed affirmative. Let us first recall the notation of $\U$-subspaces,
\begin{align*}
    \U^\I\bigl((\V_k)_{k=1}^d\bigr)&=\bigoplus_{\I\subseteq\J\subseteq[d]}\TT^\J\bigl((\V_k)_{k=1}^d\bigr)
    =\spn\mleft(\mleft\{\bigotimes_{k=1}^d \bv_k:
    \bv_k\in \V_k^\perp\,\forall\,k\in\I,\,
    \bv_k\in \R^{n_k}\,\forall\,k\notin\I
    \mright\}\mright),\\
        \U_\I\bigl((\V_k)_{k=1}^d\bigr)&=\U^\I\bigl((\V_k^\perp)_{k=1}^d\bigr)
    =\spn\mleft(\mleft\{\bigotimes_{k=1}^d \bv_k:
    \bv_k\in \V_k\,\forall\,k\in\I,\,
    \bv_k\in \R^{n_k}\,\forall\,k\notin\I
    \mright\}\mright). 
\end{align*}
%
%
An example of relevant subspaces of $\R^{n_1\times n_2\times n_3}$ for $\I=\{1,2\}$ is presented in Figure~\ref{fig:improved-subspaces}.

\begin{figure}[!h]
    \centering
    \hspace{-0.5cm}
     \subfloat[\normalsize$\TT\bigl((\V_k)_{k=1}^3\bigr)$]{
     \scalebox{1.2}{
        \begin{tikzpicture}
        \tikzcuboidset{all grids/.style={draw=black,thin,step=.25}}
\pic[thin,black] at (4,0,0) {cuboid};
        \tikzcuboidset{all grids/.style={draw=black,thin,step=.25}}
\pic[thin,black] at (4,1.133,0) {cuboid};
        \tikzcuboidset{all grids/.style={draw=black,thin,step=.25}}
\pic[thin,black] at (5.133,0,0) {cuboid};
        \tikzcuboidset{all grids/.style={draw=black,thin,step=.25}}
\pic[thin,black] at (5.133,1.133,0) {cuboid};
        \tikzcuboidset{all grids/.style={draw=black,thin,step=.25},all faces/.style={fill=black!50}}
\pic[thin,black] at (4,0+0.211,1.42) {cuboid};
        \tikzcuboidset{all grids/.style={draw=black,thin,step=.25}}
\pic[thin,black] at (4,1.133+0.211,1.42) {cuboid};
        \tikzcuboidset{all grids/.style={draw=black,thin,step=.25}}
\pic[thin,black] at (5.133,0+0.211,1.42) {cuboid};
        \tikzcuboidset{all grids/.style={draw=black,thin,step=.25}}
\pic[thin,black] at (5.133,1.133+0.211,1.42) {cuboid};

\node[inner sep=0pt, outer sep=0pt, baseline] at (3.7,1.9,1.42) {\textcolor{black!100}{\scalebox{0.83333333}{\normalsize$\V_1^\perp$}}};
\node[inner sep=0pt, outer sep=0pt, baseline] at (3.7,0.75,1.42) {\textcolor{black!100}{\scalebox{0.83333333}{\normalsize$\V_1$}}};
\node[inner sep=0pt, outer sep=0pt, baseline] at (4.55,0.45,1.42) {\textcolor{white!100}{\scalebox{0.83333333}{\normalsize$\V_2$}}};
\node[inner sep=0pt, outer sep=0pt, baseline] at (5.65,0.46,1.42) {\textcolor{black!100}{\scalebox{0.83333333}{\normalsize$\V_2^\perp$}}};
\node[inner sep=0pt, outer sep=0pt, baseline] at (3.95,2.65,1.42) {\textcolor{black!100}{\scalebox{0.83333333}{\normalsize$\V_3$}}};
\node[inner sep=0pt, outer sep=0pt, baseline] at (4.5,3,1.42) {\textcolor{black!100}{\scalebox{0.83333333}{\normalsize$\V_3^\perp$}}};
\end{tikzpicture}}\label{fig:improved-subspaces-(a)}}
    \hspace{-0.75em}
    \subfloat[\normalsize$\TT^{\{1,2,3\}}\bigl((\V_k)_{k=1}^3\bigr)$]{
    \scalebox{1.2}{
        \begin{tikzpicture}
			\tikzcuboidset{all grids/.style={draw=black,thin,step=.25}}
	\pic[thin,black] at (4,0,0) {cuboid};
			\tikzcuboidset{all grids/.style={draw=black,thin,step=.25}}
	\pic[thin,black] at (4,1.133,0) {cuboid};
			\tikzcuboidset{all grids/.style={draw=black,thin,step=.25}}
	\pic[thin,black] at (5.133,0,0) {cuboid};
			\tikzcuboidset{all grids/.style={draw=black,thin,step=.25},all faces/.style={fill=black!50}}
	\pic[thin,black] at (5.133,1.133,0) {cuboid};
			\tikzcuboidset{all grids/.style={draw=black,thin,step=.25}}
	\pic[thin,black] at (4,0+0.211,1.42) {cuboid};
			\tikzcuboidset{all grids/.style={draw=black,thin,step=.25}}
	\pic[thin,black] at (4,1.133+0.211,1.42) {cuboid};
			\tikzcuboidset{all grids/.style={draw=black,thin,step=.25}}
	\pic[thin,black] at (5.133,0+0.211,1.42) {cuboid};
			\tikzcuboidset{all grids/.style={draw=black,thin,step=.25}}
	\pic[thin,black] at (5.133,1.133+0.211,1.42) {cuboid};
	\end{tikzpicture}}\label{fig:improved-subspaces-(b)}}
    \hspace{-0.75em}
    \subfloat[\normalsize$\U_{\{1,2\}}\bigl((\V_k)_{k=1}^3\bigr)$]{
    \scalebox{1.2}{
        \begin{tikzpicture}
        \tikzcuboidset{all grids/.style={draw=black,thin,step=.25},all faces/.style={fill=black!50}}
\pic[thin,black] at (4,0+0.211,1.42) {cuboid=1--1--2};
        \tikzcuboidset{all grids/.style={draw=black,thin,step=.25}}
\pic[thin,black] at (4,1.133+0.211,1.42) {cuboid=1--1--2};
        \tikzcuboidset{all grids/.style={draw=black,thin,step=.25}}
\pic[thin,black] at (5.133,0+0.211,1.42) {cuboid=1--1--2};
        \tikzcuboidset{all grids/.style={draw=black,thin,step=.25}}
\pic[thin,black] at (5.133,1.133+0.211,1.42) {cuboid=1--1--2};

\node[inner sep=0pt, outer sep=0pt, baseline] at (3.7,1.9,1.42) {\textcolor{black!100}{\scalebox{0.83333333}{\normalsize$\V_1^\perp$}}};
\node[inner sep=0pt, outer sep=0pt, baseline] at (3.7,0.75,1.42) {\textcolor{black!100}{\scalebox{0.83333333}{\normalsize$\V_1$}}};
\node[inner sep=0pt, outer sep=0pt, baseline] at (4.55,0.45,1.42) {\textcolor{white!100}{\scalebox{0.83333333}{\normalsize$\V_2$}}};
\node[inner sep=0pt, outer sep=0pt, baseline] at (5.65,0.46,1.42) {\textcolor{black!100}{\scalebox{0.83333333}{\normalsize$\V_2^\perp$}}};
\node[inner sep=0pt, outer sep=0pt, baseline] at (4.28,2.80,1.42) {\textcolor{black!100}{\scalebox{0.83333333}{\normalsize$\R^{n_3}$}}};
\end{tikzpicture}}\label{fig:improved-subspaces-(c)}}
    \hspace{-0.75em}
    \subfloat[\normalsize$\U^{\{1,2\}}\bigl((\V_k)_{k=1}^3\bigr)$]{
    \scalebox{1.2}{
        \begin{tikzpicture}
			\tikzcuboidset{all grids/.style={draw=black,thin,step=.25}}
	\pic[thin,black] at (4,0+0.211,1.42) {cuboid=1--1--2};
			\tikzcuboidset{all grids/.style={draw=black,thin,step=.25}}
	\pic[thin,black] at (4,1.133+0.211,1.42) {cuboid=1--1--2};
			\tikzcuboidset{all grids/.style={draw=black,thin,step=.25}}
	\pic[thin,black] at (5.133,0+0.211,1.42) {cuboid=1--1--2};
			\tikzcuboidset{all grids/.style={draw=black,thin,step=.25},all faces/.style={fill=black!50}}
	\pic[thin,black] at (5.133,1.133+0.211,1.42) {cuboid=1--1--2};
	
	\end{tikzpicture}}\label{fig:improved-subspaces-(d)}}
    \caption[The subspaces.]{
    Subspaces of $\R^{n_1\times n_2\times n_3}$ presented by shaded blocks where the union of all blocks represents $\R^{n_1\times n_2\times n_3}$ in each subfigure.
    }
    \label{fig:improved-subspaces}
    \end{figure}

For third-order tensors, the pair of subspaces shown in (\subref{fig:improved-subspaces-(a)}) and (\subref{fig:improved-subspaces-(b)}) of Figure~\ref{fig:improved-subspaces} and required in the natural decomposability in Theorem~\ref{thm:decomposability} can now be enlarged to a pair of subspaces shown in (\subref{fig:improved-subspaces-(c)}) and (\subref{fig:improved-subspaces-(d)}) of Figure~\ref{fig:improved-subspaces}, respectively. More generally, for higher-order tensors, the following result shows the full decomposability of the tensor nuclear norm over an improved subspace pair from Theorem~\ref{thm:decomposability}.
\begin{theorem}\label{thm:improved-decomposability}
If $\I\subseteq [d]$ with $|\I|\ge 2$ and $\V_k$ is a subspace of $\R^{n_k}$ for $k\in[d]$, then the nuclear norm is decomposable over $\U_\I\bigl((\V_k)_{k=1}^d\bigr)$ and $\U^\I\bigl((\V_k)_{k=1}^d\bigr)$ of the tensor space $\R^{n_1\times n_2\times\dots\times n_d}$, i.e.,
    \begin{equation*}
        \|\CT+\CS\|_*=\|\CT\|_*+\|\CS\|_* \text{ for any $\CT\in\U_\I\bigl((\V_k)_{k=1}^d\bigr)$ and $\CS\in\U^\I\bigl((\V_k)_{k=1}^d\bigr)$}.
    \end{equation*}
\end{theorem}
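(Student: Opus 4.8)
The plan is to re-run the proof of Theorem~\ref{thm:decomposability} with the index set $\I$ playing the role that $[d]$ played there. The first (and key) step is the observation that $\U_\I\bigl((\V_k)_{k=1}^d\bigr)=\TT\bigl((\W_k)_{k=1}^d\bigr)$ and $\U^\I\bigl((\V_k)_{k=1}^d\bigr)=\TT\bigl((\W_k')_{k=1}^d\bigr)$, where $\W_k:=\V_k$, $\W_k':=\V_k^\perp$ for $k\in\I$ and $\W_k:=\W_k':=\R^{n_k}$ for $k\notin\I$; that is, both $\U$-subspaces are ordinary $\TT$-subspaces. Hence Lemma~\ref{prop:nuclear-U-equiv} applies to each and gives (assuming $\CT,\CS\ne\CO$, the remaining case being trivial) nuclear decompositions $\CT=\sum_{i=1}^{r_1}\lambda_i\bigotimes_{k=1}^d\bx_k^i$ with $\lambda_i>0$, $\bx_k^i\in\W_k\cap\SI^{n_k}$, and $\CS=\sum_{i=1}^{r_2}\mu_i\bigotimes_{k=1}^d\by_k^i$ with $\mu_i>0$, $\by_k^i\in\W_k'\cap\SI^{n_k}$, together with dual certificates $\CX\in\TT\bigl((\W_k)_{k=1}^d\bigr)$ and $\CY\in\TT\bigl((\W_k')_{k=1}^d\bigr)$ satisfying $\|\CX\|_\sigma=\|\CY\|_\sigma=1$, $\langle\CT,\CX\rangle=\|\CT\|_*$, and $\langle\CS,\CY\rangle=\|\CS\|_*$. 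The point to retain is that $\CX$ has all of its mode-$k$ fibers in $\V_k$, and $\CY$ has all of its mode-$k$ fibers in $\V_k^\perp$, for every $k\in\I$.

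Second, I would take $\CX+\CY$ as the candidate dual certificate of $\CT+\CS$. Picking any single $k_0\in\I$, the mode-$k_0$ fibers of $\CX$ and of $\CS$ live in the orthogonal subspaces $\V_{k_0}$ and $\V_{k_0}^\perp$ respectively, whence $\langle\CX,\CS\rangle=0$; symmetrically $\langle\CY,\CT\rangle=0$. Therefore $\langle\CT+\CS,\CX+\CY\rangle=\langle\CT,\CX\rangle+\langle\CS,\CY\rangle=\|\CT\|_*+\|\CS\|_*$, while the bound $\|\CT+\CS\|_*\le\|\CT\|_*+\|\CS\|_*$ is immediate from~\eqref{eq:nuclear} applied to the concatenated decomposition $\CT+\CS=\sum_i\lambda_i\bigotimes_k\bx_k^i+\sum_i\mu_i\bigotimes_k\by_k^i$. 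So everything reduces to the single estimate $\|\CX+\CY\|_\sigma\le1$, after which Lemma~\ref{lma:norm-duality} yields $\|\CT+\CS\|_*\ge\langle\CT+\CS,\CX+\CY\rangle=\|\CT\|_*+\|\CS\|_*$.

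Third, for that estimate I would argue as in Theorem~\ref{thm:dualdecomp}. For unit vectors $\bv_k$, using that $\proj_{\W_k}$ and $\proj_{\W_k'}$ are the identity for $k\notin\I$ and that $\CX,\CY$ are fixed by the respective self-adjoint projections, one obtains
$$
\mleft\langle\CX+\CY,\bigotimes_{k=1}^d\bv_k\mright\rangle\le\prod_{k\in\I}\bigl\|\proj_{\V_k}(\bv_k)\bigr\|_2+\prod_{k\in\I}\bigl\|\proj_{\V_k^\perp}(\bv_k)\bigr\|_2;
$$
writing $a_k:=\|\proj_{\V_k}(\bv_k)\|_2\in[0,1]$, so that $\|\proj_{\V_k^\perp}(\bv_k)\|_2=\sqrt{1-a_k^2}$, and keeping only two indices $k_1,k_2\in\I$ in each product, the right-hand side is at most $a_{k_1}a_{k_2}+\sqrt{1-a_{k_1}^2}\sqrt{1-a_{k_2}^2}\le1$ by Cauchy--Schwarz. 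Maximizing over the $\bv_k$'s gives $\|\CX+\CY\|_\sigma\le1$, completing the argument.

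The only genuinely load-bearing step is that last display: it is exactly the hypothesis $|\I|\ge2$ that allows retaining two factors in each product for the Cauchy--Schwarz bound, and the same estimate makes clear why the statement must fail for $|\I|=1$ (it would degenerate to the false matrix claim that column spaces in $\V_1$ and $\V_1^\perp$ force additivity of the nuclear norm). I would also note that the inequality $\|\CX+\CY\|_\sigma\le1$ in fact upgrades to the identity $\|\CX+\CY\|_\sigma=\max\{\|\CX\|_\sigma,\|\CY\|_\sigma\}$ for all $\CX\in\U_\I\bigl((\V_k)_{k=1}^d\bigr)$ and $\CY\in\U^\I\bigl((\V_k)_{k=1}^d\bigr)$ --- a spectral-norm decomposability dual to Theorem~\ref{thm:improved-decomposability}, whose reverse inequality is the lower-bound half of the argument above --- so it may be cleanest to state and prove that companion result first and then quote it here, paralleling the Theorem~\ref{thm:decomposability}/Theorem~\ref{thm:dualdecomp} pairing.
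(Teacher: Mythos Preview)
Your proposal is correct and matches the paper's approach essentially line for line: the paper likewise re-runs the proof of Theorem~\ref{thm:decomposability} with $\I$ in place of $[d]$, factoring the argument into Lemma~\ref{prop:duality-decomp} (spectral decomposability $\Rightarrow$ nuclear decomposability, proved exactly as you outline in steps one and two) and Theorem~\ref{thm:spec-decomp} (the companion spectral-norm identity you describe in step three and in your final paragraph). Your observation that $\U_\I$ and $\U^\I$ are themselves $\TT$-subspaces, enabling a direct appeal to Lemma~\ref{prop:nuclear-U-equiv}, is exactly the mechanism at work.
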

\textcolor{black}{The proof of Theorem~\ref{thm:improved-decomposability} 
follows a similar structure to that of Theorem~\ref{thm:decomposability} and is 
an immediate consequence of Lemma~\ref{prop:duality-decomp} and Theorem~\ref{thm:spec-decomp} to be introduced soon. We shall include it after the proof of Theorem~\ref{thm:spec-decomp}.}

It is important to remark and in fact easy to observe the monotonicity. The decomposability over $\U_{\I_1}\bigl((\V_k)_{k=1}^d\bigr)$ and $\U^{\I_1}\bigl((\V_k)_{k=1}^d\bigr)$ implies the decomposability over $\U_{\I_2}\bigl((\V_k)_{k=1}^d\bigr)$ and $\U^{\I_2}\bigl((\V_k)_{k=1}^d\bigr)$ if $\I_1\subseteq\I_2$, under which  $\U_{\I_2}\bigl((\V_k)_{k=1}^d\bigr)\subseteq\U_{\I_1}\bigl((\V_k)_{k=1}^d\bigr)$ and $\U^{\I_2}\bigl((\V_k)_{k=1}^d\bigr)\subseteq\U^{\I_1}\bigl((\V_k)_{k=1}^d\bigr)$. 
Therefore, Theorem~\ref{thm:improved-decomposability} is essentially for the strongest case $|\I|=2$ whereas the weakest case is for $\I=[d]$, the result in Theorem~\ref{thm:decomposability}.
\textcolor{black}{In short}, the tensor nuclear norm is decomposable over a pair of subspaces if they have at least two disjoint modes. We say that a mode is disjoint for a pair of subspaces if the projections of the subspaces onto the vector space of that mode are orthogonal to each other.

The above point is particularly clear for the matrix case that is included in Theorem~\ref{thm:improved-decomposability}. Among the four basic subspaces $\TT(\V_1,\V_2)$, $\TT^{\{1\}}(\V_1,\V_2)$, $\TT^{\{2\}}(\V_1,\V_2)$, and $\TT^{\{1,2\}}(\V_1,\V_2)$, as well as any direct sum of some basic subspaces, only the pair $\TT(\V_1,\V_2)$ and $\TT^{\{1,2\}}(\V_1,\V_2)$ and the pair $\TT^{\{1\}}(\V_1,\V_2)$ and $\TT^{\{2\}}(\V_1,\V_2)$ have at least two disjoint modes, resulting \textcolor{black}{in} their full decomposability. If a pair of subspaces share only one disjoint mode, such as $\TT(\V_1,\V_2)$ and $\U^{\{2\}}(\V_1,\V_2)$, nothing can be said about the decomposability; see a simple example below. Therefore, the full decomposability of the tensor nuclear norm over $\U_\I\bigl((\V_k)_{k=1}^d\bigr)$ and $\U^\I\bigl((\V_k)_{k=1}^d\bigr)$ is in fact in the maximal sense for any $\I\subseteq [d]$ with $|\I|= 2$.
\begin{example}
\label{ex:local-optimality}
    Let $\BT=\be_1\otimes\be_1\in\R^{2\times 2}$ and $\BS=\epsilon\,\be_1\otimes\be_2\in\R^{2\times 2}$, where $\epsilon>0$ is sufficiently small. It is obvious that $\BT\in\TT(\V_1,\V_2)$ and $\BS\in\TT(\V_1,\V_2^\perp)$, where $\V_1=\V_2=\spn(\be_1)$. 
    However, the inequality
    $$
        \sqrt{1+\epsilon^2} = \mleft\|\BT+\BS\mright\|_* \ge \mleft\|\BT\mright\|_*+\alpha\mleft\|\BS\mright\|_*=1+ \alpha\epsilon=\sqrt{1+2\alpha\epsilon+\alpha^2\epsilon^2}
  $$
  cannot hold for any $\alpha>0$ that is independent of $\epsilon$.
\end{example}

We would like to remark that Theorem~\ref{thm:improved-decomposability} is also important in tensor analysis. As an immediate application, it helps to single out a large class of tensors whose upper bounds of the nuclear
norm based on tensor partitions (see~\cite[Theorem~3.1]{li2016bounds} and~\cite[Theorem~3.1]{chen2020tensor}) are tight. Specifically, it is stated in~\cite[Theorem~3.1]{li2016bounds} that if a tensor $\CT$ is partitioned into any set of subtensors $\{\CT_1,\allowbreak\CT_2,\allowbreak\dots,\allowbreak\CT_m\}$, then
  \begin{align}
    \bigl\|(\|\CT_1\|_\sigma,\|\CT_2\|_\sigma,\dots,\|\CT_m\|_\sigma)^{\T}\bigr\|_\infty
    &\le \|\CT\|_\sigma \le
    \bigl\|(\|\CT_1\|_\sigma,\|\CT_2\|_\sigma,\dots,\|\CT_m\|_\sigma)^{\T}\bigr\|_2, \label{eq:snorm0}\\
    \bigl\|(\|\CT_1\|_*,\|\CT_2\|_*,\dots,\|\CT_m\|_*)^{\T}\bigr\|_2 &\le \|\CT\|_* \le
    \bigl\|(\|\CT_1\|_*,\|\CT_2\|_*,\dots,\|\CT_m\|_*)^{\T}\bigr\|_1.  \label{eq:nnorm0}
  \end{align}
Therefore, any tensor $\CT=\CT_1+\CT_2$, where $\CT_1\in\U_\I\bigl((\V_k)_{k=1}^d\bigr)$ and $\CT_2\in\U^\I\bigl((\V_k)_{k=1}^d\bigr)$ with $|\I|=2$, immediately becomes a tight example of the upper bound of~\eqref{eq:nnorm0} for $m=2$. The number of blocks, $m$, can be further increased if we further decompose $\CT_1$ and/or $\CT_2$ properly. Besides, this class of tensors already includes all the examples discussed in~\cite[Section~3.2]{li2016bounds} as a proper subset. For the same reason, the dual version of Theorem~\ref{thm:improved-decomposability}, i.e., Theorem~\ref{thm:spec-decomp} to be presented soon, does a similar job to the tightness of the lower bound of~\eqref{eq:snorm0}. 

\textcolor{black}{We are now in a position to state Lemma~\ref{prop:duality-decomp}, which}
can be shown in the same way as that in the proof of Theorem~\ref{thm:decomposability}, and is thus left to interested readers.
\begin{lemma}\label{prop:duality-decomp}
If $d\ge2$,  $\I\subseteq [d]$ with $|\I|\ge 1$, and $\V_k$ is a subspace of $\R^{n_k}$ for $k\in[d]$, then $\textnormal{(i)}\implies\textnormal{(ii)}$, where
\begin{enumerate}[label=\textnormal{(\roman*)}]
\item $\|\CT+\CS\|_\sigma=\max\bigl\{\|\CT\|_\sigma,\|\CS\|_\sigma\bigr\}$ for any $\CT\in \U_\I\bigl((\V_k)_{k=1}^d\bigr)$ and $\CS\in \U^\I\bigl((\V_k)_{k=1}^d\bigr)$;
\item $\|\CT+\CS\|_*=\|\CT\|_*+\|\CS\|_*$ for any $\CT\in\U_\I\bigl((\V_k)_{k=1}^d\bigr)$ and $\CS\in\U^\I\bigl((\V_k)_{k=1}^d\bigr)$.
\end{enumerate}
\end{lemma}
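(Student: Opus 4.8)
The plan is to repeat, almost verbatim, the proof of Theorem~\ref{thm:decomposability}, exploiting the observation that $\U_\I$ and $\U^\I$ are themselves $\TT$-type subspaces. Concretely, put $\W_k:=\V_k$ for $k\in\I$ and $\W_k:=\R^{n_k}$ for $k\notin\I$; then $\U_\I\bigl((\V_k)_{k=1}^d\bigr)=\TT\bigl((\W_k)_{k=1}^d\bigr)$, and likewise $\U^\I\bigl((\V_k)_{k=1}^d\bigr)=\TT\bigl((\W_k')_{k=1}^d\bigr)$ with $\W_k':=\V_k^\perp$ for $k\in\I$ and $\W_k':=\R^{n_k}$ for $k\notin\I$. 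This makes Lemma~\ref{prop:nuclear-U-equiv} directly applicable to either subspace.

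Assume (i). Given $\CT\in\U_\I\bigl((\V_k)_{k=1}^d\bigr)$ and $\CS\in\U^\I\bigl((\V_k)_{k=1}^d\bigr)$, I would first invoke Lemma~\ref{prop:nuclear-U-equiv} to obtain nuclear decompositions $\CT=\sum_{i=1}^{r_1}\lambda_i\bigotimes_{k=1}^d\bx_k^i$ and $\CS=\sum_{i=1}^{r_2}\mu_i\bigotimes_{k=1}^d\by_k^i$ with $\lambda_i,\mu_i>0$, $\bx_k^i\in\W_k\cap\SI^{n_k}$, and $\by_k^i\in\W_k'\cap\SI^{n_k}$ (so that $\bx_k^i\in\V_k$ and $\by_k^i\in\V_k^\perp$ for every $k\in\I$), together with dual certificates $\CX\in\U_\I\bigl((\V_k)_{k=1}^d\bigr)$ of $\|\CT\|_*$ and $\CY\in\U^\I\bigl((\V_k)_{k=1}^d\bigr)$ of $\|\CS\|_*$, both of spectral norm one. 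The next step is to show that $\CX+\CY$ certifies $\|\CT+\CS\|_*$. Writing $\bigotimes_{k=1}^d\proj_{\W_k'}$ for the orthogonal projection onto $\U^\I\bigl((\V_k)_{k=1}^d\bigr)$ (which fixes $\CY$, is self-adjoint, and acts as $\proj_{\V_k^\perp}$ on each mode $k\in\I$ and as the identity on the remaining modes), one obtains, exactly as in~\eqref{eq:vertical}, that $\langle\CY,\bigotimes_{k=1}^d\bx_k^i\rangle=\langle\CY,\bigotimes_{k=1}^d\proj_{\W_k'}(\bx_k^i)\rangle=0$, since $\proj_{\V_k^\perp}(\bx_k^i)=\mathbf{0}$ for any $k\in\I$ and $|\I|\ge1$ guarantees at least one such mode. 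Hence $\langle\CX+\CY,\bigotimes_{k=1}^d\bx_k^i\rangle=1$ for all $i\in[r_1]$, and symmetrically $\langle\CX+\CY,\bigotimes_{k=1}^d\by_k^i\rangle=1$ for all $i\in[r_2]$, so $\langle\CT+\CS,\CX+\CY\rangle=\sum_{i=1}^{r_1}\lambda_i+\sum_{i=1}^{r_2}\mu_i$.

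This is the only point where hypothesis (i) is used: it yields $\|\CX+\CY\|_\sigma=\max\{\|\CX\|_\sigma,\|\CY\|_\sigma\}=1$, and then Lemma~\ref{lma:norm-duality} gives $\|\CT+\CS\|_*\ge\langle\CT+\CS,\CX+\CY\rangle=\sum_{i=1}^{r_1}\lambda_i+\sum_{i=1}^{r_2}\mu_i=\|\CT\|_*+\|\CS\|_*$. The reverse inequality $\|\CT+\CS\|_*\le\sum_{i=1}^{r_1}\lambda_i+\sum_{i=1}^{r_2}\mu_i$ is immediate from the definition~\eqref{eq:nuclear} applied to $\CT+\CS=\sum_{i=1}^{r_1}\lambda_i\bigotimes_{k=1}^d\bx_k^i+\sum_{i=1}^{r_2}\mu_i\bigotimes_{k=1}^d\by_k^i$, which in passing also shows that this is a nuclear decomposition of $\CT+\CS$. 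There is no genuine obstacle beyond this bookkeeping; the one spot that deserves care is verifying that the partial orthogonality over the modes in $\I$ — rather than over all modes as in Theorem~\ref{thm:decomposability} — still annihilates the cross terms $\langle\CY,\bigotimes_{k=1}^d\bx_k^i\rangle$ and $\langle\CX,\bigotimes_{k=1}^d\by_k^i\rangle$, which is precisely why the hypothesis $|\I|\ge1$ enters.
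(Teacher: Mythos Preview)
Your proposal is correct and follows essentially the same approach as the paper, which explicitly says the lemma ``can be shown in the same way as that in the proof of Theorem~\ref{thm:decomposability}'' and notes (in the remark following the lemma) that $|\I|\ge1$ is needed precisely to kill the cross terms in~\eqref{eq:vertical}. Your recasting of $\U_\I$ and $\U^\I$ as $\TT$-type subspaces via $\W_k$ and $\W_k'$ is a clean way to make Lemma~\ref{prop:nuclear-U-equiv} directly applicable, and your identification of where hypothesis (i) enters is exactly right.
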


We remark that the actual requirement of Lemma~\ref{prop:duality-decomp} is $|\I|\ge 1$ which is weaker than $|\I|\ge 2$ imposed in both Theorem~\ref{thm:improved-decomposability} and Theorem~\ref{thm:spec-decomp}. The condition $|\I|\ge 1$ is only required to show~\eqref{eq:vertical}. As long as we have one mode $k\in\I$, it suffices to have $\bigotimes_{k=1}^d\proj_{\V_k^\perp}(\bx_k^i)=\CO$ in~\eqref{eq:vertical} since $\bx_k^i\in\V_k$. 
We believe that the reverse implication of Lemma~\ref{prop:duality-decomp} is also true, i.e., the two statements are in fact equivalent. Unfortunately, currently we are unable to verify this claim. 

Theorem~\ref{thm:spec-decomp} is the dual version of Theorem~\ref{thm:improved-decomposability} and is a generalization of Theorem~\ref{thm:dualdecomp}.
\begin{theorem}\label{thm:spec-decomp}
If $\I\subseteq [d]$ with $|\I|\ge 2$ and $\V_k$ is a subspace of $\R^{n_k}$ for $k\in[d]$, then the spectral norm is decomposable over $\U_\I\bigl((\V_k)_{k=1}^d\bigr)$ and $\U^\I\bigl((\V_k)_{k=1}^d\bigr)$ of the tensor space $\R^{n_1\times n_2\times\dots\times n_d}$, i.e.,
$$\|\CT+\CS\|_\sigma=\max\bigl\{\|\CT\|_\sigma,\|\CS\|_\sigma\bigr\} \text{ for any $\CT\in \U_\I\bigl((\V_k)_{k=1}^d\bigr)$ and $\CS\in \U^\I\bigl((\V_k)_{k=1}^d\bigr)$}.
$$
\end{theorem}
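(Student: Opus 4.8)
The plan is to mirror the two-sided argument in the proof of Theorem~\ref{thm:dualdecomp}, using the structural observation that both $\U_\I\bigl((\V_k)_{k=1}^d\bigr)$ and $\U^\I\bigl((\V_k)_{k=1}^d\bigr)$ are $\TT$-subspaces of the kind appearing in Lemma~\ref{thm:spec-subspace}: the former is spanned by outer products $\bigotimes_{k=1}^d\bv_k$ with $\bv_k$ ranging over $\V_k$ for $k\in\I$ and over all of $\R^{n_k}$ for $k\notin\I$, and the latter is the same with $\V_k^\perp$ in place of $\V_k$. Consequently, the composite projection $\bigotimes_{k\in\I}\proj_{\V_k}$, understood to act as the identity in the modes outside $\I$, is self-adjoint and fixes every element of $\U_\I\bigl((\V_k)_{k=1}^d\bigr)$; dually, $\bigotimes_{k\in\I}\proj_{\V_k^\perp}$ fixes every element of $\U^\I\bigl((\V_k)_{k=1}^d\bigr)$.

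For the lower bound, I would apply Lemma~\ref{thm:spec-subspace} to $\U_\I\bigl((\V_k)_{k=1}^d\bigr)$ to get unit vectors $\bx_1,\dots,\bx_d$ with $\bx_k\in\V_k$ for $k\in\I$ and $\|\CT\|_\sigma=\bigl\langle\CT,\bigotimes_{k=1}^d\bx_k\bigr\rangle$. Since $\CS\in\U^\I\bigl((\V_k)_{k=1}^d\bigr)$ and $\proj_{\V_k^\perp}(\bx_k)=\mathbf{0}$ for each $k\in\I$, we get $\bigl\langle\CS,\bigotimes_{k=1}^d\bx_k\bigr\rangle=0$, whence $\|\CT+\CS\|_\sigma\ge\bigl\langle\CT+\CS,\bigotimes_{k=1}^d\bx_k\bigr\rangle=\|\CT\|_\sigma$; by symmetry $\|\CT+\CS\|_\sigma\ge\|\CS\|_\sigma$, so $\|\CT+\CS\|_\sigma\ge\max\{\|\CT\|_\sigma,\|\CS\|_\sigma\}$. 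Only $|\I|\ge1$ is needed for this half.

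For the upper bound, expand $\|\CT+\CS\|_\sigma=\max_{\bv_k\in\SI^{n_k}\,\forall\,k\in[d]}\bigl\langle\CT+\CS,\bigotimes_{k=1}^d\bv_k\bigr\rangle$ and shift the projections from $\CT$ and $\CS$ onto the test vectors by self-adjointness, writing the two contributions as $\bigl\langle\CT,\bigotimes_{k=1}^d\bw_k\bigr\rangle$ and $\bigl\langle\CS,\bigotimes_{k=1}^d\bw_k'\bigr\rangle$, where $\bw_k=\proj_{\V_k}(\bv_k)$ and $\bw_k'=\proj_{\V_k^\perp}(\bv_k)$ for $k\in\I$ while $\bw_k=\bw_k'=\bv_k$ for $k\notin\I$. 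Normalizing the projected vectors (the zero case being trivial) and using $\|\bv_k\|_2=1$ for $k\notin\I$, these are at most $\|\CT\|_\sigma\prod_{k\in\I}\bigl\|\proj_{\V_k}(\bv_k)\bigr\|_2$ and $\|\CS\|_\sigma\prod_{k\in\I}\bigl\|\proj_{\V_k^\perp}(\bv_k)\bigr\|_2$ respectively. Setting $a_k=\bigl\|\proj_{\V_k}(\bv_k)\bigr\|_2$ and $b_k=\bigl\|\proj_{\V_k^\perp}(\bv_k)\bigr\|_2$, so that $a_k^2+b_k^2=1$ for every $k$, this gives $\|\CT+\CS\|_\sigma\le\max\{\|\CT\|_\sigma,\|\CS\|_\sigma\}\cdot\max\bigl(\prod_{k\in\I}a_k+\prod_{k\in\I}b_k\bigr)$.

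The crux, and the only place where $|\I|\ge2$ enters, is the scalar inequality $\prod_{k\in\I}a_k+\prod_{k\in\I}b_k\le1$ for all $a_k,b_k\ge0$ with $a_k^2+b_k^2=1$. I would dispose of it exactly as in the proof of Theorem~\ref{thm:dualdecomp}: fixing any two indices $i,j\in\I$, the fact that all $a_k,b_k\le1$ makes the left side at most $a_ia_j+b_ib_j$, which is the inner product of the unit vectors $(a_i,b_i)^{\T}$ and $(a_j,b_j)^{\T}$ and hence at most $1$ by Cauchy--Schwarz. Combined with the lower bound, this yields the claimed equality. Note that for $|\I|=1$ the scalar inequality fails, since $a+b$ can reach $\sqrt{2}$---precisely why a single disjoint mode is insufficient, in line with Example~\ref{ex:local-optimality}.
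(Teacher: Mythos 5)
Your proof is correct, and it takes a genuinely different route from the paper's. The paper first reduces to $|\I|=2$ by monotonicity and then establishes a chain of equalities built on~\eqref{eq:specextend}: it contracts the two modes in $\I$ down to a matrix, observes that $\CT\bigl(\bullet,\proj_{\V_2}(\bv_2),\cdot\bigr)\in\V_1$ and $\CS\bigl(\bullet,\proj_{\V_2^\perp}(\bv_2),\cdot\bigr)\in\V_1^\perp$ are orthogonal vectors so the maximization over $\bv_1$ becomes a Pythagorean combination, then closes with the consistency of the matrix spectral norm together with $\|\proj_{\V_2}(\bv_2)\|_2^2+\|\proj_{\V_2^\perp}(\bv_2)\|_2^2=1$. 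You instead extend the two-sided argument of Theorem~\ref{thm:dualdecomp} (the case $\I=[d]$) directly to general $\I$: the observation that $\U_\I$ is itself a $\TT$-subspace lets Lemma~\ref{thm:spec-subspace} carry the full weight, the lower bound is a one-line evaluation at the optimal test vector for $\CT$ (here indeed only $|\I|\ge1$ is needed, as you note), and the upper bound collapses to the scalar inequality $\prod_{k\in\I}a_k+\prod_{k\in\I}b_k\le 1$, settled by dropping to two factors and applying Cauchy--Schwarz. Your version avoids both the monotonicity reduction and the auxiliary matrix machinery, keeps the argument entirely elementary, and makes the role of $|\I|\ge 2$ maximally transparent; the paper's version presents the upper bound as a chain of equalities and more directly exposes the geometric picture of an orthogonal decomposition of the mode-$1$ slice. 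Both are valid.
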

\begin{proof} By monotonicity, it suffices to show the case of $|\I|=2$. Without loss of generality, we assume that $\I=\{1,2\}$. By Lemma~\ref{thm:spec-subspace}, we have
    \begin{align*}
    &\|\CT+\CS\|_\sigma
    \\=&\max_{\bv_k\in\SI^{n_k}\,\forall\,k\in[d]} \mleft(\mleft\langle\CT, \bigotimes_{k=1}^d\bv_k\mright\rangle+\mleft\langle\CS, \bigotimes_{k=1}^d\bv_k\mright\rangle\mright)
    \\=&\max_{\bv_k\in\SI^{n_k}\,\forall\,k\in[d]} \mleft(\mleft\langle\CT, \proj_{\V_1}(\bv_1)\otimes\proj_{\V_2}(\bv_2)\otimes\bigotimes_{k=3}^d\bv_k\mright\rangle+\mleft\langle\CS, \proj_{\V_1^\perp}(\bv_1)\otimes\proj_{\V_2^\perp}(\bv_2)\otimes\bigotimes_{k=3}^d\bv_k\mright\rangle\mright)
    \\=&\max_{\bv_k\in\SI^{n_k}\,\forall\,k\in[d]\setminus\{1\}}\sqrt{\mleft\|\CT\mleft(\bullet,\proj_{\V_2}(\bv_2),\bigotimes_{k=3}^d\bv_k\mright)\mright\|_2^2+\mleft\|\CS\mleft(\bullet,\proj_{\V_2^\perp}(\bv_2),\bigotimes_{k=3}^d\bv_k\mright)\mright\|_2^2}     \\
    =&\max_{\bv_k\in\SI^{n_k}\,\forall\,k\in[d]\setminus\{1\}}\sqrt{\mleft\|\CT\mleft(\bullet,\bullet,\bigotimes_{k=3}^d\bv_k\mright)\mright\|_\sigma^2\cdot\mleft\|\proj_{\V_2}(\bv_2)\mright\|_2^2+\mleft\|\CS\mleft(\bullet,\bullet,\bigotimes_{k=3}^d\bv_k\mright)\mright\|_\sigma^2\cdot\bigl\|\proj_{\V_2^\perp}(\bv_2)\bigr\|_2^2}     \\
    =&\max_{\bv_k\in\SI^{n_k}\,\forall\,k\in[d]\setminus[2]} \sqrt{\max\mleft\{\mleft\|\CT\mleft(\bullet,\bullet,\bigotimes_{k=3}^d\bv_k\mright)\mright\|_\sigma^2,\mleft\|\CS\mleft(\bullet,\bullet,\bigotimes_{k=3}^d\bv_k\mright)\mright\|_\sigma^2\mright\}}
    \\=&\max\bigl\{\|\CT\|_\sigma,\|\CS\|_\sigma\bigr\},
    \end{align*}
    where the third equality is due to Cauchy-Schwarz inequality and the fact that 
    $$
        \CT\mleft(\bullet,\proj_{\V_2}(\bv_2),\bigotimes_{k=3}^d\bv_k\mright)\in\V_1\text{ and }\CS\mleft(\bullet,\proj_{\V_2^\perp}(\bv_2),\bigotimes_{k=3}^d\bv_k\mright)\in\V_1^\perp,
    $$
    the fourth is due to the consistency of the matrix spectral norm, 
    the  second to last is due to $\bigl\|\proj_{\V_2}(\bv_2)\bigr\|_2^2+\bigl\|\proj_{\V_2^\perp}(\bv_2)\bigr\|_2^2=1$, 
    and the last is due to that
    $    \max_{\bv_k\in\SI^{n_k}\,\forall\,k\in[d]\setminus[2]} \bigl\|\CT(\bullet,\bullet,\bigotimes_{k=3}^d\bv_k)\bigr\|_\sigma=\|\CT\|_\sigma
    $
    holds for any tensor $\CT$; see~\eqref{eq:specextend}.
\end{proof}
{\color{black}
We are now ready to prove Theorem~\ref{thm:improved-decomposability}.
\begin{myproof}{Theorem~\ref{thm:improved-decomposability}}
    The condition of Theorem~\ref{thm:improved-decomposability} is the same to the condition of Theorem~\ref{thm:spec-decomp} and implies the condition of Lemma~\ref{prop:duality-decomp}. Therefore, by Theorem~\ref{thm:spec-decomp}, the spectral norm is decomposable over $\U_\I\bigl((\V_k)_{k=1}^d\bigr)$ and $\U^\I\bigl((\V_k)_{k=1}^d\bigr)$, which implies that the nuclear norm is decomposable over $\U_\I\bigl((\V_k)_{k=1}^d\bigr)$ and $\U^\I\bigl((\V_k)_{k=1}^d\bigr)$ by Lemma~\ref{prop:duality-decomp}.
\end{myproof}
}

Theorem~\ref{thm:spec-decomp} further implies a generalized decomposability of the nuclear norm.
\begin{theorem}\label{thm:nucearld2}
If $\I\subseteq [d]$ with $|\I|\ge 2$ and $\V_k$ is a subspace of $\R^{n_k}$ for $k\in[d]$, then
    $$
    \|\CT\|_*\ge \bigl\|\proj_{\U_\I((\V_k)_{k=1}^d)}(\CT)\bigr\|_* + \bigl\|\proj_{\U^\I((\V_k)_{k=1}^d)}(\CT)\bigr\|_*\text{ for any $\CT\in\R^{n_1\times n_2\times \dots \times n_d}$}.
    $$
\end{theorem}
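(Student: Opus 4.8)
The plan is to deduce this estimate from the dual decomposability in Theorem~\ref{thm:spec-decomp} via a dual-certificate argument, in the same spirit as the proof of Theorem~\ref{thm:decomposability}. The crucial starting observation is that both $\U_\I\bigl((\V_k)_{k=1}^d\bigr)$ and $\U^\I\bigl((\V_k)_{k=1}^d\bigr)$ are themselves subspaces of the form $\TT\bigl((\W_k)_{k=1}^d\bigr)$: from the spanning description of the $\U$-subspaces, $\U_\I\bigl((\V_k)_{k=1}^d\bigr)=\TT\bigl((\W_k)_{k=1}^d\bigr)$ with $\W_k=\V_k$ for $k\in\I$ and $\W_k=\R^{n_k}$ for $k\notin\I$, and likewise $\U^\I\bigl((\V_k)_{k=1}^d\bigr)=\TT\bigl((\W_k')_{k=1}^d\bigr)$ with $\W_k'=\V_k^\perp$ for $k\in\I$ and $\W_k'=\R^{n_k}$ for $k\notin\I$. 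Hence $\proj_{\U_\I((\V_k)_{k=1}^d)}$ and $\proj_{\U^\I((\V_k)_{k=1}^d)}$ are tensor-product projections $\bigotimes_k\proj_{\W_k}$, so they are self-adjoint and, by Lemma~\ref{thm:spec-subspace}, do not increase the spectral norm.

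First I would fix $\CT$ and, using Lemma~\ref{lma:norm-duality}, pick a dual certificate $\CX_0$ of $\proj_{\U_\I((\V_k)_{k=1}^d)}(\CT)$, i.e.\ $\|\CX_0\|_\sigma\le1$ and $\bigl\langle\proj_{\U_\I((\V_k)_{k=1}^d)}(\CT),\CX_0\bigr\rangle=\bigl\|\proj_{\U_\I((\V_k)_{k=1}^d)}(\CT)\bigr\|_*$ (taking $\CX_0=\CO$ if this projection vanishes). Replacing $\CX_0$ by its projection $\CX:=\proj_{\U_\I((\V_k)_{k=1}^d)}(\CX_0)$ keeps $\|\CX\|_\sigma\le1$ by Lemma~\ref{thm:spec-subspace}, and self-adjointness of the projection gives $\langle\CT,\CX\rangle=\bigl\langle\proj_{\U_\I((\V_k)_{k=1}^d)}(\CT),\CX_0\bigr\rangle=\bigl\|\proj_{\U_\I((\V_k)_{k=1}^d)}(\CT)\bigr\|_*$; moreover now $\CX\in\U_\I\bigl((\V_k)_{k=1}^d\bigr)$. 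The same construction applied to $\proj_{\U^\I((\V_k)_{k=1}^d)}(\CT)$ produces $\CY\in\U^\I\bigl((\V_k)_{k=1}^d\bigr)$ with $\|\CY\|_\sigma\le1$ and $\langle\CT,\CY\rangle=\bigl\|\proj_{\U^\I((\V_k)_{k=1}^d)}(\CT)\bigr\|_*$.

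The final step is to glue the two certificates. Since $|\I|\ge2$, $\CX\in\U_\I\bigl((\V_k)_{k=1}^d\bigr)$ and $\CY\in\U^\I\bigl((\V_k)_{k=1}^d\bigr)$, Theorem~\ref{thm:spec-decomp} yields $\|\CX+\CY\|_\sigma=\max\bigl\{\|\CX\|_\sigma,\|\CY\|_\sigma\bigr\}\le1$, so $\CX+\CY$ is feasible in the dual characterization $\|\CT\|_*=\max_{\|\CZ\|_\sigma\le1}\langle\CT,\CZ\rangle$ of Lemma~\ref{lma:norm-duality}. Therefore
$$
\|\CT\|_*\ \ge\ \langle\CT,\CX+\CY\rangle\ =\ \langle\CT,\CX\rangle+\langle\CT,\CY\rangle\ =\ \bigl\|\proj_{\U_\I((\V_k)_{k=1}^d)}(\CT)\bigr\|_*+\bigl\|\proj_{\U^\I((\V_k)_{k=1}^d)}(\CT)\bigr\|_*,
$$
which is the claimed inequality.

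There is no deep obstacle here once Theorem~\ref{thm:spec-decomp} is available; the two points that need care are (i) recognizing that $\U_\I$ and $\U^\I$ are $\TT$-type subspaces, so that Lemma~\ref{thm:spec-subspace} applies and projecting a dual certificate into the relevant subspace keeps it feasible while preserving the value of the inner product with $\CT$, and (ii) checking that the combined certificate $\CX+\CY$ still lies in the spectral-norm unit ball, which is precisely where the hypothesis $|\I|\ge2$ enters through Theorem~\ref{thm:spec-decomp}. The degenerate situations where one or both projections of $\CT$ vanish are absorbed by allowing the zero tensor as a trivial dual certificate.
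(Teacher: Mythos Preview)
Your proof is correct and follows essentially the same dual-certificate strategy as the paper's: obtain dual certificates for the two projected tensors lying in $\U_\I$ and $\U^\I$ respectively, combine them using Theorem~\ref{thm:spec-decomp}, and invoke Lemma~\ref{lma:norm-duality}. The only cosmetic difference is that the paper gets the certificates directly from Lemma~\ref{prop:nuclear-U-equiv}, whereas you start from an arbitrary dual certificate and project it into the subspace (which is exactly how the last part of Lemma~\ref{prop:nuclear-U-equiv} is proved anyway).
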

\begin{proof}
Let $\CT=\CT_1+\CT_2+\CT_3$, where $\CT_1=\proj_{\U_\I((\V_k)_{k=1}^d)}(\CT)$ and $\CT_2=\proj_{\U^\I((\V_k)_{k=1}^d)}(\CT)$. As a result, $\CT_1$, $\CT_2$, and $\CT_3$ reside in mutually orthogonal subspaces. By Lemma~\ref{prop:nuclear-U-equiv}, there exist $\CZ_1\in\U_\I\bigl((\V_k)_{k=1}^d\bigr)$ and $\CZ_2\in\U^\I\bigl((\V_k)_{k=1}^d\bigr)$ with $\|\CZ_1\|_\sigma=\|\CZ_2\|_\sigma=1$ such that $\langle \CT_1,\CZ_1\rangle=\|\CT_1\|_*$ and $\langle \CT_2,\CZ_2\rangle=\|\CT_2\|_*$. By Theorem~\ref{thm:spec-decomp}, we also know $\|\CZ_1+\CZ_2\|_\sigma=\max\bigl\{\|\CZ_1\|_\sigma,\|\CZ_2\|_\sigma\bigr\}=1$. These, together with the duality in Lemma~\ref{lma:norm-duality}, further imply that
$$
\|\CT\|_*\ge\langle \CT_1+\CT_2+\CT_3, \CZ_1+\CZ_2 \rangle = \langle \CT_1, \CZ_1\rangle + \langle \CT_2, \CZ_2\rangle = \|\CT_1\|_*+\|\CT_2\|_*,
$$
where the first equality is due to that $\langle \CT_i, \CZ_j\rangle=0$ for any $i\ne j$.
\end{proof}

We remark that Theorem~\ref{thm:nucearld2} includes Theorem~\ref{thm:improved-decomposability} as a special case. Since $\CT_3=\CO$ under the circumstances of Theorem~\ref{thm:improved-decomposability}, we have $\|\CT\|_*\le \|\CT_1\|_*+\|\CT_2\|_*$ by the triangle inequality and so Theorem~\ref{thm:nucearld2} holds as an equality. It is also worth mentioning that Theorem~\ref{thm:nucearld2} applies to any tensor instead of a direct sum of two tensors in mutually orthogonal subspaces. This broadens its applicability. For example, restricting to the matrix case, Theorem~\ref{thm:nucearld2} reduces to
$$
\|\BT\|_*\ge \bigl\|\proj_{\TT(\V_1,\V_2)}(\BT)\bigr\|_* + \bigl\|\proj_{\TT(\V_1^\perp,\V_2^\perp)}(\BT)\bigr\|_*\text{ if $\V_k$ is a subspace of $\R^{n_k}$ for $k\in[2]$}.
$$
We are not aware of such a generalization of the decomposability of the matrix nuclear norm, to the best of our knowledge. For tensor spaces, Theorem~\ref{thm:nucearld2} also provides a new way to bound the tensor nuclear norm from below via the flexibility of $\U_\I\bigl((\V_k)_{k=1}^d\bigr)$ and $\U^\I\bigl((\V_k)_{k=1}^d\bigr)$.

We conclude this section with a discussion of pairs of subspaces used in the decomposability of the tensor nuclear norm. Although not explicitly presented, the subspace $\bigoplus_{|\I|\ge2,\,\I\subseteq[d]}\TT^\I\bigl((\V_k)_{k=1}^d\bigr)$ of $\R^{n_1\times n_2\times \dots \times n_d}$ proposed in~\cite{yuan2017incoherent} actually implies (by a property similar to Lemma~\ref{thm:connection} to be discussed soon) another weak decomposability as that of $\bigoplus_{|\I|\ge2,\,\I\subseteq[3]}\TT^\I\bigl((\V_k)_{k=1}^3\bigr)$ in $\R^{n_1\times n_2\times n_3}$, i.e.,
%
$$
\|\CT+\CS\|_*\ge\|\CT\|_*+\frac{2}{d(d-1)}\|\CS\|_*\text{ for any $\CT\in\TT\bigl((\V_k)_{k=1}^d\bigr)$ and $\CS\in\bigoplus_{|\I|\ge2,\,\I\subseteq[d]}\TT^\I\bigl((\V_k)_{k=1}^d\bigr)$}.
$$
The subspace in which $\CS$ resides includes all the basic subspaces of $\R^{n_1\times n_2\times \dots \times n_d}$ spanned by at least two $\V_k^\perp$'s.
Therefore, this weak decomposability applies to the pair of subspaces, $\TT\bigl((\V_k)_{k=1}^d\bigr)$ and $\bigoplus_{|\I|\ge2,\,\I\subseteq[d]}\TT^{\I}\bigl((\V_k)_{k=1}^d\bigr)$, including $1$ and $2^d-d-1$ basic subspaces, respectively. 
The decomposability becomes (quadratically) weaker as $d$ increases. 
By contrast, the natural and restrictive full decomposability in Theorem~\ref{thm:decomposability} applies to $\TT\bigl((\V_k)_{k=1}^d\bigr)$ and $\TT^{[d]}\bigl((\V_k)_{k=1}^d\bigr)$, both being one basic subspace, whereas the improved full decomposability in Theorem~\ref{thm:improved-decomposability} applies to $\U_\I\bigl((\V_k)_{k=1}^d\bigr)$ and $\U^\I\bigl((\V_k)_{k=1}^d\bigr)$, both including $2^{d-|\I|}$ basic subspaces and attaining the maximum $2^{d-2}$ when $|\I|=2$.

\section{Subdifferential of the tensor nuclear norm}\label{sec:subdiff}

The subdifferential of a convex function $f:\R^n\rightarrow\R$ at $\bx\in\R^n$ is defined as
$$
\partial f(\bx):=\bigl\{\bz\in\R^n:f(\by)\ge f(\bx)+\langle \bz, \by-\bx\rangle\,\forall\,\by\in\R^n\bigr\},
$$
and the elements of $\partial f(\bx)$ are called subgradients; see, e.g.,~\cite[Section~23]{rock1997convex} and~\cite[Section~1]{mordukhovich2006variational}.
{\color{black}Applying to the tensor nuclear norm, we have for any $\CT\in\R^{n_1\times n_2\times \dots \times n_d}$ that
$$
    \partial \|\CT\|_*=\bigl\{\CZ\in\R^{n_1\times n_2\times \dots \times n_d}:\|\CY\|_*\ge \|\CT\|_*+\langle \CZ, \CY-\CT\rangle\,\forall\,\CY\in\R^{n_1\times n_2\times \dots \times n_d}\bigr\}.
$$
}As an immediate consequence of~\cite[Corollary~8.25]{rockafellar2009variational}, the subdifferential of a norm $\|\bullet\|_{\diamond}:\R^n\rightarrow\R$ has a representation 
$$\partial\|\bx\|_{\diamond}=\bigl\{\bz\in\R^n:\langle\bz,\bx\rangle=\|\bx\|_{\diamond},\,\|\bz\|_{\circ}\le 1\bigr\},$$ 
where $\|\bullet\|_{\circ}$
is the dual norm of $\|\bullet\|_{\diamond}$; \textcolor{black}{see (\ref{eq:exact}) for its specialization to the tensor nuclear norm}.

It is well known that the subdifferential of the matrix nuclear norm has an explicit characterization. If $\CT=\BU\BD\BV^{\T}\in\R^{n_1\times n_2}$ is a compact SVD, then
\begin{equation} \label{eq:matrixinclu}
\partial\|\BT\|_* =\bigl\{\BU\BV^{\T}+\BX:\BX\in\TT^{\{1,2\}}(\BT), \|\BX\|_\sigma\le 1\bigr\};
\end{equation}
see, e.g.,~\cite[Example~2]{watson1992characterization} 
and~\cite{lewis2005nonsmooth}.
We remark that $\BU\BV^{\T}$ is invariant over all compact SVDs of $\CT$ due to Autonne's uniqueness~\cite[Theorem~2.6.5]{horn2012matrix}. The importance of this representation has been well recognized in mathematical optimization and statistics.

In the tensor space, however, only two limited subdifferential inclusions of the nuclear norm are known in the literature, to the best of our knowledge. It was first stated in \textcolor{black}{the discussion following}~\cite[Lemma~1]{yuan2016tensor} that for any third-order tensor $\CT\in\R^{n_1\times n_2\times n_3}$, 
\begin{equation*}\label{eq:subdiff-old}
{\color{black}\overline{\D}_1(\CT):=\mleft\{\CZ+\proj_{\bigoplus_{|\I|\ge2,\,\I\subseteq[3]}\TT^\I(\CT)}(\CX):\CZ\in\Z(\CT),\,\CX\in\R^{n_1\times n_2\times n_3},\,\|\CX\|_\sigma\le \frac{1}{2}\mright\} \subseteq \partial \|\CT\|_*,}
\end{equation*}
where for any $\CT\in\R^{n_1\times n_2\times \dots \times n_d}$,
\begin{equation}\label{eq:z(t)}
    \Z(\CT):=
    \begin{dcases}
        \{\CO\} & \CT=\CO \\
        \bigl\{\CZ\in\TT(\CT):\langle\CZ,\CT\rangle=\|\CT\|_*,\,\|\CZ\|_\sigma=1\bigr\} & \CT\ne\CO.
    \end{dcases}
\end{equation}
Later, it was stated in \textcolor{black}{the discussion following}~\cite[Theorem~1]{yuan2017incoherent} that for any $\CT\in\R^{n_1\times n_2\times \dots \times n_d}$,
\begin{equation}\label{eq:subdiff-incoherent}
    \D_2(\CT):=\mleft\{\CZ+\CX:\CZ\in\Z(\CT),\,\CX\in\bigoplus_{|\I|\ge2,\,\I\subseteq[d]}\TT^\I(\CT),\,\|\CX\|_\sigma\le \frac{2}{d(d-1)}\mright\} \subseteq \partial \|\CT\|_*.
\end{equation}

{\color{black}In this paper, we work with a slightly different variant of $\overline{\D}_1(\CT)$, i.e.,
\begin{equation}\label{eq:subdiff-yuanming}
   \D_1(\CT):= \mleft\{\CZ+\CX:\CZ\in\Z(\CT),\,\CX\in\bigoplus_{|\I|\ge2,\,\I\subseteq[3]}\TT^\I(\CT),\,
     \|\CX\|_\sigma\le \frac{1}{2}\mright\} \subseteq \partial \|\CT\|_*,
\end{equation}
which makes the component in $\bigoplus_{|\I|\ge2,\,\I\subseteq[3]}\TT^\I(\CT)$ explicit.
This is mainly
to 
match the same convention in the work~\cite{yuan2017incoherent} for the general order $d$, i.e., an explicit part in $\bigoplus_{|\I|\ge2,\,\I\subseteq[d]}\TT^\I(\CT)$ adopted in $\D_2(\CT)$.
We believe that this formulation is more natural and transparent; see Section~\ref{sec:final-remark} for a further discussion.}
We remark that although $\D_2(\CT)$ applies to tensors of an arbitrary order, it does not include $\D_1(\CT)$ as a special case. In fact, $\D_2(\CT)$ for $d=3$ is strictly smaller than $\D_1(\CT)$. The two inclusions have been applied in tensor completions~\cite{yuan2016tensor,yuan2017incoherent}. It is also worth mentioning that $\Z(\CT)$ consists of exactly one element if $\BT$ is a matrix, as a consequence of Von Neumann's trace inequality~\cite[Theorem~0.1]{rhea2011case}.

\subsection{Decomposability and subdifferential}\label{sec:connect}

To gain a better understanding of the subdifferential of the tensor nuclear norm, let us first establish its connections with the decomposability. This enables us to apply the results developed in Section~\ref{sec:decomp}.

\begin{lemma} \label{thm:connection}
If $d\ge2$, $\I\subseteq[d]$ with $|\I|\ge 1$, and $\tau> 0$, then
$\textnormal{(i)}\implies\textnormal{(ii)}\implies\textnormal{(iii)}$, where
\begin{enumerate}[label=\textnormal{(\roman*)}]
\item $\|\CT+\CS\|_\sigma\le\max\bigl\{\|\CT\|_\sigma,\|\CS\|_\sigma/\tau\bigr\}$ for any $\CT\in\R^{n_1\times n_2\times \dots \times n_d}$ and $\CS\in\U^\I(\CT)$;
\item $\bigl\{\CZ+\CX:\CZ\in\Z(\CT),\,\CX\in\U^\I(\CT),\,\|\CX\|_\sigma\le \tau\bigr\} \subseteq \partial \|\CT\|_*$ for any $\CT\in\R^{n_1\times n_2\times \dots \times n_d}$;
\item $\|\CT+\CS\|_*\ge\|\CT\|_*+\tau\|\CS\|_*$ for any $\CT\in\R^{n_1\times n_2\times \dots \times n_d}$ and $\CS\in\U^\I(\CT)$.
\end{enumerate}
\end{lemma}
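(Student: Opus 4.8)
The plan is to prove the two implications by pure duality bookkeeping, using the characterization $\partial\|\CT\|_*=\bigl\{\CW:\langle\CW,\CT\rangle=\|\CT\|_*,\ \|\CW\|_\sigma\le1\bigr\}$ recalled at the start of Section~\ref{sec:subdiff}, together with two structural facts about $\U^\I(\CT)$ that hold because $|\I|\ge1$. First, $\U^\I(\CT)=\bigoplus_{\I\subseteq\J\subseteq[d]}\TT^\J(\CT)$ does not contain the summand $\TT^\varnothing(\CT)=\TT(\CT)$, so $\TT(\CT)\perp\U^\I(\CT)$; in particular both $\CT$ and every $\CZ\in\Z(\CT)\subseteq\TT(\CT)$ are orthogonal to all of $\U^\I(\CT)$. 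Second, $\U^\I(\CT)$ is itself a $\TT$-type subspace, namely $\U^\I(\CT)=\TT\bigl((\W_k)_{k=1}^d\bigr)$ with $\W_k=\spn_k(\CT)^\perp$ for $k\in\I$ and $\W_k=\R^{n_k}$ otherwise, so Lemma~\ref{prop:nuclear-U-equiv} and the characterization $\TT\bigl((\V_k)_{k=1}^d\bigr)=\{\CS:\spn_k(\CS)\subseteq\V_k\ \forall\,k\in[d]\}$ apply to it verbatim.

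\textbf{Step 1 ($\textnormal{(i)}\Rightarrow\textnormal{(ii)}$).} Fixing $\CT$, I first treat the case $\CT\ne\CO$. Given $\CZ\in\Z(\CT)$ and $\CX\in\U^\I(\CT)$ with $\|\CX\|_\sigma\le\tau$, set $\CW:=\CZ+\CX$. The identity $\langle\CW,\CT\rangle=\langle\CZ,\CT\rangle+\langle\CX,\CT\rangle=\|\CT\|_*+0$ gives the first membership condition. For the second, $\|\CW\|_\sigma\le1$, I apply hypothesis (i) with the \emph{dual certificate} $\CZ$, not $\CT$, in the role of the base tensor; this is legitimate provided $\CX\in\U^\I(\CZ)$, which holds because $\CZ\in\TT(\CT)$ forces $\spn_k(\CZ)\subseteq\spn_k(\CT)$, hence $\spn_k(\CZ)^\perp\supseteq\spn_k(\CT)^\perp$, hence $\U^\I(\CT)\subseteq\U^\I(\CZ)$. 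Then (i) yields $\|\CZ+\CX\|_\sigma\le\max\{\|\CZ\|_\sigma,\|\CX\|_\sigma/\tau\}\le\max\{1,1\}=1$, so $\CW\in\partial\|\CT\|_*$. The case $\CT=\CO$ is separate: there $\Z(\CO)=\{\CO\}$, $\U^\I(\CO)=\R^{n_1\times n_2\times\dots\times n_d}$ and $\partial\|\CO\|_*$ is the unit spectral ball, so (ii) reduces to $\tau\le1$, which is precisely what (i) delivers when applied with $\CT=\CO$ and any nonzero $\CS$.

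\textbf{Step 2 ($\textnormal{(ii)}\Rightarrow\textnormal{(iii)}$).} Fix $\CT$ and $\CS\in\U^\I(\CT)$; the case $\CS=\CO$ is trivial, so assume $\CS\ne\CO$. Since $\CS\in\U^\I(\CT)$ forces $\TT(\CS)\subseteq\U^\I(\CT)$, Lemma~\ref{prop:nuclear-U-equiv} applied to $\CS$ produces a dual certificate $\CY\in\TT(\CS)\subseteq\U^\I(\CT)$ with $\|\CY\|_\sigma=1$ and $\langle\CS,\CY\rangle=\|\CS\|_*$. Put $\CX:=\tau\CY\in\U^\I(\CT)$, so $\|\CX\|_\sigma=\tau$, and pick any $\CZ\in\Z(\CT)$; by hypothesis (ii), $\CW:=\CZ+\CX\in\partial\|\CT\|_*$. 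Using $\|\CW\|_\sigma\le1$ with Lemma~\ref{lma:norm-duality}, then $\langle\CW,\CT\rangle=\|\CT\|_*$, then $\langle\CZ,\CS\rangle=0$, I obtain
\[
\|\CT+\CS\|_*\ \ge\ \langle\CW,\CT+\CS\rangle\ =\ \langle\CW,\CT\rangle+\langle\CX,\CS\rangle\ =\ \|\CT\|_*+\tau\langle\CY,\CS\rangle\ =\ \|\CT\|_*+\tau\|\CS\|_*,
\]
which is (iii).

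\textbf{Main obstacle.} There is no hard estimate: all the analytic content sits in hypothesis (i), and everything downstream is duality manipulation. The delicate part is the subspace bookkeeping, which appears twice: in Step 1, seeing that (i) must be invoked with the dual certificate $\CZ$ rather than $\CT$ as base tensor, which relies on the monotonicity $\U^\I(\CT)\subseteq\U^\I(\CZ)$; and in Step 2, recognizing $\U^\I(\CT)$ as a $\TT$-type subspace so that a dual certificate of $\CS$ can be taken inside it. One must also watch the degenerate cases $\CT=\CO$ and $\CS=\CO$, where each statement collapses to the bound $\tau\le1$ that is already forced by the corresponding hypothesis.
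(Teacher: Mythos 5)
Your proof is correct and follows essentially the same route as the paper's: both implications hinge on the same two ideas, namely the monotonicity $\U^\I(\CT)\subseteq\U^\I(\CZ)$ for $\CZ\in\Z(\CT)$ (so that (i) can be invoked with $\CZ$ rather than $\CT$ as base tensor), and applying Lemma~\ref{prop:nuclear-U-equiv} to place a dual certificate of $\CS$ inside $\U^\I(\CT)$ for the second implication. Your use of the equality-plus-unit-ball characterization of $\partial\|\CT\|_*$ in Step 1, and your explicit handling of $\CT=\CO$, are cosmetic variations on the paper's argument (which uses the raw subgradient inequality and leaves the degenerate case implicit), not a different method.
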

\begin{proof}
Suppose that (i) holds. For any $\CZ+\CX$ in the left-hand-side set in (ii), $\CZ\in\Z(\CT)$ implies that $\U^\I(\CT)\subseteq\U^\I(\CZ)$ and so $\CX\in\U^\I(\CT)\subseteq\U^\I(\CZ)$. As a result, $\|\CZ+\CX\|_\sigma\le\max\bigl\{\|\CZ\|_\sigma,\|\CX\|_\sigma/\tau\bigr\}\le 1$. 
Therefore, for any $\CY\in\R^{n_1\times n_2\times \dots \times n_d}$,
\begin{equation}\label{eq:in}
    \langle\CZ+\CX,\CY-\CT\rangle=\langle\CZ+\CX,\CY\rangle-\langle\CZ,\CT\rangle-\langle\CX,\CT\rangle\le\|\CZ+\CX\|_\sigma\|\CY\|_*-\|\CT\|_*\le\|\CY\|_*-\|\CT\|_*,
\end{equation}
where the first inequality is due to Lemma~\ref{lma:norm-duality},~\eqref{eq:z(t)}, and the orthogonality between $\CT\in\TT(\CT)$ and $\CX\in\U^\I(\CT)$. This shows that $\CZ+\CX\in\partial\|\CT\|_*$.

Suppose that (ii) holds.
For any $\CT\in\R^{n_1\times n_2\times \dots \times n_d}$ and $\CS\in\U^\I(\CT)$, there exists an $\CX\in\U^\I(\CT)$, such that $\langle \CX,\CS\rangle=\|\CS\|_*$ and $\|\CX\|_\sigma=1$ by Lemma~\ref{prop:nuclear-U-equiv}.
As a result, $\CZ+\tau\CX\in\partial\|\CT\|_*$ for any $\CZ\in\Z(\CT)$. Therefore, 
$$
\|\CT+\CS\|_*\ge \|\CT\|_* + \langle\CZ+\tau\CX,\CS\rangle= \|\CT\|_* + \langle\CZ,\CS\rangle +\tau\langle\CX,\CS\rangle= \|\CT\|_* + \tau\|\CS\|_*,
$$
where the last equality is due to the orthogonality between $\CZ\in\Z(\CT)\subseteq\TT(\CT)$ and $\CS\in\U^\I(\CT)$.
\end{proof}

In a nutshell, a decomposability of the spectral norm implies an inclusion of the subdifferential of the nuclear norm, which in turn implies a decomposability of the nuclear norm. We remark that Lemma~\ref{thm:connection} actually specializes to a universal version of Lemma~\ref{prop:duality-decomp}, i.e., $\textnormal{(i)}\implies\textnormal{(iii)}$ when $\tau=1$.
It is worth noting that when $\tau=1$, the inequalities in (i) and (iii) of Lemma~\ref{thm:connection} actually become equalities, i.e., $\|\CT+\CS\|_\sigma=\max\bigl\{\|\CT\|_\sigma,\|\CS\|_\sigma\bigr\}$ due to Lemma~\ref{thm:spec-subspace} and $\|\CT+\CS\|_*=\|\CT\|_*+\|\CS\|_*$ due to the triangle inequality. As a result of Lemma~\ref{thm:connection} with $\tau=1$, the decomposability of the spectral norm discussed in Section~\ref{sec:decomp}, in particular Theorem~\ref{thm:spec-decomp}, immediately provides new inclusions of the subdifferential of the nuclear norm.
\begin{corollary}\label{thm:newset}
If $\CT\in\R^{n_1\times n_2\times \dots \times n_d}$ and $\I\subseteq[d]$ with $|\I|\ge 2$, then
\begin{equation}\label{eq:newset}
\D^\I(\CT):=\bigl\{\CZ+\CX: \CZ\in\Z(\CT),\,\CX\in\U^\I(\CT),\,\|\CX\|_\sigma\le 1\bigr\} \subseteq \partial \|\CT\|_*.
\end{equation}
\end{corollary}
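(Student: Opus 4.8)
The plan is to obtain Corollary~\ref{thm:newset} directly from Lemma~\ref{thm:connection}: the claimed inclusion $\D^\I(\CT)\subseteq\partial\|\CT\|_*$ is nothing but statement (ii) of that lemma in the case $\tau=1$, so it suffices to verify the hypothesis (i) of Lemma~\ref{thm:connection} with $\tau=1$, namely that $\|\CT+\CS\|_\sigma\le\max\bigl\{\|\CT\|_\sigma,\|\CS\|_\sigma\bigr\}$ for every $\CT\in\R^{n_1\times n_2\times\dots\times n_d}$ and every $\CS\in\U^\I(\CT)$. All of the analytic work for this has already been done in Theorem~\ref{thm:spec-decomp}; the only thing to check is that the pair $(\CT,\CS)$ actually fits the hypotheses of that theorem.

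Concretely, fix $\CT$ and set $\V_k=\spn_k(\CT)$, so that $\U^\I(\CT)=\U^\I\bigl((\V_k)_{k=1}^d\bigr)$ and $\U_\I(\CT)=\U_\I\bigl((\V_k)_{k=1}^d\bigr)$. Since every mode-$k$ fiber of $\CT$ lies in $\spn_k(\CT)=\V_k$, we have $\CT\in\TT(\CT)=\U_{[d]}(\CT)$; and by the monotonicity of the $\U$-subspaces (for $\I\subseteq[d]$ one has $\U_{[d]}(\CT)\subseteq\U_\I(\CT)$, as recorded after Theorem~\ref{thm:improved-decomposability}) it follows that $\CT\in\U_\I\bigl((\V_k)_{k=1}^d\bigr)$. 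Because $|\I|\ge 2$, Theorem~\ref{thm:spec-decomp} applies to the pair $\CT\in\U_\I\bigl((\V_k)_{k=1}^d\bigr)$, $\CS\in\U^\I\bigl((\V_k)_{k=1}^d\bigr)$ and yields the exact identity $\|\CT+\CS\|_\sigma=\max\bigl\{\|\CT\|_\sigma,\|\CS\|_\sigma\bigr\}$, which in particular gives the inequality required in Lemma~\ref{thm:connection}(i) with $\tau=1$.

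With hypothesis (i) in hand, the implication $\textnormal{(i)}\implies\textnormal{(ii)}$ of Lemma~\ref{thm:connection} (applicable since $|\I|\ge 2\ge 1$ and $d\ge 2$) delivers exactly $\bigl\{\CZ+\CX:\CZ\in\Z(\CT),\,\CX\in\U^\I(\CT),\,\|\CX\|_\sigma\le 1\bigr\}\subseteq\partial\|\CT\|_*$, i.e., $\D^\I(\CT)\subseteq\partial\|\CT\|_*$. The degenerate case $\CT=\CO$ needs no separate treatment, as Lemma~\ref{thm:connection} is stated for arbitrary $\CT$ (there $\Z(\CO)=\{\CO\}$, $\U^\I(\CO)=\R^{n_1\times n_2\times\dots\times n_d}$, and both sides collapse to the unit spectral-norm ball). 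There is essentially no obstacle here: the substance has been front-loaded into Theorem~\ref{thm:spec-decomp} and Lemma~\ref{thm:connection}, and the only point demanding a moment's care is getting the direction of the monotonicity inclusion $\U_{[d]}(\CT)\subseteq\U_\I(\CT)$ right, together with the trivial membership $\CT\in\TT(\CT)$, which is what places $\CT$ in the subspace to which Theorem~\ref{thm:spec-decomp} can be applied.
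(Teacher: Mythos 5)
Your proof is correct and follows essentially the same route as the paper: the paper states the corollary as an immediate consequence of Lemma~\ref{thm:connection} with $\tau=1$ together with Theorem~\ref{thm:spec-decomp}, which is exactly your chain. Your extra remark that $\CT\in\TT(\CT)=\U_{[d]}(\CT)\subseteq\U_\I(\CT)$ is a useful explicit check that the pair $(\CT,\CS)$ satisfies the hypotheses of Theorem~\ref{thm:spec-decomp}, a step the paper leaves implicit.
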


When $d=2$, Corollary~\ref{thm:newset} reduces to the maximal inclusion of the subdifferential of the matrix nuclear norm, i.e.,~\eqref{eq:matrixinclu}. It is important to remark that $\U^\I(\CT)$, a properly chosen subspace for $\CX$, has made the full stretch possible for $\CX$ whose spectral norm can be as large as $1$. Moreover, Corollary~\ref{thm:newset} offers the flexibility to choose any $\U^\I(\CT)$ as long as $|\I|\ge2$. As a result, by combining all possible $\I\subseteq [d]$ with $|\I|= 2$ for higher-order tensors, we can have a larger inclusion.
\begin{theorem}\label{thm:subdiff}
    If $d\ge2$ and $\CT\in\R^{n_1\times n_2\times \dots \times n_d}$, then
    \begin{align}
    \D(\CT) &:=\conv\mleft(\bigcup_{|\I|=2,\,\I\subseteq[d]}\D^\I(\CT)\mright)\nonumber\\
    &=\mleft\{\CZ+\CX: \CZ\in\Z(\CT),\,\CX\in\conv\mleft(\bigl\{\CY:\|\CY\|_\sigma\le 1\bigr\}\bigcap\bigcup_{|\I|=2,\,\I\subseteq[d]}\U^\I(\CT)\mright)\mright\} \nonumber \\&\subseteq \partial\|\CT\|_*. \label{eq:full-subdiff}
    \end{align}
\end{theorem}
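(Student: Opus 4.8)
The plan is to split the display~\eqref{eq:full-subdiff} into the final inclusion and the set identity, the former being essentially free. By Corollary~\ref{thm:newset}, $\D^\I(\CT)\subseteq\partial\|\CT\|_*$ for every $\I\subseteq[d]$ with $|\I|=2$; since $\partial\|\CT\|_*$ is convex (it is the subdifferential of a convex function), it contains $\conv\bigl(\bigcup_{|\I|=2,\,\I\subseteq[d]}\D^\I(\CT)\bigr)=\D(\CT)$. This disposes of $\D(\CT)\subseteq\partial\|\CT\|_*$.

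The substance of the proof is the set equality, and the one non-routine ingredient I would establish first is that $\Z(\CT)$ is itself a convex set. For $\CT=\CO$ this is trivial since $\Z(\CO)=\{\CO\}$. For $\CT\ne\CO$, take $\CZ_1,\CZ_2\in\Z(\CT)$ and $\lambda\in[0,1]$ and put $\CZ:=\lambda\CZ_1+(1-\lambda)\CZ_2$. Then $\CZ\in\TT(\CT)$ because $\TT(\CT)$ is a subspace, $\langle\CZ,\CT\rangle=\lambda\|\CT\|_*+(1-\lambda)\|\CT\|_*=\|\CT\|_*$ by linearity, and $\|\CZ\|_\sigma\le\lambda\|\CZ_1\|_\sigma+(1-\lambda)\|\CZ_2\|_\sigma=1$ by the triangle inequality; conversely Lemma~\ref{lma:norm-duality} gives $\|\CT\|_*=\langle\CZ,\CT\rangle\le\|\CZ\|_\sigma\,\|\CT\|_*$, hence $\|\CZ\|_\sigma\ge1$ since $\|\CT\|_*>0$. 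Therefore $\|\CZ\|_\sigma=1$ and $\CZ\in\Z(\CT)$.

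Granting this, the equality reduces to bookkeeping with Minkowski sums. Writing $\CB:=\bigl\{\CY\in\R^{n_1\times n_2\times\dots\times n_d}:\|\CY\|_\sigma\le1\bigr\}$, the definition~\eqref{eq:newset} says $\D^\I(\CT)=\Z(\CT)+\bigl(\U^\I(\CT)\cap\CB\bigr)$ as a Minkowski sum. Pulling the union over $\I$ through the fixed summand $\Z(\CT)$, and using that intersection distributes over union, gives $\bigcup_{|\I|=2,\,\I\subseteq[d]}\D^\I(\CT)=\Z(\CT)+\bigl(\CB\cap\bigcup_{|\I|=2,\,\I\subseteq[d]}\U^\I(\CT)\bigr)$. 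Applying the elementary identity $\conv(A+B)=\conv(A)+\conv(B)$ for Minkowski sums, together with $\conv(\Z(\CT))=\Z(\CT)$ from the previous paragraph, then yields $\D(\CT)=\Z(\CT)+\conv\bigl(\CB\cap\bigcup_{|\I|=2,\,\I\subseteq[d]}\U^\I(\CT)\bigr)$, which is exactly the middle expression in~\eqref{eq:full-subdiff}.

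The only place that calls for real thought is the convexity of $\Z(\CT)$, and inside it the single non-obvious inequality $\|\CZ\|_\sigma\ge1$, for which the nuclear--spectral duality of Lemma~\ref{lma:norm-duality} is precisely the right tool; everything else is routine set algebra. I would also record the degenerate case $\CT=\CO$ explicitly: then $\spn_k(\CO)=\{0\}$ for every $k$, so $\U^\I(\CO)=\R^{n_1\times n_2\times\dots\times n_d}$ for every $\I$, whence $\D^\I(\CO)=\CB=\partial\|\CO\|_*$ and all three expressions in~\eqref{eq:full-subdiff} collapse to $\CB$, confirming the identity in that case too.
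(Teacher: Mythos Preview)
Your proposal is correct and follows essentially the same approach as the paper: invoke Corollary~\ref{thm:newset} plus convexity of the subdifferential for the inclusion, and use convexity of $\Z(\CT)$ for the set identity. The paper delegates the convexity of $\Z(\CT)$ to Lemma~\ref{lma:convexity-ZT} and additionally cites the orthogonality of $\Z(\CT)$ to $\bigcup_{|\I|=2}\U^\I(\CT)$, but your explicit Minkowski-sum computation $\conv(A+B)=\conv(A)+\conv(B)$ shows that orthogonality is not actually needed for the equality, which is a small clarification over the paper's sketch.
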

\begin{proof}
Because any subdifferential is convex, the inclusion $\D(\CT)\subseteq \partial\|\CT\|_*$ is immediate by Corollary~\ref{thm:newset}. The second equality can also be easily verified by noticing that $\Z(\CT)$ is convex (see the discussions before Example~\ref{ex:notsingle}) and orthogonal to $\bigcup_{|\I|=2,\,\I\subseteq[d]}\U^\I(\CT)$.
\end{proof}

We remark that $\bigcup_{|\I|=2,\,\I\subseteq[d]}\U^\I(\CT)$ does not include every direction of $\sum_{|\I|=2,\,\I\subseteq[d]}\U^\I(\CT)$. 
However, even when the former is intersected by a spectral ball, its convex hull does contain all directions of the latter, simply because
$$\conv\mleft(\bigcup_{|\I|=2,\,\I\subseteq[d]}\U^\I(\CT)\mright)=\sum_{|\I|=2,\,\I\subseteq[d]}\U^\I(\CT)=\bigoplus_{|\I|\ge2,\,\I\subseteq[d]}\TT^\I(\CT).$$

Before we analyze in detail the subdifferential of the tensor nuclear norm in Section~\ref{sec:detail}, let us discuss the relations among $\D_1(\CT)$, $\D_2(\CT)$, and $\D(\CT)$. All these sets consist of subgradients in the form of $\CZ+\CX$, where $\CZ\in\TT(\CT)$ and $\CX\in\bigoplus_{|\I|\ge2,\,\I\subseteq[d]}\TT^\I(\CT)$. There is no difference in terms of $\CZ$ as it must be chosen exactly from $\Z(\CT)$ and there is also no difference in terms of the direction of $\CX$ available from $\bigoplus_{|\I|\ge2,\,\I\subseteq[d]}\TT^\I(\CT)$. However, the key difference among these three sets is the spectral size of $\CX$, i.e., how large $\|\CX\|_\sigma$ can be. The set $\D(\CT)$ clearly beats $\D_1(\CT)$ and $\D_2(\CT)$ for any $\CX\in\bigcup_{|\I|=2,\,\I\subseteq[d]}\U^\I(\CT)$ where a full stretch is attainable, i.e., $\|\CX\|_\sigma\le1$ instead of $\|\CX\|_\sigma\le\frac{1}{2}$ for $\D_1(\CT)$ and $\|\CX\|_\sigma\le\frac{2}{d(d-1)}$ for $\D_2(\CT)$. As a result, a lot of subgradients have been missed in $\D_1(\CT)$ and $\D_2(\CT)$. Moreover, $\frac{2}{d(d-1)}$ tends to zero as $d$ tends to infinity.
\begin{proposition} 
If $d\ge2$ and $\CT\in\R^{n_1\times n_2\times \dots \times n_d}$, then
\begin{align*}
    \D(\CT)\setminus\D_1(\CT)&\supseteq\mleft\{\CZ+\CX: \CZ\in\Z(\CT),\,  
    \CX\in\bigcup_{|\I|=2,\,\I\subseteq[3]}\U^\I(\CT),\,\frac{1}{2}<\|\CX\|_\sigma\le 1\mright\}\text{ if $d=3$}, \\
    \D(\CT)\setminus\D_2(\CT)&\supseteq\mleft\{\CZ+\CX: \CZ\in\Z(\CT),\,  \CX\in\bigcup_{|\I|=2,\,\I\subseteq[d]}\U^\I(\CT),\,\frac{2}{d(d-1)}<\|\CX\|_\sigma\le 1\mright\}.
\end{align*}
\end{proposition}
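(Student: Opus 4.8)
The plan is to check, for each element of the right-hand-side set, two membership claims separately: that it lies in $\D(\CT)$, and that it does \emph{not} lie in $\D_1(\CT)$ (respectively $\D_2(\CT)$). The first is essentially free. Given $\CZ\in\Z(\CT)$ and $\CX\in\U^\I(\CT)$ with $|\I|=2$ and $\|\CX\|_\sigma\le 1$, the tensor $\CZ+\CX$ belongs to $\D^\I(\CT)$ by~\eqref{eq:newset}, hence to $\bigcup_{|\I|=2,\,\I\subseteq[d]}\D^\I(\CT)\subseteq\conv\bigl(\bigcup_{|\I|=2,\,\I\subseteq[d]}\D^\I(\CT)\bigr)=\D(\CT)$ by Theorem~\ref{thm:subdiff}.

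For the exclusion I would argue by contradiction, exploiting the orthogonality of the basic subspaces recalled in Section~\ref{sec:subspaces}. First note two elementary facts: $\Z(\CT)\subseteq\TT(\CT)=\TT^\varnothing(\CT)$ by~\eqref{eq:z(t)}, and, for $|\I|=2$, $\U^\I(\CT)=\bigoplus_{\I\subseteq\J\subseteq[d]}\TT^\J(\CT)\subseteq\bigoplus_{|\J|\ge 2,\,\J\subseteq[d]}\TT^\J(\CT)$, since every $\J$ containing $\I$ satisfies $|\J|\ge 2$. Now suppose, for $d=3$, that $\CZ+\CX\in\D_1(\CT)$, say $\CZ+\CX=\CZ'+\CX'$ with $\CZ'\in\Z(\CT)$, $\CX'\in\bigoplus_{|\J|\ge 2,\,\J\subseteq[3]}\TT^\J(\CT)$, and $\|\CX'\|_\sigma\le\frac12$. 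Then $\CZ-\CZ'=\CX'-\CX$, where the left side lies in $\TT^\varnothing(\CT)$ and the right side lies in $\bigoplus_{|\J|\ge 2}\TT^\J(\CT)$; since the $2^d$ basic subspaces are pairwise orthogonal, $\TT^\varnothing(\CT)$ is orthogonal to $\bigoplus_{|\J|\ge 1}\TT^\J(\CT)$, so both sides vanish. In particular $\CX=\CX'$ and therefore $\|\CX\|_\sigma=\|\CX'\|_\sigma\le\frac12$, contradicting the hypothesis $\|\CX\|_\sigma>\frac12$. Thus $\CZ+\CX\notin\D_1(\CT)$, which together with the previous paragraph yields the first inclusion. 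The second inclusion follows by repeating this argument verbatim, with $[3]$ replaced by $[d]$ and $\frac12$ replaced by $\frac{2}{d(d-1)}$ throughout, using the definition~\eqref{eq:subdiff-incoherent} of $\D_2(\CT)$.

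I do not anticipate a real obstacle: the argument is a bookkeeping computation about orthogonal direct sums. The only points deserving a careful sentence are that the ambient decomposition $\R^{n_1\times n_2\times\dots\times n_d}=\bigoplus_{\J\subseteq[d]}\TT^\J(\CT)$ is orthogonal (so the $\TT^\varnothing(\CT)$-component of $\CZ+\CX$ is uniquely determined, forcing $\CZ=\CZ'$ and hence $\CX=\CX'$) and that $\bigcup_{|\I|=2}\U^\I(\CT)\subseteq\bigoplus_{|\J|\ge 2}\TT^\J(\CT)$, so $\CX$ and $\CX'$ are drawn from the same subspace; both are immediate from Section~\ref{sec:subspaces}. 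The degenerate case $\CT=\CO$ is covered by the same reasoning, since then $\TT^\varnothing(\CT)=\{\CO\}$ and $\bigoplus_{|\J|\ge 2}\TT^\J(\CT)=\R^{n_1\times n_2\times\dots\times n_d}$, so again $\CX=\CX'$ is forced.
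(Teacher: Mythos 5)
Your proof is correct. The paper states this proposition without proof, presumably regarding it as immediate, and your argument is the natural one: membership in $\D(\CT)$ is a direct consequence of Corollary~\ref{thm:newset} plus the definition of $\D(\CT)$ as a convex hull, while the exclusion from $\D_1(\CT)$ (resp.\ $\D_2(\CT)$) follows because the basic subspaces $\TT^\J(\CT)$, $\J\subseteq[d]$, are pairwise orthogonal, so the decomposition of $\CZ+\CX$ into a $\TT^\varnothing(\CT)$-component and a $\bigoplus_{|\J|\ge 2}\TT^\J(\CT)$-component is unique, forcing $\CX=\CX'$ and hence $\|\CX\|_\sigma\le\tfrac12$ (resp.\ $\le\tfrac{2}{d(d-1)}$), contradicting the strict lower bound on $\|\CX\|_\sigma$. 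The only two facts you invoke beyond definitions --- that $\Z(\CT)\subseteq\TT^\varnothing(\CT)$ and that $\U^\I(\CT)\subseteq\bigoplus_{|\J|\ge 2}\TT^\J(\CT)$ when $|\I|=2$ --- are both correct and stated in the paper's Section~\ref{sec:subspaces}, and your remark on the degenerate case $\CT=\CO$ is accurate, so the argument is complete.
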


When $d=3$, there does exist an $\CX\in\bigoplus_{|\I|\ge2,\,\I\subseteq[3]}\TT^\I(\CT)\setminus\bigcup_{|\I|=2,\,\I\subseteq[3]}\U^\I(\CT)$ such that $\CZ+\CX\in\D(\CT)$ requires $\|\CX\|_\sigma\le\alpha$ for some $\alpha<\frac{1}{2}$, implying that $\D_1(\CT)\setminus \D(\CT)\ne\varnothing$ when $d=3$. To be specific, the tensor $\CX(\frac{1}{3})$ in Example~\ref{ex:yuan3}, adding any $\CZ\in\Z(\CT)$, resides on the boundary of 
$\D(\CT)$ but $\|\CX(\frac{1}{3})\|_\sigma=\frac{2}{3\sqrt{3}}<\frac{1}{2}$.
However, $\D(\CT)$ always includes $\D_2(\CT)$ as a proper subset for any $d\ge3$. We believe that $\D(\CT)$ is more useful in applications.

\begin{proposition}
If $d\ge2$ and $\CT\in\R^{n_1\times n_2\times \dots\times n_d}$, then $\D_2(\CT)\subseteq\D(\CT)$.
\end{proposition}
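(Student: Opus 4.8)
The plan is to reduce $\D_2(\CT)\subseteq\D(\CT)$ to a statement about a single tensor. Since both $\D_2(\CT)$ and $\D(\CT)$ consist of sums $\CZ+\CX$ with the same free part $\CZ\in\Z(\CT)$ (see the second description of $\D(\CT)$ in Theorem~\ref{thm:subdiff}), it suffices to show that every $\CX\in\bigoplus_{|\I|\ge2,\,\I\subseteq[d]}\TT^\I(\CT)$ with $\|\CX\|_\sigma\le\frac{2}{d(d-1)}$ lies in $\conv\bigl(\{\CY:\|\CY\|_\sigma\le1\}\cap\bigcup_{|\I|=2,\,\I\subseteq[d]}\U^\I(\CT)\bigr)$. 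Writing $m:=\binom{d}{2}$, one has $\frac{2}{d(d-1)}=\frac1m$, so the goal becomes to exhibit $\CX$ as the average of $m$ tensors, one for each $2$-subset $\I$, each lying in $\U^\I(\CT)$ and of spectral norm at most $1$.

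The heart of the argument is a careful splitting of $\CX$ across the subspaces $\U^\I(\CT)$, $|\I|=2$, that controls the spectral norm of each piece. First I would expand $\CX=\sum_{|\J|\ge2}\CX_\J$ along the basic subspaces, with $\CX_\J=\proj_{\TT^\J(\CT)}(\CX)$, and assign each $\J$ with $|\J|\ge2$ to the $2$-subset consisting of its two smallest elements. For a fixed pair $\{a,b\}$ with $a<b$, the index sets assigned to it are precisely those $\J$ with $\J\cap[b]=\{a,b\}$, which form an interval of the subset lattice; consequently $\bigoplus_{\J:\,\J\cap[b]=\{a,b\}}\TT^\J(\CT)$ is again a subspace of the type $\TT((\W_k)_{k=1}^d)$ --- with $\W_k=\spn_k(\CT)^\perp$ for $k\in\{a,b\}$, $\W_k=\spn_k(\CT)$ for $k<b$ with $k\ne a$, and $\W_k=\R^{n_k}$ otherwise --- which follows from $\R^{n_k}=\spn_k(\CT)\oplus\spn_k(\CT)^\perp$ exactly as in the construction of the $\U$-subspaces, and which is contained in $\U^{\{a,b\}}(\CT)$. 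Taking $\CZ_{a,b}$ to be the orthogonal projection of $\CX$ onto this subspace, I obtain $\CZ_{a,b}\in\U^{\{a,b\}}(\CT)$ and, crucially, $\|\CZ_{a,b}\|_\sigma\le\|\CX\|_\sigma$ by the last assertion of Lemma~\ref{thm:spec-subspace}, since projecting onto a $\TT((\W_k))$-type subspace does not increase the spectral norm. Because the intervals $\{\J:\J\cap[b]=\{a,b\}\}$, ranging over all pairs $a<b$, partition $\{\J\subseteq[d]:|\J|\ge2\}$ and $\CX$ has no component in $\TT^\J(\CT)$ when $|\J|\le1$, summing yields $\sum_{a<b}\CZ_{a,b}=\CX$.

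To finish, I would invoke the count: there are exactly $m=\binom{d}{2}$ pairs $a<b$, and for each one $\|\CZ_{a,b}\|_\sigma\le\|\CX\|_\sigma\le\frac1m$, hence $m\,\CZ_{a,b}$ lies in $\{\CY:\|\CY\|_\sigma\le1\}\cap\U^{\{a,b\}}(\CT)$. Therefore $\CX=\sum_{a<b}\CZ_{a,b}=\frac1m\sum_{a<b}\bigl(m\,\CZ_{a,b}\bigr)$ displays $\CX$ as a convex combination of $m$ tensors in $\{\CY:\|\CY\|_\sigma\le1\}\cap\bigcup_{|\I|=2,\,\I\subseteq[d]}\U^\I(\CT)$ (the zero tensor also lies in this set, which covers any term with $\CZ_{a,b}=\CO$), so $\CX$ lies in the required convex hull and $\CZ+\CX\in\D(\CT)$ for every $\CZ\in\Z(\CT)$, giving $\D_2(\CT)\subseteq\D(\CT)$. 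The one delicate point I foresee is the choice of the grouping $\J\mapsto\{\text{two smallest elements of }\J\}$: an arbitrary assignment of each $\CX_\J$ to some containing pair would leave each aggregated piece in an unstructured union of basic subspaces, on which no spectral-norm bound is available, whereas the two-smallest-elements rule is exactly what turns every group into a $\TT((\W_k))$-type subspace so that Lemma~\ref{thm:spec-subspace} applies; the identity $\binom{d}{2}\cdot\frac{2}{d(d-1)}=1$ then shows that the threshold $\frac{2}{d(d-1)}$ built into $\D_2(\CT)$ is precisely the largest constant this construction can absorb.
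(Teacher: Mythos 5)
Your proof is correct and follows essentially the same route as the paper's: the paper's subspaces $\W^{\{i,j\}}(\CT)$ are precisely your "two-smallest-elements" aggregates $\bigoplus_{\J:\,\J\cap[b]=\{a,b\}}\TT^\J(\CT)$, the spectral-norm control on each piece comes from the projection inequality in Lemma~\ref{thm:spec-subspace}, and the final step is the same $\frac{1}{\binom{d}{2}}$-weighted convex combination. Your observation that the needed bound $\|\CZ_{a,b}\|_\sigma\le\|\CX\|_\sigma$ is just the closing assertion of Lemma~\ref{thm:spec-subspace} is a cleaner way to state the step that the paper re-derives explicitly.
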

\begin{proof}
Let $\CZ+\CX\in\D_2(\CT)$ where $\CZ\in\Z(\CT)$ and $\CX\in\bigoplus_{|\I|\ge2,\,\I\subseteq[d]}\TT^\I(\CT)$ with $\|\CX\|_\sigma\le\frac{2}{d(d-1)}$. It suffices to show that $\CX\in\conv\bigl(\mleft\{\CY:\|\CY\|_\sigma\le 1\mright\}\bigcap\bigcup_{|\I|=2,\,\I\subseteq[d]}\U^\I(\CT)\bigr)$.

For any $\I=\{i,j\}$ with $1\le i<j\le d$, denote $\W^\I(\CT)=\W^\I\bigl((\spn_k(\CT))_{k=1}^d\bigr)$, where
    \begin{equation*}
\W^\I\bigl((\V_k)_{k=1}^d\bigr):=\spn\mleft(\mleft\{\bigotimes_{k=1}^d \bv_k:
    \bv_k\in
    \begin{dcases}
        \V_k&k\in[j-1]\setminus\{i\}
        \\
        \R^{n_k}&k\in[d]\setminus[j]
        \\
        \V_k^\perp&k=i,j
    \end{dcases}\mright\}\mright)
    \end{equation*}
for any subspace $\V_k\subseteq\R^{n_k}$ for $k\in[d]$. It is easy to verify that $\bigoplus_{|\I|=2,\,\I\subseteq[d]}\W^\I(\CT)=\bigoplus_{|\I|\ge2,\,\I\subseteq[d]}\TT^\I(\CT)$ and $\W^\I(\CT)\subseteq\U^\I(\CT)$. 
Since there are exactly $s=\frac{d(d-1)}{2}$ different index sets $\I$ satisfying $|\I|=2$ and $\I\subseteq[d]$, we may denote them to be $\I_1,\I_2,\dots,\I_s$. As a result, we have $\CX\in\bigoplus_{i=1}^s\W^{\I_i}(\CT)$ and so $\CX$ can be uniquely decomposed as $\sum_{i=1}^{s}\CX_i$, where $\CX_i\in\W^{\I_i}(\CT)$ for $i\in[s]$.

It is important to observe that $\|\CX_i\|_\sigma\le\|\CX\|_\sigma\le \frac{1}{s}$ for any $i\in[s]$. To see why, let $\I_i=\{j_1,j_2\}$ with $j_1<j_2$. By Lemma~\ref{thm:spec-subspace}, there exist $\bv_k\in\SI^{n_k}$ with
    \begin{equation*}
    \bv_k\in
    \begin{dcases}
        \spn_k(\CT)&k\in[j_2-1]\setminus\{j_1\}
        \\
        \R^{n_k}&k\in[d]\setminus[j_2]
        \\
        \spn_k^\perp(\CT)&k=j_1,j_2
    \end{dcases}
    \end{equation*}
for $k\in[d]$, 
such that 
$$
\|\CX_i\|_\sigma 
= \mleft\langle \CX_i, \bigotimes_{k=1}^d \bv_k\mright\rangle
= \mleft\langle \proj_{\W^{\I_i}(\CT)}(\CX), \bigotimes_{k=1}^d \bv_k\mright\rangle
= \mleft\langle \CX, \proj_{\W^{\I_i}(\CT)}\mleft(\bigotimes_{k=1}^d \bv_k\mright)\mright\rangle
\le \|\CX\|_\sigma.
$$
As a result, we have
$$
\CX = \frac{1}{s}\sum_{i=1}^{s} s\CX_i \text{ with } \|s\CX_i\|_{\sigma}\le 1 \text{ and }s\CX_i\in \W^{\I_i}(\CT)\subseteq\U^{\I_i}(\CT) \text{ for } i\in[s],
$$
implying that $\CX\in\conv\bigl(\bigl\{\CY:\|\CY\|_\sigma\le 1\bigr\} \bigcap \bigcup_{|\I|=2,\,\I\subseteq[d]}\U^\I(\CT)\bigr)$.
\end{proof}

\subsection{Subgradients in subspaces}\label{sec:detail}

A properly chosen subspace has not only provided better inclusions of the subdifferential of the tensor nuclear norm such as Corollary~\ref{thm:newset} and Theorem~\ref{thm:subdiff}, but also resulted \textcolor{black}{in} the full decomposability of the tensor nuclear norm such as Theorem~\ref{thm:improved-decomposability}. In this part we look into the details of subgradients of the tensor nuclear norm in various subspaces of interest. In particular, we look into the structure of the subdifferential and estimate the bounds of the spectral norm in relevant subspaces, in order to provide a class of tensors that must be subgradients and a class that cannot be. 

Let us first examine the matrix case. Recall that $\partial\|\BT\|_* =\bigl\{\BU\BV^{\T}+\BX:\BX\in\TT^{\{1,2\}}(\BT),\,\|\BX\|_\sigma\le 1\bigr\}$ if $\BT=\BU\BD\BV^{\T}$ is a compact SVD. The projections of any subgradient $\CG\in\partial\|\BT\|_*$ onto the four basic subspaces, $\TT(\BT)$, $\TT^{\{1\}}(\BT)$, $\TT^{\{2\}}(\BT)$, and $\TT^{\{1,2\}}(\BT)$, behave exactly as follows:
\begin{enumerate}[label=\textnormal{(\roman*)}]
    \item $\proj_{\TT(\BT)}(\CG)=\BU\BV^{\T}$ which is unique although the SVD may not be unique;
    \item $\proj_{\TT^{\{1\}}(\BT)}(\CG)=\CO$ and $\proj_{\TT^{\{2\}}(\BT)}(\CG)=\CO$;
    \item $\proj_{\TT^{\{1,2\}}(\BT)}(\CG)$ can be any matrix 
    \textcolor{black}{in $\TT^{\{1,2\}}(\BT)$}
    as long as $\|\proj_{\TT^{\{1,2\}}(\BT)}(\CG)\|_\sigma\le1$.
\end{enumerate}
Perhaps it is the last property that motivated the construction of the subspace $\bigoplus_{|\I|\ge2,\,\I\subseteq[d]}\TT^\I(\CT)$ when the subdifferential of the tensor nuclear norm was first studied in~\cite{yuan2016tensor,yuan2017incoherent}. This is indeed a good choice as we will explain soon, but there are more interesting ones, such as $\U^\I(\CT)$ for any $|\I|\ge2$ in Corollary~\ref{thm:newset}. As the first fact we observed, none of the above three properties holds for higher-order tensors. Let us now examine the tensor case.

Given a nonzero $\CT\in\R^{n_1\times n_2\times \dots\times n_d}$, we have by definition that
\begin{align}
\partial\|\CT\|_*&=\bigl\{\CY\in\R^{n_1\times n_2\times \dots\times n_d}:\langle\CT,\CY\rangle=\|\CT\|_*,\,\|\CY\|_\sigma\le 1\bigr\}\nonumber \\ 
&= \mleft\{\CZ+\CX: \CZ\in\Z(\CT),\,\CX\in\bigoplus_{|\I|\ge1,\,\I\subseteq[d]}\TT^\I(\CT),\,\|\CZ+\CX\|_\sigma\le 1\mright\}, \label{eq:exact}
\end{align}
where $\Z(\CT)=\bigl\{\CZ\in\TT(\CT):\langle\CZ,\CT\rangle=\|\CT\|_*,\,\|\CZ\|_\sigma=1\bigr\}$ from~\eqref{eq:z(t)}. This natural decomposition of subgradients into $\CZ+\CX$ allows to examine $\CZ$ and $\CX$ separately in two mutually orthogonal subspaces but one needs to make sure $\|\CZ+\CX\|_\sigma\le 1$.
Let us first consider the set $\Z(\CT)$.
\begin{lemma}\label{lma:convexity-ZT}
    If $\CT\in\R^{n_1\times n_2\times \dots\times n_d}$, then $\Z(\CT)$ is nonempty, compact, and convex.
\end{lemma}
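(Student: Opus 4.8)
The plan is to treat the degenerate case $\CT=\CO$ separately (where $\Z(\CT)=\{\CO\}$ is trivially nonempty, compact, and convex) and, for $\CT\neq\CO$, to first rewrite $\Z(\CT)$ so that it is visibly the intersection of three convex sets: a linear subspace, an affine hyperplane, and the unit ball of the spectral norm. Concretely, I would first argue that for $\CT\neq\CO$ the constraint $\|\CZ\|_\sigma=1$ may be relaxed to $\|\CZ\|_\sigma\le 1$ without enlarging the set: if $\CZ\in\TT(\CT)$ with $\langle\CZ,\CT\rangle=\|\CT\|_*$ and $\|\CZ\|_\sigma\le 1$, then Lemma~\ref{lma:norm-duality} gives $\|\CT\|_*=\langle\CZ,\CT\rangle\le\|\CZ\|_\sigma\,\|\CT\|_*\le\|\CT\|_*$, and since $\|\CT\|_*>0$ this forces $\|\CZ\|_\sigma=1$. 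Hence
$$\Z(\CT)=\TT(\CT)\ \cap\ \bigl\{\CZ:\langle\CZ,\CT\rangle=\|\CT\|_*\bigr\}\ \cap\ \bigl\{\CZ:\|\CZ\|_\sigma\le 1\bigr\}.$$

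From this representation convexity is immediate, being preserved under the intersection of the subspace, the hyperplane, and the spectral unit ball (which is convex because $\|\bullet\|_\sigma$ is a norm). Nonemptiness follows directly from the last assertion of Lemma~\ref{prop:nuclear-U-equiv} applied with $\V_k=\spn_k(\CT)$: it supplies a dual certificate $\CZ\in\TT(\CT)$ with $\|\CZ\|_\sigma=1$ and $\langle\CT,\CZ\rangle=\|\CT\|_*$, which is precisely an element of $\Z(\CT)$. For compactness, note that $\Z(\CT)$ lies in the finite-dimensional space $\R^{n_1\times n_2\times\dots\times n_d}$, so it suffices to verify closedness and boundedness. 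Closedness holds since each of the three defining sets is closed: the subspace trivially, the hyperplane as the preimage of a point under the continuous linear functional $\langle\bullet,\CT\rangle$, and the spectral unit ball since $\|\bullet\|_\sigma$ is continuous, being a maximum of linear functionals over a compact set by~\eqref{eq:spec}. Boundedness follows from the equivalence of norms on a finite-dimensional space: $\|\CZ\|_\sigma\le 1$ forces $\|\CZ\|_2\le C$ for a constant $C$ depending only on $n_1,\dots,n_d$.

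I do not anticipate any genuine obstacle; the one step that requires a moment's thought is the reduction from $\|\CZ\|_\sigma=1$ to $\|\CZ\|_\sigma\le 1$, which is what converts the sphere constraint into a convex one and makes convexity and closedness routine. (Should one wish to avoid invoking Lemma~\ref{prop:nuclear-U-equiv} for nonemptiness, an alternative is to take a maximizer of $\max_{\|\CZ\|_\sigma\le 1}\langle\CT,\CZ\rangle=\|\CT\|_*$, attained by compactness of the spectral ball, and replace it by its projection onto $\TT(\CT)$, which leaves the inner product with $\CT$ unchanged and does not increase the spectral norm by Lemma~\ref{thm:spec-subspace}; the quoted lemma, however, already packages exactly this.)
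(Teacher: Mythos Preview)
Your proof is correct and uses the same key insight as the paper's: the duality inequality from Lemma~\ref{lma:norm-duality} forces $\|\CZ\|_\sigma=1$ whenever $\langle\CZ,\CT\rangle=\|\CT\|_*$ and $\|\CZ\|_\sigma\le 1$, which is precisely what makes the sphere constraint effectively a ball constraint and hence convex. The paper verifies convexity directly by taking a convex combination and checking membership, while you first rewrite $\Z(\CT)$ as an intersection of three convex sets; this is a cosmetic repackaging of the same argument, and both routes invoke Lemma~\ref{prop:nuclear-U-equiv} for nonemptiness.
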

\begin{proof}
It suffices to consider the case $\CT\ne\CO$. The nonemptiness and compactness follow easily from Lemma~\ref{prop:nuclear-U-equiv} and the definition in~\eqref{eq:z(t)}. For the convexity, given any $\CZ_1,\CZ_2\in\Z(\CT)$ and $\alpha_1,\alpha_2\ge0$ with $\alpha_1+\alpha_2=1$, we have
$$
\|\CT\|_*  = \alpha_1 \|\CT\|_* + \alpha_2 \|\CT\|_* = \alpha_1\langle  \CZ_1,\CT\rangle + \alpha_2 \langle\CZ_2,\CT\rangle =\langle \alpha_1 \CZ_1 +\alpha_2\CZ_2,\CT\rangle \le \|\alpha_1 \CZ_1 +\alpha_2\CZ_2\|_\sigma\|\CT\|_*,
$$
implying that $\|\alpha_1 \CZ_1 +\alpha_2\CZ_2\|_\sigma\ge1$. By noticing that $\|\alpha_1 \CZ_1 +\alpha_2\CZ_2\|_\sigma \le\alpha_1\|\CZ_1\|_\sigma + \alpha_2\|\CZ_2\|_\sigma=1$, we in fact have $\|\alpha_1 \CZ_1 +\alpha_2\CZ_2\|_\sigma=1$, implying that $\alpha_1 \CZ_1 +\alpha_2\CZ_2\in\Z(\CT)$.
\end{proof}

Unlike the matrix case, the set $\Z(\CT)$, however, may not be a singleton when $d\ge3$.
\begin{example}
\label{ex:notsingle}
Let $\CT=\sum_{i=1}^3\be_i\otimes\be_i\otimes\be_i\in\R^{3\times3\times 3}$ and $\CZ(t)=\CT+t\,\be_1\otimes\be_2\otimes\be_3\in\R^{3\times3\times 3}$. It is obvious that $\CZ(t)\in \TT(\CT)=\R^{3\times 3\times 3}$. For any $-1\le t\le 1$, it can be verified that $\bigl\|\CZ(t)\bigr\|_\sigma=1$ (see Lemma~\ref{thm:E1}) and $\bigl\langle \CZ(t),\CT\bigr\rangle=3=\|\CT\|_*$. Therefore, we have $\CZ(t)\in\Z(\CT)$ for any $-1\le t\le 1$.
\end{example}

We next turn to the subspace $\bigoplus_{|\I|\ge1,\,\I\subseteq[d]}\TT^\I(\CT)$ in~\eqref{eq:exact} where $\CX$ resides. Unlike the matrix case where the subspaces $\TT^{\{1\}}(\BT)$ and $\TT^{\{2\}}(\BT)$ are both intangible for any matrix $\BT$, a nonzero $\CX$ is in fact possible in, e.g., $\TT^{\{3\}}(\CT)$, of a third-order tensor $\CT$. 
\begin{example}
\label{ex:1perp}
Let $\CT=\sum_{i=1}^2\be_i\otimes\be_i\otimes\be_i\in\R^{2\times 2\times 3}$. We have $\spn_1(\CT)=\spn_2(\CT)=\R^2$ and $\spn_3(\CT)=\spn(\be_1,\be_2)$. Let $\CZ=\CT\in\Z(\CT)$ since $\langle\CZ,\CT\rangle=2=\|\CT\|_*$ and $\|\CZ\|_\sigma=1$. We have the following two observations in contrast.
\begin{enumerate}[label=\textnormal{(\roman*)}]
    \item Let $\CX(t)=t\,\be_1\otimes\be_2\otimes\be_3\in\TT^{\{3\}}(\CT)$. For any $-1\le t\le 1$, it can be verified that $\bigl\|\CZ+\CX(t)\bigr\|_\sigma=1$ (see Lemma~\ref{thm:E2}). Therefore, $\CZ+\CX(t)\in\partial\|\CT\|_*$ for any $-1\le t\le 1$.
    \item Let $\CY(t)=t\,\be_1\otimes\be_1\otimes\be_3\in\TT^{\{3\}}(\CT)$. For any $t\ne0$, it can be verified that $\bigl\|\CZ+\CY(t)\bigr\|_\sigma>1$ (see Lemma~\ref{thm:E2}). Therefore, $\CZ+\CY(t)\notin\partial\|\CT\|_*$ for any $t\ne0$.
\end{enumerate}
\end{example}
Although the inclusions discussed in Section~\ref{sec:connect} focus on basic subspaces that are spanned by at least two $\spn_k^\perp(\CT)$'s, no matter for $\D_1(\CT)$, $\D_2(\CT)$, or $\D(\CT)$, it is important not to disregard basic subspaces that are spanned by only one $\spn_k^\perp(\CT)$. Another interesting observation is that, for some subspace where $\CX$ resides, e.g., $\bigoplus_{|\I|\ge2,\,\I\subseteq[3]}\TT^\I(\CT)$ in $\D_1(\CT)$, the tensor $\CX$ can surprisingly go beyond the full stretch, i.e., $\|\CX\|_\sigma>1$.
\begin{example}
\label{ex:yuan3}
Let $\CT=\be_1\otimes\be_1\otimes\be_1\in\R^{2\times 2\times 2}$. We have $\spn_k(\CT)=\spn(\be_1)$ for $k\in[3]$. Let $\CZ=\CT\allowbreak\in\allowbreak\Z(\CT)$ since $\langle\CZ,\CT\rangle=1=\|\CT\|_*$ and $\|\CZ\|_\sigma=1$. 

Let $\CX(t)=t(\be_1\otimes\be_2\otimes\be_2 + \be_2\otimes\be_1\otimes\be_2 + \be_2\otimes\be_2\otimes\be_1) \in\bigoplus_{|\I|\ge2,\,\I\subseteq[3]}\TT^\I(\CT)$. It can be verified that $\bigl\|\CZ+\CX(t)\bigr\|_\sigma=1$ if and only if $-1\le t\le \frac{1}{2}$ (see Lemma~\ref{thm:E3}). Therefore, $\CZ+\CX(t)\in\partial\|\CT\|_*$ for any $-1\le t\le \frac{1}{2}$. However, it can also be verified that $\bigl\|\CX(t)\bigr\|_\sigma=\frac{2|t|}{\sqrt{3}}$ (see Lemma~\ref{thm:E3}), in particular, $\|\CX(-1)\|_\sigma=\frac{2}{\sqrt{3}}>1$. Moreover, the allowed stretches of $\CX(t)$ are different along two opposite directions, positive $t$ making at most $\|\CX(\frac{1}{2})\|_\sigma=\frac{1}{\sqrt{3}}$ while negative $t$ leading to even $\|\CX(-1)\|_\sigma=\frac{2}{\sqrt{3}}$.
\end{example}

The above examples make the subdifferential of the tensor nuclear norm much more complicated and interesting. From what we have observed, the basic subspace $\TT^{[d]}(\CT)$ resembles the most to $\TT^{\{1,2\}}(\CT)$ of the matrix space, evidenced by not only the full decomposability in Theorem~\ref{thm:decomposability} but also the inclusion of the subdifferential in Corollary~\ref{thm:newset} for $\I=[d]$. However, the above examples call attention that every $\CX\in\bigoplus_{|\I|\ge1,\,\I\subseteq[d]}\TT^\I(\CT)$ should be taken care of. The rest of this subsection is focused on the size of $\|\CX\|_\sigma$ in these subspaces.

Given a tensor $\CT\in\R^{n_1\times n_2\times \dots \times n_d}$ and a nonempty $\J\subseteq 2^{[d]}\setminus\varnothing$, it defines a subspace $\X^\J(\CT):=\bigoplus_{\I\in\J}\TT^\I(\CT)$, a direct sum of $|\J|$ basic subspaces of $\R^{n_1\times n_2\times \dots \times n_d}$ defined by $(\spn_k(\CT))_{k=1}^d$. It is also obvious that $\X^\J(\CT)$ is orthogonal to $\TT(\CT)=\X^{\{\varnothing\}}(\CT)$ since $\varnothing\notin\J$. Let us define
\begin{align}
    \underline{\tau}\bigl(\X^\J(\CT)\bigr)&:=\max\bigl\{t:\|\CZ+\CX\|_\sigma\le 1 \,\forall\, \CZ\in\Z(\CT),\, \CX\in\X^\J(\CT),\, \|\CX\|_\sigma\le t\bigr\},\nonumber\\
    \overline{\tau}\bigl(\X^\J(\CT)\bigr)&:=\max\bigl\{\|\CX\|_\sigma:\CZ\in\Z(\CT),\,\CX\in\X^\J(\CT),\,\|\CZ+\CX\|_\sigma\le 1\bigr\}, \label{eq:tau_upper}
\end{align}
and further
\begin{align*}
    \underline{\tau}(\X^\J)&:=\min\bigl\{\underline{\tau}\bigl(\X^\J(\CT)\bigr):\CT\in\R^{n_1\times n_2\times \dots \times n_d}\bigr\},\\
    \overline{\tau}(\X^\J)&:=\max\bigl\{\overline{\tau}\bigl(\X^\J(\CT)\bigr):\CT\in\R^{n_1\times n_2\times \dots \times n_d}\bigr\}.
\end{align*}
In other words, $\underline{\tau}(\X^\J)$ is the tight lower bound of $\|\CX\|_\sigma$ and $\overline{\tau}(\X^\J)$  is the tight upper bound,
in the sense that for any $\CT\in\R^{n_1\times n_2\times \dots \times n_d}$,
\begin{align*}
\partial\|\CT\|_*&\supseteq\bigl\{\CZ+\CX: \CZ\in\Z(\CT),\,\CX\in\X^\J(\CT),\,\|\CX\|_\sigma\le \underline{\tau}(\X^\J)\bigr\}, 
\\ (\TT(\CT)\oplus\X^\J(\CT))\cap\partial\|\CT\|_*&\subseteq \bigl\{\CZ+\CX: \CZ\in\Z(\CT),\,\CX\in\X^\J(\CT),\,\|\CX\|_\sigma\le \overline{\tau}(\X^\J)\bigr\}. 
\end{align*}

As an example, the inclusion $\D_1(\CT)\subseteq\partial\|\CT\|_*$ in~\eqref{eq:subdiff-yuanming} implies that
$\underline{\tau}\mleft(\bigoplus_{|\I|\ge2,\,\I\subseteq[3]}\TT^\I(\CT)\mright)\ge\frac{1}{2}$ for any $\CT\in\R^{n_1\times n_2\times n_3}$, or simply $\underline{\tau}\mleft(\bigoplus_{|\I|\ge2,\,\I\subseteq[3]}\TT^\I\mright)\ge\frac{1}{2}$. The inclusion $\D_2(\CT)\subseteq\partial\|\CT\|_*$ in~\eqref{eq:subdiff-incoherent} implies that
$\underline{\tau}\mleft(\bigoplus_{|\I|\ge2,\,\I\subseteq[d]}\TT^\I\mright)\ge\frac{2}{d(d-1)}$. Although $\D(\CT)$ includes $\D_2(\CT)$ as a proper subset, it does not help to increase the lower bound of $\underline{\tau}\mleft(\bigoplus_{|\I|\ge2,\,\I\subseteq[d]}\TT^\I\mright)$. However, Corollary~\ref{thm:newset} and Theorem~\ref{thm:spec-decomp} actually imply that $\underline{\tau}(\U^\I)=\overline{\tau}(\U^\I)=1$ for any $\I\subseteq [d]$ with $|\I|\ge 2$. 

Some obvious monotonicity holds for the two bounds. If $\J_1\subseteq\J_2$, then $\X^{\J_1}\subseteq\X^{\J_2}$ and so
$$
\underline{\tau}(\X^{\J_1})\ge \underline{\tau}(\X^{\J_2}) \text{ and } \overline{\tau}(\X^{\J_1}) \le \overline{\tau}(\X^{\J_2}).
$$
Moreover, 
for fixed $\J$, increasing the order $d$ will decrease $\underline{\tau}(\X^{\J})$ but increase $\overline{\tau}(\X^{\J})$ in the weak sense, the same to increasing any dimension $n_k$ while fixing the others.

We now present the main results on the two bounds for various tensor subspaces of interest.
\begin{theorem}\label{thm:bounds}
In the tensor space $\R^{n_1\times n_2\times \dots \times n_d}$ of order $d\ge3$,
\begin{enumerate}[label=\textnormal{(\roman*)}]
    \item $\underline{\tau}(\TT^\I)=0$ and $\overline{\tau}(\TT^\I)=1$ for any $\I\subseteq [d]$ with $|\I|=1$; 
    
    
    \item $\underline{\tau}(\TT^\I)=\overline{\tau}(\TT^\I)=1$ for any $\I\subseteq [d]$ with $|\I|\ge 2$;
    
    \item $\underline{\tau}(\U^\I)=0$ and $\overline{\tau}(\U^\I)=1$ for any $\I\subseteq [d]$ with $|\I|=1$;
    
    
    \item $\underline{\tau}(\U^\I)=\overline{\tau}(\U^\I)=1$ for any $\I\subseteq [d]$ with $|\I|\ge 2$;
    
    \item \mbox{$\frac{1}{2}\le \underline{\tau}\mleft(\bigoplus_{|\I|\ge2,\,\I\subseteq[3]}\TT^\I\mright)\le\frac{1}{\sqrt{3}}\approx 0.577$ and $1.155\approx\frac{2}{\sqrt{3}}\le \overline{\tau}\mleft(\bigoplus_{|\I|\ge2,\,\I\subseteq[3]}\TT^\I\mright)\le\frac{1+\sqrt{2}}{2}\approx 1.207$};
    
    \item $\underline{\tau}\mleft(\bigoplus_{|\I|\ge1,\,\I\subseteq[3]}\TT^\I\mright)=0$ and $1.155\approx\frac{2}{\sqrt{3}}\le\overline{\tau}\mleft(\bigoplus_{|\I|\ge1,\,\I\subseteq[3]}\TT^\I\mright)\le\frac{3}{2}$;
    
    \item \mbox{$\frac{1}{3}\le \underline{\tau}\mleft(\bigoplus_{|\I|\ge2,\,\I\subseteq[4]}\TT^\I\mright)\le\frac{1}{2}$ and $1.207\approx\frac{1+\sqrt{2}}{2}\le \overline{\tau}\mleft(\bigoplus_{|\I|\ge2,\,\I\subseteq[4]}\TT^\I\mright)\le\frac{1+\sqrt{3}}{2}\approx 1.366$};
    
    \item $\underline{\tau}\mleft(\bigoplus_{|\I|\ge1,\,\I\subseteq[4]}\TT^\I\mright)=0$ and $1.207\approx\frac{1+\sqrt{2}}{2}\le\overline{\tau}\mleft(\bigoplus_{|\I|\ge1,\,\I\subseteq[4]}\TT^\I\mright)\le \frac{8}{5}$;

    \item $\frac{2}{d(d-1)}\le\underline{\tau}\mleft(\bigoplus_{|\I|\ge2,\,\I\subseteq[d]}\TT^\I\mright)\le\frac{1}{2}$ and $1.207 \approx\frac{1+\sqrt{2}}{2}\le \overline{\tau}\mleft(\bigoplus_{|\I|\ge2,\,\I\subseteq[d]}\TT^\I\mright)\le 2$ for $d\ge 5$;
        
    \item $\underline{\tau}\mleft(\bigoplus_{|\I|\ge1,\,\I\subseteq[d]}\TT^\I\mright)=0$ and $1.207 \approx\frac{1+\sqrt{2}}{2}\le \overline{\tau}\mleft(\bigoplus_{|\I|\ge1,\,\I\subseteq[d]}\TT^\I\mright)\le 2$ for $d\ge 5$.
\end{enumerate}
\end{theorem}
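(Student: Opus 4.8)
The plan is to treat part~(x) as the ``consolidation'' case of the theorem. First observe that $\bigoplus_{|\I|\ge1,\,\I\subseteq[d]}\TT^\I(\CT)$ is nothing but the orthogonal complement of $\TT(\CT)$ in $\R^{n_1\times\dots\times n_d}$, so here $\CX$ ranges over \emph{every} tensor orthogonal to $\TT(\CT)$. With that in mind I would derive all of the claimed (in)equalities from three ingredients only: the monotonicity of $\underline\tau(\X^\J)$ and $\overline\tau(\X^\J)$ in $\J$ recorded just before this theorem, parts~(i) and~(ix) of the present statement, and one soft triangle-inequality estimate. No new example needs to be constructed.

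For $\underline\tau\bigl(\bigoplus_{|\I|\ge1,\,\I\subseteq[d]}\TT^\I\bigr)=0$, I would note that $\TT^{\{d\}}$ is one of the summands, so monotonicity and part~(i) give $\underline\tau\bigl(\bigoplus_{|\I|\ge1,\,\I\subseteq[d]}\TT^\I\bigr)\le\underline\tau(\TT^{\{d\}})=0$, while $\underline\tau\ge0$ always. If a self-contained witness is preferred, take $\CT=\be_1\otimes\dots\otimes\be_1$, so that $\Z(\CT)=\{\CT\}$ and $\TT^{\{d\}}(\CT)=\spn(\be_1\otimes\dots\otimes\be_1\otimes\be_2)$; then $\CX(t):=t\,\be_1\otimes\dots\otimes\be_1\otimes\be_2$ gives $\CT+\CX(t)=\be_1\otimes\dots\otimes\be_1\otimes(\be_1+t\be_2)$, whence $\|\CT+\CX(t)\|_\sigma=\sqrt{1+t^2}>1$ for every $t\ne0$ even though $\|\CX(t)\|_\sigma=|t|$ can be made arbitrarily small, forcing $\underline\tau(\TT^{\{d\}}(\CT))=0$.

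For the two bounds on $\overline\tau$, the lower bound comes for free: since $\bigoplus_{|\I|\ge2,\,\I\subseteq[d]}\TT^\I(\CT)\subseteq\bigoplus_{|\I|\ge1,\,\I\subseteq[d]}\TT^\I(\CT)$, monotonicity and part~(ix) yield $\overline\tau\bigl(\bigoplus_{|\I|\ge1,\,\I\subseteq[d]}\TT^\I\bigr)\ge\overline\tau\bigl(\bigoplus_{|\I|\ge2,\,\I\subseteq[d]}\TT^\I\bigr)\ge\tfrac{1+\sqrt2}{2}$. For the upper bound I would fix $\CT\ne\CO$ (the case $\CT=\CO$ gives $\|\CX\|_\sigma\le1$ directly), take any $\CZ\in\Z(\CT)$ and $\CX\in\TT(\CT)^\perp$ with $\|\CZ+\CX\|_\sigma\le1$, and use $\CZ\in\TT(\CT)$, $\CX\perp\TT(\CT)$, and $\|\CZ\|_\sigma=1$ to get $\|\CX\|_\sigma=\|(\CZ+\CX)-\CZ\|_\sigma\le\|\CZ+\CX\|_\sigma+\|\CZ\|_\sigma\le2$; maximizing over $\CT$, $\CZ$, $\CX$ gives $\overline\tau\le2$. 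This estimate is actually valid for all $d\ge2$; the restriction to $d\ge5$ only reflects that parts~(vi) and~(viii) supply the sharper constants $\tfrac32$ and $\tfrac85$ in the cases $d=3,4$.

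Honestly there is no serious obstacle in part~(x): every delicate ingredient — the exact small-order constants and the spectral-norm perturbations behind the $\overline\tau$ lower bounds — already lives in the earlier items, and here one merely assembles them via monotonicity and a one-line triangle inequality. The real remaining difficulty, which I would flag rather than resolve, is the gap between $\tfrac{1+\sqrt2}{2}\approx1.207$ and $2$ for $\overline\tau$ when $d\ge5$; closing it would require either a better spectral-norm perturbation inside $\TT(\CT)^\perp$ or a sharpening of the triangle-inequality step that genuinely exploits the constraint $\langle\CZ+\CX,\CT\rangle=\|\CT\|_*$, and this is not attempted here.
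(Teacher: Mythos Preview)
Your proposal is correct and matches the paper's own proof of part~(x) essentially line for line: the paper also gets $\underline\tau=0$ from part~(i) plus monotonicity in $\J$, gets the lower bound $\tfrac{1+\sqrt2}{2}$ on $\overline\tau$ by monotonicity (citing (viii) plus monotonicity in $d$ rather than (ix) directly, a cosmetic difference), and gets $\overline\tau\le2$ by the identical triangle-inequality step $\|\CX\|_\sigma\le\|\CZ+\CX\|_\sigma+\|\CZ\|_\sigma\le2$, which it records once in the proof of~(ix) and then simply references. Your optional self-contained witness for $\underline\tau=0$ is a nice addition (with the harmless caveat that $\TT^{\{d\}}(\CT)$ is generally larger than the one-dimensional span you wrote, though your $\CX(t)$ still lies in it).
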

\begin{proof}
The proof is given case by case. As a general notation, $\CT$ is an arbitrary tensor or a specific example where $\underline{\tau}\bigl(\X^\J(\CT)\bigr)$ or $\overline{\tau}\bigl(\X^\J(\CT)\bigr)$ is concerned with and $\CZ\in\Z(\CT)$.
\begin{enumerate}[label=\textnormal{(\roman*)}]
\item

When $d=3$, the tensor $\CZ+\CY(t)\in\R^{2\times 2\times 3}$ in Example~\ref{ex:1perp} shows that $\underline{\tau}(\TT^{\{3\}})\le0$, implying that $\underline{\tau}(\TT^{\{3\}})=0$ as it cannot be negative. 
By the monotonicity with respect to $d$ and $n_k$, we have $\underline{\tau}(\TT^{\I})=0$ for any $\I\subseteq [d]$ with $|\I|=1$. Similarly, the tensor $\CZ+\CX(t)\in\R^{2\times 2\times 3}$ in Example~\ref{ex:1perp} shows that $\overline{\tau}(\TT^{\{3\}})\ge1$. 
By the monotonicity with respect to $d$ and $n_k$, we have $\overline{\tau}(\TT^{\I})\ge1$ for any $\I\subseteq [d]$ with $|\I|=1$. By Lemma~\ref{thm:spec-subspace} and the requirement of a subgradient in~\eqref{eq:exact}, we have $\|\CX\|_\sigma\le\|\CZ+\CX\|_\sigma\le1$ for any $\CZ\in\Z(\CT)$ and $\CX\in\TT^{\I}(\CT)$ with $\|\CZ+\CX\|_\sigma\le1$. This means that $\overline{\tau}(\TT^{\I})\le1$, implying that $\overline{\tau}(\TT^{\I})=1$.

\item Since $\TT^\I(\CT)\subseteq\U^\I(\CT)$ for any $\CT$, the monotonicity with respect to $\J$ of $\X^\J$ implies that $\underline{\tau}(\U^\I)\le\underline{\tau}(\TT^\I)\le \overline{\tau}(\TT^\I)\le \overline{\tau}(\U^\I)$. The results then follow immediately from (iv).

\item Since $\TT^\I(\CT)\subseteq\U^\I(\CT)$ for any $\CT$, the monotonicity with respect to $\J$ of $\X^\J$ implies that
$\underline{\tau}(\U^\I)\le\underline{\tau}(\TT^\I)=0$ and $\overline{\tau}(\U^\I)\ge\overline{\tau}(\TT^\I)=1$ by (i). Therefore, $\underline{\tau}(\U^\I)= 0$ as it cannot be negative. 
On the other hand, by Lemma~\ref{thm:spec-subspace} and the requirement of a subgradient in~\eqref{eq:exact}, we have $\|\CX\|_\sigma\le\|\CZ+\CX\|_\sigma\le1$ for any $\CZ\in\Z(\CT)$ and $\CX\in\U^{\I}(\CT)$ with $\|\CZ+\CX\|_\sigma\le1$. This means that $\overline{\tau}(\U^{\I})\le1$, implying that $\overline{\tau}(\U^{\I})=1$.

\item This is an immediate consequence of Corollary~\ref{thm:newset} and Theorem~\ref{thm:spec-decomp}.

\item $\underline{\tau}\mleft(\bigoplus_{|\I|\ge2,\,\I\subseteq[3]}\TT^\I\mright)\ge\frac{1}{2}$ is due to the inclusion $\D_1(\CT)\subseteq\partial\|\CT\|_*$; see also~\cite[Lemma~1]{yuan2016tensor}. $\underline{\tau}\mleft(\bigoplus_{|\I|\ge2,\,\I\subseteq[3]}\TT^\I\mright)\le\frac{1}{\sqrt{3}}$ and $\overline{\tau}\mleft(\bigoplus_{|\I|\ge2,\,\I\subseteq[3]}\TT^\I\mright)\ge\frac{2}{\sqrt{3}}$ are due to $\CZ+\CX(\frac{1}{2})$ and $\CZ+\CX(-1)$ in Example~\ref{ex:yuan3}, respectively.
To upper bound $\overline{\tau}\mleft(\bigoplus_{|\I|\ge2,\,\I\subseteq[3]}\TT^\I\mright)$, we notice that  for any $\CT$,
$$\bigoplus_{|\I|\ge2,\,\I\subseteq[3]}\TT^\I(\CT)\subseteq\TT^{\{2,3\}}(\CT)\oplus\U^{\{1\}}(\CT).$$ 
By the monotonicity and Lemma~\ref{thm:bridge},
\begin{align}
&\,\overline{\tau}\mleft(\bigoplus_{|\I|\ge2,\,\I\subseteq[3]}\TT^\I\mright) \nonumber\\%
\le&\,\max\mleft\{\|\CX_1+\CX_2\|_\sigma:\|\CZ+\CX_1+\CX_2\|_\sigma\le 1,\,\CZ\in\TT(\CT),\,\CX_1\in\TT^{\{2,3\}}(\CT),\,\CX_2\in\U^{\{1\}}(\CT)\mright\} \nonumber\\
\le&\,\max\mleft\{x_1y_2z_2+x_2: x_1y_2z_2+x_2\le 1+x_1y_1z_1,\, 
\bx,\by,\bz\in\SI^2\cap\R_{+}^2\mright\}\nonumber\\
=&\,\frac{1+\sqrt{2}}{2}, \nonumber
\end{align}
where the last equality is computed in Lemma~\ref{thm:optimization1}.

\item Since $\underline{\tau}(\TT^\I)=0$ for any $\I\subseteq[3]$ with $|\I|=1$ from (i), $\underline{\tau}\mleft(\bigoplus_{|\I|\ge1,\,\I\subseteq[3]}\TT^\I\mright)=0$ by the monotonicity with respect to $\J$ of $\X^\J$. $\overline{\tau}\mleft(\bigoplus_{|\I|\ge1,\,\I\subseteq[3]}\TT^\I\mright)\ge\frac{2}{\sqrt{3}}$ is due to the tensor $\CZ+\CX(-1)$ in Example~\ref{ex:yuan3}. To upper bound $\overline{\tau}\mleft(\bigoplus_{|\I|\ge1,\,\I\subseteq[3]}\TT^\I\mright)$, we notice that for any $\CT$,
$$
\bigoplus_{|\I|\ge1,\,\I\subseteq[3]}\TT^\I(\CT)=\TT^{\{3\}}(\CT)\oplus\bigl(\TT^{\{2\}}(\CT)\oplus\TT^{\{2,3\}}(\CT)\bigr)\oplus\U^{\{1\}}(\CT).
$$
By the monotonicity and a similar idea to Lemma~\ref{thm:bridge},
\begin{align*}
&\overline{\tau}\mleft(\bigoplus_{|\I|\ge1,\,\I\subseteq[3]}\TT^\I\mright)\\
\le{}&
\max\mleft\{\|\CX_1+\CX_2+\CX_3\|_\sigma: 
\begin{gathered}
\|\CZ+\CX_1+\CX_2+\CX_3\|_\sigma\le 1,\,\CZ\in\TT(\CT),\\
\CX_1\in\TT^{\{3\}}(\CT),\,\CX_2\in\TT^{\{2\}}(\CT)\oplus\TT^{\{2,3\}}(\CT),\,\CX_3\in\U^{\{1\}}(\CT)
\end{gathered}
\mright\}\\
\le{}&
\max\mleft\{x_1y_1z_2+x_1y_2+x_2:
x_1y_1z_2+x_1y_2+x_2\le1+x_1y_1z_1,\,
\bx,\by,\bz\in\SI^2\cap\R_{+}^2
\mright\}\\
={}&\frac{3}{2},
\end{align*}
where the last equality is computed in Lemma~\ref{thm:optimization2}. 

\item $\underline{\tau}\mleft(\bigoplus_{|\I|\ge2,\,\I\subseteq[4]}\TT^\I\mright)\ge\frac{1}{3}$ generalizes~\cite[Lemma~1]{yuan2016tensor} from $d=3$ to $d=4$ and its proof is given by Lemma~\ref{lemma:yuan-lemma1-d=4}.  $\underline{\tau}\mleft(\bigoplus_{|\I|\ge2,\,\I\subseteq[4]}\TT^\I\mright)\le\frac{1}{2}$ and $\overline{\tau}\mleft(\bigoplus_{|\I|\ge2,\,\I\subseteq[4]}\TT^\I\mright)\ge\frac{1+\sqrt{2}}{2}$ are due to the tensors $\CZ+\CX(\frac{1}{3})$ and $\CZ+\CX(-\frac{1+\sqrt{2}}{3})$ in Example~\ref{ex:yuan4}, respectively. To upper bound $\overline{\tau}\mleft(\bigoplus_{|\I|\ge2,\,\I\subseteq[4]}\TT^\I\mright)$, we notice that for any $\CT$,
$$
\bigoplus_{|\I|\ge2,\,\I\subseteq[4]}\TT^\I(\CT)\subseteq\TT^{\{3,4\}}(\CT)\oplus(\TT^{\{2\}}\oplus\TT^{\{2,3\}}\oplus\TT^{\{2,4\}}\oplus\TT^{\{2,3,4\}})(\CT)\oplus\U^{\{1\}}(\CT).
$$
By the monotonicity and a similar idea to Lemma~\ref{thm:bridge},
\begin{align*}
&\overline{\tau}\mleft(\bigoplus_{|\I|\ge2,\,\I\subseteq[4]}\TT^\I\mright)\\
\le{}&\max\mleft\{\|\CX_1+\CX_2+\CX_3\|_\sigma:
\begin{gathered}
\|\CZ+\CX_1+\CX_2+\CX_3\|_\sigma\le 1,\,\CZ\in\TT(\CT),\,\CX_1\in\TT^{\{3,4\}}(\CT),\\
\CX_2\in(\TT^{\{2\}}\oplus\TT^{\{2,3\}}\oplus\TT^{\{2,4\}}\oplus\TT^{\{2,3,4\}})(\CT),\,\CX_3\in\U^{\{1\}}(\CT)
\end{gathered}
\mright\}\\
\le{}&\max\mleft\{x_1y_1z_2w_2+x_1y_2+x_2:
x_1y_1z_2w_2+x_1y_2+x_2\le 1+x_1y_1z_1w_1,\,
\bx,\by,\bz,\bw\in\SI^2\cap\R_{+}^2
\mright\}\\
={}&\frac{1+\sqrt{3}}{2},
\end{align*}
where the last equality is obtained using a similar calculation to Lemma~\ref{thm:optimization2}.

\item Since $\underline{\tau}(\TT^\I)=0$ for any $\I\subseteq[4]$ with $|\I|=1$ from (i), $\underline{\tau}\mleft(\bigoplus_{|\I|\ge1,\,\I\subseteq[4]}\TT^\I\mright)=0$ by the monotonicity with respect to $\J$ of $\X^\J$.
$\overline{\tau}\mleft(\bigoplus_{|\I|\ge1,\,\I\subseteq[4]}\TT^\I\mright)\ge\frac{1+\sqrt{2}}{2}$ is due to the tensor $\CZ+\CX(-\frac{1+\sqrt{2}}{3})$ in Example~\ref{ex:yuan4}. To upper bound $\overline{\tau}\mleft(\bigoplus_{|\I|\ge1,\,\I\subseteq[4]}\TT^\I\mright)$, we notice that for any $\CT$,
\begin{align*}
&\bigoplus_{|\I|\ge1,\,\I\subseteq[4]}\TT^\I(\CT)\\
={}&\TT^{\{4\}}(\CT)\oplus\bigl(\TT^{\{3\}}(\CT)\oplus\TT^{\{3,4\}}(\CT)\bigr) \oplus(\TT^{\{2\}}\oplus\TT^{\{2,3\}}\oplus\TT^{\{2,4\}}\oplus\TT^{\{2,3,4\}})(\CT)\oplus\U^{\{1\}}(\CT).
\end{align*}
By the monotonicity and a similar idea to Lemma~\ref{thm:bridge},
\begin{align*}
&\overline{\tau}\mleft(\bigoplus_{|\I|\ge1,\,\I\subseteq[4]}\TT^\I\mright)\\
\le{}&\max\mleft\{\mleft\|\sum_{i=1}^4\CX_i\mright\|_\sigma\!: 
\begin{gathered}
\|\CZ+\CX_1+\CX_2+\CX_3+\CX_4\|_\sigma\le 1,\,\CZ\in\TT(\CT),\,\CX_2\in\TT^{\{3\}}(\CT)\oplus\TT^{\{3,4\}}(\CT), \\
\CX_1\in\TT^{\{4\}}(\CT),\CX_3\in(\TT^{\{2\}}\oplus\TT^{\{2,3\}}\oplus\TT^{\{2,4\}}\oplus\TT^{\{2,3,4\}})(\CT),\CX_4\in\U^{\{1\}}(\CT)
\end{gathered}
\mright\}\\
\le{}&
\max\mleft\{x_1y_1z_1w_2+x_1y_1z_2+x_1y_2+x_2:
\begin{gathered}
x_1y_1z_1w_2+x_1y_1z_2+x_1y_2+x_2\le1+x_1y_1z_1w_1,\\
\bx,\by,\bz,\bw\in\SI^2\cap\R_{+}^2
\end{gathered}
\mright\}\\
={}&\frac{8}{5},
\end{align*} 
where the last equality is obtained using a similar calculation to Lemma~\ref{thm:optimization2}.

\item $\underline{\tau}\mleft(\bigoplus_{|\I|\ge2,\,\I\subseteq[d]}\TT^\I\mright)\ge\frac{2}{d(d-1)}$ is due to the inclusion $\D_2(\CT)\subseteq\partial\|\CT\|_*$; see also~\cite[Theorem~1]{yuan2017incoherent}. Both $\underline{\tau}\mleft(\bigoplus_{|\I|\ge2,\,\I\subseteq[d]}\TT^\I\mright)\le\frac{1}{2}$ and $\overline{\tau}\mleft(\bigoplus_{|\I|\ge2,\,\I\subseteq[d]}\TT^\I\mright)\ge\frac{1+\sqrt{2}}{2}$ are due to (vii) and the monotonicity with respect to $\J$ of $\X^\J$ and $d$. Finally, $\overline{\tau}\mleft(\bigoplus_{|\I|\ge2,\,\I\subseteq[d]}\TT^\I\mright)\le 2$ is a trivial bound since $\|\CX\|_\sigma\le \|\CX+\CZ\|_\sigma +\mleft\|-\CZ\mright\|_\sigma\le 2$ for any $\CZ\in\Z(\CT)$ and $\CX\in\X^\J(\CT)$ with $\|\CZ+\CX\|_\sigma\le1$ in~\eqref{eq:tau_upper}.

\item 
Since $\underline{\tau}(\TT^\I)=0$ for any $\I\subseteq[d]$ with $|\I|=1$ from (i), $\underline{\tau}\mleft(\bigoplus_{|\I|\ge1,\,\I\subseteq[d]}\TT^\I\mright)=0$ by the monotonicity with respect to $\J$ of $\X^\J$. $\overline{\tau}\mleft(\bigoplus_{|\I|\ge1,\,\I\subseteq[d]}\TT^\I\mright)\ge\frac{1+\sqrt{2}}{2}$ is due to (viii) and the monotonicity with respect to $\J$ of $\X^\J$ and $d$. Finally, $\overline{\tau}\mleft(\bigoplus_{|\I|\ge1,\,\I\subseteq[d]}\TT^\I\mright)\le 2$ is a trivial bound as in the proof of (ix).
\end{enumerate}
The proof is complete.
\end{proof}

The key idea to bound $\overline{\tau}\mleft(\bigoplus_{|\I|\ge2,\,\I\subseteq[3]}\TT^\I\mright)$ from above in (v) is to establish a link between a tensor optimization problem and a simple low-dimensional spherical optimization problem, i.e., Lemma~\ref{thm:bridge}. This link offers a tool of independent interest. While it is presented in a rather special structure, it can be easily extended to other similar structures, such as bounding $\overline{\tau}\mleft(\bigoplus_{|\I|\ge1,\,\I\subseteq[3]}\TT^\I\mright)$ in (vi), $\overline{\tau}\mleft(\bigoplus_{|\I|\ge2,\,\I\subseteq[4]}\TT^\I\mright)$ in (vii), and 
$\overline{\tau}\mleft(\bigoplus_{|\I|\ge1,\,\I\subseteq[4]}\TT^\I\mright)$ in (viii).

We remark that instead of using $\bigoplus_{|\I|\ge2,\,\I\subseteq[3]}\TT^\I(\CT)\subseteq\TT^{\{2,3\}}(\CT)\oplus\U^{\{1\}}(\CT)$ in the proof of (v) to bound $\overline{\tau}\mleft(\bigoplus_{|\I|\ge2,\,\I\subseteq[3]}\TT^\I\mright)$ from above, we may also use $\bigoplus_{|\I|\ge2,\,\I\subseteq[3]}\TT^\I(\CT)=\TT^{\{1,3\}}(\CT)\oplus\TT^{\{2,3\}}(\CT)\oplus\U^{\{1,2\}}(\CT)$ to obtain another upper bound via a similar argument. However, the bound is worse than that in (v), the same to other ways of partitioning $\bigoplus_{|\I|\ge2,\,\I\subseteq[3]}\TT^\I(\CT)$. For the same reason, the ways of partitioning $\bigoplus_{|\I|\ge1,\,\I\subseteq[3]}\TT^\I(\CT)$ in the proof of (vi), $\bigoplus_{|\I|\ge2,\,\I\subseteq[4]}\TT^\I(\CT)$ in (vii), and 
$\bigoplus_{|\I|\ge1,\,\I\subseteq[4]}\TT^\I(\CT)$ in (viii) are all the best.

\begin{lemma}\label{thm:bridge}
If $\CT\in\R^{n_1\times n_2\times n_3}$, then
\begin{align*}
&\,\max\mleft\{\|\CX_1+\CX_2\|_\sigma: \|\CZ+\CX_1+\CX_2\|_\sigma\le 1,\,\CZ\in\TT(\CT),\,\CX_1\in\TT^{\{2,3\}}(\CT),\,\CX_2\in\U^{\{1\}}(\CT)\mright\}\\
\le&\,\max\mleft\{x_1y_2z_2+x_2: x_1y_2z_2+x_2\le1+x_1y_1z_1,\, \bx,\by,\bz\in\SI^2\cap\R_{+}^2\mright\}.
\end{align*}
\end{lemma}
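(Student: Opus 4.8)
The plan is to collapse the tensor optimization on the left into the scalar spherical optimization on the right by testing everything against a single optimal rank-one direction and then performing a continuity correction. Set $\V_k:=\spn_k(\CT)$, so that $\TT(\CT)=\TT(\V_1,\V_2,\V_3)$, $\TT^{\{2,3\}}(\CT)=\TT(\V_1,\V_2^\perp,\V_3^\perp)$ and $\U^{\{1\}}(\CT)=\TT(\V_1^\perp,\R^{n_2},\R^{n_3})$; these are mutually orthogonal sums of distinct basic subspaces of $\R^{n_1\times n_2\times n_3}$. First I would fix an arbitrary feasible triple $(\CZ,\CX_1,\CX_2)$ and put $\CW:=\CZ+\CX_1+\CX_2$, so $\|\CW\|_\sigma\le 1$. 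By the orthogonality, $\CZ,\CX_1,\CX_2$ are exactly the orthogonal projections of $\CW$ onto the three subspaces above, and since each subspace is of the form $\TT(\W_1,\W_2,\W_3)$ for suitable $\W_k$, the contractivity clause of Lemma~\ref{thm:spec-subspace} gives $\|\CZ\|_\sigma,\|\CX_1\|_\sigma,\|\CX_2\|_\sigma\le 1$. It then suffices to bound $v:=\|\CX_1+\CX_2\|_\sigma$ by the right-hand-side value.

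Next I would pick unit vectors $\bu_k$ attaining $v=\langle\CX_1+\CX_2,\bu_1\otimes\bu_2\otimes\bu_3\rangle$, split each as $\bu_k=\ba_k+\bb_k$ with $\ba_k=\proj_{\V_k}(\bu_k)$, $\bb_k=\proj_{\V_k^\perp}(\bu_k)$, and record the six norms as $\bx,\by,\bz\in\SI^2\cap\R_{+}^2$ (so $x_1=\|\ba_1\|_2$, $x_2=\|\bb_1\|_2$, and likewise for modes $2,3$). Since the modes of $\CX_1$ live in $\V_1,\V_2^\perp,\V_3^\perp$ and the first mode of $\CX_2$ lives in $\V_1^\perp$, self-adjointness of the projections gives $v=\langle\CX_1,\ba_1\otimes\bb_2\otimes\bb_3\rangle+\langle\CX_2,\bb_1\otimes\bu_2\otimes\bu_3\rangle\le\|\CX_1\|_\sigma x_1y_2z_2+\|\CX_2\|_\sigma x_2\le x_1y_2z_2+x_2$. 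Feeding the same rank-one tensor into $\CW$ and using $\langle\CZ,\bu_1\otimes\bu_2\otimes\bu_3\rangle=\langle\CZ,\ba_1\otimes\ba_2\otimes\ba_3\rangle\ge-\|\CZ\|_\sigma x_1y_1z_1\ge-x_1y_1z_1$ gives $v\le 1-\langle\CZ,\ba_1\otimes\ba_2\otimes\ba_3\rangle\le 1+x_1y_1z_1$. Thus $v$ is dominated, at one admissible point $(\bx,\by,\bz)$, simultaneously by the objective $x_1y_2z_2+x_2$ and by the constraint bound $1+x_1y_1z_1$ of the right-hand-side problem.

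The last step, which I expect to be the main obstacle, is that this point need not itself satisfy $x_1y_2z_2+x_2\le 1+x_1y_1z_1$, so one cannot simply declare the right-hand-side maximum to be $\ge v$. I would resolve this by a sliding argument. If $v\le 1$, the admissible point with $\bx=(0,1)$ and arbitrary $\by,\bz$ has objective $1\ge v$ and trivially satisfies the constraint ($1\le 1$), so the right-hand-side maximum is $\ge 1\ge v$. If $v>1$, the two inequalities from the previous paragraph force $x_1y_1z_1\ge v-1>0$ and (using $x_2<1<v\le x_1y_2z_2+x_2$) also $x_1,y_2,z_2>0$, hence $z_1<1$. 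Now rotate only $\bz$ along the unit circle towards $(1,0)$: the objective $x_1y_2\cdot(\text{second coordinate of }\bz)+x_2$ decreases continuously from $x_1y_2z_2+x_2\ge v$ down to $x_2<v$, while the constraint bound $1+x_1y_1\cdot(\text{first coordinate of }\bz)$ only increases; by the intermediate value theorem there is an intermediate $\bz^*$ at which the objective equals exactly $v$, and there the constraint bound is still at least $1+x_1y_1z_1\ge v$. So $(\bx,\by,\bz^*)$ is feasible with objective $v$, whence the right-hand-side maximum is $\ge v$. Since the feasible triple $(\CZ,\CX_1,\CX_2)$ was arbitrary, the asserted inequality between the two maxima follows.
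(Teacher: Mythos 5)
Your proof is correct and follows essentially the same strategy as the paper: obtain the two bounds $v\le x_1y_2z_2+x_2$ and $v\le 1+x_1y_1z_1$ at an optimizing triple, then slide $\bz$ toward $(1,0)$ and invoke an intermediate-value argument to land on a feasible point for the scalar problem whose objective is at least $v$. Where you differ, you simplify: the paper first invokes Theorem~\ref{thm:spec-decomp} (once with an index set of size one, where the full decomposability hypothesis $|\I|\ge 2$ is not met and only the easy monotonicity direction is actually needed) to conclude $\|\CZ\|_\sigma,\|\CX_1\|_\sigma,\|\CX_2\|_\sigma\le 1$, whereas you observe that each of $\TT(\CT)$, $\TT^{\{2,3\}}(\CT)$, $\U^{\{1\}}(\CT)$ has the product form $\TT(\W_1,\W_2,\W_3)$, so the contractivity clause of Lemma~\ref{thm:spec-subspace} yields the same three bounds directly and without any appeal to spectral-norm decomposability. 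Your intermediate-value step also slides $\bz$ to the point where the objective equals $v$ exactly (with the constraint then automatically satisfied because $z_1^*\ge z_1$), rather than to the crossing of the functions $f$ and $g$ as in the paper; both implementations are valid, and yours is arguably a little more transparent.
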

\begin{proof}
Let us denote $(\CZ, \CX_1,\CX_2)$ to be an optimal solution to the first optimization problem and let $\|\CX_1+\CX_2\|_\sigma=\langle \CX_1+\CX_2,\bv_1\otimes\bv_2\otimes\bv_3\rangle$ where $\bv_k\in\SI^{n_k}$ for $k\in[3]$. Let $\V_k=\spn_k(\CT)$ for $k\in[3]$ and
$$
\begin{array}{lll}
a_1=\bigl\|\proj_{\V_1}(\bv_1)\bigr\|_2, &b_1=\bigl\|\proj_{\V_2}(\bv_2)\bigr\|_2, &c_1=\bigl\|\proj_{\V_3}(\bv_3)\bigr\|_2,\\
a_2=\bigl\|\proj_{\V_1^\perp}(\bv_1)\bigr\|_2, &b_2=\bigl\|\proj_{\V_2^\perp}(\bv_2)\bigr\|_2, &c_2=\bigl\|\proj_{\V_3^\perp}(\bv_3)\bigr\|_2.
\end{array}
$$
It is obvious that $\|\ba\|_2=\|\bb\|_2=\|\bc\|_2=1$ and $\ba,\bb,\bc\ge{\bf 0}$.

By applying Theorem~\ref{thm:spec-decomp} with $\CZ+\CX_1\in\U_{\{1\}}(\CT)$ and $\CX_2\in\U^{\{1\}}(\CT)$, we have
$$
\max\bigl\{\|\CZ+\CX_1\|_\sigma,\|\CX_2\|_\sigma\bigr\} = \|\CZ+\CX_1+\CX_2\|_\sigma\le 1.
$$
Again by applying Theorem~\ref{thm:spec-decomp} with $\CZ\in\U_{\{2\}}(\CT)$ and $\CX_1\in\U^{\{2\}}(\CT)$, we also have 
$$
\max\bigl\{\|\CZ\|_\sigma,\|\CX_1\|_\sigma\bigr\}= \|\CZ+\CX_1\|_\sigma\le\max\bigl\{\|\CZ+\CX_1\|_\sigma,\|\CX_2\|_\sigma\bigr\} \le 1.
$$
As a result, $\|\CZ\|_\sigma,\|\CX_1\|_\sigma, \|\CX_2\|_\sigma\le 1$.


It follows from Lemma~\ref{thm:spec-subspace} that
\begin{align*}
\|\CX_1+\CX_2\|_\sigma 
&= \langle\CX_1, \bv_1 \otimes  \bv_2 \otimes \bv_3\rangle + \langle \CX_2,\bv_1 \otimes  \bv_2 \otimes \bv_3\rangle \\
&= \bigl\langle\CX_1,\proj_{\V_1}(\bv_1) \otimes  \proj_{\V_2^\perp}(\bv_2) \otimes \proj_{\V_3^\perp}(\bv_3)\bigr\rangle +  \bigl\langle\CX_2,\proj_{\V_1^\perp}(\bv_1) \otimes  \bv_2 \otimes \bv_3\bigr\rangle \\
&\le \|\CX_1\|_\sigma \bigl\|\proj_{\V_1}(\bv_1)\bigr\|_2 \bigl\|\proj_{\V_2^\perp}(\bv_2)\bigr\|_2 \bigl\|\proj_{\V_3^\perp}(\bv_3)\bigr\|_2 + \|\CX_2\|_\sigma \bigl\|\proj_{\V_1^\perp}(\bv_1)\bigr\|_2 \|\bv_2\|_2 \|\bv_3\|_2\\
&\le a_1b_2c_2+a_2.
\end{align*}
Moreover, by that
$$\langle\CZ,\bv_1\otimes\bv_2\otimes\bv_3\rangle+\|\CX_1+\CX_2\|_\sigma = 
\langle\CZ+\CX_1+\CX_2,\bv_1\otimes\bv_2\otimes\bv_3\rangle\le \|\CZ+\CX_1+\CX_2\|_\sigma\le 1$$
and applying Lemma~\ref{thm:spec-subspace} again, we also have
$$
\|\CX_1+\CX_2\|_\sigma\le 1 - \bigl\langle\CZ,\bv_1\otimes\bv_2\otimes\bv_3\rangle =1 + \langle-\CZ,\proj_{\V_1}(\bv_1) \otimes  \proj_{\V_2}(\bv_2) \otimes \proj_{\V_3}(\bv_3)\bigr\rangle \le 1+a_1b_1c_1.
$$

Let us now compare the two upper bounds of $\|\CX_1+\CX_2\|_\sigma$, $a_1b_2c_2+a_2$ and $1+a_1b_1c_1$. 
\begin{enumerate}[label=\textnormal{(\roman*)}]
    \item If $a_1b_2c_2+a_2>1+a_1b_1c_1$, consider $f(x) = a_1b_2\sqrt{1-x^2}+a_2$ and $g(x)=1+a_1b_1x$ defined on $[0,1]$. Since $f(x)$ is decreasing, $g(x)$ is increasing, $f(c_1)>g(c_1)$, and $f(1)=a_2\le 1\le g(1)$, there must exist some $d_1\in[c_1,1]$ such that $f(c_1)\ge f(d_1)=g(d_1)\ge g(c_1)$.
    
    \item If $a_1b_2c_2+a_2\le 1+a_1b_1c_1$, then by letting $d_1=c_1$, we have $f(c_1)=f(d_1)\le g(d_1)=g(c_1)$.
\end{enumerate}
To summarize both cases, there always exists some $d_1\in[c_1,1]$, such that
\begin{equation} \label{eq:first}
\|\CX_1+\CX_2\|_\sigma\le \min\bigl\{f(c_1),g(c_1)\bigr\} \le f(d_1)= a_1b_2d_2+a_2 \le g(d_1)= 1+a_1b_1d_1,
\end{equation}
where $d_2=\sqrt{1-d_1^2}$. Therefore, $(a_1,a_2,b_1,b_2,d_1,d_2)$ is feasible to the second optimization problem, which implies that $a_1b_2d_2+a_2$ is no more than the optimal value of the second problem. This, together with the fact that the optimal value of the first optimization problem is no more than $a_1b_2d_2+a_2$ by~\eqref{eq:first}, directly shows the claimed result.
\end{proof}

Let us take some time to digest the results in Theorem~\ref{thm:bounds}, focusing on the subspaces where $\CX$ resides.
For any basic subspace $\TT^\I(\CT)$ with $|\I|\ge2$, $\CZ+\CX$ is a subgradient if and only if $\|\CX\|_\sigma\le 1$, i.e., (ii). However, for a basic subspace $\TT^\I(\CT)$ with $|\I|=1$, nothing can be guaranteed and one has to check $\|\CZ+\CX\|_\sigma$ case by case (see Example~\ref{ex:1perp}), but any $\|\CX\|_\sigma>1$ definitely rules $\CZ+\CX$ out, i.e., (i). A direct sum of several basic subspaces can be a subspace that keeps the bounds perfect as long as the sum is a subset of $\U^\I(\CT)$ for some $|\I|=2$, the largest subspace to be perfect, i.e., (iv). Although $\bigoplus_{|\I|\ge2,\,\I\subseteq[d]}\TT^\I$ cannot keep the bounds perfect, it is the largest structure in the sense that every direction in $\bigoplus_{|\I|\ge2,\,\I\subseteq[d]}\TT^\I(\CT)$ can make $\CZ+\CX$ a subgradient. This is perhaps the most interesting aspect of the subspace, in which the lower bound $\frac{2}{d(d-1)}$ provides an assurance of the spectral size of $\CX$ albeit it can be conservative, i.e., (ix). For the two special cases $d=3,4$, we are able to nail both $\underline{\tau}\mleft(\bigoplus_{|\I|\ge2,\,\I\subseteq[d]}\TT^\I\mright)$ and $\overline{\tau}\mleft(\bigoplus_{|\I|\ge2,\,\I\subseteq[d]}\TT^\I\mright)$ down to a smaller range.

We remark in particular that $\frac{2}{d(d-1)}$, as the conservative lower bound of $\underline{\tau}\mleft(\bigoplus_{|\I|\ge2,\,\I\subseteq[d]}\TT^\I\mright)$ from $\D_2(\CT)$, was improved to $\frac{1}{2}$ in $\D_1(\CT)$, i.e., (v) for $d=3$. It was also improved to $\frac{1}{3}$ in (vii) for $d=4$, one of the most important results in Theorem~\ref{thm:bounds}. Both make us believe strongly that the $\frac{2}{d(d-1)}$ bound can be improved to $\frac{1}{d-1}$, i.e., the validity of (iii) in Conjecture~\ref{cj:bounds} below.


We have not explored the cases for $d\ge 5$ in theory. However, we do have applied similar techniques to those used in Theorem~\ref{thm:bounds} and then resorted to computer programs to find the optimal values of relevant problems. We list our findings as a conjecture below.
\begin{conjecture}\label{cj:bounds}
In the tensor space $\R^{n_1\times n_2\times \dots \times n_d}$ of order $d\ge3$, 
\begin{enumerate}[label=\textnormal{(\roman*)}]
    \item $\underline{\tau}\mleft(\bigoplus_{|\I|\ge2,\,\I\subseteq[3]}\TT^\I\mright)=\frac{1}{\sqrt{3}}$ and $\overline{\tau}\mleft(\bigoplus_{|\I|\ge2,\,\I\subseteq[3]}\TT^\I\mright)=\frac{2}{\sqrt{3}}$;
    \item $\underline{\tau}\mleft(\bigoplus_{|\I|\ge2,\,\I\subseteq[d]}\TT^\I\mright)\le\mleft(\frac{d-2}{d}\mright)^{\frac{d-2}{2}}$ and in particular $\lim_{d\rightarrow\infty}\underline{\tau}\mleft(\bigoplus_{|\I|\ge2,\,\I\subseteq[d]}\TT^\I\mright)\le \frac{1}{e}$;
    \item $\underline{\tau}\mleft(\bigoplus_{|\I|\ge2,\,\I\subseteq[d]}\TT^\I\mright)\ge\frac{1}{d-1}$;   
    \item $\overline{\tau}\mleft(\bigoplus_{|\I|\ge2,\,\I\subseteq[d]}\TT^\I\mright)\le\frac{2(d-2)}{d-1}$ for $d\ge 5$;
    \item $\overline{\tau}\mleft(\bigoplus_{|\I|\ge1,\,\I\subseteq[d]}\TT^\I\mright)\le\frac{2d}{d+1}$;
    \item $\lim_{d\rightarrow\infty}\overline{\tau}\mleft(\bigoplus_{|\I|\ge2,\,\I\subseteq[d]}\TT^\I\mright)\ge\overline{\tau}\mleft(\bigoplus_{|\I|\ge2,\,\I\subseteq[200]}\TT^\I\mright)\ge 1.319$.
\end{enumerate}
\end{conjecture}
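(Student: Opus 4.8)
The conjecture gathers six assertions that fall into two families. The \emph{extremal-example} assertions --- the upper bounds on $\underline{\tau}$ in~(i) and~(ii), and the lower bounds on $\overline{\tau}$ in~(i) and~(vi) --- only require producing one tensor $\CT$, one $\CZ\in\Z(\CT)$, and one $\CX$ in the relevant subspace that sits on the boundary of $\partial\|\CT\|_*$. For these the plan is to generalize Examples~\ref{ex:yuan3} and~\ref{ex:yuan4}: take $\CT=\be_1^{\otimes d}\in\R^{2\times\dots\times 2}$, $\CZ=\CT$ (which lies in $\Z(\CT)$), and $\CX(t)=t\sum_{j=1}^{d}\CY_j$, where $\CY_j$ is $\be_2^{\otimes d}$ with its $j$th factor replaced by $\be_1$, so that $\CX(t)\in\bigoplus_{|\I|\ge2,\,\I\subseteq[d]}\TT^\I(\CT)$ for $\V_k=\spn(\be_1)$. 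By symmetry the quantities $\|\CX(t)\|_\sigma$ and $\|\CZ+\CX(t)\|_\sigma$ reduce to one-variable maximizations over a unit vector $(a,b)\in\SI^2\cap\R_{+}^2$ shared by all modes; carrying these out and matching the largest admissible $|t|$ should reproduce $\bigl(\tfrac{d-2}{d}\bigr)^{(d-2)/2}$ (hence $\tfrac1e$ in the limit) for~(ii), while running the same recipe at $d=200$ --- now a genuinely low-dimensional program --- should yield a rigorous (interval-arithmetic) certificate for the $1.319$ bound in~(vi).

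The universal lower bounds on $\underline{\tau}$ --- the value $\tfrac1{\sqrt3}$ in~(i) and the rate $\tfrac1{d-1}$ in~(iii) --- are the heart of the matter. The plan is to reduce, as in Lemma~\ref{thm:connection}, to the spectral statement ``$\|\CZ+\CX\|_\sigma\le 1$ whenever $\CZ\in\Z(\CT)$ and $\CX\in\bigoplus_{|\I|\ge2}\TT^\I(\CT)$ with $\|\CX\|_\sigma\le\tau$''. Picking unit vectors $\bu_k$ attaining $\|\CZ+\CX\|_\sigma$ and writing $c_k=\|\proj_{\V_k}(\bu_k)\|_2$, $s_k=\|\proj_{\V_k^\perp}(\bu_k)\|_2$ with $\V_k=\spn_k(\CT)$, the expansion of $\bigotimes_k\bu_k$ over the $2^d$ basic subspaces together with Lemma~\ref{thm:spec-subspace} (which bounds every basic-subspace component of $\CX$ by $\|\CX\|_\sigma$) gives
\begin{equation*}
\|\CZ+\CX\|_\sigma\le\prod_{k=1}^{d}c_k+\tau\mleft(\prod_{k=1}^{d}(c_k+s_k)-\prod_{k=1}^{d}c_k-\sum_{j=1}^{d}s_j\prod_{k\ne j}c_k\mright),
\end{equation*}
and it would suffice to show the right-hand side is $\le 1$ for $\tau=\tfrac1{d-1}$ (resp.\ $\tau=\tfrac1{\sqrt3}$ at $d=3$). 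This term-by-term estimate is, however, far too weak --- at $c_k=s_k=\tfrac1{\sqrt2}$ it already exceeds $1$, and it degrades badly as $d$ grows --- so the real argument, as in~\cite[Lemma~1]{yuan2016tensor} for $d=3$ and Lemma~\ref{lemma:yuan-lemma1-d=4} for $d=4$, must instead control $\langle\CX,\bigotimes_k\bu_k\rangle$ by bounding the nuclear norm of the projection of $\bigotimes_k\bu_k$ onto $\bigoplus_{|\I|\ge2}\TT^\I(\CT)$ more cleverly (exploiting that the $\CZ$-part and the $\CX$-part cannot be simultaneously extremal on the same test vector), and then solving the resulting spherical optimization in the $c_k,s_k$.

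The universal upper bounds on $\overline{\tau}$ --- $\tfrac2{\sqrt3}$ in~(i), $\tfrac{2(d-2)}{d-1}$ in~(iv), $\tfrac{2d}{d+1}$ in~(v) --- I would attack with the ``bridge'' technique of Lemma~\ref{thm:bridge}: embed $\bigoplus_{|\I|\ge2,\,\I\subseteq[d]}\TT^\I(\CT)$ (resp.\ $\bigoplus_{|\I|\ge1}$) into a telescoping chain of $\U$-type pieces ending in $\U^{\{1\}}(\CT)$, apply Theorem~\ref{thm:spec-decomp} repeatedly along the chain to split $\|\CZ+\CX\|_\sigma\le1$ into spectral-norm constraints on the pieces, and then --- exactly as Lemmas~\ref{thm:optimization1} and~\ref{thm:optimization2} do for $d=3,4$ --- reduce to a nonconvex spherical program in one variable $(x^{(k)}_1,x^{(k)}_2)\in\SI^2\cap\R_{+}^2$ per mode. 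Computing its closed-form optimum for general $d$ is what yields $\tfrac{2(d-2)}{d-1}$ and $\tfrac{2d}{d+1}$, together with a sharpening of the $d=3$ instance of Theorem~\ref{thm:bounds}(v) replacing $\tfrac{1+\sqrt2}{2}$ by $\tfrac2{\sqrt3}$.

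The main obstacle, across all three universal families, is precisely these optimizations. For the $\underline{\tau}$ bounds the delicate point is finding an estimate of $\langle\CX,\bigotimes_k\bu_k\rangle$ tight enough to survive both at the symmetric configuration and at large $d$ with the exact rate $\tfrac1{d-1}$; the coupling that makes the $d=3,4$ proofs work is already intricate and it is unclear it extends uniformly. For the $\overline{\tau}$ bounds the bottleneck is solving a $d$-variable nonconvex maximization in closed form rather than the $3$- and $4$-variable instances of Lemmas~\ref{thm:optimization1}--\ref{thm:optimization2}, and pinning down the exact constants $\tfrac1{\sqrt3}$ and $\tfrac2{\sqrt3}$ at $d=3$ requires improving the estimates currently used to prove Theorem~\ref{thm:bounds}(v). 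These are presumably the reasons the assertions are stated only as a conjecture.
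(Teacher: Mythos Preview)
The statement is a \emph{conjecture}, and the paper does not prove it. Immediately before Conjecture~\ref{cj:bounds} the authors write that they ``have not explored the cases for $d\ge 5$ in theory'' but ``have applied similar techniques to those used in Theorem~\ref{thm:bounds} and then resorted to computer programs to find the optimal values of relevant problems,'' and ``list [their] findings as a conjecture.'' There is therefore no paper proof to compare against.

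Your proposal is not a proof either, and you recognize this: you outline natural approaches (generalizing Examples~\ref{ex:yuan3} and~\ref{ex:yuan4} for the extremal-example assertions, extending the bridge technique of Lemma~\ref{thm:bridge} and the decomposition arguments of Lemma~\ref{lemma:yuan-lemma1-d=4} for the universal bounds) and correctly identify the obstacles --- the term-by-term estimate is too lossy, the coupling in the $d=3,4$ arguments does not obviously extend, and the spherical optimizations become $d$-variable nonconvex programs without an obvious closed form. Your closing sentence, ``These are presumably the reasons the assertions are stated only as a conjecture,'' is exactly right: the paper itself supplies only numerical evidence for general $d$, and your sketch matches the techniques the authors say they applied before resorting to computation.
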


\subsection{Final remark of the subdifferential}\label{sec:final-remark}

\textcolor{black}{Recall that the}
original version of the subdifferential inclusion in~\cite{yuan2016tensor} states that
\begin{equation*}
\overline{\D}_1(\CT)=\mleft\{\CZ+\proj_{\bigoplus_{|\I|\ge2,\,\I\subseteq[3]}\TT^\I(\CT)}(\CX):\CZ\in\Z(\CT),\,\CX\in\R^{n_1\times n_2\times n_3},\,\|\CX\|_\sigma\le \frac{1}{2}\mright\} \subseteq \partial \|\CT\|_*,
\end{equation*}
which is slightly different to the one that we structured
\begin{equation*}\label{eq:subdiff-new}
   \D_1(\CT)=\mleft\{\CZ+\CX:\CZ\in\Z(\CT),\,\CX\in\bigoplus_{|\I|\ge2,\,\I\subseteq[3]}\TT^\I(\CT)
    ,\,\|\CX\|_\sigma\le \frac{1}{2}\mright\} \subseteq \partial \|\CT\|_*.
\end{equation*}
They both restrict subgradients in the direct sum of $\Z(\CT)$ and $\bigoplus_{|\I|\ge2,\,\I\subseteq[3]}\TT^\I(\CT)$. In order to restrict $\CX\in\bigoplus_{|\I|\ge2,\,\I\subseteq[3]}\TT^\I(\CT)$, $\D_1(\CT)$ simply asks $\|\CX\|_\sigma\le\frac{1}{2}$, whereas $\overline{\D}_1(\CT)$ asks $\|\CX+\CY\|_\sigma\le\frac{1}{2}$ for some $\CY\in\bigoplus_{|\I|\le1,\,\I\subseteq[3]}\TT^\I(\CT)$, but $\CY$ has nothing to do with the subgradient $\CZ+\CX$ itself.

The study on the subdifferential in this section is presented in the structure of $\D_1(\CT)$, i.e., simply checking $\|\CX\|_\sigma$, 
rather than checking 
$
\min\bigl\{\|\CX+\CY\|_\sigma: \CY\in\bigoplus_{|\I|\le1,\,\I\subseteq[3]}\TT^\I(\CT)\bigr\}.
$
For the perfect inclusion in Corollary~\ref{thm:newset}, i.e.,
$$
\D^\I(\CT)=\bigl\{\CZ+\CX:\CZ\in\Z(\CT),\,\CX\in\U^\I(\CT),\,\|\CX\|_\sigma\le 1\bigr\} \subseteq \partial \|\CT\|_*\text{ for any $\I\subseteq [d]$ with $|\I|\ge 2$},
$$
both structures lead to the same set. This is because $\CX=\proj_{\U^\I(\CT)}(\CX+\CY)$ and so $\|\CX\|_\sigma\le\|\CX+\CY\|_\sigma\le1$ by Lemma~\ref{thm:spec-subspace}.
On the other hand, $\overline{\D}_1(\CT)$ includes $\D_1(\CT)$ as a proper subset since $\|\CX\|_\sigma\le\frac{1}{2}$ trivially implies that $\|\CX+\CY\|_\sigma\le\frac{1}{2}$ for $\CY=\CO$ and the tensor $\CZ+\CX(\frac{1}{2})$ in Example~\ref{ex:yuan33} belongs to $\overline{\D}_1(\CT)\setminus\D_1(\CT)$. Even though, both $\overline{\D}_1(\CT)$ and $\D_1(\CT)$ provide the lower bound $\frac{1}{2}$ of $\underline{\tau}\mleft(\bigoplus_{|\I|\ge2,\,\I\subseteq[3]}\TT^\I\mright)$ in Theorem~\ref{thm:bounds}. In fact, the constant $\frac{1}{2}$ in $\overline{\D}_1(\CT)$ turns out to be tight and this answers in the negative a comment raised in~\cite[Page~1039]{yuan2016tensor} to sharpen the constant.
\begin{proposition} 
The constant $\frac{1}{2}$ in $\overline{\D}_1(\CT)$ is tight.
\end{proposition}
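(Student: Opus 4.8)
The plan is to show that for every $\beta>\tfrac12$ the inclusion obtained from the definition of $\overline{\D}_1(\CT)$ by replacing the bound $\tfrac12$ with $\beta$ fails, i.e. there are a tensor $\CT$, some $\CZ\in\Z(\CT)$, and some $\CX$ with $\|\CX\|_\sigma\le\beta$ for which $\CZ+\proj_{\bigoplus_{|\I|\ge2,\,\I\subseteq[3]}\TT^\I(\CT)}(\CX)\notin\partial\|\CT\|_*$; since the candidate set only grows with $\beta$, it in fact suffices to do this for $\beta$ arbitrarily close to $\tfrac12$, although the construction below works for all $\beta>\tfrac12$ at once. I would reuse the tensor $\CT=\be_1\otimes\be_1\otimes\be_1\in\R^{2\times2\times2}$ of Example~\ref{ex:yuan3} and Example~\ref{ex:yuan33}: here $\spn_k(\CT)=\spn(\be_1)$ for $k\in[3]$, so $\TT^\varnothing(\CT)=\R\cdot(\be_1\otimes\be_1\otimes\be_1)$, $\Z(\CT)=\{\CT\}$, and $\bigoplus_{|\I|\ge2,\,\I\subseteq[3]}\TT^\I(\CT)$ is exactly the span of the four basis tensors $\be_{i_1}\otimes\be_{i_2}\otimes\be_{i_3}$ with at least two of the $i_k$ equal to $2$. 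Writing $\CX(t):=t(\be_1\otimes\be_2\otimes\be_2+\be_2\otimes\be_1\otimes\be_2+\be_2\otimes\be_2\otimes\be_1)$ as in Example~\ref{ex:yuan3}, recall from there that $\|\CT+\CX(t)\|_\sigma=1$ if and only if $-1\le t\le\tfrac12$.

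The idea is to hide inside $\CX$ a correction term that the projection annihilates, so that $\|\CX\|_\sigma$ stays small while the projected tensor pushes the spectral norm past $1$. For $\beta>\tfrac12$ set
\[
\CX_\beta:=-\beta\,\be_1\otimes\be_1\otimes\be_1+\beta\bigl(\be_1\otimes\be_2\otimes\be_2+\be_2\otimes\be_1\otimes\be_2+\be_2\otimes\be_2\otimes\be_1\bigr)=-\beta\bigl(\CT+\CX(-1)\bigr).
\]
Then $\|\CX_\beta\|_\sigma=\beta\,\|\CT+\CX(-1)\|_\sigma=\beta$, the last equality by the displayed fact from Example~\ref{ex:yuan3} at $t=-1$. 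Moreover the summand $-\beta\,\be_1\otimes\be_1\otimes\be_1$ lies in $\TT^\varnothing(\CT)$ and is killed by $\proj_{\bigoplus_{|\I|\ge2,\,\I\subseteq[3]}\TT^\I(\CT)}$, whereas each of the other three rank-one summands lies in a basic subspace indexed by a two-element set, so
\[
\proj_{\bigoplus_{|\I|\ge2,\,\I\subseteq[3]}\TT^\I(\CT)}(\CX_\beta)=\beta\bigl(\be_1\otimes\be_2\otimes\be_2+\be_2\otimes\be_1\otimes\be_2+\be_2\otimes\be_2\otimes\be_1\bigr)=\CX(\beta).
\]

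Finally I would conclude as follows. With $\CZ=\CT\in\Z(\CT)$, the candidate subgradient is $\CZ+\proj_{\bigoplus_{|\I|\ge2,\,\I\subseteq[3]}\TT^\I(\CT)}(\CX_\beta)=\CT+\CX(\beta)$, and it belongs to the $\beta$-enlarged version of $\overline{\D}_1(\CT)$ because $\|\CX_\beta\|_\sigma=\beta$. But by Example~\ref{ex:yuan3} the equality $\|\CT+\CX(\beta)\|_\sigma=1$ holds only for $-1\le\beta\le\tfrac12$, while always $\|\CT+\CX(\beta)\|_\sigma\ge(\CT+\CX(\beta))(\be_1,\be_1,\be_1)=1$; hence for $\beta>\tfrac12$ we get $\|\CT+\CX(\beta)\|_\sigma>1$, so $\CT+\CX(\beta)\notin\partial\|\CT\|_*$. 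Therefore no constant larger than $\tfrac12$ is admissible. I do not foresee a genuine obstacle: the only substantive inputs are the facts about $\be_1\otimes\be_1\otimes\be_1$ already recorded in Example~\ref{ex:yuan3}, and the one point requiring care is merely to observe that the projection onto $\bigoplus_{|\I|\ge2,\,\I\subseteq[3]}\TT^\I(\CT)$ drops exactly the $\TT^\varnothing(\CT)$ component of $\CX_\beta$ — which is precisely what keeps $\|\CX_\beta\|_\sigma=\beta$ instead of the strictly larger value $\|\CX(\beta)\|_\sigma=\tfrac{2\beta}{\sqrt3}$ carried by the projected tensor alone, and thereby distinguishes the behavior of $\overline{\D}_1$ from that of $\D_1$.
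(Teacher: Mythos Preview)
Your proposal is correct and follows essentially the same approach as the paper: you use the same tensor $\CT=\be_1\otimes\be_1\otimes\be_1$, the same tensor $\CX_\beta=\CX(\beta)+\CY(\beta)$ (in the paper's notation $\CY(t)=-t\CT$), exploit the same identity $\|\CX(\beta)+\CY(\beta)\|_\sigma=\beta$, and conclude via the same fact that $\CZ+\CX(\beta)\notin\partial\|\CT\|_*$ once $\beta>\tfrac12$. The only cosmetic difference is that you obtain $\|\CX_\beta\|_\sigma=\beta$ by scaling from $\|\CT+\CX(-1)\|_\sigma=1$, whereas the paper records $\|\CX(t)+\CY(t)\|_\sigma=|t|$ directly.
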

The proposition is an immediate consequence of the following example.
\begin{example}\label{ex:yuan33}
Let $\CT=\be_1\otimes\be_1\otimes\be_1\in\R^{2\times 2\times 2}$. We have $\spn_k(\CT)=\spn(\be_1)$ for $k\in[3]$. The only tensor $\CZ\in\TT(\CT)$ that satisfies $\langle\CZ,\CT\rangle=\|\CT\|_*=1$ and $\|\CZ\|_\sigma=1$ is $\CZ=\CT$.

Let $\CX(t)=t(\be_1\otimes\be_2\otimes\be_2 + \be_2\otimes\be_1\otimes\be_2 + \be_2\otimes\be_2\otimes\be_1) \in\bigoplus_{|\I|\ge2,\,\I\subseteq[3]}\TT^\I(\CT)$ and $\CY(t)=-t\CT\in\bigoplus_{|\I|\le1,\,\I\subseteq[3]}\TT^\I(\CT)$. It can be verified that $\bigl\|\CX(t)\bigr\|_\sigma=\frac{2|t|}{\sqrt{3}}$ and $\bigl\|\CX(t)+\CY(t)\bigr\|_\sigma=|t|$ for any $t\in\R$ (see Lemma~\ref{thm:E3}). However, it can also be verified that $\bigl\|\CZ+\CX(t)\bigr\|_\sigma\le1$ if and only if $-1\le t\le \frac{1}{2}$ (see Lemma~\ref{thm:E3}), implying that $\CZ+\CX(t)\notin\partial\|\CT\|_*$ for any $t>\frac{1}{2}$.
\end{example}
Perhaps the tightness of $\overline{\D}_1(\CT)$ provides another reason to consider the structure of $\D_1(\CT)$ as we strongly believe that the constant $\frac{1}{2}$ in $\D_1(\CT)$ can be improved to $\frac{1}{\sqrt{3}}$, i.e., (i) in Conjecture~\ref{cj:bounds}. We leave it to future works.

\section{Tensor robust principal component analysis}\label{sec:TRPCA}

In this section, we establish the statistical performance of the nuclear-norm-based tensor robust PCA as an immediate application of our theoretical developments. The main result that we apply is the new inclusion~\eqref{eq:full-subdiff} in Theorem~\ref{thm:subdiff}, more specifically, Corollary~\ref{thm:newset}. It makes the study in the sequel as straightforward as that in the matrix case~\cite{candes2011robust}.

\subsection{Model and main result}

The tensor robust PCA aims to recover a low-rank ground-truth tensor $\CL\in\R^{n_1\times n_2\times\dots\times n_d}\setminus\{\CO\}$ that is superposed by a sparse corruption $\CS\in\R^{n_1\times n_2\times\dots\times n_d}$. Specifically, it is to find the ground-truth $\CL$ by solving the following convex optimization model
\begin{equation}\label{eq:PCP}
    \min\bigl\{\|\CT_1\|_*+\lambda\|\CT_2\|_1: \CT_1+\CT_2=\CL+\CS,\, \CT_1,\CT_2\in\R^{n_1\times n_2\times\dots\times n_d}\bigr\},
\end{equation}
where $\lambda> 0$ is a balancing parameter. For a comprehensive introduction to the model, the readers are referred to~\cite{candes2011robust} and a recent book~\cite{wright2022high}. 
Unfortunately,~\eqref{eq:PCP} is computationally intractable due to the NP-hardness to compute the tensor nuclear norm~\cite{friedland2018nuclear} albeit it is convex. However, understanding the statistical performance of~\eqref{eq:PCP} can still be of great importance, just as it is for relevant problems such as tensor completion~\cite{yuan2016tensor} and tensor regression~\cite{raskutti2019convex}.

It should be noted that not every $\CL$ is identifiable. For example, it is definitely impossible to recover a simultaneously low-rank and sparse $\CL$ due to ambiguity. As a result, some standard assumptions on $\CL$ and $\CS$ are required.
\begin{assumption}\label{assump:assump}
    The entries of $\CS$ are independent random variables, each being zero with probability $1-\rho$, positive with probability $\frac{\rho}{2}$, and negative with probability $\frac{\rho}{2}$, for a sufficiently small constant $\rho>0$. 
    There exist a constant $u_0>0$ and a sufficiently small constant $\theta_0>0$ such that
    \begin{equation*}
        \max_{k\in[d]}u_k \le u_0,~
        r_0:=\max_{k\in[d]}r_k\le \theta_0\frac{ (1-\rho)n_1}{u_0\ln^2{n_d}}, \text{ and }
        \min_{\CZ\in\Z(\CL)}\|\CZ\|_\infty\le\sqrt{\frac{u_0r_0}{n_1 n_d \ln^{\max\{2d-5,0\}}{n_d}}},
    \end{equation*}
where
$$
    r_k:=\dim\bigl(\spn_k(\CL)\bigr)\text{ and }u_k:=\frac{n_k}{r_k} \max_{i\in[n_k]}\bigl\|\proj_{\spn_k(\CL)}(\be_i)\bigr\|_2^2\text{ for $k\in[d]$}.
$$
\end{assumption}
Recall $\Z(\CL)=\bigl\{\CZ\in\TT(\CL):\langle\CZ,\CL\rangle=\|\CL\|_*,\,\|\CZ\|_\sigma=1\bigr\}$ defined in~\eqref{eq:z(t)} for $\CL\ne\CO$ and the assumption that $2\le n_1\le n_2\le\dots\le n_d$ without loss of generality. In Assumption~\ref{assump:assump}, the first and last requirements of $\CL$ are known as the incoherence conditions in the literature~\cite{candes2011robust,yuan2016tensor,driggs2019tensor,lu2020tensor}. The assumption on $\CS$ means that its sparsity patterns follow Bernoulli distributions with parameter $\rho$ but there is no assumption on the magnitudes of the entries of $\CS$. We now state the main result in this section,
\textcolor{black}{deferring its proof to Section~\ref{sec:putting-everything-together}}.
\begin{theorem}\label{thm:main-thm}
    Under Assumption~\ref{assump:assump}, the convex optimization model~\eqref{eq:PCP} with $\lambda=\frac{1}{\sqrt{n_d}}$ exactly recovers $\CL$ and $\CS$ with high probability for any fixed $d\ge2$.
\end{theorem}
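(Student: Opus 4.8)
The plan is to follow the dual-certificate (``golfing scheme'') argument of Cand\`es, Li, Ma, and Wright~\cite{candes2011robust}, which Corollary~\ref{thm:newset} now makes available for arbitrary order $d$. Let $\Omega:=\operatorname{supp}(\CS)$, a Bernoulli($\rho$) random index set carrying independent random signs, and set $\mathbb{T}^\perp:=\TT^{[d]}(\CL)=\U^{[d]}(\CL)$ together with $\mathbb{T}:=\bigl(\TT^{[d]}(\CL)\bigr)^\perp=\bigoplus_{\I\subsetneq[d]}\TT^\I(\CL)$; for $d=2$ these are exactly the matrix tangent space and its orthogonal complement. Fix $\CZ^\star\in\Z(\CL)$ attaining the minimum of $\|\CZ\|_\infty$ in Assumption~\ref{assump:assump} (nonemptiness by Lemma~\ref{prop:nuclear-U-equiv}, convexity of $\Z(\CL)$ by Lemma~\ref{lma:convexity-ZT}). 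The first step is a deterministic sufficient condition: $(\CL,\CS)$ is the unique optimal solution of~\eqref{eq:PCP} whenever $\|\proj_\Omega\proj_{\mathbb{T}}\|$ is bounded away from $1$ and there is a certificate $\CY$ with $\proj_\Omega(\CY)=\lambda\,\sign(\CS)$, $\|\proj_{\Omega^c}(\CY)\|_\infty\le\tfrac{\lambda}{4}$, $\|\proj_{\mathbb{T}}(\CY)-\CZ^\star\|_2$ tiny, and $\|\proj_{\mathbb{T}^\perp}(\CY)\|_\sigma\le\tfrac14$. This is where the structural results enter: for a feasible perturbation $(\CL+\CH,\,\CS-\CH)$ one bounds $\|\CL+\CH\|_*\ge\|\CL\|_*+\langle\CZ^\star+\CX,\CH\rangle$ with $\CX\in\TT^{[d]}(\CL)$, $\|\CX\|_\sigma\le1$, chosen adversarially to $\proj_{\TT^{[d]}(\CL)}(\CH)$; legitimacy of this subgradient is precisely Corollary~\ref{thm:newset} with $\I=[d]$, and $\|\CZ^\star+\CX\|_\sigma=\max\{\|\CZ^\star\|_\sigma,\|\CX\|_\sigma\}=1$ by the spectral decomposability of Theorem~\ref{thm:dualdecomp}. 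Pairing this with the analogous $\ell_1$ lower bound $\|\CS-\CH\|_1\ge\|\CS\|_1+\langle\lambda(\sign(\CS)+\CF),-\CH\rangle$ and absorbing the cross terms $\langle\CZ^\star-\lambda\sign(\CS),\CH\rangle$ through the defining properties of $\CY$ forces $\CH=\CO$.

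The second step is probabilistic. The driving quantity is the incoherence of $\mathbb{T}$: since $\proj_{\mathbb{T}}=\mathcal{I}-\bigotimes_{k=1}^d\proj_{\spn_k(\CL)^\perp}$, one gets $\bigl\|\proj_{\mathbb{T}}(\be_{i_1}\otimes\dots\otimes\be_{i_d})\bigr\|_2^2\le\sum_{k=1}^d\bigl\|\proj_{\spn_k(\CL)}(\be_{i_k})\bigr\|_2^2\le\frac{d\,u_0r_0}{n_1}$, which is small by the bound on $r_0$. A scalar Bernstein/decoupling estimate on the random restriction then gives $\bigl\|\proj_{\mathbb{T}}\proj_\Omega\proj_{\mathbb{T}}-(1-\rho)\proj_{\mathbb{T}}\bigr\|\le\epsilon(1-\rho)$ with high probability, hence $\|\proj_\Omega\proj_{\mathbb{T}}\|^2\le\rho+\epsilon<1$ for $\rho$ small. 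One then builds $\CY=\CY^{L}+\CY^{S}$: the low-rank certificate $\CY^{L}$ by golfing over $j_0=\Theta(\ln n_d)$ independent Bernoulli batches $\Omega_1,\dots,\Omega_{j_0}$ of $\Omega^c$, setting $\CW_0=\CZ^\star$, $\CW_j=\CW_{j-1}-(1-\rho)^{-1}\proj_{\mathbb{T}}\proj_{\Omega_j}(\CW_{j-1})$, and $\CY^L=\sum_{j}(1-\rho)^{-1}\proj_{\Omega_j}(\CW_{j-1})$, so $\proj_{\mathbb{T}}(\CY^L)=\CZ^\star-\CW_{j_0}$ with $\|\CW_{j_0}\|_2$ and $\|\CW_{j_0}\|_\infty$ decaying geometrically and $\proj_\Omega(\CY^L)=\CO$; the sparse certificate $\CY^{S}=\lambda(\mathcal{I}-\proj_{\mathbb{T}})\sum_{k\ge0}(\proj_\Omega\proj_{\mathbb{T}}\proj_\Omega)^k\sign(\CS)$ via the Neumann series licensed by the operator-norm bound, which gives $\proj_\Omega(\CY^S)=\lambda\sign(\CS)$ and $\proj_{\mathbb{T}}(\CY^S)=\CO$. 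The per-iteration estimates needed are (a) an $\ell_\infty\!\to\!\ell_\infty$ contraction for $\proj_{\mathbb{T}}\proj_{\Omega'}$ restricted to $\mathbb{T}$, and (b) an $\ell_\infty\!\to\!\sigma$ bound $\bigl\|\proj_{\mathbb{T}^\perp}\proj_{\Omega'}(\CW)\bigr\|_\sigma\lesssim\beta_d\,\|\CW\|_\infty$ with $\beta_d$ a dimension-and-log factor. With these one verifies $\|\proj_{\Omega^c}(\CY)\|_\infty\le\tfrac{\lambda}{4}=\tfrac1{4\sqrt{n_d}}$ (using $\|\CZ^\star\|_\infty$ small, the third incoherence hypothesis) and $\|\proj_{\mathbb{T}^\perp}(\CY)\|_\sigma\le\tfrac14$, union-bounds over the $j_0$ batches, and concludes; tracking that $d$ enters only through absolute constants and powers of $\ln n_d$, and not through the admissible threshold on $\rho$, yields the ``order-free'' remark and exact agreement with~\cite{candes2011robust} at $d=2$.

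The main obstacle is estimate (b): bounding the spectral norm of $\proj_{\TT^{[d]}(\CL)}\proj_{\Omega'}(\CW)$ for an incoherent $\CW\in\mathbb{T}$ and a random sparse restriction $\Omega'$. In the matrix case this is a one-line matrix Bernstein inequality, but for $d\ge3$ there is no SVD, so one must net the tensor spectral norm over $\prod_{k=1}^d\SI^{n_k}$ and apply scalar concentration to the resulting multilinear forms. The $\epsilon$-net costs a factor $\sqrt{\sum_k n_k}\lesssim\sqrt{d\,n_d}$, and because the golfing iteration alternates $\ell_\infty$ control with spectral control across all $d$ modes, these passages compound into the poly-logarithmic factor $\ln^{\max\{2d-5,0\}}n_d$ that must be budgeted inside Assumption~\ref{assump:assump} while keeping $\lambda=1/\sqrt{n_d}$ admissible. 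A secondary, bookkeeping difficulty is that $\mathbb{T}$ is here a direct sum of $2^d-1$ basic subspaces rather than a single tangent space, so the splitting $\CH=\proj_{\TT^\varnothing(\CL)}(\CH)+\sum_{1\le|\I|\le d-1}\proj_{\TT^\I(\CL)}(\CH)+\proj_{\TT^{[d]}(\CL)}(\CH)$ must be threaded carefully through the subgradient inequality, remembering that the adversarial tensor $\CX$ may only be placed in $\TT^{[d]}(\CL)$ and that the ``middle'' components of $\CH$ are controlled solely through $\proj_{\mathbb{T}}(\CY)\approx\CZ^\star$.
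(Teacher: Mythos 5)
Your proposal reproduces the paper's proof architecture essentially step for step: the same choice $\LL^\perp=\TT^{[d]}(\CL)$ and $\LL=\bigoplus_{\I\subsetneq[d]}\TT^\I(\CL)$, the same deterministic dual-certificate sufficiency (Lemma~\ref{lma:uniqueness1} and its relaxed form Lemma~\ref{lma:relax}), the same invocation of Corollary~\ref{thm:newset} with $\I=[d]$ and Theorem~\ref{thm:dualdecomp} to legitimize the subgradient $\CZ^\star+\CX$, the incoherence bound (Lemma~\ref{lma:incoherence}), matrix Bernstein for $\|\proj_{\LL}\proj_{\I(\CS)}\|$ (Proposition~\ref{prop:hdp-1-1}), golfing for $\CD_1$, and the Neumann-series least-squares construction for $\CD_2$ with $\epsilon$-net concentration of the Rademacher sign tensor $\CE$. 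Two small bookkeeping points: the per-batch rescaling in the golfing recursion should be $(1-\varphi)^{-1}$ with $\varphi=\rho^{1/m}$, not $(1-\rho)^{-1}$, so that the restriction operator has the right expectation; and when you split $\CH$ in the uniqueness argument, the ``middle'' components of $\CH$ are controlled through the identity $\LL\cap\I(\CS)=\{\CO\}$ rather than directly through $\proj_{\LL}(\CY)\approx\CZ^\star$.

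The one place where your route genuinely departs from the paper is the $\ell_\infty\to\sigma$ estimate (your ``estimate (b)'') in the golfing scheme, i.e.\ bounding $\bigl\|(\proj_{\I^d}-q^{-1}\proj_{\I})(\CW)\bigr\|_\sigma$ for a fixed incoherent $\CW\in\LL$ and a Bernoulli mask $\I$. You propose to handle this by netting $\prod_k\SI^{n_k}$ and applying scalar concentration; the paper instead cites the sparse random tensor result of Zhou--Zhu (Corollary~\ref{cor:concent-inf-Fro}, from \cite{zhou2021sparse}). This is not a cosmetic difference: for Bernoulli indicators the relevant tail is Bernstein, not Hoeffding, and the linear (range) term $q^{-1}\|\CW\|_\infty\max_{\bi}\prod_k|(\bv_k)_{i_k}|$ can be of order $q^{-1}\|\CW\|_\infty$ for spiky net vectors. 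A union bound over $\exp\bigl(\Theta(\sum_k n_k)\bigr)$ points then forces a threshold of order $q^{-1}\|\CW\|_\infty\sum_k n_k$, which is far larger than the needed $\kappa\|\CW\|_\infty\sqrt{q^{-1}n_d}\ln^{d-2}n_d$ and would break the budget in Proposition~\ref{prop:Dl}. The sharper bound requires the decoupling/chaining machinery of \cite{zhou2021sparse}, not a plain net. (By contrast, your plan for $\CD_2$ is fine: there $\CE$ has i.i.d.\ Rademacher entries conditional on its support, so Hoeffding over the net does close, exactly as in Lemmas~\ref{lma:least-square1}--\ref{lma:least-square2}.) So the proposal is sound as a blueprint, but at this one critical lemma you would need to import the cited concentration result rather than rederive it by the method you describe.
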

An event with high probability is one whose probability depends on a certain number, which 
is $n_d$ in our case, and tends to 1 as $n_d$ tends to infinity, i.e., the probability of the event occurring can be made as close to 1 as desired. As mentioned in Theorem~\ref{thm:main-thm}, the order of the tensor space, $d\ge2$, is deemed as a fixed parameter.

To the best of our knowledge, Theorem~\ref{thm:main-thm} is the first result concerning the statistical behavior of~\eqref{eq:PCP} for tensors of an arbitrary order. Before proceeding to the proof, let us first compare our result with several existing ones on tensor robust PCA based on nuclear norms. First of all, when $d=2$, the special case of Theorem~\ref{thm:main-thm} exactly recovers the result on the matrix robust PCA; see~\cite[Theorem~5.3]{wright2022high} and~\cite[Theorem~1.1]{candes2011robust}.

There are many t-SVD-based methods in the literature that use the t-SVD~\cite{kilmer2013third} to define different tensor nuclear norms; see, e.g.,~\cite{gao2020enhanced,lu2020tensor,lu2021transforms}. These methods have a good computability and exhibit remarkable performance in numerical experiments but they only work for third-order tensors due to the inherent design mechanism of the t-SVD. This limitation has restricted the generality and versatility of the methods. By contrast, the model~\eqref{eq:PCP} is built on top of the vanilla tensor nuclear norm that applies to tensors of an arbitrary order. For third-order tensors, some of the conditions in these t-SVD-based methods seem a bit different from ours. Taking~\cite[Theorem~4.1]{lu2020tensor} as an example, the balancing parameter $\lambda$ therein is set as $\frac{1}{\sqrt{n_2 n_3}}$ while that in Theorem~\ref{thm:main-thm} is $\frac{1}{\sqrt{n_3}}$, which is consistent with the matrix case $\frac{1}{\sqrt{n_2}}$; see~\cite[Theorem~1.1]{candes2011robust}. As another example, in~\cite[Theorem~4.1]{lu2020tensor}, 
the counterpart of $\min_{\CZ\in\Z(\CL)}\|\CZ\|_\infty$ in its context needs to be bounded by $\sqrt{\frac{u_{\rm{t}} r_{\rm{t}}}{n_1 n_2 n_3^2}}$ where $r_{\rm{t}}$ is the tubal rank of $\CL$~\cite[Definition~4.4]{kilmer2013third} while ours is $\sqrt{\frac{u_0r_0}{n_1 n_3 \ln{n_3}}}$ in Assumption~\ref{assump:assump}. The former seems more restrictive if $r_{\rm{t}}$ is regarded analogously to $r_0$.


We are also aware of an unpublished work of tensor robust PCA based on the vanilla tensor nuclear norm~\cite{driggs2019tensor}. We highlight that our result has some advantages. Similar to the t-SVD-based methods, the analysis in~\cite{driggs2019tensor} also applies to third-order tensors only. Even for third-order tensors, 
some assumptions in~\cite{driggs2019tensor} can be stricter than ours. For example, the requirements of $\min_{\CZ\in\Z(\CL)}\|\CZ\|_\infty$ in~\cite[Theorem~1]{driggs2019tensor} and in Theorem~\ref{thm:main-thm} are 
        \begin{equation*}
            \min_{\CZ\in\Z(\CL)}\|\CZ\|_\infty\le\sqrt{\frac{u_{\rm{d}} r_{\rm{d}}}{n_1 n_2 n_3}}\text{ with } r_{\rm{d}}=\sqrt{\frac{r_1 r_2 n_3+r_1 r_3 n_2+ r_2 r_3 n_1}{n_1+n_2+n_3}}  \mbox{ and }\min_{\CZ\in\Z(\CL)}\|\CZ\|_\infty\le\sqrt{\frac{u_0r_0}{n_1 n_3 \ln{n_3}}}, 
        \end{equation*}
respectively. 
While it is difficult to compare the two upper bounds directly, in the case when $r_k=r$ and $n_k=n$ for $k\in[3]$, they become $O(\sqrt{\frac{ r}{n^3}})$ and $O(\sqrt{\frac{ r}{n^2\ln{n}}})$, respectively. It shows that our bound is clearly better in this case. That being said, the rank requirements in~\cite[Theorem~1]{driggs2019tensor} and in Theorem~\ref{thm:main-thm} are  
    \begin{equation*}
        r_{\rm{d}}\le \theta_{\rm{d}}\sqrt{\frac{n_1 n_2 n_3}{(n_1+n_2+n_3)\ln(n_1 n_2 n_3)}} \text{ and } r_0\le \theta_0\frac{(1-\rho)n_1}{u_0\ln^2{n_3}},
    \end{equation*}
    respectively.
In the case when $n_k=n$ for $k\in[3]$, the two upper bounds become $O(\frac{n}{\sqrt{\ln{n}}})$ and $O(\frac{n}{\ln^2{n}})$, respectively. It shows that our bound is slightly worse if $r_{\rm{d}}$ is regarded analogously to $r_0$.

{\color{black}
It is worth mentioning that the differences between the recovery bounds required in~\cite[Theorem~1]{driggs2019tensor} and in Theorem~\ref{thm:main-thm}
are driven by 
different subspaces 
coupled with different subdifferential inclusions, i.e.,~\eqref{eq:subdiff-yuanming} and~\eqref{eq:subdiff-nuclear-L}.
$ 
$
These differences largely govern the quantitative differences in 
many subsequent 
tools used in~\cite{driggs2019tensor} and in this chapter.
For example, \cite[Lemma~7]{driggs2019tensor} and Lemma~\ref{lma:incoherence} to be introduced soon, both concerning the projections of standard basis tensors,
state that
$$
\max_{\bi\in\I^3} \mleft\|\proj_{\bigoplus_{|\I|\le 1,\,\I\subseteq[3]}\TT^\I(\CL)}\mleft(\bigotimes_{k=1}^3 \be_{i_k}\mright)\mright\|_2^2 \le  \frac{v_{\rm{d}}^2r_{\rm{d}}^2 \sum_{k=1}^3n_k}{n_1 n_2 n_3}
\mbox{ and }
    \max_{\bi\in\I^3} \mleft\|\proj_{\bigoplus_{|\I|\le 2,\,\I\subseteq[3]}\TT^\I(\CL)}\mleft(\bigotimes_{k=1}^3 \be_{i_k}\mright)\mright\|_2^2\le u_0\sum_{k=1}^3\frac{r_k}{n_k},
$$ 
respectively.
If $r_{\rm{d}}$ is regarded analogously to $r_0$ and $n_k=n$ for $k\in[3]$, then the above two bounds become $O(\frac{r_0^2}{n^2})$ and $O(\frac{r_0}{n})$, respectively.
The two bounds scale differently and propagate through subsequent 
bounds in the analysis, resulting in different recovery bounds.
In any case, we believe that the recovery bounds in~\cite[Theorem~1]{driggs2019tensor} and in Theorem~\ref{thm:main-thm} offer complementary insights, when the order of the tensor is three.}

We remark that there are also a lot of works based on the so-called sum-of-nuclear-norms~\cite{liu2012tensor,huang2015provable}, i.e., using $\sum_{k=1}^d\lambda_k\|\BT_{(k)}\|_*$ as a tractable surrogate of $\|\CT\|_*$. We do not compare Theorem~\ref{thm:main-thm} with these results since their assumptions involve quantities that are absent from our framework.

The rest of this section is devoted to the proof of Theorem~\ref{thm:main-thm}. The overall framework of the proof is similar to that of the matrix robust PCA in~\cite[Chapter~5]{wright2022high} but there are quite a lot of details to be dealt with for tensors. 

\subsection{Unique optimality}

    




To prove Theorem~\ref{thm:main-thm}, we start to characterize the conditions under which $(\CL,\CS)$ is the unique optimal solution of~\eqref{eq:PCP}. They are essentially the existence of a tensor in the relative interior of $\partial\|\CL\|_*\cap\,\lambda\,\partial\|\CS\|_1$. 

To manipulate $\partial\|\CL\|_*$, we resort to the inclusion in Corollary~\ref{thm:newset} with $\I=[d]$, i.e.,
\begin{equation}\label{eq:subdiff-nuclear-L}
\bigl\{\CZ+\CX:\CZ\in\Z(\CL),\,\CX\in\TT^{[d]}(\CL),\,\|\CX\|_\sigma\le 1\bigr\} \subseteq \partial \|\CL\|_*.
\end{equation}
In order to alleviate the lengthy notation in derivations, let us denote $\LL:=\bigoplus_{|\I|\le d-1,\,\I\subseteq[d]}\TT^\I(\CL)$ as the direct sum of all basic subspaces defined by $\CL$ except $\TT^{[d]}(\CL)$, or equivalently $\LL^\perp=\TT^{[d]}(\CL)$ as the orthogonal complement.

To study $\partial\|\CS\|_1$, we need to define subspaces based on the entries of a tensor.
For any tensor $\CT\in\R^{n_1\times n_2\times \dots\times n_d}$, we denote $\I(\CT):=\{\bi\in\I^d:t_{\bi}\ne0\}$ to be the support set of $\CT$, recalling that $\I^d=\bigl\{\bi\in\N^d:i_k\in[n_k]\,\forall\,k\in[d]\bigr\}$. We use the notation $\I(\CS)\sim\operatorname{Bernoulli}(\rho)$ to denote the sampling process of $\CS$ in Assumption~\ref{assump:assump}, i.e., $\Prob\{\bi\in\I(\CS)\}=\rho$ for every $\bi\in\I^d$ independently.
For any index set $\I\subseteq\I^d$, we also abuse $\I$ to denote a subspace of $\R^{n_1\times n_2\times \dots\times n_d}$ under no ambiguity, i.e., 
$$
\I=\spn\bigl(\{\be_{i_1}\otimes \be_{i_2}\otimes\dots\otimes\be_{i_d}:\bi\in\I\}\bigr).
$$
Therefore, $\dim(\I)=|\I|$ where the former $\I$ is a subspace and the latter is an index set. This also makes $\proj_{\I}$ self-explanatory. For example, $\proj_{\varnothing}$ is the zero operator, $\proj_{\I^d}$ is the identity operator as $\I^d=\R^{n_1\times n_2\times \dots\times n_d}$, and $\proj_{\{\bi\}}=\proj_{\spn(\{\be_{i_1}\otimes\be_{i_2}\otimes \dots\otimes\be_{i_d}\})}$, i.e., zeroing out all but the $\bi$th entry. It is also obvious that $\I^\perp=\spn\bigl(\{\be_{i_1}\otimes \be_{i_2}\otimes\dots\otimes\be_{i_d}:\bi\in\I^d\setminus\I\}\bigr)$. 
Let $\CE:=\sign(\CS)$, which is a random tensor with entries being $0$, $1$, or $-1$ since $\CS$ is random. It is easy to check (see also~\cite[Example~3.41]{beck2017first}) that 
\begin{equation}\label{eq:subdiff-l1-L}
\partial\|\CS\|_1=\bigl\{\CE+\CF : \proj_{\I(\CS)}(\CF)=\CO,\,\|\CF\|_\infty \le 1\bigr\}.
\end{equation}

\begin{lemma}\label{lma:uniqueness1}
    If $\LL\cap\I(\CS)=\{\CO\}$, or equivalently $\|\proj_{\LL}\proj_{\I(\CS)}\|< 1$, then $(\CL,\CS)$ is the unique optimal solution of~\eqref{eq:PCP} if there exists a $\CD\in\R^{n_1\times n_2\times \dots\times n_d}$ such that 
    $$
    \proj_{\LL}(\CD)\in\Z(\CL),~\bigl\|\proj_{\LL^\perp}(\CD)\bigr\|_\sigma< 1,~\proj_{\I(\CS)}(\CD)=\lambda\CE,\text{ and }\bigl\|\proj_{\I^\perp(\CS)}(\CD)\bigr\|_\infty<\lambda.
    $$
\end{lemma}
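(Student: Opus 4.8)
The plan is the standard convex-duality argument for exact recovery, adapted to the tensor setting via the inclusions~\eqref{eq:subdiff-nuclear-L} and~\eqref{eq:subdiff-l1-L}. First I would suppose that $(\CL+\CA,\CS-\CA)$ is another feasible point of~\eqref{eq:PCP} with $\CA\ne\CO$ (so that $\CA$ is a nonzero perturbation preserving $\CT_1+\CT_2=\CL+\CS$), and show that its objective value is strictly larger than $\|\CL\|_*+\lambda\|\CS\|_1$. The key is to produce a single subgradient that is simultaneously valid for $\|\bullet\|_*$ at $\CL$ and for $\lambda\|\bullet\|_1$ at $\CS$, which is exactly what the tensor $\CD$ furnishes once we complete it appropriately.

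The main steps would be: (1) \emph{Construct a common subgradient.} Set $\CG_* := \proj_{\LL}(\CD) + \CW$ where $\CW\in\TT^{[d]}(\CL)=\LL^\perp$ is chosen with $\|\CW\|_\sigma\le 1$ to maximize $\langle\CW,\CA\rangle$, i.e.\ $\langle\CW,\proj_{\LL^\perp}(\CA)\rangle = \|\proj_{\LL^\perp}(\CA)\|_*$; by~\eqref{eq:subdiff-nuclear-L}, $\CG_*\in\partial\|\CL\|_*$. Similarly set $\CG_1 := \CE + \CH$ where $\proj_{\I(\CS)}(\CH)=\CO$, $\|\CH\|_\infty\le 1$, and $\langle\CH,\CA\rangle = -\|\proj_{\I^\perp(\CS)}(\CA)\|_1$ (choosing $\CH$ entrywise equal to $-\sign$ of $\CA$ off the support of $\CS$); by~\eqref{eq:subdiff-l1-L}, $\CG_1\in\partial\|\CS\|_1$. (2) \emph{Lower bound the perturbed objective.} Using convexity of each norm,
\begin{align*}
\|\CL+\CA\|_* + \lambda\|\CS-\CA\|_1 &\ge \|\CL\|_* + \langle\CG_*,\CA\rangle + \lambda\|\CS\|_1 - \lambda\langle\CG_1,\CA\rangle \\
&= \|\CL\|_* + \lambda\|\CS\|_1 + \langle\CG_* - \lambda\CG_1,\CA\rangle.
\end{align*}
(3) \emph{Show the inner product is positive unless $\CA=\CO$.} Write $\langle\CG_*-\lambda\CG_1,\CA\rangle = \langle \proj_{\LL}(\CD)-\lambda\CE,\CA\rangle + \langle\CW,\CA\rangle - \lambda\langle\CH,\CA\rangle$. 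The first term: since $\proj_{\LL}(\CD)=\proj_{\LL}(\CD)$ lives in $\LL$ and $\lambda\CE = \proj_{\I(\CS)}(\CD)$, one rewrites $\proj_{\LL}(\CD) - \lambda\CE = \CD - \proj_{\LL^\perp}(\CD) - \proj_{\I(\CS)}(\CD)$, so $\langle\proj_{\LL}(\CD)-\lambda\CE,\CA\rangle = -\langle\proj_{\LL^\perp}(\CD),\CA\rangle - \langle\proj_{\I(\CS)}(\CD),\CA\rangle + \langle\CD,\CA\rangle$; but also $\langle\CD,\CA\rangle = \langle\proj_{\I(\CS)}(\CD),\CA\rangle + \langle\proj_{\I^\perp(\CS)}(\CD),\CA\rangle = \lambda\langle\CE,\CA\rangle + \langle\proj_{\I^\perp(\CS)}(\CD),\CA\rangle$. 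Assembling, the mixed terms cancel and one obtains
$$\langle\CG_*-\lambda\CG_1,\CA\rangle = \|\proj_{\LL^\perp}(\CA)\|_* - \langle\proj_{\LL^\perp}(\CD),\CA\rangle + \lambda\|\proj_{\I^\perp(\CS)}(\CA)\|_1 + \langle\proj_{\I^\perp(\CS)}(\CD),\CA\rangle.$$
By Lemma~\ref{lma:norm-duality} and H\"older, $\langle\proj_{\LL^\perp}(\CD),\CA\rangle \le \|\proj_{\LL^\perp}(\CD)\|_\sigma\|\proj_{\LL^\perp}(\CA)\|_*$ and $|\langle\proj_{\I^\perp(\CS)}(\CD),\CA\rangle| \le \|\proj_{\I^\perp(\CS)}(\CD)\|_\infty\|\proj_{\I^\perp(\CS)}(\CA)\|_1$, so using $\|\proj_{\LL^\perp}(\CD)\|_\sigma<1$ and $\|\proj_{\I^\perp(\CS)}(\CD)\|_\infty<\lambda$ gives
$$\langle\CG_*-\lambda\CG_1,\CA\rangle \ge (1-\beta)\|\proj_{\LL^\perp}(\CA)\|_* + (\lambda-\gamma)\|\proj_{\I^\perp(\CS)}(\CA)\|_1 \ge 0$$
for constants $\beta<1$, $\gamma<\lambda$, with equality only if $\proj_{\LL^\perp}(\CA)=\CO$ and $\proj_{\I^\perp(\CS)}(\CA)=\CO$, i.e.\ $\CA\in\LL\cap\I(\CS)$. (4) \emph{Invoke the transversality hypothesis.} The assumption $\LL\cap\I(\CS)=\{\CO\}$ then forces $\CA=\CO$, contradicting $\CA\ne\CO$; hence $(\CL,\CS)$ is the unique minimizer. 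I would also briefly verify the parenthetical equivalence $\LL\cap\I(\CS)=\{\CO\}\iff\|\proj_{\LL}\proj_{\I(\CS)}\|<1$, which is the standard fact that two subspaces intersect trivially iff the norm of the product of their orthogonal projections is strictly less than $1$ (finite dimensions).

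The main obstacle will be the bookkeeping in step (3): carefully tracking which projections annihilate $\CD$ versus $\CA$, and confirming that all the ``cross'' terms (those pairing $\CE$ with $\proj_{\LL^\perp}(\CA)$, or $\CW$ with $\proj_{\I(\CS)}(\CA)$, etc.) vanish by orthogonality of the subspaces $\LL$, $\LL^\perp$, $\I(\CS)$, $\I^\perp(\CS)$ and the defining properties of $\CD$. A subtle point is that strictness must be extracted correctly: one needs $\langle\CG_*-\lambda\CG_1,\CA\rangle>0$ for $\CA\ne\CO$, which follows because if the nonnegative sum $(1-\beta)\|\proj_{\LL^\perp}(\CA)\|_*+(\lambda-\gamma)\|\proj_{\I^\perp(\CS)}(\CA)\|_1$ is zero then $\CA$ lies in $\LL\cap\I(\CS)$, which is $\{\CO\}$ — so the only way to have a tie in the objective is $\CA=\CO$. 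Everything else (convexity inequality, duality bounds, H\"older) is routine given the results already established in the excerpt.
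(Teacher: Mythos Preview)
Your proposal is correct and follows essentially the same approach as the paper's proof: construct subgradients $\proj_{\LL}(\CD)+\CW\in\partial\|\CL\|_*$ and $\CE+\CH\in\partial\|\CS\|_1$ (the paper's $\CX$ and $\CF$), apply the subgradient inequalities, and reduce the inner-product term via the identity $\proj_{\LL}(\CD)-\proj_{\I(\CS)}(\CD)=\proj_{\I^\perp(\CS)}(\CD)-\proj_{\LL^\perp}(\CD)$ to obtain $(1-\beta)\|\proj_{\LL^\perp}(\CA)\|_*+(\lambda-\gamma)\|\proj_{\I^\perp(\CS)}(\CA)\|_1>0$ unless $\CA\in\LL\cap\I(\CS)=\{\CO\}$. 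Your bookkeeping in step~(3) is a slightly more expanded version of the paper's one-line rewriting, but the argument is identical.
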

\begin{proof}
Since any feasible solution of~\eqref{eq:PCP} can be written as $(\CL+\CH,\CS-\CH)$ for some perturbation $\CH$, it suffices to show that $\|\CL+\CH\|_*+\lambda\|\CS-\CH\|_1>\|\CL\|_*+\lambda\|\CS\|_1$ for any $\CH\ne\CO$. 

By Lemma~\ref{prop:nuclear-U-equiv}, there exists an $\CX\in\LL^\perp$ with $\|\CX\|_\sigma=1$ such that $\bigl\langle\proj_{\LL^\perp}(\CH),\CX\bigr\rangle=\bigl\|\proj_{\LL^\perp}(\CH)\bigr\|_*$. As a result, $\proj_{\LL}(\CD)+\CX\in\partial\|\CL\|_*$ by~\eqref{eq:subdiff-nuclear-L}. Let $\CF\in\I^\perp(\CS)$ with $\|\CF\|_\infty=1$ such that $\bigl\langle\proj_{\I^\perp(\CS)}(\CH),\CF\bigr\rangle=-\bigl\|\proj_{\I^\perp(\CS)}(\CH)\bigr\|_1$. It is obvious that $\CE+\CF\in\partial\|\CS\|_1$ by~\eqref{eq:subdiff-l1-L}. Thus, by the definition of subgradient,
        \begin{align*}
            &\|\CL+\CH\|_*+\lambda\|\CS-\CH\|_1
            \\\ge\,& \|\CL\|_*+\lambda\|\CS\|_1+\bigl\langle\proj_{\LL}(\CD)+\CX,\CH\bigr\rangle+\lambda\langle\CE+\CF,-\CH\rangle
            \\=\,&\|\CL\|_*+\lambda\|\CS\|_1+\langle\CX,\CH\rangle-\lambda\langle\CF,\CH\rangle + \bigl\langle\proj_{\LL}(\CD)-\proj_{\I(\CS)}(\CD),\CH\bigr\rangle
            \\=\,&\|\CL\|_*+\lambda\|\CS\|_1+\bigl\|\proj_{\LL^\perp}(\CH)\bigr\|_*+\lambda\bigl\|\proj_{\I^\perp(\CS)}(\CH)\bigr\|_1 + \bigl\langle\proj_{\I^\perp(\CS)}(\CD)-\proj_{\LL^\perp}(\CD),\CH\bigr\rangle
            \\\ge\,& \|\CL\|_*+\lambda\|\CS\|_1+\bigl(1-\bigl\|\proj_{\LL^\perp}(\CD)\bigr\|_\sigma\bigr)\bigl\|\proj_{\LL^\perp}(\CH)\bigr\|_*+\bigl(\lambda-\bigl\|\proj_{\I^\perp(\CS)}(\CD)\bigr\|_\infty\bigr)\bigl\|\proj_{\I^\perp(\CS)}(\CH)\bigr\|_1
            \\>\,& \|\CL\|_*+\lambda\|\CS\|_1
        \end{align*}
    as long as $\max\bigl\{\bigl\|\proj_{\LL^\perp}(\CH)\bigr\|_*,\bigl\|\proj_{\I^\perp(\CS)}(\CH)\bigr\|_1\bigr\}>0$ since $\bigl\|\proj_{\LL^\perp}(\CD)\bigr\|_\sigma< 1$ and $\bigl\|\proj_{\I^\perp(\CS)}(\CD)\bigr\|_\infty<\lambda$.

    Suppose on the contrary that $\bigl\|\proj_{\LL^\perp}(\CH)\bigr\|_*=\bigl\|\proj_{\I^\perp(\CS)}(\CH)\bigr\|_1=0$. This means that $\CH\in\LL$ and $\CH\in\I(\CS)$, and so $\CH\in\LL\cap\I(\CS)=\{\CO\}$, a contradiction to $\CH\ne\CO$.
\end{proof}

To guarantee the exact recovery by solving~\eqref{eq:PCP}, it suffices to construct a dual certificate $\CD$ that satisfies the conditions stated in Lemma~\ref{lma:uniqueness1}. However, restricting $\proj_{\LL}(\CD)\in\Z(\CL)$ and $\proj_{\I(\CS)}(\CD)=\lambda\CE$ simultaneously can be demanding and in fact may not be possible. Thus, we need to relax these two conditions, from zero distance to $\frac{\lambda}{8}$ in terms of the Frobenius norm. As a compensation, the other two conditions need in turn to be restricted by shrinking their radii by half. 

\begin{lemma}\label{lma:relax}
If $\|\proj_{\LL}\proj_{\I(\CS)}\|< \frac{1}{2}$ and $\lambda<1$, then $(\CL,\CS)$ is the unique optimal solution of~\eqref{eq:PCP} if there exists a $\CD\in\R^{n_1\times n_2\times \dots\times n_d}$ such that
\begin{equation}\label{eq:existence-dual-certificate}
\min_{\CZ\in\Z(\CL)}\bigl\|\proj_{\LL}(\CD)-\CZ\bigr\|_2\le \frac{\lambda}{8},~\bigl\|\proj_{\LL^\perp}(\CD)\bigr\|_\sigma< \frac{1}{2},~\bigl\|\proj_{\I(\CS)}(\CD)-\lambda\CE\bigr\|_2\le\frac{\lambda}{8},\text{ and }\bigl\|\proj_{\I^\perp(\CS)}(\CD)\bigr\|_\infty<\frac{\lambda}{2}.
\end{equation}
\end{lemma}
\begin{proof}
    We first derive a couple of technical bounds. For any $\CH\in\R^{n_1\times n_2\times \dots\times n_d}$, we have
        \begin{align*}
            \bigl\|\proj_{\I(\CS)}(\CH)\bigr\|_2&\le\bigl\|\proj_{\I(\CS)}\proj_{\LL}(\CH)\bigr\|_2+\bigl\|\proj_{\I(\CS)}\proj_{\LL^\perp}(\CH)\bigr\|_2 \\
            &\le\frac{1}{2}\|\CH\|_2+\bigl\|\proj_{\LL^\perp}(\CH)\bigr\|_2
            \\&\le\frac{1}{2}\bigl(\bigl\|\proj_{\I(\CS)}(\CH)\bigr\|_2+\bigl\|\proj_{\I^\perp(\CS)}(\CH)\bigr\|_2\bigr)+\bigl\|\proj_{\LL^\perp}(\CH)\bigr\|_2,
        \end{align*}
    which implies that $\bigl\|\proj_{\I(\CS)}(\CH)\bigr\|_2\le\bigl\|\proj_{\I^\perp(\CS)}(\CH)\bigr\|_2+2\bigl\|\proj_{\LL^\perp}(\CH)\bigr\|_2$. For the same reason, we also have $\bigl\|\proj_{\LL}(\CH)\bigr\|_2\le\bigl\|\proj_{\LL^\perp}(\CH)\bigr\|_2+2\bigl\|\proj_{\I^\perp(\CS)}(\CH)\bigr\|_2$. 

    Similar to the proof of Lemma~\ref{lma:uniqueness1}, we have $\CZ+\CX\in\partial\|\CL\|_*$, where $\CZ\in\Z(\CL)$ is an optimal solution of $\min_{\CZ\in\Z(\CL)}\bigl\|\proj_{\LL}(\CD)-\CZ\bigr\|_2$ (the existence is guaranteed by Lemma~\ref{lma:convexity-ZT}) and $\CX\in\LL^\perp$ with $\|\CX\|_\sigma=1$ and $\bigl\langle\proj_{\LL^\perp}(\CH),\CX\bigr\rangle=\bigl\|\proj_{\LL^\perp}(\CH)\bigr\|_*$. We also have $\CE+\CF\in\partial\|\CS\|_1$, where $\CF\in\I^\perp(\CS)$ with $\|\CF\|_\infty=1$ and $\bigl\langle\proj_{\I^\perp(\CS)}(\CH),\CF\bigl\rangle=-\bigl\|\proj_{\I^\perp(\CS)}(\CH)\bigr\|_1$. Therefore, for any $\CH\ne\CO$,
        \begin{align*}
                    &\|\CL+\CH\|_*+\lambda\|\CS-\CH\|_1            \\
            \ge\,& \|\CL\|_*+\lambda\|\CS\|_1+\langle\CZ+\CX,\CH\rangle+\lambda\langle\CE+\CF,-\CH\rangle            \\
            =\,&\|\CL\|_*+\lambda\|\CS\|_1+ \bigl\langle\CX-\proj_{\LL^\perp}(\CD)-\lambda\CF+\proj_{\I^\perp(\CS)}(\CD)+\bigl(\CZ-\proj_{\LL}(\CD)\bigr)+\bigl(\proj_{\I(\CS)}(\CD)-\lambda\CE\bigr),\CH\bigr\rangle\\            
            \ge\,&\|\CL\|_*+\lambda\|\CS\|_1+\mleft(1-\frac{1}{2}\mright)\bigl\|\proj_{\LL^\perp}(\CH)\bigr\|_*+\mleft(\lambda-\frac{\lambda}{2}\mright)\bigl\|\proj_{\I^\perp(\CS)}(\CH)\bigr\|_1 - \frac{\lambda}{8}\bigl\|\proj_{\LL}(\CH)\bigr\|_2 - \frac{\lambda}{8}\bigl\|\proj_{\I(\CS)}(\CH)\bigr\|_2
            \\
            \ge\,&\|\CL\|_*+\lambda\|\CS\|_1+\frac{1}{2}\bigl\|\proj_{\LL^\perp}(\CH)\bigr\|_*+\frac{\lambda}{2}\bigl\|\proj_{\I^\perp(\CS)}(\CH)\bigr\|_1 - \frac{3\lambda}{8}\bigl\|\proj_{\LL^\perp}(\CH)\bigr\|_2 - \frac{3\lambda}{8}\bigl\|\proj_{\I^\perp(\CS)}(\CH)\bigr\|_2
            \\
            \ge\,&\|\CL\|_*+\lambda\|\CS\|_1+\mleft(\frac{1}{2}-\frac{3\lambda}{8}\mright)\bigl\|\proj_{\LL^\perp}(\CH)\bigr\|_*+\frac{\lambda}{8}\bigl\|\proj_{\I^\perp(\CS)}(\CH)\bigr\|_1,
        \end{align*}
    where the penultimate inequality is due to the technical bounds derived earlier and the last one is due to the trivial bounds among tensor norms~\eqref{eq:trivial}. Since $\lambda<1$, the desired result follows immediately 
    by the condition $\|\proj_{\LL}\proj_{\I(\CS)}\|< \frac{1}{2}<1$, i.e., $\LL\cap\I(\CS)=\{\CO\}$.
\end{proof}

The task of the remaining subsections is to study the concentration behavior of $\|\proj_{\LL}\proj_{\I(\CS)}\|$ and to construct a dual certificate $\CD$ that satisfies the conditions in Lemma~\ref{lma:relax} with high probability. Using the idea in~\cite{candes2011robust}, we construct a low-rank part $\CD_1$ and a sparse part $\CD_2$ separately and then form the dual certificate $\CD=\CD_1+\CD_2$.

\subsection{Concentration behavior of $\|\proj_{\LL}\proj_{\I(\CS)}\|$}\label{sec:POPMless}

In this subsection, we show that $\|\proj_{\LL}\proj_{\I(\CS)}\|\le\delta$ holds with high probability for any $\delta\in(0,1]$, a stronger result than that required in Lemma~\ref{lma:relax}. As a necessary preparation, we first show that any standard basis of $\R^{n_1\times n_2\times\dots\times n_d}$ is far away from the subspace $\LL$ as a direct consequence of the incoherence conditions in Assumption~\ref{assump:assump}. 

\begin{lemma}\label{lma:incoherence}
If the condition 
$
u_k=\frac{n_k}{r_k} \max_{i\in[n_k]}\bigl\|\proj_{\spn_k(\CL)}(\be_i)\bigr\|_2^2\le u_0
$
for $k\in[d]$
in Assumption~\ref{assump:assump} holds, then 
$$
\max_{\bi\in\I^d}\,\mleft\|\proj_{\LL}\mleft(\bigotimes_{k=1}^d \be_{i_k}\mright)\mright\|_2^2\le u_0\sum_{k=1}^d\frac{r_k}{n_k}.
$$
\end{lemma}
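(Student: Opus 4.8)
The plan is to exploit the direct-sum structure of $\LL$ together with the projection formula $\bigl(\bigotimes_{k=1}^d \proj_{\V_k}\bigr)(\CT) = \CT\times_1 \BP_1 \times_2 \dots \times_d \BP_d$ established in Section~\ref{sec:orthognoal}. First I would fix $\bi\in\I^d$ and write $\CE_{\bi}:=\bigotimes_{k=1}^d\be_{i_k}$. Since $\LL = \bigoplus_{|\I|\le d-1,\,\I\subseteq[d]}\TT^\I(\CL)$ and its orthogonal complement is $\LL^\perp=\TT^{[d]}(\CL)=\TT\bigl((\V_k^\perp)_{k=1}^d\bigr)$ with $\V_k=\spn_k(\CL)$, the projection onto $\LL^\perp$ is exactly $\bigotimes_{k=1}^d\proj_{\V_k^\perp}$. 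Hence $\proj_{\LL} = \mathrm{id} - \bigotimes_{k=1}^d\proj_{\V_k^\perp}$, and so $\proj_{\LL}(\CE_{\bi}) = \CE_{\bi} - \bigotimes_{k=1}^d \proj_{\V_k^\perp}(\be_{i_k})$. Therefore
$$
\mleft\|\proj_{\LL}(\CE_{\bi})\mright\|_2^2 = \|\CE_{\bi}\|_2^2 - \mleft\|\bigotimes_{k=1}^d \proj_{\V_k^\perp}(\be_{i_k})\mright\|_2^2 = 1 - \prod_{k=1}^d \bigl\|\proj_{\V_k^\perp}(\be_{i_k})\bigr\|_2^2,
$$
using orthogonality of $\CE_{\bi}-\proj_{\LL^\perp}(\CE_{\bi})$ and $\proj_{\LL^\perp}(\CE_{\bi})$, and the norm multiplicativity of rank-one tensors.

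Next I would set $\alpha_k := \bigl\|\proj_{\V_k}(\be_{i_k})\bigr\|_2^2 \in [0,1]$, so that $\bigl\|\proj_{\V_k^\perp}(\be_{i_k})\bigr\|_2^2 = 1-\alpha_k$, and the incoherence hypothesis gives $\alpha_k \le \frac{u_k r_k}{n_k}\le u_0 \frac{r_k}{n_k}$ for every $k$. The quantity to bound is then $1 - \prod_{k=1}^d (1-\alpha_k)$. The elementary inequality $\prod_{k=1}^d (1-\alpha_k) \ge 1 - \sum_{k=1}^d \alpha_k$ (valid for $\alpha_k\in[0,1]$, by induction on $d$) yields
$$
\mleft\|\proj_{\LL}(\CE_{\bi})\mright\|_2^2 = 1 - \prod_{k=1}^d (1-\alpha_k) \le \sum_{k=1}^d \alpha_k \le u_0\sum_{k=1}^d \frac{r_k}{n_k}.
$$
Taking the maximum over $\bi\in\I^d$ gives the claim.

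There is essentially no hard obstacle here — the result is a clean consequence of the algebra of the projection operators and one scalar inequality. The only point that needs a little care is making sure that $\proj_{\LL^\perp}=\bigotimes_{k=1}^d\proj_{\spn_k(\CL)^\perp}$, which follows because $\LL^\perp = \TT^{[d]}(\CL) = \TT\bigl((\spn_k(\CL)^\perp)_{k=1}^d\bigr)$ by definition of $\LL$ and the basic-subspace notation, together with the Lemma (stated without proof in Section~\ref{sec:orthognoal}) characterizing $\TT\bigl((\V_k)_{k=1}^d\bigr)$ as the image of $\bigotimes_{k=1}^d\proj_{\V_k}$. I would also note in passing that $\proj_{\V_k}(\be_{i_k})$ and $\proj_{\V_k^\perp}(\be_{i_k})$ are orthogonal with squared norms summing to $\|\be_{i_k}\|_2^2=1$, which is what turns $\prod_k\|\proj_{\V_k^\perp}(\be_{i_k})\|_2^2$ into $\prod_k(1-\alpha_k)$. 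Everything else is routine.
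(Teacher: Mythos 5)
Your proof is correct and follows essentially the same approach as the paper's: both start from the observation that $\proj_{\LL^\perp}=\bigotimes_{k=1}^d\proj_{\spn_k^\perp(\CL)}$, write $\proj_{\LL}(\CE_{\bi})=\CE_{\bi}-\bigotimes_k\proj_{\spn_k^\perp(\CL)}(\be_{i_k})$, and then bound the squared norm by $\sum_{k}\|\proj_{\spn_k(\CL)}(\be_{i_k})\|_2^2$. The only cosmetic difference is that you obtain the exact value $1-\prod_k(1-\alpha_k)$ and then apply the scalar Weierstrass inequality $\prod_k(1-\alpha_k)\ge 1-\sum_k\alpha_k$, whereas the paper telescopes the difference of rank-one tensors, uses mutual orthogonality of the telescope terms to turn the squared norm into a sum, and bounds each term by $\alpha_\ell$ directly; the two computations are term-by-term identical.
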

\begin{proof}
    For any $\bi\in\I^d$, we have
    \begin{align*}
        &\mleft\|\proj_{\LL}\mleft(\bigotimes_{k=1}^d \be_{i_k}\mright)\mright\|_2^2\\
        =\,&\mleft\|\bigotimes_{k=1}^d \be_{i_k}-\bigotimes_{k=1}^d \proj_{\spn_k^\perp(\CL)}(\be_{i_k})\mright\|_2^2
        \\=\,&\mleft\|\sum_{\ell=1}^d\mleft(\mleft(\bigotimes_{k=1}^{\ell-1}\proj_{\spn_k^\perp(\CL)}(\be_{i_k})\mright)\otimes\mleft(\bigotimes_{k=\ell}^d\be_{i_k}\mright)-\mleft(\bigotimes_{k=1}^{\ell}\proj_{\spn_k^\perp(\CL)}(\be_{i_k})\mright)\otimes\mleft(\bigotimes_{k=\ell+1}^d\be_{i_k}\mright)\mright)\mright\|_2^2
        \\=\,&\sum_{\ell=1}^d\,\mleft\|\mleft(\bigotimes_{k=1}^{\ell-1}\proj_{\spn_k^\perp(\CL)}(\be_{i_k})\mright)\otimes\bigl(\be_{i_\ell}-\proj_{\spn_\ell^\perp(\CL)}(\be_{i_\ell})\bigr)\otimes\mleft(\bigotimes_{k=\ell+1}^d\be_{i_k}\mright)\mright\|_2^2\\
        \le\,&\sum_{\ell=1}^d\,\bigl\|\proj_{\spn_\ell(\CL)}(\be_{i_\ell})\bigr\|_2^2\\
        \le\,&u_0\sum_{\ell=1}^d \frac{r_\ell}{n_\ell},
    \end{align*}
    where the last equality holds because all the rank-one tensors there are mutually orthogonal and the last inequality follows from the condition on $u_k$.
\end{proof}

The most important result in this part is to bound the tail probability of $\bigl\|\proj_{\LL}(\proj_{\I^d}-q^{-1}\proj_{\I})\proj_{\LL}\bigr\|$, i.e., Proposition~\ref{prop:hdp-1-1}. To achieve this, we treat a projection onto a subspace in $\R^{n_1\times n_2\times \dots\times n_d}$, which is itself a linear operator, as a $\prod_{k=1}^d n_k$ by $\prod_{k=1}^d n_k$ matrix, and then apply the matrix Bernstein inequality~\cite[Theorem~1.4]{tropp2012user}:
If $\BX_1,\BX_2,\dots,\BX_m\in\R^{n\times n}$ are independent and self-adjoint random matrices such that $\Exp \BX_i=\BO$ and $\text{emax}(\BX_i)\le s$ almost surely for any $i\in[m]$, then for any $t\ge0$,
\begin{equation}\label{eq:bernstein}
\Prob\mleft\{ \text{emax}\mleft(\sum_{i=1}^m\BX_i\mright)\ge t\mright\}\le n\exp\mleft(\frac{-3t^2}{6\|\sum_{i=1}^m\Exp \BX_i^2\|_\sigma+2st}\mright),
\end{equation}
where $\text{emax}(\BX)$ denotes the largest eigenvalue of $\BX$. In particular, $\text{emax}(\BX)=\|\BX\|_\sigma$ if $\BX$ is positive semidefinite.

\begin{proposition}\label{prop:hdp-1-1}
    Under Assumption~\ref{assump:assump}, if $\I\sim\operatorname{Bernoulli}(q)$ and $t>0$, then
    \begin{equation*}
        \Prob\bigl\{\bigl\|\proj_{\LL}(\proj_{\I^d}-q^{-1}\proj_{\I})\proj_{\LL}\bigr\|\ge t\bigr\}\le \exp\mleft(\frac{-3t^2q}{u_0(6+2t)\sum_{k=1}^d\frac{r_k}{n_k}}\mright)\prod_{k=1}^d n_k.
    \end{equation*}
\end{proposition}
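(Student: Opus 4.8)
The plan is to apply the matrix Bernstein inequality~\eqref{eq:bernstein} to a sum of independent, mean-zero, rank-one self-adjoint operators obtained by peeling off the Bernoulli selectors of $\I$. Since $\I\sim\operatorname{Bernoulli}(q)$, write $\proj_{\I}=\sum_{\bi\in\I^d}\delta_{\bi}\proj_{\{\bi\}}$ with $\delta_{\bi}$ i.i.d.\ $\operatorname{Bernoulli}(q)$, so that, using $\proj_{\LL}\proj_{\I^d}\proj_{\LL}=\proj_{\LL}$,
\[
\proj_{\LL}(\proj_{\I^d}-q^{-1}\proj_{\I})\proj_{\LL}=\sum_{\bi\in\I^d}\mleft(1-q^{-1}\delta_{\bi}\mright)\proj_{\LL}\proj_{\{\bi\}}\proj_{\LL}=:\sum_{\bi\in\I^d}\BX_{\bi}.
\]
Regarding operators on $\R^{n_1\times n_2\times\dots\times n_d}$ as $\bigl(\prod_k n_k\bigr)\times\bigl(\prod_k n_k\bigr)$ matrices, the $\BX_{\bi}$ are independent and self-adjoint, $\Exp\BX_{\bi}=(1-q^{-1}\Exp\delta_{\bi})\proj_{\LL}\proj_{\{\bi\}}\proj_{\LL}=\CO$, and $\proj_{\LL}\proj_{\{\bi\}}\proj_{\LL}=\bw_{\bi}\bw_{\bi}^{\T}\succeq\CO$ is rank one with $\bw_{\bi}=\proj_{\LL}\bigl(\bigotimes_{k=1}^d\be_{i_k}\bigr)$; Lemma~\ref{lma:incoherence} gives the crucial uniform bound $\|\bw_{\bi}\|_2^2\le u_0\sum_{k=1}^d r_k/n_k$ for every $\bi\in\I^d$.

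Next I would assemble the two inputs to~\eqref{eq:bernstein}. Since $\BX_{\bi}=\bw_{\bi}\bw_{\bi}^{\T}$ when $\delta_{\bi}=0$ and $\BX_{\bi}=(1-q^{-1})\bw_{\bi}\bw_{\bi}^{\T}\preceq\CO$ when $\delta_{\bi}=1$, one has $\text{emax}(\pm\BX_{\bi})\le q^{-1}u_0\sum_{k}r_k/n_k$ almost surely, so $s:=q^{-1}u_0\sum_k r_k/n_k$ is admissible. For the variance proxy, $\Exp\bigl[(1-q^{-1}\delta_{\bi})^2\bigr]=(1-q)+q(1-q^{-1})^2=(1-q)/q\le q^{-1}$, hence
\[
\sum_{\bi\in\I^d}\Exp\BX_{\bi}^2=\Exp\bigl[(1-q^{-1}\delta_{\bi})^2\bigr]\sum_{\bi\in\I^d}\|\bw_{\bi}\|_2^2\,\bw_{\bi}\bw_{\bi}^{\T}\preceq \frac{u_0}{q}\mleft(\sum_{k=1}^d\frac{r_k}{n_k}\mright)\sum_{\bi\in\I^d}\bw_{\bi}\bw_{\bi}^{\T}=\frac{u_0}{q}\mleft(\sum_{k=1}^d\frac{r_k}{n_k}\mright)\proj_{\LL},
\]
where the last step uses $\sum_{\bi\in\I^d}\proj_{\{\bi\}}=\proj_{\I^d}$; since $\|\proj_{\LL}\|_\sigma\le 1$ this gives $\bigl\|\sum_{\bi}\Exp\BX_{\bi}^2\bigr\|_\sigma\le q^{-1}u_0\sum_k r_k/n_k$.

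Then I would invoke~\eqref{eq:bernstein} with $n=\prod_k n_k$, the above $s$, and the above variance bound, applied to $\sum_{\bi}\BX_{\bi}$ and, symmetrically, to $-\sum_{\bi}\BX_{\bi}$ — recalling $\|\sum_{\bi}\BX_{\bi}\|=\max\{\text{emax}(\sum_{\bi}\BX_{\bi}),\text{emax}(-\sum_{\bi}\BX_{\bi})\}$. The denominator in the exponent becomes $6q^{-1}u_0\sum_k r_k/n_k+2q^{-1}u_0(\sum_k r_k/n_k)t=q^{-1}u_0(6+2t)\sum_k r_k/n_k$, which produces precisely the factor $\exp\bigl(-3t^2q\,/\,(u_0(6+2t)\sum_k r_k/n_k)\bigr)$ claimed in the proposition. (For the $\text{emax}(\sum_{\bi}\BX_{\bi})$ direction one may instead use the sharper $s=u_0\sum_k r_k/n_k$, yielding the slightly stronger denominator $u_0(6+2tq)\sum_k r_k/n_k$, which is dominated by $u_0(6+2t)\sum_k r_k/n_k$ because $q\le 1$.)

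The step I expect to require the most care is not any isolated estimate but the clean verification of the Bernstein hypotheses: exact mean-zero and genuine independence of the $\BX_{\bi}$; the almost-sure spectral bound in the "rare" branch $\delta_{\bi}=1$, where the coefficient $1-q^{-1}$ is large in magnitude and one must not lose the $q^{-1}$; and the positive-semidefinite ordering that lets $\sum_{\bi}\Exp\BX_{\bi}^2$ telescope to a scalar multiple of $\proj_{\LL}$. The uniform incoherence bound from Lemma~\ref{lma:incoherence}, which is exactly where Assumption~\ref{assump:assump} enters, is what makes $\|\bw_{\bi}\|_2^2$ small \emph{simultaneously} for all $\bi\in\I^d$ and thereby keeps both $s$ and the variance small; correctly identifying the ambient dimension $\prod_k n_k$ as the prefactor and bookkeeping the two-sided spectral control is the remaining routine work.
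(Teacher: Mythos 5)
Your proposal is correct and follows essentially the same route as the paper: decompose $\proj_{\LL}(\proj_{\I^d}-q^{-1}\proj_{\I})\proj_{\LL}$ into a sum over $\bi\in\I^d$ of independent, mean-zero, rank-one self-adjoint contributions, uniformly bound each summand and the variance proxy via Lemma~\ref{lma:incoherence}, and invoke the matrix Bernstein inequality~\eqref{eq:bernstein} with ambient dimension $\prod_k n_k$. Your rank-one framing $\proj_{\LL}\proj_{\{\bi\}}\proj_{\LL}=\bw_{\bi}\bw_{\bi}^{\T}$ and the explicit two-sided control $\|\sum_{\bi}\BX_{\bi}\|=\max\{\text{emax}(\sum_{\bi}\BX_{\bi}),\,\text{emax}(-\sum_{\bi}\BX_{\bi})\}$ are in fact slightly more careful than the paper's presentation, which labels the summands ``positive semidefinite'' even though the $\bi\in\I$ branch carries the sign-flipping coefficient $1-q^{-1}<0$; a fully rigorous two-sided application would pick up a harmless factor of $2$ in the probability bound, which both your sketch and the stated proposition silently absorb but which is immaterial for the high-probability conclusions drawn from it.
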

\begin{proof}
    We first notice that 
    $$
    \proj_{\LL}(\proj_{\I^d}-q^{-1}\proj_{\I})\proj_{\LL}=\sum_{\bi\in\I^d}\proj_{\LL}(\proj_{\bi}-q^{-1}\proj_{\bi}\proj_{\I})\proj_{\LL},
    $$
    which is a sum of independent, zero-mean, and positive semidefinite random linear operators as
    \begin{equation}\label{eq:operator-case-by-case}
        \proj_{\LL}(\proj_{\bi}-q^{-1}\proj_{\bi}\proj_{\I})\proj_{\LL}=
        \begin{dcases}
            (1-q^{-1})\proj_{\LL}\proj_{\bi}\proj_{\LL} & \bi\in\I \\
            \proj_{\LL}\proj_{\bi}\proj_{\LL} & \bi\notin\I.
        \end{dcases}
    \end{equation}
    In order to apply the matrix Bernstein inequality to bound its tail probability, we need to control both $\bigl\|\proj_{\LL}(\proj_{\bi}-q^{-1}\proj_{\bi}\proj_{\I})\proj_{\LL}\bigr\|$ and $\bigl\|\sum_{\bi\in\I^d}\Exp\bigl(\proj_{\LL}(\proj_{\bi}-q^{-1}\proj_{\bi}\proj_{\I})\proj_{\LL}\bigr)^2\bigr\|$. In fact, for any $\bi\in\I^d$,
        \begin{align*}
       \bigl\|\proj_{\LL}(\proj_{\bi}-q^{-1}\proj_{\bi}\proj_{\I})\proj_{\LL}\bigr\|& \le \max\bigl\{|1-q^{-1}|,1\bigr\}\|\proj_{\LL}\proj_{\bi}\proj_{\LL}\|\\
           & \le q^{-1}\max_{\|\CX\|_2\le 1}\bigl\langle\proj_{\LL}\proj_{\bi}\proj_{\LL}(\CX),\CX\bigr\rangle \\
           & = q^{-1}\max_{\|\CX\|_2\le 1}\bigl\langle\proj_{\bi}\proj_{\LL}(\CX),\proj_{\bi}\proj_{\LL}(\CX)\bigr\rangle \\
        & = q^{-1}\max_{\|\CX\|_2\le 1}\mleft\langle\bigotimes_{k=1}^d\be_{i_k},\proj_{\LL}(\CX)\mright\rangle^2\\
        &    = q^{-1}\mleft\|\proj_{\LL}\mleft(\bigotimes_{k=1}^d \be_{i_k}\mright)\mright\|_2^2\\
        &\le q^{-1}u_0\sum_{k=1}^d\frac{r_k}{n_k},
    \end{align*}
    where the last inequality follows from Lemma~\ref{lma:incoherence}. On the other hand, since $\Exp\bigl(\proj_{\LL}(q^{-1}\proj_{\bi}\proj_{\I})\proj_{\LL}\bigr)=\proj_{\LL}\proj_{\bi}\proj_{\LL}$ and the variance of 
$\proj_{\LL}(q^{-1}\proj_{\bi}\proj_{\I})\proj_{\LL}$ is no more than its second moment, 
we have
        \begin{align*}
            \mleft\|\sum_{\bi\in\I^d}\Exp\bigl(\proj_{\LL}(\proj_{\bi}-q^{-1}\proj_{\bi}\proj_{\I})\proj_{\LL}\bigr)^2\mright\|&\le \mleft\|\sum_{\bi\in\I^d}\Exp\bigl(\proj_{\LL}(q^{-1}\proj_{\bi}\proj_{\I})\proj_{\LL}\bigr)^2\mright\|
            \\&=q^{-2}\mleft\|\sum_{\bi\in\I^d}\proj_{\LL}\proj_{\bi}\proj_{\LL}\proj_{\bi}\proj_{\LL}\Prob\{\bi\in\I\}\mright\|
            \\&=q^{-1}\mleft\|\sum_{\bi\in\I^d}\proj_{\LL}\proj_{\bi}\proj_{\LL}\proj_{\bi}\proj_{\LL}\mright\|
            \\&=q^{-1}\max_{\|\CX\|_2\le 1}\sum_{\bi\in\I^d}\bigl\langle\proj_{\LL}\proj_{\bi}\proj_{\LL}\proj_{\bi}\proj_{\LL}(\CX),\CX\bigr\rangle
            \\&=q^{-1}\max_{\|\CX\|_2\le 1}\sum_{\bi\in\I^d}\bigl\langle\proj_{\LL}\proj_{\bi}\proj_{\LL}(\CX),\proj_{\LL}\proj_{\bi}\proj_{\LL}(\CX)\bigr\rangle
            \\&=q^{-1}\max_{\|\CX\|_2\le 1}\sum_{\bi\in\I^d}\mleft\langle\bigotimes_{k=1}^d \be_{i_k},\proj_{\LL}(\CX)\mright\rangle^2\mleft\|\proj_{\LL}\mleft(\bigotimes_{k=1}^d \be_{i_k}\mright)\mright\|_2^2 
            \\&\le q^{-1}\max_{\bi\in\I^d}\mleft\|\proj_{\LL}\mleft(\bigotimes_{k=1}^d \be_{i_k}\mright)\mright\|_2^2
            \max_{\|\CX\|_2\le 1}\sum_{\bi\in\I^d}\mleft\langle\bigotimes_{k=1}^d \be_{i_k},\proj_{\LL}(\CX)\mright\rangle^2
            \\
            &\le q^{-1}u_0\sum_{k=1}^d\frac{r_k}{n_k},
        \end{align*}
    where the last inequality is due to Lemma~\ref{lma:incoherence} and that 
    $$
    \sum_{\bi\in\I^d}\mleft\langle\bigotimes_{k=1}^d \be_{i_k},\proj_{\LL}(\CX)\mright\rangle^2=\bigl\|\proj_{\LL}(\CX)\bigr\|_2^2.
    $$
    The desired inequality then follows immediately from the matrix Bernstein inequality~\eqref{eq:bernstein}.
\end{proof}

Proposition~\ref{prop:hdp-1-1} immediately implies the following two results and shows that $\|\proj_{\LL}\proj_{\I(\CS)}\|\le\delta$ holds with high probability for any $\delta\in(0,1]$.

\begin{corollary}\label{cor:hdp-1}
Under Assumption~\ref{assump:assump}, for any $\epsilon\in(0,1]$, there exists a $\kappa_0>0$ depending on $\epsilon$ only, such that $\|\proj_{\LL}\proj_{\I^\perp}\|\le\sqrt{1-q+q\epsilon}$ holds with high probability as long as $q\ge \kappa_0 \frac{{d^2u_0r_0\ln{n_d}}}{n_1}$ and $\I\sim\operatorname{Bernoulli}(q)$.
\end{corollary}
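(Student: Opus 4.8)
The plan is to derive the bound directly from Proposition~\ref{prop:hdp-1-1} applied with $t=\epsilon$. First I would eliminate the square root by squaring: since $\proj_{\I^\perp}$ is a self-adjoint idempotent and $\proj_{\LL}$ is self-adjoint,
\[
\bigl\|\proj_{\LL}\proj_{\I^\perp}\bigr\|^2=\bigl\|\proj_{\LL}\proj_{\I^\perp}\proj_{\I^\perp}\proj_{\LL}\bigr\|=\bigl\|\proj_{\LL}\proj_{\I^\perp}\proj_{\LL}\bigr\|,
\]
so it suffices to show $\bigl\|\proj_{\LL}\proj_{\I^\perp}\proj_{\LL}\bigr\|\le 1-q+q\epsilon$ with high probability. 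Writing $\proj_{\I^\perp}=\proj_{\I^d}-\proj_{\I}$ and using that $\proj_{\I^d}$ is the identity operator, hence $\proj_{\LL}\proj_{\I^d}\proj_{\LL}=\proj_{\LL}$, one gets $\proj_{\LL}\proj_{\I^\perp}\proj_{\LL}=\proj_{\LL}-\proj_{\LL}\proj_{\I}\proj_{\LL}$, which is precisely what Proposition~\ref{prop:hdp-1-1} controls after the rescaling of $\proj_{\I}$ by $q^{-1}$.

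Next I would run a positive-semidefinite ordering argument on the event $\mathcal{G}:=\bigl\{\bigl\|\proj_{\LL}(\proj_{\I^d}-q^{-1}\proj_{\I})\proj_{\LL}\bigr\|\le\epsilon\bigr\}$. Regarding $M:=\proj_{\LL}\proj_{\I}\proj_{\LL}$ and $\proj_{\LL}$ as operators supported on $\LL$, on which $\proj_{\LL}$ acts as the identity, the estimate defining $\mathcal{G}$ reads $\|\proj_{\LL}-q^{-1}M\|\le\epsilon$, hence $q(1-\epsilon)\proj_{\LL}\preceq M\preceq q(1+\epsilon)\proj_{\LL}$. Combining the left inequality with the fact that $\proj_{\LL}\proj_{\I^\perp}\proj_{\LL}=\proj_{\LL}-M$ is itself positive semidefinite yields $\CO\preceq\proj_{\LL}-M\preceq\bigl(1-q(1-\epsilon)\bigr)\proj_{\LL}$, and since $1-q(1-\epsilon)\ge 0$ for $q\le 1$ and $\epsilon\le 1$ this gives $\bigl\|\proj_{\LL}\proj_{\I^\perp}\proj_{\LL}\bigr\|\le 1-q+q\epsilon$, i.e., $\bigl\|\proj_{\LL}\proj_{\I^\perp}\bigr\|\le\sqrt{1-q+q\epsilon}$, on $\mathcal{G}$.

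Finally I would estimate $\Prob(\mathcal{G}^c)$ from Proposition~\ref{prop:hdp-1-1} with $t=\epsilon$, using the crude bounds $\sum_{k=1}^d\frac{r_k}{n_k}\le\frac{d\,r_0}{n_1}$ (from $r_k\le r_0$ and $n_k\ge n_1$) and $\prod_{k=1}^d n_k\le n_d^d$, which give
\[
\Prob(\mathcal{G}^c)\le\exp\mleft(d\ln n_d-\frac{3\epsilon^2 q\,n_1}{u_0(6+2\epsilon)\,d\,r_0}\mright).
\]
Taking $\kappa_0$ depending on $\epsilon$ only and large enough — e.g., $\kappa_0=\frac{16}{3\epsilon^2}$, so that $6+2\epsilon\le 8$ absorbs the constants while the single power of $d$ coming from $d^2/d$ dominates $d\ln n_d$, recalling that $d$ is a fixed parameter — the hypothesis $q\ge\kappa_0\frac{d^2u_0r_0\ln n_d}{n_1}$ forces the exponent to be at most $-\ln n_d$, so $\Prob(\mathcal{G}^c)\le n_d^{-1}\to 0$ as $n_d\to\infty$. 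The whole argument is essentially bookkeeping once Proposition~\ref{prop:hdp-1-1} is in hand; the only steps requiring care are the passage between the operator-norm estimate on $\proj_{\LL}(\proj_{\I^d}-q^{-1}\proj_{\I})\proj_{\LL}$ and the semidefinite sandwich for $M$ (keeping track that these operators live on $\LL$), and the check that $\kappa_0$ can indeed be chosen free of $d$ and of the dimensions.
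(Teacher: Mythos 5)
Your proposal is correct and follows essentially the same route as the paper: square to reduce to a bound on $\|\proj_{\LL}\proj_{\I^\perp}\proj_{\LL}\|=\|\proj_{\LL}-\proj_{\LL}\proj_{\I}\proj_{\LL}\|$, invoke Proposition~\ref{prop:hdp-1-1} with $t=\epsilon$ and the chain $q\ge\kappa_0\frac{d^2u_0r_0\ln n_d}{n_1}\ge \kappa_0 u_0\bigl(\sum_k\frac{r_k}{n_k}\bigr)\sum_k\ln n_k$, and conclude with a union-free probability estimate that goes to one for $\kappa_0>\frac{6+2\epsilon}{3\epsilon^2}$. The only local variation is that where the paper uses a one-line triangle inequality, $\|\proj_{\LL}-\proj_{\LL}\proj_{\I}\proj_{\LL}\|\le\|(1-q)\proj_{\LL}\|+q\,\|\proj_{\LL}(\proj_{\I^d}-q^{-1}\proj_{\I})\proj_{\LL}\|\le 1-q+q\epsilon$, you instead run a positive-semidefinite sandwich on $M=\proj_{\LL}\proj_{\I}\proj_{\LL}$; both give exactly the same bound, so this is a cosmetic, not structural, difference. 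One minor omission relative to the paper: the paper also remarks that the lower bound $\kappa_0\frac{d^2u_0r_0\ln n_d}{n_1}$ on $q$ is $<1$ in high dimensions under Assumption~\ref{assump:assump}, so that the statement is not vacuous; this sanity check is worth including but does not affect the logical validity of your argument.
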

\begin{proof} By that 
$$
q\ge \kappa_0 \frac{{d^2u_0r_0\ln{n_d}}}{n_1}\ge \kappa_0 u_0\mleft(\sum_{k=1}^d\frac{r_k}{n_k}\mright)\sum_{k=1}^d \ln{n_k}
$$ 
and letting $t=\epsilon$ in Proposition~\ref{prop:hdp-1-1}, we have
    \begin{align*}
    \|\proj_{\LL}\proj_{\I^\perp}\proj_{\LL}\|=\|\proj_{\LL}-\proj_{\LL}\proj_{\I}\proj_{\LL}\|&\le \|\proj_{\LL}-q\proj_{\LL}\|+q\bigl\|\proj_{\LL}(\proj_{\I^d}-q^{-1}\proj_{\I})\proj_{\LL}\bigr\|\le 1-q+q\epsilon
\end{align*}
holds with probability at least
\begin{align*}
1-\exp\mleft(\frac{-3\epsilon^2q}{u_0(6+2\epsilon)\sum_{k=1}^d\frac{r_k}{n_k}}\mright)\prod_{k=1}^d n_k
&\ge 1-\exp\mleft(\frac{-3\kappa_0\epsilon^2 \sum_{k=1}^d \ln{n_k}}{6+2\epsilon}\mright)\prod_{k=1}^d n_k\\
&= 1-\mleft(\prod_{k=1}^d n_k\mright)^{1-\frac{3\kappa_0\epsilon^2}{6+2\epsilon}},
\end{align*}
a high probability for a large enough $\kappa_0$. Notice that $q\ge \kappa_0 \frac{d^2u_0r_0\ln{n_d}}{n_1}$ is a probability and so we must ensure that $\kappa_0 \frac{d^2u_0r_0\ln{n_d}}{n_1}<1$. 
This can always be guaranteed in high dimensions since we have $r_0\le \theta_0\frac{ (1-\rho)n_1}{u_0\ln^2{n_d}}$ in Assumption~\ref{assump:assump} and so
$
    \frac{\kappa_0 {d^2u_0r_0\ln{n_d}}}{n_1}\le\frac{\kappa_0 d^2 \theta_0 (1-\rho)}{\ln{n_d}}\rightarrow 0$ as $n_d\rightarrow\infty.
$ 
Therefore,
$$\|\proj_{\LL}\proj_{\I^\perp}\|
=\sqrt{\|\proj_{\LL}\proj_{\I^\perp}\proj_{\I^\perp}\proj_{\LL}\|} =\sqrt{\|\proj_{\LL}\proj_{\I^\perp}\proj_{\LL}\|}\le\sqrt{1-q+q\epsilon}$$
holds with high probability.
\end{proof}

\begin{corollary}\label{cor:hdp-2}
    For any $\delta\in(0,1]$, there exists a $\theta>0$ depending on $\delta$ only, such that $\|\proj_{\LL}\proj_{\I(\CS)}\|\le\delta$ holds with high probability as long as Assumption~\ref{assump:assump} holds for $\theta_0\le\theta$ and $\rho\le\frac{\delta^2}{2-\delta^2}$.
\end{corollary}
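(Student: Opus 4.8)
The plan is to deduce this from Proposition~\ref{prop:hdp-1-1} applied with $q=\rho$ and $\I=\I(\CS)\sim\operatorname{Bernoulli}(\rho)$, after first reducing the operator norm of interest to that of the symmetrized operator $\proj_\LL\proj_{\I(\CS)}\proj_\LL$. The first step is to record, exactly as in the proof of Corollary~\ref{cor:hdp-1}, the identity $\|\proj_\LL\proj_{\I(\CS)}\| = \sqrt{\bigl\|\proj_\LL\proj_{\I(\CS)}\proj_{\I(\CS)}\proj_\LL\bigr\|} = \sqrt{\bigl\|\proj_\LL\proj_{\I(\CS)}\proj_\LL\bigr\|}$, where the last equality uses that $\proj_{\I(\CS)}$ is an orthogonal projection. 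So it suffices to prove $\bigl\|\proj_\LL\proj_{\I(\CS)}\proj_\LL\bigr\|\le\delta^2$ with high probability.

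The second step is the splitting $\proj_\LL\proj_{\I(\CS)}\proj_\LL = \rho\,\proj_\LL\proj_{\I^d}\proj_\LL + \proj_\LL(\proj_{\I(\CS)}-\rho\proj_{\I^d})\proj_\LL = \rho\,\proj_\LL - \rho\,\proj_\LL(\proj_{\I^d}-\rho^{-1}\proj_{\I(\CS)})\proj_\LL$, where I use that $\proj_{\I^d}$ is the identity, so $\proj_\LL\proj_{\I^d}\proj_\LL=\proj_\LL$. By the triangle inequality together with $\|\proj_\LL\|\le 1$, this gives $\bigl\|\proj_\LL\proj_{\I(\CS)}\proj_\LL\bigr\| \le \rho + \rho\,\bigl\|\proj_\LL(\proj_{\I^d}-\rho^{-1}\proj_{\I(\CS)})\proj_\LL\bigr\|$. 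Assuming $\delta<1$ (the case $\delta=1$ being trivial, as $\|\proj_\LL\proj_{\I(\CS)}\|\le1$ always), I then apply Proposition~\ref{prop:hdp-1-1} with $t=1-\delta^2\in(0,1)$ to obtain that, with probability at least $1-\exp\!\bigl(\tfrac{-3(1-\delta^2)^2\rho}{u_0(8-2\delta^2)\sum_{k=1}^d r_k/n_k}\bigr)\prod_{k=1}^d n_k$, one has $\bigl\|\proj_\LL(\proj_{\I^d}-\rho^{-1}\proj_{\I(\CS)})\proj_\LL\bigr\|\le 1-\delta^2$. On this event, $\bigl\|\proj_\LL\proj_{\I(\CS)}\proj_\LL\bigr\| \le \rho + \rho(1-\delta^2) = \rho(2-\delta^2)\le\delta^2$, the last inequality being precisely the hypothesis $\rho\le\frac{\delta^2}{2-\delta^2}$ (note $2-\delta^2>0$ since $\delta\le1$). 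Taking square roots, $\|\proj_\LL\proj_{\I(\CS)}\|\le\delta$ holds on this event.

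The last step is to check that the bad event has vanishing probability as $n_d\to\infty$. Since $r_k\le r_0$ and $n_k\ge n_1$ for all $k\in[d]$, and since $r_0\le\theta_0\frac{(1-\rho)n_1}{u_0\ln^2 n_d}$ by Assumption~\ref{assump:assump}, I can bound $u_0\sum_{k=1}^d r_k/n_k \le d\,u_0 r_0/n_1 \le d\theta_0/\ln^2 n_d$. Setting $c_\delta:=\frac{3(1-\delta^2)^2}{8-2\delta^2}>0$, the failure probability is then at most $\exp\!\bigl(-\tfrac{c_\delta\rho}{d\theta_0}\ln^2 n_d\bigr)\prod_{k=1}^d n_k \le n_d^{\,d-(c_\delta\rho/d\theta_0)\ln n_d}$, which tends to $0$; this works for any fixed $\theta_0>0$, so one may simply take $\theta$ to be any positive constant (in particular one depending only on $\delta$). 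Putting the three steps together proves the claim. The argument is essentially routine once Proposition~\ref{prop:hdp-1-1} is in hand, so there is no serious obstacle; the points that require attention are the operator-algebra reduction to $\proj_\LL\proj_{\I(\CS)}\proj_\LL$ and the sign and scaling bookkeeping in the splitting step, and---more importantly for sharpness---the calibration $t=1-\delta^2$, which is exactly what makes the resulting bound $\rho(2-\delta^2)$ line up with the hypothesized admissible range $\rho\le\frac{\delta^2}{2-\delta^2}$ for the corruption density.
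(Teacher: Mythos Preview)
Your proof is correct but takes a route genuinely different from the paper's. The paper passes to the complement, noting that $\I^\perp(\CS)\sim\operatorname{Bernoulli}(1-\rho)$, and then invokes Corollary~\ref{cor:hdp-1} with $q=1-\rho$ and $\epsilon=\delta^2/2$; the requirement $q\ge\kappa_0\frac{d^2u_0r_0\ln n_d}{n_1}$ in that corollary is what forces a specific choice of $\theta$ (namely $\theta=\frac{\ln 2}{\kappa_0 d^2}$), and the resulting failure probability $(\prod_k n_k)^{1-3\kappa_0\epsilon^2/(6+2\epsilon)}$ is independent of $\rho$. You instead apply Proposition~\ref{prop:hdp-1-1} directly to $\I(\CS)$ with $q=\rho$ and $t=1-\delta^2$, which avoids the detour through Corollary~\ref{cor:hdp-1} and, as you correctly observe, shows that in fact \emph{any} $\theta>0$ works. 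The trade-off is that your failure probability carries a factor $\rho$ in the exponent, so it is not uniform as $\rho\to 0$; this is harmless here because Assumption~\ref{assump:assump} treats $\rho$ as a fixed constant, but it explains why the paper's route via the complement (where $q=1-\rho$ is bounded away from zero) is the more robust formulation. Both calibrations land exactly on the threshold $\rho\le\frac{\delta^2}{2-\delta^2}$: yours via $\rho(1+t)=\rho(2-\delta^2)\le\delta^2$, the paper's via $\rho+(1-\rho)\epsilon=\rho(1-\delta^2/2)+\delta^2/2\le\delta^2$.
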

\begin{proof}
Since $\I(\CS)\sim\operatorname{Bernoulli}(\rho)$ by Assumption~\ref{assump:assump}, we have $\I^\perp(\CS)\sim\operatorname{Bernoulli}(1-\rho)$. Let $\kappa_0$ be the constant associated with $\epsilon=\frac{\delta^2}{2}$ in Corollary~\ref{cor:hdp-1} and further let $\theta=\frac{\ln{2}}{\kappa_0 d^2}$. Since $r_0\le \theta_0\frac{ (1-\rho)n_1}{u_0\ln^2{n_d}}$ in Assumption~\ref{assump:assump} and $\theta_0\le\theta\le\frac{\ln{n_d}}{\kappa_0 d^2}$, we have $1-\rho\ge\frac{u_0r_0\ln^2n_d}{\theta_0n_1}\ge \kappa_0\frac{{d^2u_0r_0\ln{n_d}}}{n_1}$, as required by Corollary~\ref{cor:hdp-1}.
Therefore, if we let $q=1-\rho$ and $\epsilon=\frac{\delta^2}{2}$ in Corollary~\ref{cor:hdp-1}, then
$$\|\proj_{\LL}\proj_{\I(\CS)}\|\le\sqrt{1-q+q\epsilon}=\sqrt{\rho+\frac{\delta^2(1-\rho)}{2}}\le\sqrt{\frac{\delta^2}{2-\delta^2}\mleft(1-\frac{\delta^2}{2}\mright)+\frac{\delta^2}{2}}=\delta$$
holds with high probability.
\end{proof}    

\subsection{Dual certificate: Low-rank part via the golfing scheme}

In this subsection, we construct the low-rank part of the dual certificate via the golfing scheme~\cite{gross2011recovering,candes2011robust}; let us start with a couple of probability bounds before introducing the scheme. The first one is a direct consequence of~\cite[Theorem~2.1]{zhou2021sparse}: If $d$th order tensors $\CX\in\R^{n\times n\times \dots\times n}$ and $\CY\in\{0,1\}^{n\times n\times \dots\times n}$ with $\CX$ given and $\I(\CY)\sim\operatorname{Bernoulli}(q)$ 
for $q\ge \theta_1 \frac{\ln n}{n}$, then
    \begin{equation}\label{eq:zhou2021sparse}
        \Prob\mleft\{\bigl\|\Exp(\CX\odot \CY)-\CX\odot\CY\bigr\|_\sigma\le \kappa_1\|\CX\|_\infty\sqrt{qn}\ln^{d-2}{n}\mright\}\ge 1-n^{-\kappa_2},
    \end{equation}
where $\odot$ is the Hadamard product. 
\begin{corollary}\label{cor:concent-inf-Fro}
    If $\CX\in\R^{n_1\times n_2\times\dots\times n_d}$ and $\I\sim\operatorname{Bernoulli}(q)$ with $q\ge \theta_1 \frac{\ln{n_d}}{n_d}$, then
    \begin{equation*}
        \Prob\mleft\{\bigl\|(\proj_{\I^d}-q^{-1}\proj_{\I})(\CX)\bigr\|_\sigma\le \kappa_1\|\CX\|_\infty\sqrt{q^{-1}n_d}\ln^{d-2}{n_d}\mright\}\ge 1-n_d^{-\kappa_2}.
    \end{equation*}
\end{corollary}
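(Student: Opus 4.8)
The plan is to deduce this from the concentration bound \eqref{eq:zhou2021sparse}, which is stated only for cubic tensors, by a zero-padding reduction. First I would embed $\CX\in\R^{n_1\times n_2\times\dots\times n_d}$ into the cubic space $\R^{n_d\times n_d\times\dots\times n_d}$ by appending zeros, writing $\widetilde{\CX}$ for the result. I would then check that zero-padding changes neither $\|\widetilde{\CX}\|_\infty=\|\CX\|_\infty$ nor $\|\widetilde{\CX}\|_\sigma=\|\CX\|_\sigma$: in the definition \eqref{eq:spec} of the spectral norm, placing any mass on a padded coordinate in some mode can only shrink the objective after renormalization (cf.\ the argument in the proof of Lemma~\ref{thm:spec-subspace}), so an optimal tuple of unit vectors for $\widetilde{\CX}$ may be taken supported on the original coordinates. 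Likewise, if $\widetilde{\I}\sim\operatorname{Bernoulli}(q)$ on the index set of the cubic space, then its restriction to the index set $\I^d$ of the original space is again $\operatorname{Bernoulli}(q)$, and $\proj_{\widetilde{\I}}(\widetilde{\CX})$ is the zero-padding of $\proj_{\widetilde{\I}}(\CX)$ because $\widetilde{\CX}$ vanishes on the padded entries. Hence it suffices to prove the bound for $\widetilde{\CX}$ with $n=n_d$.

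Next I would recast the operator in Hadamard form. Let $\CY\in\{0,1\}^{n_d\times\dots\times n_d}$ be the indicator tensor of $\widetilde{\I}$, so that $\I(\CY)=\widetilde{\I}\sim\operatorname{Bernoulli}(q)$ and $\proj_{\widetilde{\I}}(\widetilde{\CX})=\widetilde{\CX}\odot\CY$. Since the entries of $\CY$ are independent $\operatorname{Bernoulli}(q)$ variables, $\Exp(\widetilde{\CX}\odot\CY)=q\,\widetilde{\CX}$, whence
\[
\Exp(\widetilde{\CX}\odot\CY)-\widetilde{\CX}\odot\CY \;=\; q\,\widetilde{\CX}-\proj_{\widetilde{\I}}(\widetilde{\CX}).
\]
Because $\widetilde{\CX}$ is the zero-padding of $\CX$ and $\widetilde{\I}$ restricts to a $\operatorname{Bernoulli}(q)$ sample of $\I^d$, the right-hand side is the zero-padding of $q\,(\proj_{\I^d}-q^{-1}\proj_{\I})(\CX)$, so that $\bigl\|\Exp(\widetilde{\CX}\odot\CY)-\widetilde{\CX}\odot\CY\bigr\|_\sigma = q\,\bigl\|(\proj_{\I^d}-q^{-1}\proj_{\I})(\CX)\bigr\|_\sigma$ by the norm identities established above.

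Then I would invoke \eqref{eq:zhou2021sparse} with $n=n_d$ — this is exactly where the hypothesis $q\ge\theta_1\frac{\ln n_d}{n_d}$ is used — to obtain, with probability at least $1-n_d^{-\kappa_2}$, that $\bigl\|\Exp(\widetilde{\CX}\odot\CY)-\widetilde{\CX}\odot\CY\bigr\|_\sigma\le\kappa_1\|\widetilde{\CX}\|_\infty\sqrt{qn_d}\,\ln^{d-2}{n_d}$. Dividing by $q$, using $q^{-1}\sqrt{q}=\sqrt{q^{-1}}$ and $\|\widetilde{\CX}\|_\infty=\|\CX\|_\infty$, gives $\bigl\|(\proj_{\I^d}-q^{-1}\proj_{\I})(\CX)\bigr\|_\sigma\le\kappa_1\|\CX\|_\infty\sqrt{q^{-1}n_d}\,\ln^{d-2}{n_d}$ with the same probability, which is the claim (the constants $\theta_1,\kappa_1,\kappa_2$ being inherited from \eqref{eq:zhou2021sparse}). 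The only point requiring care is the padding reduction — verifying that the $\ell_\infty$- and spectral norms are preserved and that the Bernoulli sampling restricts correctly — but this is routine bookkeeping; all the analytic content is already packaged in \eqref{eq:zhou2021sparse}.
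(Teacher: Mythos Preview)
Your proposal is correct and follows essentially the same route as the paper: zero-pad into the cubic space $\R^{n_d\times\dots\times n_d}$, observe that both $\|\cdot\|_\sigma$ and $\|\cdot\|_\infty$ are unchanged, rewrite $(\proj_{\I^d}-q^{-1}\proj_{\I})(\CX)$ as $q^{-1}\bigl(\Exp(\CX\odot\CY)-\CX\odot\CY\bigr)$ via the Bernoulli indicator tensor $\CY$, and then invoke \eqref{eq:zhou2021sparse}. Your write-up is simply more explicit about the bookkeeping (the restriction of the Bernoulli sample to the original index set, the division by $q$), but there is no substantive difference.
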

Although~\eqref{eq:zhou2021sparse} requires that the tensor space to be $n\times n\times \dots\times n$, we can embed any tensor in $\R^{n_1\times n_2\times\dots\times n_d}$ into the space $\R^{n_d\times n_d\times\dots\times n_d}$ by appending zero entries. Obviously the embedding makes no changes to the spectral norm and the $\ell_\infty$-norm. By inspecting the tensor $\CX\odot\CY$ entrywisely, it is easy to see that 
$$
\bigl(\proj_{\I^d}-q^{-1}\proj_{\I(\CY)}\bigr)(\CX)=q^{-1}\bigl(\Exp(\CX\odot\CY)-\CX\odot\CY\bigr) \text{ if } \CY\in\{0,1\}^{n\times n\times \dots\times n} \text{ and } \I(\CY)\sim\operatorname{Bernoulli}(q).
$$
Corollary~\ref{cor:concent-inf-Fro} then follows immediately from~\eqref{eq:zhou2021sparse}. 

We are ready to present the main probability bound in this subsection.

\begin{lemma}\label{lma:concent-linf}
    Under Assumption~\ref{assump:assump}, if $\CX\in\LL\setminus\{\CO\}$ and $\I\sim\operatorname{Bernoulli}(q)$, then
    \begin{equation*}
        \Prob\mleft\{\bigl\|\proj_{\LL}(\proj_{\I^d}-q^{-1}\proj_{\I})\proj_{\LL}(\CX)\bigr\|_\infty\ge t\mright\}\le 2 \exp\mleft(\frac{-3 t^2q}{u_0\|\CX\|_\infty\bigl(6\|\CX\|_\infty+2t\bigr)\sum_{k=1}^d\frac{r_k}{n_k}}\mright)\prod_{k=1}^d n_k.
    \end{equation*}
    In particular, by letting $t=\frac{\|\CX\|_\infty}{2}$ and 
    $q\ge \kappa_3\frac{d^2u_0r_0\ln{n_d}}{n_1}\ge \kappa_3 u_0\mleft(\sum_{k=1}^d\frac{r_k}{n_k}\mright)\sum_{k=1}^d\ln{n_k}$,
    $$
    \bigl\|\proj_{\LL}\bigl(\proj_{\I^d}-q^{-1}\proj_{\I}\bigr)\proj_{\LL}(\CX)\bigr\|_\infty\le \frac{\|\CX\|_\infty}{2}
    $$
    holds with high probability.
\end{lemma}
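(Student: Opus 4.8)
The plan is to reduce this entrywise estimate to a union bound of scalar Bernstein inequalities, one for each entry index $\bi\in\I^d$. First I would fix $\bi\in\I^d$ and use that $\CX\in\LL$ (so $\proj_{\LL}(\CX)=\CX$) together with the self-adjointness and idempotence of $\proj_{\LL}$ to rewrite the $\bi$th entry of $\proj_{\LL}(\proj_{\I^d}-q^{-1}\proj_{\I})\proj_{\LL}(\CX)$ as $\bigl\langle\proj_{\LL}\bigl(\bigotimes_{k=1}^d\be_{i_k}\bigr),\,(\proj_{\I^d}-q^{-1}\proj_{\I})(\CX)\bigr\rangle$. Writing $\eta_{\bj}=\mathbbm{1}[\bj\in\I]$ for the i.i.d.\ $\operatorname{Bernoulli}(q)$ indicators and $e^{\bi}_{\bj}$ for the $\bj$th entry of $\proj_{\LL}\bigl(\bigotimes_{k=1}^d\be_{i_k}\bigr)$, and noting that $\proj_{\I^d}-q^{-1}\proj_{\I}$ is diagonal in the standard tensor basis, this entry equals $\sum_{\bj\in\I^d}(1-q^{-1}\eta_{\bj})\,e^{\bi}_{\bj}\,x_{\bj}$, a sum of independent random variables each with mean zero since $\Exp[1-q^{-1}\eta_{\bj}]=0$.

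Next I would apply the Bernstein inequality to this scalar sum; concretely this is \eqref{eq:bernstein} applied to $\pm(1-q^{-1}\eta_{\bj})e^{\bi}_{\bj}x_{\bj}$ regarded as $1\times 1$ self-adjoint matrices, which accounts for the factor $2$. The almost-sure bound on each summand is $M:=q^{-1}\|\CX\|_\infty\max_{\bj}|e^{\bi}_{\bj}|$, using $|1-q^{-1}\eta_{\bj}|\le q^{-1}$, and the total variance is $\sigma^2:=q^{-1}(1-q)\sum_{\bj}(e^{\bi}_{\bj})^2x_{\bj}^2\le q^{-1}\|\CX\|_\infty^2\bigl\|\proj_{\LL}\bigl(\bigotimes_{k}\be_{i_k}\bigr)\bigr\|_2^2$. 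The crucial input is that both $\bigl\|\proj_{\LL}\bigl(\bigotimes_{k}\be_{i_k}\bigr)\bigr\|_2^2$ and $\max_{\bj}|e^{\bi}_{\bj}|$ are bounded by $u_0\sum_{k=1}^d r_k/n_k$: the former is exactly Lemma~\ref{lma:incoherence}, while for the latter we write $e^{\bi}_{\bj}=\bigl\langle\proj_{\LL}\bigl(\bigotimes_k\be_{j_k}\bigr),\proj_{\LL}\bigl(\bigotimes_k\be_{i_k}\bigr)\bigr\rangle$ (using $\proj_{\LL}^2=\proj_{\LL}=\proj_{\LL}^*$) and then invoke Cauchy--Schwarz together with two applications of Lemma~\ref{lma:incoherence}. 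Substituting these bounds one checks $6\sigma^2+2Mt\le q^{-1}u_0\bigl(\sum_k r_k/n_k\bigr)\|\CX\|_\infty(6\|\CX\|_\infty+2t)$, so Bernstein yields, for each fixed $\bi$, the bound $2\exp\!\bigl(-3t^2q/(u_0\|\CX\|_\infty(6\|\CX\|_\infty+2t)\sum_k r_k/n_k)\bigr)$; a union bound over the $\prod_{k=1}^d n_k$ choices of $\bi$ gives the claimed inequality.

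For the second assertion I would set $t=\|\CX\|_\infty/2$, so that $6\|\CX\|_\infty+2t=7\|\CX\|_\infty$ and the exponent collapses to $-3q/(28u_0\sum_k r_k/n_k)$, making the failure probability at most $2\prod_k n_k\cdot\exp\!\bigl(-3q/(28u_0\sum_k r_k/n_k)\bigr)$. Since $\sum_k r_k/n_k\le dr_0/n_1$ and $\sum_k\ln n_k\le d\ln n_d$, the hypothesis $q\ge\kappa_3 d^2u_0r_0\ln n_d/n_1$ implies $q\ge\kappa_3 u_0\bigl(\sum_k r_k/n_k\bigr)\sum_k\ln n_k$, so this probability is at most $2\bigl(\prod_k n_k\bigr)^{1-3\kappa_3/28}$, which tends to $0$ once $\kappa_3>28/3$; and exactly as in the proof of Corollary~\ref{cor:hdp-1} the rank hypothesis in Assumption~\ref{assump:assump} guarantees that $q$ may legitimately be taken as a probability in high dimensions.

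The only genuinely delicate point is the uniform bound $\max_{\bj}|e^{\bi}_{\bj}|\le u_0\sum_k r_k/n_k$ on the \emph{off-diagonal} entries of $\proj_{\LL}$ viewed as a matrix: keeping this factor linear in $\sum_k r_k/n_k$, rather than settling for the weaker $\sqrt{u_0\sum_k r_k/n_k}$ that Cauchy--Schwarz against a single unit-norm factor would give, is precisely what forces the Bernstein exponent to come out with the stated denominator. Everything else is a routine Bernstein-and-union-bound computation.
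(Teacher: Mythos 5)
Your proof is correct and follows essentially the same route as the paper: expand the $\bi$th entry of $\proj_{\LL}(\proj_{\I^d}-q^{-1}\proj_{\I})\proj_{\LL}(\CX)$ as a sum of independent mean-zero scalars, bound both the almost-sure magnitude and the variance by $q^{-1}u_0\|\CX\|_\infty\sum_k r_k/n_k$ (respectively, times $\|\CX\|_\infty$) via Lemma~\ref{lma:incoherence} and the self-adjoint idempotence of $\proj_{\LL}$, apply scalar Bernstein, and union bound over the $\prod_k n_k$ entries. The only cosmetic differences are that the paper swaps your roles of $\bi$ and $\bj$ and cites the scalar Bernstein inequality of~\cite[Theorem~2.8.4]{vershynin2018high} rather than~\eqref{eq:bernstein} specialized to $1\times1$ matrices; both give the same exponent $-3t^2/(6\sigma^2+2Mt)$.
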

\begin{proof}
    In order to show the bound, we apply the scalar Bernstein inequality to $\proj_{\LL}(\proj_{\I^d}-q^{-1}\proj_{\I})\proj_{\LL}(\CX)$ entrywisely. Since $\CX\in\LL$ and $\CX=\sum_{\bi\in\I^d}x_{\bi}\bigotimes_{k=1}^d\be_{i_k}$, we have
    \begin{align*}
    \proj_{\LL}(\proj_{\I^d}-q^{-1}\proj_{\I})\proj_{\LL}(\CX)=\sum_{\bi\in\I^d}\proj_{\LL}(\proj_{\bi}-q^{-1}\proj_{\bi}\proj_{\I})(\CX)
    = \sum_{\bi\in\I^d}x_{\bi}\proj_{\LL}(\proj_{\bi}-q^{-1}\proj_{\bi}\proj_{\I})\mleft(\bigotimes_{k=1}^d\be_{i_k}\mright). 
    \end{align*}
    As a result, for any $\bj\in\I^d$,
        \begin{align*}
        \bigl(\proj_{\LL}(\proj_{\I^d}-q^{-1}\proj_{\I})\proj_{\LL}(\CX)\bigr)_{\bj} 
=\sum_{\bi\in\I^d}x_{\bi}\mleft\langle \proj_{\LL}(\proj_{\bi}-q^{-1}\proj_{\bi}\proj_{\I})\mleft(\bigotimes_{k=1}^d\be_{i_k}\mright),\bigotimes_{k=1}^d\be_{j_k}\mright\rangle=:\sum_{\bi\in\I^d}w_{\bi}.
        \end{align*}
    We first notice that $\Exp w_{\bi}=0$ as $\Exp \proj_{\I}=q \proj_{\I^d}$ and 
    \begin{align*}
        |w_{\bi}|&\le |x_{\bi}|\max\bigl\{1,|1-q^{-1}|\bigr\}\mleft|\mleft\langle\proj_\LL\mleft(\bigotimes_{k=1}^d\be_{i_k}\mright),\bigotimes_{k=1}^d\be_{j_k}\mright\rangle\mright|
        \\
        &\le q^{-1}\|\CX\|_\infty \mleft|\mleft\langle\proj_\LL\mleft(\bigotimes_{k=1}^d\be_{i_k}\mright),\proj_\LL\mleft(\bigotimes_{k=1}^d\be_{j_k}\mright)\mright\rangle\mright|\\
        &\le q^{-1}\|\CX\|_\infty 
        \mleft\|\proj_{\LL}\mleft(\bigotimes_{k=1}^d\be_{i_k}\mright)\mright\|_2\mleft\|\proj_{\LL}\mleft(\bigotimes_{k=1}^d \be_{j_k}\mright)\mright\|_2
    \\
    &\le q^{-1}u_0\|\CX\|_\infty\sum_{k=1}^d\frac{r_k}{n_k},
    \end{align*}
where the last inequality follows from Lemma~\ref{lma:incoherence}. 
        On the other hand, as the variance is no more than the second moment, we have 
        \begin{align*}
        \sum_{\bi\in\I^d}\Exp w_{\bi}^2 &\le\sum_{\bi\in\I^d}\Exp\mleft(q^{-1}x_{\bi}\mleft\langle \proj_{\LL}\proj_{\bi}\proj_{\I}\mleft(\bigotimes_{k=1}^d\be_{i_k}\mright),\bigotimes_{k=1}^d\be_{j_k}\mright\rangle\mright)^2
        \\
        &=\sum_{\bi\in\I^d}\mleft(q^{-1}x_{\bi}\mleft\langle \proj_{\LL}\mleft(\bigotimes_{k=1}^d\be_{i_k}\mright),\bigotimes_{k=1}^d\be_{j_k}\mright\rangle\mright)^2\Prob\mleft\{\bi\in\I\mright\}        \\        
&\le q^{-1}\|\CX\|_\infty^2\sum_{\bi\in\I^d}\mleft\langle\bigotimes_{k=1}^d\be_{i_k},\proj_{\LL}\mleft(\bigotimes_{k=1}^d \be_{j_k}\mright)\mright\rangle^2\\
&= q^{-1}\|\CX\|_\infty^2\mleft\|\proj_{\LL}\mleft(\bigotimes_{k=1}^d \be_{j_k}\mright)\mright\|_2^2\\
&\le q^{-1}u_0\|\CX\|_\infty^2\sum_{k=1}^d\frac{r_k}{n_k}.
    \end{align*}

By applying the scalar Bernstein inequality~\cite[Theorem~2.8.4]{vershynin2018high}, we have
    \begin{equation*}
        \Prob\mleft\{\mleft|\bigl(\proj_{\LL}(\proj_{\I^d}-q^{-1}\proj_{\I})\proj_{\LL}(\CX)\bigr)_{\bj}\mright|\ge t\mright\}\le 2\exp\mleft(\frac{-3t^2q}{u_0\|\CX\|_\infty\bigl(6\|\CX\|_\infty+2t\bigr)\sum_{k=1}^d\frac{r_k}{n_k}}\mright).
    \end{equation*}
Because the above inequality applies to any $\bj\in\I^d$, we have
    \begin{equation*}
        \Prob\mleft\{\bigl\|\proj_{\LL}(\proj_{\I^d}-q^{-1}\proj_{\I})\proj_{\LL}(\CX)\bigr\|_\infty\ge t\mright\}\le 2\exp\mleft(\frac{-3t^2q}{u_0\|\CX\|_\infty\bigl(6\|\CX\|_\infty+2t\bigr)\sum_{k=1}^d\frac{r_k}{n_k}}\mright)\prod_{k=1}^d n_k
    \end{equation*}
 by the union bound.
\end{proof}


Let us now introduce the golfing scheme. We decompose $\I^\perp(\CS)=\bigcup_{j=1}^m\I^\perp(\CS_j)$ with $m\in\N$ to be specified later, where $\I(\CS_j)\sim\operatorname{Bernoulli}(\varphi)$ for $j\in[m]$ are identical and independent of each other. Since $\Prob\{\bi\notin\I^\perp(\CS_j)\}=\varphi$, we have
$\Prob\bigl\{\bi\notin\bigcup_{j=1}^m\I^\perp(\CS_j)\bigr\}=\varphi^m$ and so $\Prob\bigl\{\bi\in\bigcup_{j=1}^m\I^\perp(\CS_j)\bigr\}=1-\varphi^m$. As a result, we must have $1-\varphi^m=1-\rho$, i.e., $\varphi=\sqrt[m]{\rho}$. Given a tensor $\CZ\in\Z(\CL)$, the golfing scheme~\cite{gross2011recovering,candes2011robust} recursively defines
\begin{equation} \label{eq:golf}
    \CZ_0=\CO \text{ and } \CZ_{j}=\CZ_{j-1}-(1-\varphi)^{-1}\proj_{\I^\perp(\CS_{j})}\bigl(\proj_{\LL}(\CZ_{j-1})-\CZ\bigr)\text{ for }j\in[m].
\end{equation}
We expect $\CZ_m$ to be a candidate for the low-rank part of the dual certificate.

Let us take a close look at the $\CZ_j$'s. As $\CZ\in\Z(\CL)\in\LL$, by treating $\proj_{\LL}(\CZ_{j})-\CZ\in\LL$ as a residual that is corrected iteratively in the process, we have
\begin{equation}\label{eq:property-Gk}
\CZ_k=-\sum_{j=1}^{k}(1-\varphi)^{-1}\proj_{\I^\perp(\CS_j)}\bigl(\proj_{\LL}(\CZ_{j-1})-\CZ\bigr)\text{ for } k\in[m].
\end{equation}
By observing that for any $j\in[m]$
    \begin{align*}
        \proj_{\LL}(\CZ_{j})-\CZ&=\proj_{\LL}(\CZ_{j-1})-\CZ-(1-\varphi)^{-1}\proj_{\LL}\proj_{\I^\perp(\CS_j)}\bigl(\proj_{\LL}(\CZ_{j-1})-\CZ\bigr)
        \\&=\bigl(\proj_{\LL}-(1-\varphi)^{-1}\proj_{\LL}\proj_{\I^\perp(\CS_j)}\proj_{\LL}\bigr)\bigl(\proj_{\LL}(\CZ_{j-1})-\CZ\bigr),
\end{align*}
we also have for any $k\in[m]$ that
\begin{equation}\label{eq:urelation}
\proj_{\LL}(\CZ_k)-\CZ = \bigl(\proj_{\LL}-(1-\varphi)^{-1}\proj_{\LL}\proj_{\I^\perp(\CS_k)}\proj_{\LL}\bigr)\cdots\bigl(\proj_{\LL}-(1-\varphi)^{-1}\proj_{\LL}\proj_{\I^\perp(\CS_1)}\proj_{\LL}\bigr)(-\CZ).
\end{equation}
We next provide some estimates of $\CZ_m$.
\begin{lemma}\label{lma:golfing1}
    Under Assumption~\ref{assump:assump}, if $m=\kappa_4\ln{n_d}$ and $\CZ\in\Z(\CL)$, then
    $$
        \bigl\|\proj_{\LL}(\CZ_m)-\CZ\bigr\|_2\le\frac{\lambda}{8},~\|\CZ_m\|_\infty\le 2(1-\varphi)^{-1}\|\CZ\|_\infty,\text{ and }\bigl\|\proj_{\LL^\perp}(\CZ_m)\bigr\|_\sigma\le 2 \kappa_1\|\CZ\|_\infty\sqrt{\frac{n_d}{1-\varphi}}\ln^{d-2}{n_d}
    $$
    hold with high probability, where $\CZ_m$ is defined by~\eqref{eq:golf} for this $\CZ$.
\end{lemma}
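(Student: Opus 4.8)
Lemma~\ref{lma:golfing1} is proved by the standard golfing-scheme argument, tracking the residual $\mathbf{R}_j:=\proj_{\LL}(\CZ_j)-\CZ\in\LL$. By~\eqref{eq:urelation} one has $\mathbf{R}_0=-\CZ$ and $\mathbf{R}_j=\proj_{\LL}\bigl(\proj_{\I^d}-(1-\varphi)^{-1}\proj_{\I^\perp(\CS_j)}\bigr)\proj_{\LL}(\mathbf{R}_{j-1})$, where $\I^\perp(\CS_j)\sim\operatorname{Bernoulli}(1-\varphi)$ and $\varphi=\sqrt[m]{\rho}$. With $m=\kappa_4\ln n_d$ one has $1-\varphi=\Theta(1/\ln n_d)$, and Assumption~\ref{assump:assump}, in particular $r_0\le\theta_0(1-\rho)n_1/(u_0\ln^2 n_d)$ which forces $u_0\sum_k r_k/n_k=O(1/\ln^2 n_d)$, guarantees that the per-stage sampling probability $q=1-\varphi$ still satisfies the hypotheses $q\ge\kappa_3 d^2u_0r_0\ln n_d/n_1$ and $q\ge\theta_1\ln n_d/n_d$ needed to invoke Proposition~\ref{prop:hdp-1-1}, Lemma~\ref{lma:concent-linf} and Corollary~\ref{cor:concent-inf-Fro} at each stage, provided $\theta_0$ is taken small enough relative to $\kappa_4$ (and $\rho$, $d$).

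For the Frobenius estimate, apply Proposition~\ref{prop:hdp-1-1} with $\I=\I^\perp(\CS_j)$, $q=1-\varphi$ and $t=\tfrac12$ to get $\bigl\|\proj_{\LL}\bigl(\proj_{\I^d}-(1-\varphi)^{-1}\proj_{\I^\perp(\CS_j)}\bigr)\proj_{\LL}\bigr\|\le\tfrac12$ with failure probability polynomially small in $n_d$; a union bound over $j\in[m]$ keeps this simultaneously valid with high probability. Hence $\|\mathbf{R}_j\|_2\le\tfrac12\|\mathbf{R}_{j-1}\|_2$, so $\|\mathbf{R}_m\|_2\le 2^{-m}\|\CZ\|_2$. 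Since $\|\CZ\|_\infty\le\|\CZ\|_\sigma=1$ by~\eqref{eq:trivial} and $\CZ$ has at most $\prod_k n_k\le n_d^d$ entries, $\|\CZ\|_2\le n_d^{d/2}$, so $2^{-m}\|\CZ\|_2\le\lambda/8=1/(8\sqrt{n_d})$ once $\kappa_4$ is large enough in terms of $d$; this is the first bound $\bigl\|\proj_{\LL}(\CZ_m)-\CZ\bigr\|_2\le\lambda/8$.

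For the remaining two estimates, iterate Lemma~\ref{lma:concent-linf} with $\CX=\mathbf{R}_{j-1}\in\LL$ and $q=1-\varphi$ to obtain $\|\mathbf{R}_j\|_\infty\le\tfrac12\|\mathbf{R}_{j-1}\|_\infty$ for all $j$ with high probability, i.e., $\|\mathbf{R}_{j-1}\|_\infty\le 2^{-(j-1)}\|\CZ\|_\infty$. Feeding this into the expansion $\CZ_m=-\sum_{j=1}^m(1-\varphi)^{-1}\proj_{\I^\perp(\CS_j)}(\mathbf{R}_{j-1})$ from~\eqref{eq:property-Gk}, together with $\|\proj_{\I^\perp(\CS_j)}(\mathbf{R}_{j-1})\|_\infty\le\|\mathbf{R}_{j-1}\|_\infty$ and $\sum_{j\ge1}2^{-(j-1)}<2$, yields $\|\CZ_m\|_\infty\le 2(1-\varphi)^{-1}\|\CZ\|_\infty$. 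For the spectral bound, $\mathbf{R}_{j-1}\in\LL$ gives $\proj_{\LL^\perp}(\mathbf{R}_{j-1})=\CO$, so $(1-\varphi)^{-1}\proj_{\LL^\perp}\proj_{\I^\perp(\CS_j)}(\mathbf{R}_{j-1})=-\proj_{\LL^\perp}\bigl(\proj_{\I^d}-(1-\varphi)^{-1}\proj_{\I^\perp(\CS_j)}\bigr)(\mathbf{R}_{j-1})$; since $\proj_{\LL^\perp}=\proj_{\TT^{[d]}(\CL)}$ is a projection of the form $\bigotimes_k\proj_{\V_k}$, Lemma~\ref{thm:spec-subspace} gives $\|\proj_{\LL^\perp}(\bullet)\|_\sigma\le\|\bullet\|_\sigma$, and Corollary~\ref{cor:concent-inf-Fro} then bounds this by $\kappa_1\|\mathbf{R}_{j-1}\|_\infty\sqrt{n_d/(1-\varphi)}\ln^{d-2}n_d$ with high probability. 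Summing over $j$ in $\proj_{\LL^\perp}(\CZ_m)=-\sum_{j=1}^m(1-\varphi)^{-1}\proj_{\LL^\perp}\proj_{\I^\perp(\CS_j)}(\mathbf{R}_{j-1})$ and using the geometric decay of $\|\mathbf{R}_{j-1}\|_\infty$ gives $\bigl\|\proj_{\LL^\perp}(\CZ_m)\bigr\|_\sigma\le 2\kappa_1\|\CZ\|_\infty\sqrt{n_d/(1-\varphi)}\ln^{d-2}n_d$.

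The main obstacle is the probability bookkeeping. There are $m=\Theta(\ln n_d)$ stages, each contributing two or three concentration events, and because $1-\varphi\to 0$ the per-stage sampling probability is only $\Theta(1/\ln n_d)$; one must check that with this $q$ the failure bounds from Proposition~\ref{prop:hdp-1-1}, Lemma~\ref{lma:concent-linf} and Corollary~\ref{cor:concent-inf-Fro} are each of the form $n_d^{-c}$ with $c$ large, which (through $u_0\sum_k r_k/n_k=O(1/\ln^2 n_d)$) pins down the order in which $\kappa_4$, then $\theta_0$, then $\kappa_3$ must be chosen, so that a union bound over all $O(\ln n_d)$ events still leaves a high-probability event. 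Granting this, the rest is a routine telescoping of the contraction $\|\mathbf{R}_j\|\le\tfrac12\|\mathbf{R}_{j-1}\|$ in the relevant norm.
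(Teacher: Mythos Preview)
Your argument is correct and follows essentially the same golfing-scheme route as the paper: contract the residual in operator norm via Proposition~\ref{prop:hdp-1-1}, in $\ell_\infty$ via Lemma~\ref{lma:concent-linf}, and use Corollary~\ref{cor:concent-inf-Fro} term-by-term for the spectral bound on $\proj_{\LL^\perp}(\CZ_m)$. The only cosmetic differences are that the paper obtains the lower bound $1-\varphi\ge(1-\rho)/m$ explicitly from Bernoulli's inequality rather than asserting $1-\varphi=\Theta(1/\ln n_d)$, and bounds $\|\CZ\|_2\le\sqrt{\prod_{k=1}^{d-1}n_k}$ via the spectral--Frobenius inequality (using $\|\CZ\|_\sigma=1$) instead of your cruder $\|\CZ\|_2\le n_d^{d/2}$; both suffice once $\kappa_4$ is large enough.
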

\begin{proof} 
Assumption~\ref{assump:assump} implies that $1-\rho\ge \frac{u_0r_0\ln^2{n_d}}{\theta_0n_1}$ and the well-known Bernoulli inequality implies that 
$$
1-\varphi=1-\rho^{\frac{1}{m}}=1-\bigl(1-(1-\rho)\bigr)^{\frac{1}{m}}\ge1-\mleft(1-\frac{1}{m}(1-\rho)\mright)=\frac{1-\rho}{m}.
$$
{\color{black} As a result}, for a sufficiently large $\kappa_3>0$,
$$
1-\varphi\ge \frac{u_0r_0\ln^2{n_d}}{\theta_0n_1m}=\frac{u_0r_0\ln{n_d}}{\theta_0\kappa_4n_1}\ge\max\mleft\{\theta_1\frac{\ln{n_d}}{n_d},\frac{\kappa_3d^2u_0r_0\ln{n_d}}{n_1}\mright\}
$$
as long as $\theta_0>0$ is sufficiently small. Therefore, $\I^\perp(\CS_j)$ satisfies the conditions in Corollary~\ref{cor:concent-inf-Fro} and Lemma~\ref{lma:concent-linf} for any $j\in[m]$. Besides, by Proposition~\ref{prop:hdp-1-1}, 
$\bigl\|\proj_{\LL}\bigl(\proj_{\I^d}-(1-\varphi)^{-1}\proj_{\I^\perp(\CS_j)}\bigr)\proj_{\LL}\bigr\|\le \frac{1}{2}$
holds with probability at least
    \begin{equation*}
        1-\exp\mleft(\frac{-3(1-\varphi)}{28u_0\sum_{k=1}^d\frac{r_k}{n_k}}\mright)\prod_{k=1}^d n_k \ge 1- \exp\mleft(\frac{-3\kappa_3}{28}\sum_{k=1}^d \ln{n_k}\mright)\prod_{k=1}^d n_k = 1-\mleft(\prod_{k=1}^d n_k\mright)^{1-\frac{3\kappa_3}{28}}
        \ge 1-\frac{1}{n_d}
    \end{equation*}
for a sufficiently large $\kappa_3$.

By the well-known bound between the spectral and Frobenius norms of a tensor (see e.g.,~\cite{li2018orthogonal})
$$
{\|\CT\|_\sigma}\ge\|\CT\|_2\mleft(\prod_{k=1}^{d-1}n_k\mright)^{-\frac{1}{2}} \text{ for any $\CT\in\R^{n_1\times n_2\times \dots \times n_d}$},
$$
we have $\|\CZ\|_2\le\sqrt{\prod_{k=1}^{d-1}n_k}$ since $\|\CZ\|_\sigma=1$. Therefore, by~\eqref{eq:urelation},
\begin{equation*}
    \bigl\|\proj_{\LL}(\CZ_m)-\CZ\bigr\|_2 \le\mleft(\prod_{j=1}^m\bigl\|\proj_{\LL}-(1-\varphi)^{-1}\proj_{\LL}\proj_{\I^\perp(\CS_j)}\proj_{\LL}\bigr\|\mright)\mleft\|\CZ\mright\|_2\le\frac{1}{2^m}\sqrt{\prod_{k=1}^{d-1}n_k}\le\frac{1}{8\sqrt{n_d}}=\frac{\lambda}{8}
\end{equation*}
holds with probability at least 
$(1-\frac{1}{n_d})^m$, where the last inequality requires that $2^{m-3}\ge\sqrt{\prod_{k=1}^d n_k}$, guaranteed by $m=\kappa_4\ln{n_d}$. This is a high probability since $(1-\frac{1}{n_d})^m=(1-\frac{1}{n_d})^{\kappa_4\ln n_d}$ tends to $1$ as $n_d$ tends to infinity.

Using a similar argument, we have by Lemma~\ref{lma:concent-linf} and~\eqref{eq:urelation} that
$\bigl\|\proj_{\LL}(\CZ_k)-\CZ\bigr\|_\infty\le\frac{\|\CZ\|_\infty}{2^k}$ holds with high probability for any $k\in[m]$. This, together with~\eqref{eq:property-Gk}, further implies that
    \begin{equation*}
        \|\CZ_m\|_\infty
        \le(1-\varphi)^{-1}\sum_{j=1}^{m}
        \bigl\|\proj_{\I^\perp(\CS_j)}\bigl(\proj_{\LL}(\CZ_{j-1})-\CZ\bigr)\bigr\|_\infty
        \le (1-\varphi)^{-1}\sum_{j=1}^{m}
        \frac{\|\CZ\|_\infty}{2^{j-1}}
\le 2(1-\varphi)^{-1}\|\CZ\|_\infty
    \end{equation*}
holds with high probability. Finally, by~\eqref{eq:property-Gk} again, we have
        \begin{align*}
        \bigl\|\proj_{\LL^\perp}(\CZ_m)\bigr\|_\sigma &=\mleft\|\proj_{\LL^\perp}\mleft(-\sum_{j=1}^{m}(1-\varphi)^{-1}\proj_{\I^\perp(\CS_j)}\bigl(\proj_{\LL}(\CZ_{j-1})-\CZ\bigr)\mright)\mright\|_\sigma\\
        &=\mleft\|\sum_{j=1}^{m}\proj_{\LL^\perp}\mleft(\proj_{\LL}-(1-\varphi)^{-1}\proj_{\I^\perp(\CS_j)}\mright)\bigl(\proj_{\LL}(\CZ_{j-1})-\CZ\bigr)\mright\|_\sigma\\
        &=\mleft\|\sum_{j=1}^{m}\proj_{\LL^\perp}\mleft(\proj_{\I^d}-(1-\varphi)^{-1}\proj_{\I^\perp(\CS_j)}\mright)\bigl(\proj_{\LL}(\CZ_{j-1})-\CZ\bigr)\mright\|_\sigma\\
        &\le\sum_{j=1}^{m}\bigl\|\bigl(\proj_{\I^d}-(1-\varphi)^{-1}\proj_{\I^\perp(\CS_j)}\bigr)\bigl(\proj_{\LL}(\CZ_{j-1})-\CZ\bigr)\bigr\|_\sigma
        \\
        &\le \kappa_1\sqrt{\frac{n_d}{1-\varphi}}\ln^{d-2}{n_d}\sum_{j=1}^{m}\bigl\|\proj_{\LL}(\CZ_{j-1})-\CZ\bigr\|_\infty \\
        &\le \kappa_1\sqrt{\frac{n_d}{1-\varphi}}\ln^{d-2}{n_d}\sum_{j=1}^{m}\frac{\|\CZ\|_\infty}{2^{j-1}} \\
        &\le 2\kappa_1\|\CZ\|_\infty\sqrt{\frac{n_d}{1-\varphi}}\ln^{d-2}{n_d}
        \end{align*}
holds with high probability, where the last equality holds because $\proj_{\LL}(\CZ_{j-1})-\CZ\in\LL$ and the second inequality 
\textcolor{black}{is due to}
Corollary~\ref{cor:concent-inf-Fro}. The proof is then completed by an overall union bound.
\end{proof}

We are now in a position to conclude this subsection.
\begin{proposition}\label{prop:Dl}
    Under Assumption~\ref{assump:assump}, there exists a $\CD_1\in\R^{n_1\times n_2\times \dots\times n_d}$ such that
    $$
    \min_{\CZ\in\Z(\CL)}\bigl\|\proj_{\LL}(\CD_1)-\CZ\bigr\|_2\le\frac{\lambda}{8},~ \bigl\|\proj_{\LL^\perp}(\CD_1)\bigr\|_\sigma\le \frac{1}{4},~\proj_{\I(\CS)}(\CD_1)=\CO, \text{ and }  \|\CD_1\|_\infty\le\frac{\lambda}{4}
      $$
    hold with high probability.
\end{proposition}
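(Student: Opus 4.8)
The plan is to take $\CD_1=\CZ_m$, where $\CZ_m$ is the output of the golfing iteration~\eqref{eq:golf} run for $m=\kappa_4\ln n_d$ steps, seeded with a tensor $\CZ\in\Z(\CL)$ that attains $\min_{\CZ\in\Z(\CL)}\|\CZ\|_\infty$. Such a seed exists: since $\CL\ne\CO$, Lemma~\ref{lma:convexity-ZT} makes $\Z(\CL)$ nonempty and compact and $\|\bullet\|_\infty$ is continuous. Each of the four required properties of $\CD_1$ will then be obtained from Lemma~\ref{lma:golfing1} together with the incoherence and rank bounds of Assumption~\ref{assump:assump}, recalling that $\lambda=\frac{1}{\sqrt{n_d}}$ from Theorem~\ref{thm:main-thm}.

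Two of the properties are essentially immediate. The condition $\proj_{\I(\CS)}(\CD_1)=\CO$ holds deterministically: by~\eqref{eq:property-Gk}, $\CZ_m$ is a linear combination of tensors each lying in some $\I^\perp(\CS_j)$, and $\I^\perp(\CS)=\bigcup_{j=1}^m\I^\perp(\CS_j)$ forces $\I^\perp(\CS_j)\subseteq\I^\perp(\CS)$ for every $j$, so $\CZ_m\in\I^\perp(\CS)$. The condition $\min_{\CZ'\in\Z(\CL)}\|\proj_{\LL}(\CD_1)-\CZ'\|_2\le\frac{\lambda}{8}$ follows from the first bound in Lemma~\ref{lma:golfing1} by estimating the minimum by the value at the chosen seed $\CZ$.

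For the remaining two conditions I would substitute the quantitative estimates $\|\CZ_m\|_\infty\le 2(1-\varphi)^{-1}\|\CZ\|_\infty$ and $\|\proj_{\LL^\perp}(\CZ_m)\|_\sigma\le 2\kappa_1\|\CZ\|_\infty\sqrt{n_d/(1-\varphi)}\ln^{d-2}n_d$ from Lemma~\ref{lma:golfing1} into three facts available from Assumption~\ref{assump:assump} and the golfing setup: (i) $(1-\varphi)^{-1}\le m/(1-\rho)=\kappa_4\ln n_d/(1-\rho)$, by the Bernoulli inequality used inside the proof of Lemma~\ref{lma:golfing1}; (ii) $\|\CZ\|_\infty^2\le\frac{u_0r_0}{n_1n_d\ln^{\max\{2d-5,0\}}n_d}$; and (iii) $\frac{u_0r_0}{n_1}\le\frac{\theta_0(1-\rho)}{\ln^2 n_d}$. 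Plugging (i)--(iii) into $\|\CD_1\|_\infty^2$ gives a bound of order $\frac{\kappa_4^2\theta_0}{(1-\rho)n_d\ln^{\max\{2d-5,0\}}n_d}\le\frac{\kappa_4^2\theta_0}{(1-\rho)n_d}$, which is at most $\frac{\lambda^2}{16}=\frac{1}{16n_d}$ once $\theta_0$ is chosen small relative to $\kappa_4$ and $1-\rho$; similarly $\|\proj_{\LL^\perp}(\CD_1)\|_\sigma^2$ reduces to a bound of order $\kappa_1^2\kappa_4\theta_0\ln^{2d-5-\max\{2d-5,0\}}n_d$, and since $\max\{2d-5,0\}\ge 2d-5$ the exponent of $\ln n_d$ is nonpositive, so this is at most $\kappa_1^2\kappa_4\theta_0\le\frac{1}{16}$ for $\theta_0$ small relative to $\kappa_1,\kappa_4$. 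A union bound over the high-probability events of Lemma~\ref{lma:golfing1} finishes the argument.

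The computations are routine once the seed $\CZ$ is fixed; the only delicate point is bookkeeping the powers of $\ln n_d$. The spectral-norm estimate is the binding one, since it picks up the extra factor $\ln^{2d-4}n_d$ (after squaring) coming from Corollary~\ref{cor:concent-inf-Fro}, and the exponent $\max\{2d-5,0\}$ in the incoherence hypothesis on $\min_{\CZ\in\Z(\CL)}\|\CZ\|_\infty$ is exactly what is needed so that this estimate still closes with only a smallness condition on the absolute constant $\theta_0$, uniformly in the order $d$. I expect the exponent-tracking to be the main (though mild) obstacle; everything else is a direct appeal to Lemma~\ref{lma:golfing1} and Assumption~\ref{assump:assump}.
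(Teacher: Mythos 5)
Your construction ($\CD_1=\CZ_m$ seeded with the minimizer of $\|\CZ\|_\infty$ over $\Z(\CL)$, then Lemma~\ref{lma:golfing1} plus the incoherence/rank bounds of Assumption~\ref{assump:assump} and a union bound) is exactly the paper's proof, and your log-power bookkeeping — in particular the observation that $2d-5-\max\{2d-5,0\}\le 0$ closes the spectral-norm estimate uniformly in $d$ — matches the paper's computation. No gap; this is correct.
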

\begin{proof}
By Assumption~\ref{assump:assump}, there exists a $\CZ\in\Z(\CL)$ such that
\begin{equation*}
  \|\CZ\|_\infty\le \sqrt{\frac{u_0}{n_1 n_d \ln^{\max\{2d-5,0\}}{n_d}}}\cdot\sqrt{\theta_0 \frac{(1-\rho)n_1}{u_0\ln^2{n_d}}}=\sqrt{\frac{\theta_0(1-\rho)}{n_d\ln^{\max\{2d-3,2\}}{n_d}}}.
\end{equation*}
Let us consider $\CD_1=\CZ_m$ defined by~\eqref{eq:golf} for this $\CZ$ with $m=\kappa_4\ln n_d$. It is obvious from~\eqref{eq:property-Gk} that $\proj_{\I(\CS)}(\CD_1)=\CO$
as $\I^\perp(\CS_j)\subseteq\I^\perp(\CS)$ for any $j\in[m]$.
Besides, $\min_{\CZ\in\Z(\CL)}\bigl\|\proj_{\LL}(\CD_1)-\CZ\bigr\|_2\le\frac{\lambda}{8}$ has been shown in Lemma~\ref{lma:golfing1}.

For the remaining two statements, by Lemma~\ref{lma:golfing1}, we have
\begin{equation*}
  \mleft\|\CD_1\mright\|_\infty\le \frac{2}{1-\varphi}\|\CZ\|_\infty\le \frac{2m}{1-\rho}\sqrt{\frac{\theta_0(1-\rho)}{n_d\ln^{\max\{2d-3,2\}}{n_d}}}\le\mleft(\frac{2\kappa_4\sqrt{\theta_0}}{\sqrt{(1-\rho)n_d \ln^{\max\{2d-5,0\}}{n_d}}}\mright)\lambda\le\frac{\lambda}{4}
\end{equation*}
for a sufficiently small $\theta_0$. Finally by Lemma~\ref{lma:golfing1} again, we have 
$$\bigl\|\proj_{\LL^\perp}(\CD_1)\bigr\|_\sigma
\le 2 \kappa_1 \sqrt{\frac{\theta_0(1-\rho)}{n_d\ln^{\max\{2d-3,2\}}{n_d}}}\cdot\sqrt{\frac{n_dm}{1-\rho}}\ln^{d-2}{n_d} \le\frac{2 \kappa_1\sqrt{\kappa_4 \theta_0}}{\sqrt{\ln^{\max\{5-2d,0\}}{n_d}}}\le \frac{1}{4}
$$
for a sufficiently small $\theta_0$.
\end{proof}

\subsection{Dual certificate: Sparse part via the least squares method}

In this subsection, 
we construct the sparse part of the dual certificate via the least squares method, aiming to achieve the following.
\begin{proposition}\label{prop:Ds}
    Under Assumption~\ref{assump:assump}, there exists a $\CD_2\in\R^{n_1\times n_2\times \dots\times n_d}$ such that
    $$
    \proj_{\LL}(\CD_2)=\CO,~ \bigl\|\proj_{\LL^\perp}(\CD_2)\bigr\|_\sigma<\frac{1}{4},~ \proj_{\I(\CS)}(\CD_2)=\lambda\CE,\text{ and }
      \bigl\|\proj_{\I^\perp(\CS)}(\CD_2)\bigr\|_\infty<\frac{\lambda}{4}
      $$
    hold with high probability.
\end{proposition}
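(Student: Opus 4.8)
The plan is to construct $\CD_2$ by the least-squares method: take the minimum-Frobenius-norm tensor lying in $\LL^\perp=\TT^{[d]}(\CL)$ whose restriction to the random support $\I(\CS)$ equals $\lambda\CE$. Since Corollary~\ref{cor:hdp-2}, applied with (say) $\delta=\tfrac12$, gives $\|\proj_{\LL}\proj_{\I(\CS)}\|\le\tfrac12$ with high probability once $\rho$ and $\theta_0$ are small, the operator $\proj_{\I(\CS)}\proj_{\LL^\perp}\proj_{\I(\CS)}=\proj_{\I(\CS)}-\proj_{\I(\CS)}\proj_{\LL}\proj_{\I(\CS)}$ is invertible on the subspace $\I(\CS)$, with inverse the convergent Neumann series $\sum_{k\ge0}(\proj_{\I(\CS)}\proj_{\LL}\proj_{\I(\CS)})^k$ (its operator norm being at most $\tfrac14$). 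I would then set
$$
\CG:=\sum_{k\ge0}(\proj_{\I(\CS)}\proj_{\LL}\proj_{\I(\CS)})^k(\CE)\in\I(\CS)\qquad\text{and}\qquad \CD_2:=\lambda\,\proj_{\LL^\perp}(\CG).
$$
The two exact requirements are then pure algebra: $\proj_{\LL}(\CD_2)=\CO$ because $\CD_2\in\LL^\perp$; and, using $\CG\in\I(\CS)$ and $\CE\in\I(\CS)$, telescoping the Neumann series gives $\proj_{\I(\CS)}(\CD_2)=\lambda\,(\proj_{\I(\CS)}\proj_{\LL^\perp}\proj_{\I(\CS)})(\CG)=\lambda\CE$.

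Next I would dispatch the off-support $\ell_\infty$ bound. Because $\proj_{\I^\perp(\CS)}(\CG)=\CO$ and $\proj_{\LL^\perp}=\proj_{\I^d}-\proj_{\LL}$, one has $\proj_{\I^\perp(\CS)}(\CD_2)=-\lambda\,\proj_{\I^\perp(\CS)}\proj_{\LL}(\CG)$, so it suffices to show $\|\proj_{\LL}(\CG)\|_\infty<\tfrac14$. Setting $\CW:=\proj_{\LL}(\CG)$, the fixed-point identity $\CW=\proj_{\LL}(\CE)+\proj_{\LL}\proj_{\I(\CS)}(\CW)$ holds; I would bound $\|\proj_{\LL}(\CE)\|_\infty$ by a Hoeffding bound over the random signs of $\CE$, whose variance proxy is controlled by $\max_{\bj}\|\proj_{\LL}(\bigotimes_k\be_{j_k})\|_2^2\le u_0\sum_k r_k/n_k$ from Lemma~\ref{lma:incoherence} --- which Assumption~\ref{assump:assump} drives below $\theta_0 d/\ln^2 n_d$, so $\|\proj_{\LL}(\CE)\|_\infty$ tends to $0$ --- and iterate a scalar-Bernstein $\ell_\infty$ estimate for $\proj_{\LL}\proj_{\I(\CS)}$ acting on bounded tensors (in the spirit of Lemma~\ref{lma:concent-linf} and Corollary~\ref{cor:concent-inf-Fro}, again via Lemma~\ref{lma:incoherence} and smallness of $\rho$) to get $\|\CW\|_\infty<\tfrac14$, hence $\|\proj_{\I^\perp(\CS)}(\CD_2)\|_\infty<\tfrac{\lambda}{4}$, with high probability.

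The spectral-norm bound is the crux. I would split $\CD_2=\lambda\,\proj_{\LL^\perp}(\CE)+\lambda\,\proj_{\LL^\perp}(\CG-\CE)$. For the leading term, Lemma~\ref{thm:spec-subspace} gives $\|\lambda\,\proj_{\LL^\perp}(\CE)\|_\sigma\le\lambda\|\CE\|_\sigma$, and $\|\CE\|_\sigma=O(\sqrt{\rho\,n_d})$ holds with high probability --- $\CE$ being a $\{-1,0,1\}$ sign tensor with $\operatorname{Bernoulli}(\rho)$ support, this follows from an $\epsilon$-net together with a matrix/scalar Bernstein argument (or from a ready-made random-tensor spectral bound obtained by embedding into $\R^{n_d\times\dots\times n_d}$, as in the use of~\cite{zhou2021sparse} behind Corollary~\ref{cor:concent-inf-Fro}), with an implied constant depending only on the fixed order $d$; thus $\lambda\|\CE\|_\sigma=O(\sqrt{\rho})<\tfrac18$ for $\rho$ small. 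For the correction $\CG-\CE=\sum_{k\ge1}(\proj_{\I(\CS)}\proj_{\LL}\proj_{\I(\CS)})^k(\CE)$, I would substitute $\proj_{\I(\CS)}=\proj_{\I^d}-\proj_{\I^\perp(\CS)}$ to pull the (spectral-norm non-increasing) projection $\proj_{\LL^\perp}$ inward, reducing each term to a tensor of the form $\proj_{\I^\perp(\CS)}\proj_{\LL}\bigl((\proj_{\I(\CS)}\proj_{\LL})^{k-1}(\CE)\bigr)$, and bound the resulting geometric series using the \emph{random} structure of $\CS$ rather than a crude Frobenius estimate, so that the whole correction is again $O(\lambda\sqrt{\rho\,n_d})<\tfrac18$. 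Combining, $\|\proj_{\LL^\perp}(\CD_2)\|_\sigma<\tfrac14$, and a final union bound over the finitely many high-probability events finishes the proof.

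The hardest step will be this spectral-norm control of $\CD_2$: one must keep $\|\proj_{\LL^\perp}(\CD_2)\|_\sigma$ at order $\lambda\sqrt{\rho\,n_d}$ rather than the useless order $\lambda\sqrt{\rho\prod_k n_k}$ that passing through the Frobenius norm would give. This forces genuine use of the randomness of $\CS$ --- the sign tensor concentrates spectrally like $\sqrt{\rho\,n_d}$, and projecting it (or its low-order transforms) onto the low-rank subspace $\LL$ must not inflate its spectral norm --- and it is exactly here that the tensor case needs ingredients absent from the matrix proof of~\cite{candes2011robust}, namely a random-tensor spectral concentration estimate of the type in~\cite{zhou2021sparse}, whose constants depend on $d$ but, $d$ being fixed, are harmless. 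A secondary nuisance is that the least-squares iterate $\CG$ depends on the random set $\I(\CS)$, so the $\ell_\infty$ estimate must be applied to the fixed point $\CW=\proj_{\LL}(\CG)$ itself, via iteration, rather than to a freshly sampled object.
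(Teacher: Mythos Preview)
Your construction of $\CD_2$ via the least-squares/Neumann-series formula is exactly the paper's, and your verification of the two equalities is fine. The gap is in the two inequalities: you never exploit the decisive feature of Assumption~\ref{assump:assump}, namely that \emph{conditional on the support $\I(\CS)$, the nonzero entries of $\CE$ are i.i.d.\ symmetric $\pm1$ random variables}. The paper uses this sign randomness (not the support randomness) for both bounds. For $\bigl\|\proj_{\I^\perp(\CS)}(\CD_2)\bigr\|_\infty$, one conditions on $\I(\CS)$, writes each entry as $\bigl\langle\CE,(\sum_{k\ge0}(\proj_{\I(\CS)}\proj_{\LL}\proj_{\I(\CS)})^k)\proj_{\I(\CS)}\proj_{\LL}(\bigotimes_k\be_{i_k})\bigr\rangle$, and applies scalar Hoeffding over the signs; the variance proxy is bounded by $\frac{\delta^2}{(1-\delta)^2}u_0\sum_k r_k/n_k$ via $\|\proj_{\LL}\proj_{\I(\CS)}\|\le\delta$ (Corollary~\ref{cor:hdp-2}) and Lemma~\ref{lma:incoherence}. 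For the spectral correction, one conditions on $\I(\CS)$, discretises via an $\epsilon$-net (Lemma~\ref{thm:net}), and again applies Hoeffding over signs to $\bigl\langle\CE,\sum_{k\ge1}(\proj_{\I(\CS)}\proj_{\LL}\proj_{\I(\CS)})^k(\bigotimes_k\bv_k)\bigr\rangle$, with $\ell_2$ proxy $\le\delta/(1-\delta)$; for $\delta$ small enough this tail beats the net cardinality $(1+4d)^{\sum_k n_k}$.

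Your proposed routes avoid this and run into trouble. Iterating a scalar-Bernstein bound for $\proj_{\LL}\proj_{\I(\CS)}$ ``in the spirit of Lemma~\ref{lma:concent-linf}'' uses the randomness of $\I(\CS)$, but after the first step the input $\CW_k$ already depends on $\I(\CS)$, so the independence hypothesis of Bernstein is lost; calling this a ``secondary nuisance'' handled ``via iteration'' does not resolve it, and a uniform $\ell_\infty\!\to\!\ell_\infty$ bound on $\proj_{\LL}\proj_{\I(\CS)}\proj_{\LL}$ is not available. Likewise, your plan to ``pull $\proj_{\LL^\perp}$ inward'' and bound each term $\proj_{\I^\perp(\CS)}\proj_{\LL}\bigl((\proj_{\I(\CS)}\proj_{\LL})^{k-1}(\CE)\bigr)$ by the random structure of $\CS$ is not a concrete argument: these terms are correlated with $\I(\CS)$, and there is no mechanism offered to keep their spectral norms at $O(\sqrt{\rho\,n_d})$ rather than the Frobenius scale. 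The fix in both places is the same: condition on the support and spend the sign randomness of $\CE$.
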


To start with, let us consider the following convex optimization problem
\begin{equation*}
    \min\bigl\{\|\CD\|_2^2:\proj_{\LL}(\CD)=\CO,\,\proj_{\I(\CS)}(\CD)=\lambda\CE\bigr\}.
\end{equation*}
The purpose is to make sure that any feasible solution satisfies the two equality conditions in Proposition~\ref{prop:Ds} and that the norm minimization meets the other two conditions as well. Using the standard method of Lagrange multipliers, it is not difficult to show that the optimal solution of the problem is
\begin{equation*}\label{eq:least-squares}    \CD_2=\lambda\proj_{\LL^\perp}\sum_{k=0}^\infty(\proj_{\I(\CS)}\proj_{\LL}\proj_{\I(\CS)})^k(\CE).
\end{equation*}
The rest of this subsection is devoted to prove the two inequality conditions in Proposition~\ref{prop:Ds} for $\CD_2$, shown in Lemma~\ref{lma:least-square1} and Lemma~\ref{lma:least-square2}, respectively.

To begin, we present a technical lemma that approximates the tensor spectral norm by using the so-called $\epsilon$-net of the unit sphere, i.e., a set of unit vectors such that the Euclidean distance from any unit vector to the set is no more than $\epsilon$.
\begin{lemma}\label{thm:net}
If $\CT\in\R^{n_1\times n_2\times \dots\times n_d}$ and $\E_k$ is an $\epsilon$-net of $\SI^{n_k}$ for $k\in[d]$ and $\epsilon\in(0,\frac{1}{d})$, then
$$
\|\CT\|_\sigma\le\frac{1}{1-d\epsilon}\max_{\bv_k\in\E_k\,\forall\,k\in[d]}\mleft\langle\CT,\bv_1\otimes \bv_2\otimes\dots\otimes\bv_d\mright\rangle.
$$
In particular, by letting $\epsilon=\frac{1}{2d}$, there exist $\E_k\subseteq\SI^{n_k}$ with $|\E_k|\le(1+4d)^{n_k}$ for $k\in[d]$ such that 
\begin{equation}\label{eq:net}
\|\CT\|_\sigma\le2\max_{\bv_k\in\E_k\,\forall\,k\in[d]}\langle\CT,\bv_1\otimes \bv_2\otimes\dots\otimes\bv_d\rangle.
\end{equation}
\end{lemma}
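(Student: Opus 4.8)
The plan is to prove the two claims separately: first the general net estimate $\|\CT\|_\sigma\le\frac{1}{1-d\epsilon}\max_{\bv_k\in\E_k}\langle\CT,\bigotimes_k\bv_k\rangle$, and then specialize to obtain \eqref{eq:net} using a volumetric bound on the size of an $\epsilon$-net.

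For the first claim, let $\bu_1,\dots,\bu_d$ be unit vectors attaining $\|\CT\|_\sigma=\langle\CT,\bigotimes_k\bu_k\rangle$ (they exist by compactness, as in \eqref{eq:spec}). For each $k$, pick $\bv_k\in\E_k$ with $\|\bu_k-\bv_k\|_2\le\epsilon$. The idea is a telescoping / hybrid argument: write $\langle\CT,\bigotimes_k\bu_k\rangle-\langle\CT,\bigotimes_k\bv_k\rangle$ as a sum of $d$ terms, where the $\ell$th term swaps $\bu_\ell$ for $\bv_\ell$ while keeping $\bv_1,\dots,\bv_{\ell-1}$ and $\bu_{\ell+1},\dots,\bu_d$. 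Each such term is $\langle\CT,\bv_1\otimes\dots\otimes\bv_{\ell-1}\otimes(\bu_\ell-\bv_\ell)\otimes\bu_{\ell+1}\otimes\dots\otimes\bu_d\rangle$, which by the definition of the spectral norm applied to the unit vectors $\bv_j$, $\bu_j$, and $(\bu_\ell-\bv_\ell)/\|\bu_\ell-\bv_\ell\|_2$, is bounded in absolute value by $\|\CT\|_\sigma\cdot\|\bu_\ell-\bv_\ell\|_2\le\epsilon\|\CT\|_\sigma$. Summing over $\ell$ gives $\|\CT\|_\sigma=\langle\CT,\bigotimes_k\bu_k\rangle\le\langle\CT,\bigotimes_k\bv_k\rangle+d\epsilon\|\CT\|_\sigma\le\max_{\bv_k\in\E_k}\langle\CT,\bigotimes_k\bv_k\rangle+d\epsilon\|\CT\|_\sigma$, and rearranging (using $d\epsilon<1$) yields the claimed inequality.

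For the second claim, I would invoke the standard fact that for any $\epsilon\in(0,1)$ the sphere $\SI^{n}$ admits an $\epsilon$-net of cardinality at most $(1+2/\epsilon)^{n}$ — proved by taking a maximal $\epsilon$-separated subset and comparing the volumes of the disjoint balls of radius $\epsilon/2$ around its points with the volume of the ball of radius $1+\epsilon/2$. Setting $\epsilon=\frac{1}{2d}$ gives $1+2/\epsilon=1+4d$, hence $|\E_k|\le(1+4d)^{n_k}$, and $\frac{1}{1-d\epsilon}=\frac{1}{1-1/2}=2$, which is exactly \eqref{eq:net}.

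There is no serious obstacle here; the only point requiring a little care is the telescoping estimate — one must make sure that at each step the vectors other than the $\ell$th slot are genuine unit vectors (which they are, being taken from $\SI^{n_j}$ or the net $\E_j\subseteq\SI^{n_j}$), so that the spectral-norm bound $|\langle\CT,\bigotimes_j\bw_j\rangle|\le\|\CT\|_\sigma\prod_j\|\bw_j\|_2$ applies cleanly after normalizing the single slot $\bu_\ell-\bv_\ell$. The volumetric net bound is entirely standard and can be cited or sketched in one line.
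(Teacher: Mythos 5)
Your proof is correct and follows essentially the same route as the paper: the same telescoping/hybrid decomposition of $\langle\CT,\bigotimes_k\bu_k\rangle-\langle\CT,\bigotimes_k\bv_k\rangle$ into $d$ one-slot swaps, each bounded by $\epsilon\|\CT\|_\sigma$, followed by rearrangement, and the same standard $(1+2/\epsilon)^n$ covering bound for the second claim (the paper simply cites Vershynin rather than sketching the volume argument).
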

\begin{proof}
Let $\|\CT\|_\sigma=\mleft\langle\CT,\bx_1\otimes \bx_2\otimes\dots\otimes\bx_d\mright\rangle$ where $\bx_k\in\SI^{n_k}$ for $k\in[d]$. By choosing $\bv_k\in\E_k$ with $\|\bx_k-\bv_k\|_2\le \epsilon$ for $k\in[d]$, we have
        \begin{align*}
        &\,\|\CT\|_\sigma-\mleft\langle\CT,\bv_1\otimes \bv_2\otimes\dots\otimes\bv_d\mright\rangle\\
        \le&\,\mleft|\mleft\langle\CT,\bigotimes_{k=1}^d \bx_k\mright\rangle-\mleft\langle\CT,\bigotimes_{k=1}^d \bv_k\mright\rangle\mright|
        \\=&\,\mleft|\sum_{k=1}^d\mleft(\mleft\langle\CT,\mleft(\bigotimes_{j=1}^{k-1}\bv_j\mright)\otimes\mleft(\bigotimes_{j=k}^d\bx_j\mright)\mright\rangle-\mleft\langle\CT,\mleft(\bigotimes_{j=1}^{k}\bv_j\mright)\otimes\mleft(\bigotimes_{j=k+1}^d\bx_j\mright)\mright\rangle\mright)\mright|
        \\
        \le&\,\sum_{k=1}^d\|\CT\|_\sigma\mleft(\prod_{j=1}^{k-1}\|\bv_j\|_2\mright)\|\bx_k-\bv_k\|_2\mleft(\prod_{j=k+1}^d\|\bx_j\|_2\mright)\\
        \le&\, \epsilon d\|\CT\|_\sigma,
    \end{align*}
    which further implies that
    \begin{equation*}
        \|\CT\|_\sigma\le\frac{1}{1-d\epsilon}\mleft\langle\CT,\bv_1\otimes \bv_2\otimes\dots\otimes\bv_d\mright\rangle.
    \end{equation*}
    The result then follows by taking maximum over all $\E_k$'s.

    In order to show~\eqref{eq:net}, the existence of relevant $\epsilon$-nets is required. In particular, it is stated in~\cite[Corollary~4.2.13]{vershynin2018high} that for any $\epsilon>0$, there exists an $\epsilon$-net of $\SI^n$ whose cardinality is no more than $(1+\frac{2}{\epsilon})^{n}$. 
\end{proof}
For relevant studies on the tensor norms by sphere covering, we refer interested readers to~\cite{hu2022complexity,he2023approx,guan2024lp}. 

We next bound the spectral norm of $\CE$ by applying the result of $\epsilon$-net.
\begin{lemma}\label{lma:espec}
    Under Assumption~\ref{assump:assump}, $\|\CE\|_\sigma\le\frac{\kappa_5}{\sqrt{-\ln\rho}}\sqrt{\sum_{k=1}^dn_k}$ holds with high probability.
\end{lemma}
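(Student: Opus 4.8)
The plan is to control $\|\CE\|_\sigma$ by a covering argument built on the $\epsilon$-net estimate of Lemma~\ref{thm:net}, after separating the test directions according to how concentrated they are. First I would fix, via Lemma~\ref{thm:net} with $\epsilon=\frac{1}{2d}$, nets $\E_k\subseteq\SI^{n_k}$ with $|\E_k|\le(1+4d)^{n_k}$, so that $\|\CE\|_\sigma\le 2\max\bigl\{\langle\CE,\bigotimes_{k=1}^d\bv_k\rangle:\bv_k\in\E_k\ \forall\,k\in[d]\bigr\}$. For a fixed tuple $\bv=(\bv_1,\dots,\bv_d)$, the quantity $\langle\CE,\bigotimes_{k=1}^d\bv_k\rangle=\sum_{\bi\in\I^d}e_{\bi}\prod_{k=1}^d(v_k)_{i_k}$ is a sum of independent mean-zero random variables with total variance $\rho\prod_{k=1}^d\|\bv_k\|_2^2=\rho$ and with individual magnitudes at most $\prod_{k=1}^d\|\bv_k\|_\infty$. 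A Bernstein-type inequality then produces a sub-Gaussian tail $\exp(-c\,t^2/\rho)$ as long as the test tuple is sufficiently flat, namely $\prod_{k=1}^d\|\bv_k\|_\infty$ is of order at most $\rho/t$; the $(-\ln\rho)^{-1/2}$ saving in the target bound is the quantitative trace of the sparsity of $\CE$, via $\sqrt\rho\le(-\ln\rho)^{-1/2}$.

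The flat directions are then routine. For those tuples with $\prod_{k=1}^d\|\bv_k\|_\infty$ below a threshold of order $\rho/\sqrt{\sum_k n_k}$, the sub-Gaussian tail above is valid at $t$ of order $\sqrt{\rho\sum_k n_k}$, and a single union bound over the $(1+4d)^{\sum_k n_k}$ net points shows that the corresponding maximum is at most $\kappa\sqrt{\rho\sum_k n_k}\le\frac{\kappa_5}{\sqrt{-\ln\rho}}\sqrt{\sum_k n_k}$ with high probability, with $\kappa_5$ depending only on $d$ (recall $\rho$ is a fixed small constant).

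The peaky directions are the heart of the matter and the step I expect to be the main obstacle: for them a union bound over the full net is hopeless, since the Chernoff tail at $t$ of order $\sqrt{\sum_k n_k}$ cannot beat $(1+4d)^{\sum_k n_k}$. The plan here is a dyadic peeling argument. I would split each $\bv_k$ at a scale $a$ (chosen so that $a^d$ matches the flat threshold) into a ``spike'' part supported on at most $a^{-2}$ coordinates and a flat remainder, expand $\langle\CE,\bigotimes_k\bv_k\rangle$ into the resulting $2^d$ cross terms, and estimate each separately. The all-remainder term is flat and already handled; any cross term involving a spike factor probes $\CE$ only on a product of small coordinate sets, i.e.\ on an $s_1\times\dots\times s_d$ box with $\prod_k s_k$ small. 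Conditioning on the support $\I(\CS)$ turns $\CE$ restricted to such a box into a Rademacher tensor, whose spectral norm is controlled by a much cheaper $\epsilon$-net; a union bound over all $\prod_k\binom{n_k}{s_k}$ boxes of that shape then bounds every such restricted spectral norm by something of order $\sqrt{(\sum_k s_k)\ln n_d}$, which for the chosen $a$ is $o\bigl(\sqrt{\sum_k n_k}\bigr)$. Summing over the $O(\ln n_d)$ dyadic scales costs only a polylogarithmic factor, harmless for a high-probability statement, and collecting the flat estimate with the peeled peaky estimates yields $\|\CE\|_\sigma\le\frac{\kappa_5}{\sqrt{-\ln\rho}}\sqrt{\sum_{k=1}^d n_k}$ with high probability. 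The delicate points will be the bookkeeping of the $2^d$ cross terms across scales and the uniform (over all boxes) control of the restricted spectral norms — equivalently, ruling out dense sub-blocks in the sparse random sign tensor $\CE$.
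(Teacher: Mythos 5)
Your opening move — an $\epsilon$-net from Lemma~\ref{thm:net} plus a concentration bound per net point — is the same skeleton as the paper's, but the concentration step goes down a much harder road than necessary, and the hard part as written has a gap. The paper's observation is that each entry $e_{\bi}$ is individually sub-Gaussian with parameter $\asymp (-\ln\rho)^{-1/2}$: from $\bigl(\Exp|e_{\bi}|^x\bigr)^{1/x}=\rho^{1/x}$, one maximizes $\rho^{1/x}/\sqrt{x}$ at $x=-2\ln\rho$ to get the moment growth $\bigl(\Exp|e_{\bi}|^x\bigr)^{1/x}\le\sqrt{x/(-2e\ln\rho)}$, equivalently an MGF bound $\Exp\exp(te_{\bi})\le\exp\bigl(\kappa_6 t^2/(-2e\ln\rho)\bigr)$. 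Once each entry is sub-Gaussian with that parameter, Tomioka--Suzuki's $\epsilon$-net bound for tensors with independent sub-Gaussian entries gives $\|\CE\|_\sigma\lesssim (-\ln\rho)^{-1/2}\sqrt{\sum_k n_k}$ \emph{uniformly over all test directions, flat or peaky}, with no case split at all. The $(-\ln\rho)^{-1/2}$ is not a trace of variance $\rho$; it is the genuine sub-Gaussian scale of a $\{0,\pm1\}$ variable that is nonzero with probability $\rho$ (note $\sqrt{\rho}\ll(-\ln\rho)^{-1/2}$ for small $\rho$, so the per-entry $\psi_2$-norm, not the standard deviation, is the right scale).

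Your Bernstein-plus-peeling route aims instead at the variance scale $\sqrt{\rho}$, which forces the flat/peaky split and the dyadic peel, and the peel as described does not close. The specific problem is the assertion that ``any cross term involving a spike factor probes $\CE$ only on an $s_1\times\dots\times s_d$ box with $\prod_k s_k$ small.'' That is only true for the all-spike term. A mixed term (spike in a nonempty proper subset $S\subsetneq[d]$ of modes, flat remainder in the others) probes $\CE$ on a \emph{slab}, restricted to $\le a^{-2}$ coordinates in each mode $k\in S$ but unrestricted in every mode $k\notin S$. The union bound over such slabs still pays $\exp\bigl(C\sum_{k\notin S}n_k\bigr)$ from the flat modes, which is exactly the factor you were trying to beat, and the per-summand magnitude bound you get from the remaining flat factors, $a^{d-|S|}$, degrades to nothing as $|S|\to d-1$. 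A single-threshold split therefore does not localize the mixed terms; making this work would require a full multi-scale chaining argument, which is a substantially bigger project than the one paragraph allotted to it. The paper's sub-Gaussian reformulation is precisely the device that makes the chaining unnecessary.
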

\begin{proof}
By the condition of $\CS$ in Assumption~\ref{assump:assump} and $\CE=\sign(\CS)$, we have 
$\bigl(\Exp |e_{\bi}|^x\bigr)^\frac{1}{x}=\rho^{\frac{1}{x}}$ for any $\bi\in\I^d$ and $x\ge 1$. The function $\rho^{\frac{1}{x}}/\sqrt{x}$ achieves the maximum over $[1,\infty)$ at $x=-2\ln\rho$ as long as $\rho\le \frac{1}{\sqrt{e}}$, which is certainly the case as $\rho$ is assumed to be sufficiently small. As a result, 
    $$
        \bigl(\Exp |e_{\bi}|^x\bigr)^\frac{1}{x}\le \sqrt{\frac{x}{-2e\ln{\rho}}} \text{ for any } x\ge 1.
    $$
    This, together with~\cite[Proposition~2.5.2]{vershynin2018high}, further implies that
    $$
        \Exp \exp(t e_{\bi})\le\exp\mleft(\frac{\kappa_6}{-2e\ln{\rho}} t^2\mright)\text{ for any $\bi\in\I^d$ and $t\in\R$}.
    $$
    The result then follows directly by combining the above with~\cite[Lemma~1]{tomioka2014spectral} and~\cite[Theorem~1]{tomioka2014spectral}.
\end{proof}

We are now ready to make two key claims to conclude this subsection.
\begin{lemma}\label{lma:least-square1}
Under Assumption~\ref{assump:assump}, $\|\proj_{\LL^\perp}(\CD_2)\|_\sigma<\frac{1}{4}$ holds with high probability.
\end{lemma}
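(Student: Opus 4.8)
The plan is to bound $\|\CD_2\|_\sigma$ directly. Since $\CD_2=\lambda\proj_{\LL^\perp}\sum_{k\ge0}(\proj_{\I(\CS)}\proj_{\LL}\proj_{\I(\CS)})^k(\CE)$ already carries an outer $\proj_{\LL^\perp}$, we have $\proj_{\LL^\perp}(\CD_2)=\CD_2$, so the statement is just $\|\CD_2\|_\sigma<\frac14$. I would work throughout on the high-probability event supplied by Corollary~\ref{cor:hdp-2}, on which $\|\proj_{\LL}\proj_{\I(\CS)}\|\le\delta$ for a fixed $\delta\in(0,1)$ chosen small (this is where the smallness of $\rho$ and $\theta_0$ is consumed). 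On that event the Neumann series converges, so setting $\CR:=\sum_{k\ge0}(\proj_{\I(\CS)}\proj_{\LL}\proj_{\I(\CS)})^k(\CE)$ we have $\CD_2=\lambda\proj_{\LL^\perp}(\CR)$, and since $\proj_{\I(\CS)}(\CE)=\CE$ also $\proj_{\I(\CS)}(\CR)=\CR$, hence $\CR=\CE+\proj_{\I(\CS)}\proj_{\LL}(\CR)$. Because $\LL^\perp=\TT^{[d]}(\CL)=\TT\bigl((\spn_k^\perp(\CL))_{k=1}^d\bigr)$ is a $\TT((\V_k))$-type subspace, Lemma~\ref{thm:spec-subspace} gives $\|\CD_2\|_\sigma\le\lambda\|\CR\|_\sigma$, so it suffices to prove $\|\CR\|_\sigma<\frac{\sqrt{n_d}}{4}$ with high probability.

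I would then split $\CR=\CE+(\CR-\CE)$ with $\CR-\CE=\proj_{\I(\CS)}\proj_{\LL}(\CR)$. For the sign part, Lemma~\ref{lma:espec} gives $\|\CE\|_\sigma\le\frac{\kappa_5}{\sqrt{-\ln\rho}}\sqrt{\sum_{k=1}^d n_k}\le\frac{\kappa_5\sqrt d}{\sqrt{-\ln\rho}}\sqrt{n_d}$, so $\lambda\|\CE\|_\sigma\le\frac{\kappa_5\sqrt d}{\sqrt{-\ln\rho}}<\frac18$ once $\rho$ is small enough (recall $d$ is fixed). For the remainder, using $\proj_{\LL^\perp}\proj_{\LL}=0$ one rewrites it, after the outer projection, as $\lambda\proj_{\LL^\perp}(\proj_{\I(\CS)}-\rho\proj_{\I^d})\proj_{\LL}(\CR)$, whose spectral norm is at most $\lambda\rho\bigl\|(\proj_{\I^d}-\rho^{-1}\proj_{\I(\CS)})\proj_{\LL}(\CR)\bigr\|_\sigma$; applying the spectral concentration estimate of Corollary~\ref{cor:concent-inf-Fro} (with $q=\rho$, legitimate as $\rho$ is a constant) bounds this by $\kappa_1\lambda\sqrt{\rho n_d}\,\ln^{d-2}{n_d}\,\|\proj_{\LL}(\CR)\|_\infty=\kappa_1\sqrt{\rho}\,\ln^{d-2}{n_d}\,\|\proj_{\LL}(\CR)\|_\infty$, thereby reducing everything to controlling the entrywise size $\|\proj_{\LL}(\CR)\|_\infty$.

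The bound on $\|\proj_{\LL}(\CR)\|_\infty$ I would obtain by propagating $\ell_\infty$-control through the recursion: from $\proj_{\LL}(\CR)=\proj_{\LL}(\CE)+\proj_{\LL}\proj_{\I(\CS)}\proj_{\LL}(\CR)$, writing $\proj_{\LL}\proj_{\I(\CS)}\proj_{\LL}=\rho\proj_{\LL}+\proj_{\LL}(\proj_{\I(\CS)}-\rho\proj_{\I^d})\proj_{\LL}$ and invoking a $\rho$-scaled version of Lemma~\ref{lma:concent-linf} for the $\ell_\infty$-contraction of the centered part, one gets $\|\proj_{\LL}(\CR)\|_\infty\le 2\|\proj_{\LL}(\CE)\|_\infty$ with high probability for $\rho$ small. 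Finally a scalar Bernstein bound applied entrywise to $\proj_{\LL}(\CE)=\sum_{\bi}e_{\bi}\proj_{\LL}(\bigotimes_k\be_{i_k})$, using Lemma~\ref{lma:incoherence} to bound $\bigl\|\proj_{\LL}(\bigotimes_k\be_{j_k})\bigr\|_2\le\sqrt{u_0\sum_k r_k/n_k}$ (and the analogous estimate for the entrywise weights), together with the rank/incoherence conditions of Assumption~\ref{assump:assump}, makes $\|\proj_{\LL}(\CE)\|_\infty$ small. Combining the two pieces and taking an overall union bound yields $\|\CD_2\|_\sigma<\frac14$.

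The main obstacle is the remainder term: one genuinely needs to control a tensor \emph{spectral} norm, and a Frobenius-norm estimate is far too lossy here, since $\CR-\CE$ has Frobenius norm of order $\sqrt{\rho\prod_k n_k}$; moreover the object $\proj_{\I(\CS)}\proj_{\LL}(\CR)$ depends on the random support $\I(\CS)$ through $\CR$ itself. The resolution is to decouple this dependence by passing to the deterministic scalar $\|\proj_{\LL}(\CR)\|_\infty$ and then calling a spectral concentration inequality for the random-sparse operator (Corollary~\ref{cor:concent-inf-Fro}, itself built from the $\epsilon$-net Lemma~\ref{thm:net} and Bernstein). A secondary subtlety is that this step produces a factor $\ln^{d-2}{n_d}$, which the calibrated logarithmic powers in Assumption~\ref{assump:assump} are designed to accommodate, exactly as in the low-rank construction of Proposition~\ref{prop:Dl}; one also needs $\lambda<1$ and $\delta$ small enough that the induced fixed-point inequality for $\|\proj_{\LL}(\CR)\|_\infty$ can be solved.
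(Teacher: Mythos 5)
Your split $\proj_{\LL^\perp}(\CD_2)=\lambda\proj_{\LL^\perp}(\CE)+\lambda\proj_{\LL^\perp}\sum_{k\ge1}(\proj_{\I(\CS)}\proj_{\LL}\proj_{\I(\CS)})^k(\CE)$ and the treatment of the first term via Lemma~\ref{lma:espec} agree with the paper. The gap is in the second term. You correctly identify the obstacle — $\CR$ depends on $\I(\CS)$ — but the proposed ``resolution'' does not remove it. Corollary~\ref{cor:concent-inf-Fro} is a statement about a \emph{given} tensor $\CX$ and an \emph{independent} random support $\I$: the high-probability event in that corollary is over the randomness of $\I$ with $\CX$ held fixed. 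Taking $\CX=\proj_{\LL}(\CR)$, which is itself a (nonlinear) function of $\I(\CS)$ and of the signs in $\CE$, is exactly the situation the corollary does not cover, and $\|\proj_{\LL}(\CR)\|_\infty$ is not ``deterministic''; it is another random variable measurable with respect to the same randomness. The same dependency problem recurs in your attempt to control $\|\proj_{\LL}(\CR)\|_\infty$ by a ``$\rho$-scaled version of Lemma~\ref{lma:concent-linf}'': that lemma likewise requires the argument tensor to be fixed. Without a genuine decoupling device (e.g.\ a uniform-over-$\CX$ version of the concentration inequality, or a symmetrization/chaining argument), the chain of inequalities you propose is not justified.

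The paper avoids this altogether with a different conditioning. It conditions on $\I(\CS)$ — thereby making the operator $\sum_{k\ge1}(\proj_{\I(\CS)}\proj_{\LL}\proj_{\I(\CS)})^k$ deterministic — and then exploits that, \emph{given} the support, the nonzero entries of $\CE$ are i.i.d.\ symmetric $\pm1$. Using self-adjointness to move the operator onto the test tensor $\bigotimes_k\bv_k$, the pairing $\bigl\langle\CE,\ \sum_{k\ge1}(\proj_{\I(\CS)}\proj_{\LL}\proj_{\I(\CS)})^k(\bigotimes_k\bv_k)\bigr\rangle$ becomes a linear form in the signs with deterministic coefficients (conditionally on $\I(\CS)$), whose $\ell_2$-mass is controlled by $\|\sum_{k\ge1}(\proj_{\I(\CS)}\proj_{\LL}\proj_{\I(\CS)})^k\|\le\frac{\delta}{1-\delta}$ on the event of Corollary~\ref{cor:hdp-2}. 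Hoeffding plus a union bound over the $\epsilon$-nets from Lemma~\ref{thm:net} then gives the spectral bound with exponent of order $n_d$, which is small for $\delta$ small. This conditional-on-support / sign-Hoeffding route is the correct decoupling; if you want to retain your structure you need to replace the appeal to Corollary~\ref{cor:concent-inf-Fro} with exactly this kind of argument.
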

\begin{proof}
    We rewrite 
    \begin{equation*}
        \proj_{\LL^\perp}(\CD_2)=\lambda\proj_{\LL^\perp}(\CE)+\lambda\proj_{\LL^\perp}\sum_{k=1}^\infty(\proj_{\I(\CS)}\proj_{\LL}\proj_{\I(\CS)})^k(\CE)
    \end{equation*}
    and bound their spectral norms separately. By Lemma~\ref{thm:spec-subspace} and Lemma~\ref{lma:espec}, 
    $$\bigl\|\lambda\proj_{\LL^\perp}(\CE)\bigr\|_\sigma\le\lambda\|\CE\|_\sigma\le 
    \frac{1}{\sqrt{n_d}}\cdot \frac{\kappa_5}{\sqrt{-\ln\rho}}\sqrt{\sum_{k=1}^dn_k}\le \frac{\kappa_5\sqrt{d}}{\sqrt{-\ln\rho}}\le\frac{1}{8}
    $$
    holds with high probability for a sufficiently small $\rho$. Besides, by Corollary~\ref{cor:hdp-2}, 
    $\|\proj_{\LL}\proj_{\I(\CS)}\|\le\delta$ holds with high probability for any $\delta\in(0,1]$. Therefore, it suffices to show that 
    $$
    \Prob\mleft\{\mleft\|\lambda\proj_{\LL^\perp}\sum_{k=1}^\infty(\proj_{\I(\CS)}\proj_{\LL}\proj_{\I(\CS)})^k(\CE)\mright\|_\sigma<\frac{1}{8}\middle|\|\proj_{\LL}\proj_{\I(\CS)}\|\le\delta\mright\}
    $$
    is a high probability.

    In fact, by Lemma~\ref{thm:spec-subspace} and~\eqref{eq:net} in Lemma~\ref{thm:net},
    \begin{align*}
    \mleft\|\lambda\proj_{\LL^\perp} \sum_{k=1}^\infty(\proj_{\I(\CS)}\proj_{\LL}\proj_{\I(\CS)})^k(\CE)\mright\|_\sigma&\le \lambda\mleft\|\sum_{k=1}^\infty(\proj_{\I(\CS)}\proj_{\LL}\proj_{\I(\CS)})^k(\CE)\mright\|_\sigma\\
    &\le 2\lambda\max_{\bv_k\in\E_k\,\forall\, k\in[d]}\mleft\langle\CE,\sum_{k=1}^\infty(\proj_{\I(\CS)}\proj_{\LL}\proj_{\I(\CS)})^k\mleft(\bigotimes_{k=1}^d \bv_k\mright)\mright\rangle,
    \end{align*}
    where $\E_k$'s are the $\epsilon$-nets used in~\eqref{eq:net}. For any given $\I(\CS)$, the nonzero entries of $\CE$ are i.i.d.\ symmetric Bernoulli random variables (taking $\pm1$ with equal probability), according to Assumption~\ref{assump:assump}. Hence, by Hoeffding's inequality~\cite[Theorem~2.2.2]{vershynin2018high}, we have
        \begin{align*}
        &\Prob\mleft\{\mleft\|\lambda\proj_{\LL^\perp}\sum_{k=1}^\infty(\proj_{\I(\CS)}\proj_{\LL}\proj_{\I(\CS)})^k(\CE)\mright\|_\sigma\ge t\middle|\I(\CS)\mright\}\\
        \le{}&\Prob\mleft\{\max_{\bv_k\in\E_k\,\forall\, k\in[d]}\mleft\langle\CE,\sum_{k=1}^\infty(\proj_{\I(\CS)}\proj_{\LL}\proj_{\I(\CS)})^k\mleft(\bigotimes_{k=1}^d \bv_k\mright)\mright\rangle\ge \frac{t}{2\lambda}\middle|\I(\CS)\mright\}\\
        \le{}&\sum_{\bv_k\in\E_k\,\forall\, k\in[d]}\Prob\mleft\{\mleft\langle\CE,\sum_{k=1}^\infty(\proj_{\I(\CS)}\proj_{\LL}\proj_{\I(\CS)})^k\mleft(\bigotimes_{k=1}^d \bv_k\mright)\mright\rangle\ge \frac{t}{2\lambda}\middle|\I(\CS)\mright\}\\
        \le{} &\sum_{\bv_k\in\E_k\,\forall\,k\in[d]}\exp\mleft(\frac{-t^2}{8\lambda^2\bigl\|\sum_{k=1}^\infty(\proj_{\I(\CS)}\proj_{\LL}\proj_{\I(\CS)})^k(\bigotimes_{k=1}^d \bv_k)\bigr\|_2^2}\mright) \\
        \le{} &\exp\mleft(\frac{-t^2}{8\lambda^2\bigl\|\sum_{k=1}^\infty(\proj_{\I(\CS)}\proj_{\LL}\proj_{\I(\CS)})^k\bigr\|^2}\mright)\prod_{k=1}^d\mleft(1+4d\mright)^{n_k},
    \end{align*}
    where the last inequality is due to $\bigl\|\bigotimes_{k=1}^d \bv_k\bigr\|_2=1$ and $|\E_k|\le(1+4d)^{n_k}$ for $k\in[d]$. 

    
    As $\|\proj_{\LL}\proj_{\I(\CS)}\|\le\delta$ implies that $\bigl\|\sum_{k=1}^\infty(\proj_{\I(\CS)}\proj_{\LL}\proj_{\I(\CS)})^k\bigr\|\le\sum_{k=1}^\infty\delta^{k}=\frac{\delta}{1-\delta}$, by letting $t=\frac{1}{8}$ in the above bound and recalling $\lambda=\frac{1}{\sqrt{n_d}}$, we have 
        \begin{align*}
     \Prob\mleft\{\mleft\|\lambda\proj_{\LL^\perp}\sum_{k=1}^\infty(\proj_{\I(\CS)}\proj_{\LL}\proj_{\I(\CS)})^k(\CE)\mright\|_\sigma\ge \frac{1}{8} \middle|\|\proj_{\LL}\proj_{\I(\CS)}\|\le\delta \mright\}
            &\le \exp\mleft(\frac{-(1-\delta)^2 n_d}{8^3\delta^2}\mright)\prod_{k=1}^d\mleft(1+4d\mright)^{n_k} \\
            &\le  \exp\mleft( n_d\mleft(-\frac{(1-\delta)^2}{8^3\delta^2}+d\ln(1+4d)\mright)\mright),
        \end{align*}
    which is a low probability by choosing a sufficiently small $\delta$ such that 
    $-\frac{(1-\delta)^2}{8^3\delta^2}+d\ln(1+4d)<0$.
\end{proof}

\begin{lemma}\label{lma:least-square2}
Under Assumption~\ref{assump:assump}, $\bigl\|\proj_{\I^\perp(\CS)}(\CD_2)\bigr\|_\infty<\frac{\lambda}{4}$ holds with high probability.
\end{lemma}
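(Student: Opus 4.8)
The plan is to reduce the $\ell_\infty$-bound to a scalar Rademacher concentration statement, conditionally on the support $\I(\CS)$. Write $\CW:=\sum_{k\ge 0}(\proj_{\I(\CS)}\proj_{\LL}\proj_{\I(\CS)})^k(\CE)$, so that $\CD_2=\lambda\proj_{\LL^\perp}(\CW)$. Every summand of $\CW$ is supported on $\I(\CS)$ (the $k=0$ term is $\CE=\sign(\CS)$, and for $k\ge 1$ the outermost operator is $\proj_{\I(\CS)}$), hence $\proj_{\I^\perp(\CS)}(\CW)=\CO$ and therefore
\begin{equation*}
\proj_{\I^\perp(\CS)}(\CD_2)=\lambda\proj_{\I^\perp(\CS)}(\CW)-\lambda\proj_{\I^\perp(\CS)}\proj_{\LL}(\CW)=-\lambda\proj_{\I^\perp(\CS)}\proj_{\LL}(\CW).
\end{equation*}
Since a coordinate projection cannot increase the $\ell_\infty$-norm, it suffices to show that $\|\proj_{\LL}(\CW)\|_\infty<\tfrac14$ holds with high probability, which gives $\bigl\|\proj_{\I^\perp(\CS)}(\CD_2)\bigr\|_\infty\le\lambda\|\proj_{\LL}(\CW)\|_\infty<\tfrac{\lambda}{4}$.

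Next I would fix a constant $\delta\in(0,1)$, say $\delta=\tfrac12$, and work on the event $\bigl\{\|\proj_{\LL}\proj_{\I(\CS)}\|\le\delta\bigr\}$, which holds with high probability by Corollary~\ref{cor:hdp-2} (legitimate since $\rho$ and $\theta_0$ are sufficiently small) and which in particular makes the Neumann series defining $\CW$ and $\CD_2$ convergent, as $\|\proj_{\I(\CS)}\proj_{\LL}\proj_{\I(\CS)}\|=\|\proj_{\LL}\proj_{\I(\CS)}\|^2\le\delta^2<1$. For each fixed $\bj\in\I^d$, using self-adjointness of $\proj_{\I(\CS)}\proj_{\LL}\proj_{\I(\CS)}$,
\begin{equation*}
\bigl(\proj_{\LL}(\CW)\bigr)_{\bj}=\mleft\langle\proj_{\LL}\mleft(\bigotimes_{k=1}^d\be_{j_k}\mright),\CW\mright\rangle=\mleft\langle\sum_{k\ge 0}(\proj_{\I(\CS)}\proj_{\LL}\proj_{\I(\CS)})^k\proj_{\LL}\mleft(\bigotimes_{k=1}^d\be_{j_k}\mright),\CE\mright\rangle=:\langle\mathbf{C}_{\bj},\CE\rangle,
\end{equation*}
where the tensor $\mathbf{C}_{\bj}$ depends only on $\I(\CS)$, not on the signs of the entries of $\CS$. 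On the conditioning event, Lemma~\ref{lma:incoherence} gives
\begin{equation*}
\|\mathbf{C}_{\bj}\|_2\le\sum_{k\ge 0}\delta^{2k}\mleft\|\proj_{\LL}\mleft(\bigotimes_{k=1}^d\be_{j_k}\mright)\mright\|_2\le\frac{1}{1-\delta^2}\sqrt{u_0\sum_{k=1}^d\frac{r_k}{n_k}}.
\end{equation*}
Conditional on $\I(\CS)$, the nonzero entries of $\CE$ are i.i.d.\ Rademacher variables, independent of which entries are nonzero, so $\langle\mathbf{C}_{\bj},\CE\rangle$ is a Rademacher sum and Hoeffding's inequality~\cite[Theorem~2.2.2]{vershynin2018high} yields $\Prob\bigl\{|(\proj_{\LL}(\CW))_{\bj}|\ge t\bigr\}\le 2\exp\bigl(-t^2/(2\|\mathbf{C}_{\bj}\|_2^2)\bigr)$.

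Finally I would take $t=\tfrac14$ and a union bound over the $\prod_{k=1}^d n_k$ indices $\bj$, so that $\|\proj_{\LL}(\CW)\|_\infty<\tfrac14$ fails with probability at most $2\bigl(\prod_{k=1}^d n_k\bigr)\exp\bigl(-(1-\delta^2)^2/(32\,u_0\sum_k r_k/n_k)\bigr)$. By Assumption~\ref{assump:assump}, $r_k\le r_0\le\theta_0(1-\rho)n_1/(u_0\ln^2 n_d)$, hence $u_0\sum_k r_k/n_k\le d\theta_0(1-\rho)/\ln^2 n_d$ and the exponent is of order $-\ln^2 n_d$, which overwhelms $\ln\bigl(\prod_k n_k\bigr)=O(\ln n_d)$; thus $\|\proj_{\LL}(\CW)\|_\infty<\tfrac14$ with high probability on the conditioning event, and removing the conditioning costs only $\Prob\{\|\proj_{\LL}\proj_{\I(\CS)}\|>\delta\}\to 0$. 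The main obstacle — and the reason the cruder estimates fail — is the intricate dependence of $\CW$ on $\I(\CS)$: one cannot bound $\|\proj_{\LL}(\CW)\|_\infty$ by $\|\proj_{\LL}(\CW)\|_2$, which is of order $\sqrt{|\I(\CS)|}\approx\sqrt{\rho\prod_k n_k}$ and far too large, nor invoke the fixed-tensor Bernstein estimate of Lemma~\ref{lma:concent-linf}. The resolution is precisely the observation that, once the support $\I(\CS)$ is frozen, $\proj_{\LL}(\CW)$ is a \emph{linear} function of the independent random signs $\CE$, so the entrywise Hoeffding bound applies with a variance proxy tamed by the Neumann-series operator norm $\tfrac{1}{1-\delta^2}$ and the incoherence bound of Lemma~\ref{lma:incoherence}.
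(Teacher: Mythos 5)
Your proof is correct and follows essentially the same approach as the paper: both reduce the $\ell_\infty$-bound to an entrywise estimate, condition on the support $\I(\CS)$ so that each entry becomes a Rademacher linear form $\langle \mathbf{C}_{\bj},\CE\rangle$, bound $\|\mathbf{C}_{\bj}\|_2$ via the Neumann-series operator norm and the incoherence bound of Lemma~\ref{lma:incoherence}, and finish with Hoeffding plus a union bound over entries. The only cosmetic difference is that the paper restricts to $\bj\in\I^\perp(\CS)$ and keeps an extra $\proj_{\I(\CS)}\proj_{\LL}$ factor, yielding a slightly tighter variance proxy (extra factor $\delta^2$), whereas you bound the larger quantity $\|\proj_{\LL}(\CW)\|_\infty$ without it — both variants give the same order $\ln^2 n_d$ in the exponent, which dominates $\ln\prod_k n_k$, so the high-probability conclusion is unaffected.
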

\begin{proof}
To begin with, we observe that
    \begin{align*}
        \bigl\|\proj_{\I^\perp(\CS)}(\CD_2)\bigr\|_\infty&=\mleft\|\lambda\proj_{\I^\perp(\CS)}(\proj_{\I^d}-\proj_{\LL})\sum_{k=0}^\infty(\proj_{\I(\CS)}\proj_{\LL}\proj_{\I(\CS)})^k(\CE)\mright\|_\infty\\
        &=\mleft\|-\lambda\proj_{\I^\perp(\CS)}\proj_{\LL}\proj_{\I(\CS)}\sum_{k=0}^\infty(\proj_{\I(\CS)}\proj_{\LL}\proj_{\I(\CS)})^k(\CE)\mright\|_\infty     \\
        &=
\lambda\max_{\bi\in\I^d}\,\mleft|\mleft\langle\CE,\mleft(\sum_{k=0}^\infty(\proj_{\I(\CS)}\proj_{\LL}\proj_{\I(\CS)})^k\mright)\proj_{\I(\CS)}\proj_{\LL}\proj_{\I^\perp(\CS)}\mleft(\bigotimes_{k=1}^d\be_{i_k}\mright)\mright\rangle\mright|\\
&=
\lambda\max_{\bi\in\I^\perp(\CS)}\,\mleft|\mleft\langle\CE,\mleft(\sum_{k=0}^\infty(\proj_{\I(\CS)}\proj_{\LL}\proj_{\I(\CS)})^k\mright)\proj_{\I(\CS)}\proj_{\LL}\mleft(\bigotimes_{k=1}^d\be_{i_k}\mright)\mright\rangle\mright|.
    \end{align*}

By Corollary~\ref{cor:hdp-2}, 
$\|\proj_{\LL}\proj_{\I(\CS)}\|\le\delta$ holds with high probability for any $\delta\in(0,1]$, which further implies that
$\bigl\|\sum_{k=0}^\infty(\proj_{\I(\CS)}\proj_{\LL}\proj_{\I(\CS)})^k\bigr\|\le\sum_{k=0}^\infty\delta^{k}=\frac{1}{1-\delta}$. As a result, we have for any $\bi\in\I^\perp(\CS)$ that
    \begin{equation*}
\mleft\|\mleft(\sum_{k=0}^\infty(\proj_{\I(\CS)}\proj_{\LL}\proj_{\I(\CS)})^k\mright)\proj_{\I(\CS)}\proj_{\LL}\mleft(\bigotimes_{k=1}^d\be_{i_k}\mright)\mright\|_2^2\le\frac{\|\proj_{\I(\CS)}\proj_{\LL}\|^2}{(1-\delta)^2}\mleft\|\proj_{\LL}\mleft(\bigotimes_{k=1}^d\be_{i_k}\mright)\mright\|_2^2\le\frac{\delta^2u_0\sum_{k=1}^d\frac{r_k}{n_k}}{(1-\delta)^2},
    \end{equation*}
where the last inequality is due to Lemma~\ref{lma:incoherence}.
    
For any given $\I(\CS)$, the nonzero entries of $\CE$ are i.i.d.\ symmetric Bernoulli random variables. By the two-sided version of Hoeffding’s inequality~\cite[Theorem~2.2.5]{vershynin2018high},
        \begin{align*}
            &\Prob\mleft\{\mleft\|\proj_{\I^\perp(\CS)}(\CD_2)\mright\|_\infty\ge \frac{\lambda}{4}\middle|\I(\CS)\mright\} \\
            ={}&\Prob\mleft\{\max_{\bi\in\I^\perp(\CS)}\mleft|\mleft\langle\CE,\mleft(\sum_{k=0}^\infty(\proj_{\I(\CS)}\proj_{\LL}\proj_{\I(\CS)})^k\mright)\proj_{\I(\CS)}\proj_{\LL}\mleft(\bigotimes_{k=1}^d\be_{i_k}\mright)\mright\rangle\mright| \ge \frac{1}{4}\middle|\I(\CS)\mright\}       \\
            \le{}&\sum_{\bi\in\I^\perp(\CS)}\Prob\mleft\{\mleft|\mleft\langle\CE,\mleft(\sum_{k=0}^\infty(\proj_{\I(\CS)}\proj_{\LL}\proj_{\I(\CS)})^k\mright)\proj_{\I(\CS)}\proj_{\LL}\mleft(\bigotimes_{k=1}^d\be_{i_k}\mright)\mright\rangle\mright| \ge \frac{1}{4}\middle|\I(\CS)\mright\}       \\
        \le {}&\,2\sum_{\bi\in\I^\perp(\CS)}\exp\mleft(-\mleft(32\,\mleft\|\mleft(\sum_{k=0}^\infty(\proj_{\I(\CS)}\proj_{\LL}\proj_{\I(\CS)})^k\mright)\proj_{\I(\CS)}\proj_{\LL}\mleft(\bigotimes_{k=1}^d\be_{i_k}\mright)\mright\|_2^2\mright)^{-1}\mright)\\
        \le {}&\,2\exp\mleft(-\frac{(1-\delta)^2}{32\delta^2u_0\sum_{k=1}^d\frac{r_k}{n_k}}\mright)\prod_{k=1}^dn_k \\
        \le {}&\,2\exp\mleft(-\frac{(1-\delta)^2\ln^2n_d}{32\delta^2\theta_0d(1-\rho)}+d\ln n_d\mright), 
    \end{align*}
    where the last inequality follows from Assumption~\ref{assump:assump}. This is clearly a low probability by choosing a sufficiently small $\delta$.
\end{proof}

\subsection{Putting everything together}\label{sec:putting-everything-together}

We arrive at the following result by combining the results of previous two subsections with an overall union bound. 


\begin{proposition}\label{prop:existence-dual-certificate}
Under Assumption~\ref{assump:assump}, there exists a $\CD\in\R^{n_1\times n_2\times \dots\times n_d}$ such that
$$
\min_{\CZ\in\Z(\CL)}\bigl\|\proj_{\LL}(\CD)-\CZ\bigr\|_2\le \frac{\lambda}{8},~\bigl\|\proj_{\LL^\perp}(\CD)\bigr\|_\sigma< \frac{1}{2},~\bigl\|\proj_{\I(\CS)}(\CD)-\lambda\CE\bigr\|_2\le\frac{\lambda}{8},\text{ and }\bigl\|\proj_{\I^\perp(\CS)}(\CD)\bigr\|_\infty<\frac{\lambda}{2}
$$
hold with high probability.
\end{proposition}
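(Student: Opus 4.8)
The plan is to assemble the dual certificate from the two pieces already constructed, setting $\CD := \CD_1 + \CD_2$, where $\CD_1$ is the low-rank part produced by the golfing scheme in Proposition~\ref{prop:Dl} and $\CD_2$ is the sparse part produced by the least squares method in Proposition~\ref{prop:Ds}. Each of the four requirements is then verified by splitting the relevant projection of $\CD$ into its $\CD_1$- and $\CD_2$-contributions and invoking the two propositions together with the triangle (and subadditivity) inequalities; the ``support'' equalities of the constructions will make two of the four bounds hold with slack.

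Concretely, first I would use $\proj_\LL(\CD_2)=\CO$ from Proposition~\ref{prop:Ds} to obtain $\proj_\LL(\CD)=\proj_\LL(\CD_1)$, so that $\min_{\CZ\in\Z(\CL)}\bigl\|\proj_\LL(\CD)-\CZ\bigr\|_2=\min_{\CZ\in\Z(\CL)}\bigl\|\proj_\LL(\CD_1)-\CZ\bigr\|_2\le\frac{\lambda}{8}$ directly by Proposition~\ref{prop:Dl}. Symmetrically, $\proj_{\I(\CS)}(\CD_1)=\CO$ from Proposition~\ref{prop:Dl} and $\proj_{\I(\CS)}(\CD_2)=\lambda\CE$ from Proposition~\ref{prop:Ds} give $\proj_{\I(\CS)}(\CD)=\lambda\CE$, so the third condition holds with its left-hand side equal to $0\le\frac{\lambda}{8}$. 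For the spectral condition, subadditivity of $\|\bullet\|_\sigma$ yields $\bigl\|\proj_{\LL^\perp}(\CD)\bigr\|_\sigma\le\bigl\|\proj_{\LL^\perp}(\CD_1)\bigr\|_\sigma+\bigl\|\proj_{\LL^\perp}(\CD_2)\bigr\|_\sigma<\frac14+\frac14=\frac12$, using the bound $\le\frac14$ from Proposition~\ref{prop:Dl} and the strict bound $<\frac14$ from Proposition~\ref{prop:Ds} (i.e.\ Lemma~\ref{lma:least-square1}). For the $\ell_\infty$ condition, I would bound $\bigl\|\proj_{\I^\perp(\CS)}(\CD)\bigr\|_\infty\le\bigl\|\proj_{\I^\perp(\CS)}(\CD_1)\bigr\|_\infty+\bigl\|\proj_{\I^\perp(\CS)}(\CD_2)\bigr\|_\infty\le\|\CD_1\|_\infty+\bigl\|\proj_{\I^\perp(\CS)}(\CD_2)\bigr\|_\infty<\frac{\lambda}{4}+\frac{\lambda}{4}=\frac{\lambda}{2}$, where the first term is controlled by Proposition~\ref{prop:Dl} (a coordinate projection does not increase $\|\bullet\|_\infty$) and the second by Proposition~\ref{prop:Ds} (i.e.\ Lemma~\ref{lma:least-square2}).

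The only delicate point, rather than any genuine obstacle, is the probabilistic bookkeeping: Proposition~\ref{prop:Dl} and Proposition~\ref{prop:Ds} each assert their conclusions only on high-probability events, and the construction of $\CD_2$ is moreover carried out conditionally on the high-probability event $\|\proj_\LL\proj_{\I(\CS)}\|\le\delta$ supplied by Corollary~\ref{cor:hdp-2}. The final step is therefore to intersect all of these events via an overall union bound and note that the intersection is still a high-probability event, after checking that the various small/large constants ($\theta_0$, $\delta$, and the implicit constants $\kappa_i$ of the golfing scheme) can be fixed once and for all so that every separate estimate holds simultaneously. This is possible because each construction only demands that its constant be chosen sufficiently small or sufficiently large, independently of the others, so the requirements are mutually compatible. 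Combined with Corollary~\ref{cor:hdp-2} and Lemma~\ref{lma:relax}, this yields Theorem~\ref{thm:main-thm}.
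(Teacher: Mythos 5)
Your proposal is correct and follows exactly the same route as the paper: set $\CD=\CD_1+\CD_2$, use $\proj_{\LL}(\CD_2)=\CO$ and $\proj_{\I(\CS)}(\CD_1)=\CO$ to handle the first and third conditions as equalities, the triangle inequality for the second and fourth conditions, and a final union bound over the high-probability events from Propositions~\ref{prop:Dl} and~\ref{prop:Ds}. The extra remarks on compatibility of constants are harmless amplification of what the paper leaves implicit.
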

\begin{proof}
By Proposition~\ref{prop:Dl} and Proposition~\ref{prop:Ds},
\begin{align*}
       \min_{\CZ\in\Z(\CL)}\bigl\|\proj_{\LL}(\CD_1+\CD_2)-\CZ\bigr\|_2&=\min_{\CZ\in\Z(\CL)}\bigl\|\proj_{\LL}(\CD_1)-\CZ\bigr\|_2\le\frac{\lambda}{8},\\
        \bigl\|\proj_{\LL^\perp}(\CD_1+\CD_2)\bigr\|_\sigma&\le\bigl\|\proj_{\LL^\perp}(\CD_1)\bigr\|_\sigma+\bigl\|\proj_{\LL^\perp}(\CD_2)\bigr\|_\sigma < \frac{1}{2},\\
        \bigl\|\proj_{\I(\CS)}(\CD_1+\CD_2)-\lambda\CE\bigr\|_2&=\bigl\|\proj_{\I(\CS)}(\CD_2)-\lambda\CE\bigr\|_2=0\le\frac{\lambda}{8},\\
        \bigl\|\proj_{\I^\perp(\CS)}(\CD_1+\CD_2)\bigr\|_\infty&\le\|\CD_1\|_\infty+\bigl\|\proj_{\I^\perp(\CS)}(\CD_2)\bigr\|_\infty <\frac{\lambda}{2}
\end{align*}
hold with high probability. The proof is completed by letting $\CD=\CD_1+\CD_2$.
\end{proof}

{\color{black}
We are now ready to prove Theorem~\ref{thm:main-thm}.
\begin{myproof}{Theorem~\ref{thm:main-thm}}
    By letting $\delta$ to be, e.g., $\frac{1}{3}$ in Corollary~\ref{cor:hdp-2}, we have $\|\proj_{\LL}\proj_{\I(\CS)}\|<\frac{1}{2}$ with high probability. Besides, $\lambda=\frac{1}{\sqrt{n_d}}\le \frac{1}{\sqrt{2}}<1$ since $n_d\ge 2$ is assumed. Moreover, Proposition~\ref{prop:existence-dual-certificate} implies that there exists a $\CD\in\R^{n_1\times n_2\times \dots\times n_d}$ satisfying (\ref{eq:existence-dual-certificate}) with high probability.
    By combining the above facts with Lemma~\ref{lma:relax}, it follows that $(\CL,\CS)$ is the unique optimal solution of~\eqref{eq:PCP} with high probability, i.e.,~\eqref{eq:PCP} exactly recovers $\CL$ and $\CS$ with high probability.
\end{myproof}
}


\section{Concluding remarks}\label{sec:conclusion}

In this paper, we systematically study the decomposability and the subdifferential of the tensor nuclear norm. We show that the tensor nuclear norm is decomposable over a pair of subspaces that have at least two disjoint modes, naturally generalizing the result for the matrix case. The same property holds for the tensor spectral norm as well. Based on the decomposability, we propose novel inclusions of the subdifferential of the tensor nuclear norm. They strictly enlarge the inclusion proposed in~\cite[Theorem~1]{yuan2017incoherent}, the only known subdifferential inclusion for tensors of an arbitrary order. While various bounds for the subdifferential of the tensor nuclear norm and several interesting examples suggest that there is no general way to explicitly characterize the subdifferential of the tensor nuclear norm in general as in the matrix case, we examine subgradients of the tensor nuclear norm in all relevant subspaces and estimate their inner and outer approximations. 

We believe that these new results on the tensor nuclear norm have great potential in applications. For instance, the new inclusions of the subdifferential of the tensor nuclear norm can potentially be applied to analyze the statistical performance of a variety of nuclear-norm-based tensor learning problems. As a precursor, we study one immediate application, the tensor robust PCA. As shown in Theorem~\ref{thm:main-thm}, the exact recovery applies to tensors of an arbitrary order for the first time in the literature, generalizing the result in the matrix case when $d=2$. 
In light of our analysis, we propose a natural conjecture concerning the conditions for exact recovery of the tensor robust PCA.
\begin{conjecture}\label{conj:remove}
    The factor $\ln^{\max\{2d-5,0\}}{n_d}$ in Assumption~\ref{assump:assump} can be removed in Theorem~\ref{thm:main-thm}.
\end{conjecture}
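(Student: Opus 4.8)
\emph{Overview and localization of the logarithmic loss.} The plan is to show that the factor $\ln^{\max\{2d-5,0\}}{n_d}$ in Assumption~\ref{assump:assump} enters the proof of Theorem~\ref{thm:main-thm} through exactly one place --- the concentration bound for sparsified random tensors used in the construction of the low-rank part of the dual certificate --- and then to remove it by sharpening that bound. First I would verify that the incoherence requirement on $\min_{\CZ\in\Z(\CL)}\|\CZ\|_\infty$ is used only for $\CD_1$, i.e., in Lemma~\ref{lma:golfing1} and Proposition~\ref{prop:Dl}: the sparse part (Proposition~\ref{prop:Ds} and Lemmas~\ref{lma:least-square1},~\ref{lma:least-square2}) and the concentration of $\|\proj_{\LL}\proj_{\I(\CS)}\|$ in Section~\ref{sec:POPMless} invoke only the parameters $u_k$ and the rank bound $r_0\le\theta_0\frac{(1-\rho)n_1}{u_0\ln^2{n_d}}$ via Lemma~\ref{lma:incoherence}. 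Inside Lemma~\ref{lma:golfing1}, the factor $\ln^{d-2}{n_d}$ in $\bigl\|\proj_{\LL^\perp}(\CZ_m)\bigr\|_\sigma\le 2\kappa_1\|\CZ\|_\infty\sqrt{n_d/(1-\varphi)}\ln^{d-2}{n_d}$ originates solely from Corollary~\ref{cor:concent-inf-Fro}, which restates~\eqref{eq:zhou2021sparse} of~\cite{zhou2021sparse}. Combining with $1-\varphi\ge(1-\rho)/m$ and $m=\kappa_4\ln{n_d}$, the golfing scheme needs $\|\CZ\|_\infty^2$ of order at most $(n_d\ln^{2d-3}{n_d})^{-1}$, whereas Assumption~\ref{assump:assump} (together with the rank bound) supplies $\|\CZ\|_\infty^2\le\theta_0(1-\rho)(n_d\ln^{\max\{2d-3,2\}}{n_d})^{-1}$; the bookkeeping shows that the $\ln^{\max\{2d-5,0\}}{n_d}$ in the incoherence condition is precisely the cost of the $\ln^{d-2}{n_d}$ in Corollary~\ref{cor:concent-inf-Fro}, the extra single power of $\ln{n_d}$ being the $\sqrt{m}$ from the unavoidable $\Theta(\ln{n_d})$ golfing iterations, which is harmlessly absorbed by the $\ln^2{n_d}$ slack already present in the rank condition.

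\emph{The log-free spectral concentration.} The heart of the argument would be to upgrade Corollary~\ref{cor:concent-inf-Fro} to
$$
\Prob\mleft\{\bigl\|(\proj_{\I^d}-q^{-1}\proj_{\I})(\CX)\bigr\|_\sigma\le \kappa\|\CX\|_\infty\sqrt{q^{-1}n_d}\mright\}\ge 1-n_d^{-\kappa'}
$$
for $q\ge\theta_1\ln{n_d}/n_d$. Writing $\CW=(\proj_{\I^d}-q^{-1}\proj_{\I})(\CX)$, its entries are independent, mean zero, bounded in absolute value by $\|\CX\|_\infty q^{-1}$, and of variance at most $\|\CX\|_\infty^2 q^{-1}$, so the claim is a spectral-norm bound for a random tensor with independent heterogeneous bounded entries matching the variance proxy $\sqrt{q^{-1}n_d}$ without logarithmic overhead. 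One natural route is a moment method of the type already used for $\|\CE\|_\sigma$ in Lemma~\ref{lma:espec}, adapted so that the recursive mode-by-mode flattening does not accrue a factor of $\ln{n_d}$ per mode. A second route is to exploit that the tensors actually fed to the golfing scheme, $\proj_{\LL}(\CZ_{j-1})-\CZ$, stay $\ell_\infty$-small and incoherent, so the block-concentrated worst cases responsible for the $\ln^{d-2}{n_d}$ factor in~\cite{zhou2021sparse} are ruled out. A third is to put a sharper tensor concentration inequality of Bandeira--van Handel / free-probability flavor into the required form.

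\emph{Re-running the golfing scheme.} Granting the improved estimate, I would reprove Lemma~\ref{lma:golfing1} verbatim to get $\bigl\|\proj_{\LL^\perp}(\CZ_m)\bigr\|_\sigma\le 2\kappa\|\CZ\|_\infty\sqrt{n_d/(1-\varphi)}\le 2\kappa\|\CZ\|_\infty\sqrt{\kappa_4 n_d\ln{n_d}/(1-\rho)}$, and then redo Proposition~\ref{prop:Dl}: starting from the relaxed incoherence $\|\CZ\|_\infty\le\sqrt{u_0r_0/(n_1n_d)}$ combined with the unchanged rank bound, one has $\|\CZ\|_\infty^2\le\theta_0(1-\rho)(n_d\ln^2{n_d})^{-1}$, whence $\bigl\|\proj_{\LL^\perp}(\CD_1)\bigr\|_\sigma\le 2\kappa\sqrt{\kappa_4\theta_0}/\sqrt{\ln n_d}\le\tfrac14$ and $\|\CD_1\|_\infty\le\tfrac{2m}{1-\rho}\|\CZ\|_\infty$ is a constant multiple of $\sqrt{\theta_0}$ times $\lambda$, hence $\le\tfrac{\lambda}{4}$ for $\theta_0$ sufficiently small, exactly as in the current proof. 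Everything else in Section~\ref{sec:TRPCA} is untouched, so Lemma~\ref{lma:relax} still applies and Theorem~\ref{thm:main-thm} follows under the relaxed Assumption~\ref{assump:assump}.

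\emph{Main obstacle.} The crux is the second step: a genuinely log-free spectral-norm concentration inequality for sparse (equivalently, heterogeneous bounded) random tensors of order $d\ge3$. For $d=2$ this is classical --- which is exactly why $\max\{2d-5,0\}=0$ and there is nothing to remove, consistent with the matrix case~\cite{candes2011robust} --- but for higher order the best currently available bound~\eqref{eq:zhou2021sparse} carries the $\ln^{d-2}{n_d}$ factor, and it is unclear whether it is an artifact of the net/flattening proof or whether the incoherence of the iterates is essential to kill it. Resolving this, either by a sharper general inequality or by a structure-aware argument tailored to the golfing iterates, is the only missing ingredient, and is why we leave the statement as a conjecture.
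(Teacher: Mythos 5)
Your reduction matches exactly what the paper itself observes right after stating the conjecture: ``the conjecture holds true if the factor $\ln^{d-2}{n_d}$ in Corollary~\ref{cor:concent-inf-Fro} can be removed,'' which has been conjectured in~\cite[Section~11]{zhou2021sparse}. You have spelled out the bookkeeping in more detail than the paper does, and the arithmetic is right: with the $\ln^{d-2}{n_d}$ gone from Corollary~\ref{cor:concent-inf-Fro} and the $\ln^{\max\{2d-5,0\}}{n_d}$ gone from the incoherence condition, the combination of the rank bound with $m=\kappa_4\ln n_d$ and $1-\varphi\ge(1-\rho)/m$ still yields $\bigl\|\proj_{\LL^\perp}(\CD_1)\bigr\|_\sigma\lesssim\sqrt{\theta_0/\ln n_d}$ and $\|\CD_1\|_\infty\lesssim\sqrt{\theta_0}\,\lambda$, both controllable by $\theta_0$, while nothing else in Section~\ref{sec:TRPCA} touches the $\ell_\infty$-incoherence of $\CZ$. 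You also correctly note that for $d=2$ the exponent is already zero, consistent with~\cite{candes2011robust}.

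Since this is stated as a conjecture precisely because the log-free spectral concentration for sparse order-$d$ tensors is open, your ``proof'' is necessarily a conditional reduction, not a resolution. That is the honest state of affairs and you flag it clearly. The three routes you sketch toward the missing inequality (sharper moment method, structure-aware incoherence of the golfing iterates, Bandeira--van Handel-type bounds) are reasonable directions but are not developed, and the paper does not attempt them either. In short: correct, same approach, more detail on the arithmetic, same open obstacle.
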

In fact, the conjecture holds true if the factor $\ln^{d-2}{n_d}$ in Corollary~\ref{cor:concent-inf-Fro} can be removed. This has already been conjectured similarly in~\cite[Section~11]{zhou2021sparse}. If Conjecture~\ref{conj:remove} is true, then the conditions for exact recovery of the tensor robust PCA of every order, including the matrix case, would become identical. 

While our study indicates that there is no general way to explicitly characterize the
subdifferential of the tensor nuclear norm, it is definitely interesting to look into specific classes of tensors, in particular those from practical applications, such that a complete characterization is possible. Moreover, despite the remarkable statistical performance of the tensor robust PCA, the underlying optimization problem~\eqref{eq:PCP} is computationally intractable due to the NP-hardness to compute the tensor nuclear norm. Developing tractable tight approximations of the tensor robust PCA is important. For example, one may resort to the sum-of-squares relaxation; see, e.g.,~\cite{barak2016noisy,barak2022noisy} for its application to tensor completion and a related work~\cite{xia2019polynomial}.

Apart from practical applications, our developments may facilitate the study of a variety of other more in-depth properties and implications of the tensor nuclear norm.   
For example, they may help to deduce the $C^2$-cone reducibility~\cite[Definition~3.135]{bonnans2013perturbation}, subdifferential metric subregularity~\cite[Section~3.H]{dontchev2009implicit}, and twice epi-differentiability~\cite[Definition~13.6(b)]{rockafellar2009variational} of the tensor nuclear norm;
see, e.g.,~\cite[Proposition~3.2]{cui2017quadratic},~\cite[Proposition~11]{zhou2017unified} and~\cite[Proposition~3.8]{cui2017quadratic}, and~\cite[Corollary~3.6]{he2024twice} and~\cite{mohammadi2025parabolic}, respectively for their matrix counterparts. 
Besides, the developments can also be useful to understand the neural collapse~\cite{papyan2020prevalence} in training tensor-based neural networks and the relations between various nuclear-norm-regularized tensor optimization problems and their Burer-Monteiro reparameterizations~\cite{burer2003nonlinear,burer2005local}; see, e.g.,~\cite[Appendix~C]{zhu2021geometric} and~\cite{li2019non,boumal2024,mcrae2024low,ouyang2025burer}, respectively for their matrix counterparts.

\section*{Acknowledgments}

This work was partially supported by the National Natural Science Foundation of China [Grants 72394364,
72394360, and 72171141].
The authors would like to thank the two anonymous reviewers for their insightful comments that have helped to improve this paper from its original version. In particular, the authors are indebted to the reviewer who has brought the references~\cite{sturmfels2016tensors,robeva2017singular,hashemi2018spectral,vannieuwenhoven2012new,tropp2015introduction,li2003norm} to their attention and suggested the last part of Lemma~\ref{prop:nuclear-U-equiv}.
The first author would like to express his sincere gratitude to Professor Simai He for his generous subsidy support when he was studying at School of Information Management and Engineering, Shanghai University of Finance and Economics, where part of the work in this paper was carried out. The first author would also like to thank his current advisor, Professor Anthony Man-Cho So, for the continued encouragement and support in pursuing this project.


\appendix

\section{Computing tensor spectral norms}

\begin{lemma}\label{thm:E1}
$\bigl\|\CZ(t)\bigr\|_\sigma=1$ if and only if $-1\le t\le 1$, where
$$
\CZ(t)=\sum_{i=1}^3\be_i\otimes\be_i\otimes\be_i+t\,\be_1\otimes\be_2\otimes\be_3\in\R^{3\times3\times 3}.
$$
\end{lemma}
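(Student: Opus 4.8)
The plan is to separate the two implications and, before either, record the cheap lower bound $\|\CZ(t)\|_\sigma\ge\langle\CZ(t),\be_1\otimes\be_1\otimes\be_1\rangle=1$, which holds for every $t\in\R$. With this in hand, proving the lemma amounts to showing (a) $|t|>1$ forces $\|\CZ(t)\|_\sigma>1$ and (b) $|t|\le 1$ forces $\|\CZ(t)\|_\sigma\le 1$.

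For (a) I would just test against a rank-one tensor: if $t>1$ then $\CZ(t)(\be_1,\be_2,\be_3)=t>1$, and if $t<-1$ then $\CZ(t)(-\be_1,\be_2,\be_3)=-t>1$; in both cases $\|\CZ(t)\|_\sigma>1$, so $\|\CZ(t)\|_\sigma\ne 1$. This disposes of the ``only if'' direction.

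For (b), which combined with the lower bound yields $\|\CZ(t)\|_\sigma=1$, I would use the partial-contraction form of the spectral norm from~\eqref{eq:specextend}, namely $\|\CZ(t)\|_\sigma=\max\bigl\{\|\CZ(t)(\bullet,\by,\bz)\|_2:\|\by\|_2=\|\bz\|_2=1\bigr\}$. A direct evaluation gives $\CZ(t)(\bullet,\by,\bz)=(y_1z_1+t\,y_2z_3,\ y_2z_2,\ y_3z_3)^{\T}$. Using $|t|\le 1$ we get $|y_1z_1+t\,y_2z_3|\le |y_1||z_1|+|y_2||z_3|$, and then Cauchy--Schwarz gives $|y_1||z_1|+|y_2||z_3|\le\sqrt{y_1^2+y_2^2}\,\sqrt{z_1^2+z_3^2}$, so that
\[
\|\CZ(t)(\bullet,\by,\bz)\|_2^2\le (y_1^2+y_2^2)(z_1^2+z_3^2)+y_2^2z_2^2+y_3^2z_3^2.
\]
Finally I would expand the right-hand side and compare it with $(y_1^2+y_2^2+y_3^2)(z_1^2+z_2^2+z_3^2)=\|\by\|_2^2\|\bz\|_2^2=1$; the difference of the latter minus the former is exactly $y_1^2z_2^2+y_3^2z_1^2+y_3^2z_2^2\ge 0$, so the bound is at most $1$. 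Taking the maximum over unit $\by,\bz$ gives $\|\CZ(t)\|_\sigma\le 1$.

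There is no genuine obstacle here; the one idea that makes it painless is contracting out the first mode so that the three-variable spherical optimization collapses to a two-variable quadratic inequality that telescopes. The only places needing mild care are the sign bookkeeping in $|y_1z_1+t\,y_2z_3|\le|y_1||z_1|+|y_2||z_3|$ (which uses precisely $|t|\le 1$) and verifying that the leftover terms in the final expansion are nonnegative.
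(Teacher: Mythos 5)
Your proof is correct, and it takes a genuinely different route from the paper's. The paper establishes the upper bound $\|\CZ(t)\|_\sigma\le 1$ for $|t|\le 1$ by writing down the first-order optimality (Lagrange/KKT) conditions for the spherical maximization problem on the oblique manifold and then arguing case by case on the multiplier $\lambda$; this is a generalizable technique but requires some bookkeeping with the stationarity system. You instead contract out the first mode via~\eqref{eq:specextend} to get $\|\CZ(t)\|_\sigma=\max_{\by,\bz\in\SI^3}\|(y_1z_1+ty_2z_3,\ y_2z_2,\ y_3z_3)^{\T}\|_2$, and then a single triangle/Cauchy--Schwarz chain collapses the problem to checking that $\|\by\|_2^2\|\bz\|_2^2$ dominates your bound, with the excess terms $y_1^2z_2^2+y_3^2z_1^2+y_3^2z_2^2\ge 0$ accounting for the gap. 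I verified both the contracted vector and the final expansion, and they are exactly as you state. Your argument is more elementary and self-contained --- it does not invoke manifold optimality conditions --- at the small cost of being a hand-tailored computation that would need rethinking for tensors without the special sparsity pattern that makes the telescoping work. The lower bound and the $|t|>1$ direction match the paper's verbatim.
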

\begin{proof}
    By the definition of the spectral norm, $\bigl\|\CZ(t)\bigr\|_\sigma\ge\bigl\langle\CZ(t),\be_1\otimes\be_1\otimes\be_1\bigr\rangle=1$ for any $t\in\R$ and $\bigl\|\CZ(t)\bigr\|_\sigma\ge\bigl\langle\CZ(t),\sign(t)\,\be_1\otimes\be_2\otimes\be_3\bigr\rangle=|t|>1$ for any $|t|>1$. It suffices to show that $\bigl\|\CZ(t)\bigr\|_\sigma\le 1$ for any $-1\le t\le 1$. 
    
    To this end, consider the first-order optimality condition of the following problem
    \begin{equation}\label{eq:problem-A.1}
        \bigl\|\CZ(t)\bigr\|_\sigma=\max\mleft\{\sum_{i=1}^3 x_i y_i z_i+t x_1 y_2 z_3:\bx,\by,\bz\in\SI^3\mright\}.
    \end{equation}
    As an optimization problem of a smooth function over the oblique manifold, it follows from~\cite[Proposition~4.5]{boumal2023introduction},~\cite[Proposition~4.6]{boumal2023introduction},~\cite[Exercise~3.67]{boumal2023introduction}, and~\cite[(7.10)]{boumal2023introduction} that any local maximizer $(\bu,\bv,\bw)$ satisfies
    $$
        \begin{dcases}
            v_1 w_1 + t v_2 w_3 =\lambda u_1 \\
            v_2 w_2 = \lambda u_2 \\
            v_3 w_3 = \lambda u_3,
        \end{dcases}~
        \begin{dcases}
            u_1 w_1 = \lambda v_1 \\
            u_2 w_2 + t u_1 w_3 = \lambda v_2 \\
            u_3 w_3 = \lambda v_3,
        \end{dcases}\text{ and }
        \begin{dcases}
            u_1 v_1 = \lambda w_1 \\
            u_2 v_2 = \lambda w_2 \\
            u_3 v_3 + t u_1 v_2 = \lambda w_3,
        \end{dcases}
    $$
    where $\lambda=\sum_{i=1}^3 u_i v_i w_i+t u_1 v_2 w_3$, i.e., the objective function value of $(\bu,\bv,\bw)$ in~\eqref{eq:problem-A.1}.
    
    If $v_1w_1\ne 0$, then by $u_1 w_1 = \lambda v_1$ and $u_1 v_1 = \lambda w_1$, we have $\lambda^2=u_1^2\le1$, i.e., $|\lambda| \le 1$. For the same reason, either $u_2w_2\ne 0$ or $u_3v_3\ne 0$ implies that $|\lambda|\le1$. It remains to consider the case that $v_1w_1=u_2w_2=u_3v_3=0$, under which $|\lambda|=\bigl|\sum_{i=1}^3 u_i v_i w_i+t u_1 v_2 w_3\bigr|=|t u_1 v_2 w_3|\le |t|\le 1$ if $-1\le t\le 1$. Therefore, the objective value of any local maximizer in~\eqref{eq:problem-A.1} is no more than $1$ if $-1\le t\le 1$.    
\end{proof}

\begin{lemma}
\label{thm:E2}
 $\bigl\|\CZ+\CX(t)\bigr\|_\sigma=1$ if and only if $-1\le t\le 1$ and $\bigl\|\CZ+\CY(t)\bigr\|_\sigma=1$ if and only if $t=0$, where
$$
    \CZ=\sum_{i=1}^2\be_i\otimes\be_i\otimes\be_i,~ \CX(t)=t\,\be_1\otimes\be_2\otimes\be_3,\text{ and }\CY(t)=t\,\be_1\otimes\be_1\otimes\be_3, \text{ all in } \R^{2\times 2\times 3}.
$$
\end{lemma}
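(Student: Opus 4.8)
The plan is to pin down both spectral norms exactly by splitting each tensor into two summands lying in a decomposable pair of subspaces and invoking Theorem~\ref{thm:spec-decomp}; once the norm of each summand is computed (all of them being either rank-one or essentially two-dimensional), the claimed ``if and only if'' statements fall out immediately. Throughout I will use the elementary fact, consequence of $\|\bx_1\otimes\dots\otimes\bx_d\|_2=\prod_k\|\bx_k\|_2$ together with~\eqref{eq:trivial}, that a rank-one tensor has spectral norm equal to its Frobenius norm, i.e. the product of the factor norms.

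For $\CZ+\CX(t)$, I would write $\CZ+\CX(t)=\CA+\CB$ with $\CA=\be_1\otimes\be_1\otimes\be_1$ and $\CB=\be_2\otimes\be_2\otimes\be_2+t\,\be_1\otimes\be_2\otimes\be_3$, and then check that modes $2$ and $3$ are ``disjoint'' for this split: indeed $\spn_2(\CA)=\spn_3(\CA)=\spn(\be_1)$ whereas $\spn_2(\CB)=\spn(\be_2)$ and $\spn_3(\CB)\subseteq\spn(\be_2,\be_3)$. Concretely, with $\V_1=\R^2$, $\V_2=\V_3=\spn(\be_1)$ and $\I=\{2,3\}$, one has $\CA\in\U_\I\bigl((\V_k)_{k=1}^3\bigr)$ and $\CB\in\U^\I\bigl((\V_k)_{k=1}^3\bigr)$, so Theorem~\ref{thm:spec-decomp} gives $\|\CZ+\CX(t)\|_\sigma=\max\bigl\{\|\CA\|_\sigma,\|\CB\|_\sigma\bigr\}$. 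Here $\|\CA\|_\sigma=1$ since $\CA$ is a unit rank-one tensor, and since every nonzero entry of $\CB$ has mode-$2$ index equal to $2$, I get $\|\CB\|_\sigma=\max_{\|\bx\|_2=\|\bz\|_2=1}|x_2z_2+tx_1z_3|$, i.e. the spectral norm of the essentially diagonal $2\times 3$ matrix with nonzero entries $1$ and $t$, which is $\max\{1,|t|\}$. Thus $\|\CZ+\CX(t)\|_\sigma=\max\{1,|t|\}$, equal to $1$ precisely when $-1\le t\le 1$.

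For $\CZ+\CY(t)$, I would split $\CZ+\CY(t)=\CA'+\CB'$ with $\CA'=\be_1\otimes\be_1\otimes(\be_1+t\,\be_3)$ and $\CB'=\be_2\otimes\be_2\otimes\be_2$; now modes $1$ and $2$ are disjoint, and with $\V_1=\V_2=\spn(\be_1)$, $\V_3=\R^3$, $\I=\{1,2\}$ one has $\CA'\in\U_\I\bigl((\V_k)_{k=1}^3\bigr)$ and $\CB'\in\U^\I\bigl((\V_k)_{k=1}^3\bigr)$. Theorem~\ref{thm:spec-decomp} then yields $\|\CZ+\CY(t)\|_\sigma=\max\bigl\{\|\CA'\|_\sigma,\|\CB'\|_\sigma\bigr\}$, and both summands being rank-one gives $\|\CB'\|_\sigma=1$ and $\|\CA'\|_\sigma=\|\be_1+t\be_3\|_2=\sqrt{1+t^2}$. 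Hence $\|\CZ+\CY(t)\|_\sigma=\sqrt{1+t^2}$, equal to $1$ precisely when $t=0$.

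The only steps that require any care are verifying the four subspace memberships (pure bookkeeping with the mode-$k$ spans to see that each split really produces one tensor in $\U_\I$ and one in $\U^\I$) and the single matrix computation $\|\CB\|_\sigma=\max\{1,|t|\}$; neither is a serious obstacle. If one preferred to avoid the decomposability machinery, an alternative is to mimic the first-order optimality analysis of Lemma~\ref{thm:E1}, writing each norm as a smooth maximization over an oblique manifold and examining the critical-point equations, but this route is longer and unnecessary here.
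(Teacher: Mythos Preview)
Your proof is correct and takes a genuinely different route from the paper's. The paper handles $\CZ+\CX(t)$ by embedding it as a subtensor of the $3\times3\times3$ tensor $\CU(t)=\sum_{i=1}^3\be_i\otimes\be_i\otimes\be_i+t\,\be_1\otimes\be_2\otimes\be_3$ from Lemma~\ref{thm:E1} (whose norm was computed via a first-order optimality analysis) and then uses the subtensor inequality $\|\CZ+\CX(t)\|_\sigma\le\|\CU(t)\|_\sigma$ together with elementary lower bounds; for $\CZ+\CY(t)$ it simply extracts the mode-$3$ fiber $(1,0,t)^{\T}$ to get $\|\CZ+\CY(t)\|_\sigma\ge\sqrt{1+t^2}$. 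Your argument instead invokes the spectral-norm decomposability of Theorem~\ref{thm:spec-decomp} in both cases, reducing each computation to norms of rank-one tensors and a single $2\times3$ matrix. This is more uniform and in fact yields the exact value of the spectral norm in every case (e.g.\ $\|\CZ+\CX(t)\|_\sigma=\max\{1,|t|\}$) rather than matching upper and lower bounds; it also makes Lemma~\ref{thm:E1} unnecessary for this particular lemma. The paper's approach has the minor advantage of being more elementary in the sense that the $\CY(t)$ half uses nothing beyond the subtensor bound, whereas yours leans on the decomposability machinery; but since Theorem~\ref{thm:spec-decomp} is proved independently and earlier in the paper, there is no circularity, and your approach is arguably cleaner.
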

\begin{proof}
By Lemma~\ref{thm:E1}, we have $\|\CU(t)\|_\sigma=1$ for any $-1\le t\le 1$, where 
$$
\CU(t)=\sum_{i=1}^3\be_i\otimes\be_i\otimes\be_i+t\,\be_1\otimes\be_2\otimes\be_3\in\R^{3\times 3\times 3}.
$$
Since $\CZ+\CX(t)$ is a subtensor of $\CU(t)$, it is easy to see (or by~\cite[Theorem 3.1]{li2016bounds}) that $$\bigl\|\CZ+\CX(t)\bigr\|_\sigma\le\bigl\|\CU(t)\bigr\|_\sigma=1 \text{ for any } {-1}\le t\le 1.$$ This, together with the fact that 
\begin{align*}
\bigl\|\CZ+\CX(t)\bigr\|_\sigma&\ge\bigl\langle\CZ+\CX(t),\be_1\otimes\be_1\otimes\be_1\bigr\rangle=1 \text{ for any }t\in\R \text{ and } \\ \bigl\|\CZ+\CX(t)\bigr\|_\sigma&\ge\bigl\langle\CZ+\CX(t),\sign(t)\,\be_1\otimes\be_2\otimes\be_3\bigr\rangle=|t|>1 \text{ for any }|t|>1
\end{align*}
immediately implies the first result. 

To show the second result, we observe that the vector $\bv(t)=(1,0,t)^{\T}$ is a subtensor of $\CZ+\CY(t)$. Therefore, $\bigl\|\CZ+\CY(t)\bigr\|_\sigma\ge\bigl\|\bv(t)\bigr\|_\sigma=\sqrt{1+t^2}>1$ if $t\ne0$. The conclusion then follows from the fact that $\bigl\|\CZ+\CY(0)\bigr\|_\sigma=\|\CZ\|_\sigma=1$.
\end{proof}

\begin{lemma}\label{thm:E3}
$\bigl\|\CX(t)\bigr\|_\sigma=\frac{2|t|}{\sqrt{3}}$ for any $t\in\R$ and $\bigl\|\CZ+\CX(t)\bigr\|_\sigma=1$ if and only if $-1\le t\le \frac{1}{2}$, where
$$
    \CZ=\be_1\otimes\be_1\otimes\be_1\text{ and }\CX(t)=t(\be_1\otimes\be_2\otimes\be_2 + \be_2\otimes\be_1\otimes\be_2 + \be_2\otimes\be_2\otimes\be_1),\text{ both in } \R^{2\times2\times 2}.
$$
\end{lemma}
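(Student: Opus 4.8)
The plan is to treat the two claims separately, and in each case to reduce the three-variable spherical optimization defining the spectral norm to a one-dimensional problem by first maximizing over $\bx$.

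For the first claim, write $\CX(t)=t\,\CW$ with $\CW=\be_1\otimes\be_2\otimes\be_2+\be_2\otimes\be_1\otimes\be_2+\be_2\otimes\be_2\otimes\be_1$, so by homogeneity $\|\CX(t)\|_\sigma=|t|\,\|\CW\|_\sigma$ and it suffices to show $\|\CW\|_\sigma=\tfrac{2}{\sqrt3}$. Since each monomial of $\CW(\bx,\by,\bz)=x_1y_2z_2+x_2y_1z_2+x_2y_2z_1$ is multilinear, replacing every coordinate by its absolute value does not decrease $|\CW(\bx,\by,\bz)|$, so one may restrict to $\bx,\by,\bz\in\SI^2\cap\R_+^2$. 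Maximizing over $\bx$ first gives $\|\CW\|_\sigma=\max\sqrt{(y_2z_2)^2+(y_1z_2+y_2z_1)^2}$. Parametrizing $\by=(\cos\alpha,\sin\alpha)$, $\bz=(\cos\beta,\sin\beta)$ with $\alpha,\beta\in[0,\tfrac\pi2]$, this becomes maximization of $\sin^2\alpha\sin^2\beta+\sin^2(\alpha+\beta)$; for a fixed sum $\gamma=\alpha+\beta$ the product $\sin\alpha\sin\beta=\tfrac12(\cos(\alpha-\beta)-\cos\gamma)$ is largest at $\alpha=\beta$, so the objective is at most $\sin^4(\gamma/2)+\sin^2\gamma$, and the substitution $u=\sin^2(\gamma/2)\in[0,1]$ (using $\sin^2\gamma=4u(1-u)$) reduces it to $-3u^2+4u$, maximized at $u=\tfrac23$ with value $\tfrac43$; this value is attained, so $\|\CW\|_\sigma=\sqrt{4/3}=\tfrac{2}{\sqrt3}$.

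For the second claim, first note $\|\CZ+\CX(t)\|_\sigma\ge\langle\CZ+\CX(t),\be_1\otimes\be_1\otimes\be_1\rangle=1$ for every $t$, so everything reduces to deciding when $\|\CZ+\CX(t)\|_\sigma\le1$. Maximizing over $\bx$ as above yields
$$
\|\CZ+\CX(t)\|_\sigma^2=\max_{\by,\bz\in\SI^2}\bigl((y_1z_1+ty_2z_2)^2+t^2(y_1z_2+y_2z_1)^2\bigr).
$$
For necessity, if $t>\tfrac12$ I would test $\bx=\by=\bz=(\cos\theta,\sin\theta)$: the value $g(\theta)=\cos^3\theta+3t\cos\theta\sin^2\theta$ has $g(0)=1$, is positive near $\theta=0$, and $g(\theta)^2-1=s\bigl(3(2t-1)+O(s)\bigr)$ with $s=\sin^2\theta$, which is positive for small $\theta>0$ since $2t-1>0$; while if $t<-1$ the trivial bound $\|\CZ+\CX(t)\|_\sigma\ge\|\CZ+\CX(t)\|_\infty\ge|t|>1$ from~\eqref{eq:trivial} (the $(2,1,2)$-entry of $\CZ+\CX(t)$ equals $t$) already suffices. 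In both regimes $\|\CZ+\CX(t)\|_\sigma>1$.

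The crux is sufficiency: for $-1\le t\le\tfrac12$ one must show the displayed maximum is $\le1$, i.e., homogenizing, that $(y_1z_1+ty_2z_2)^2+t^2(y_1z_2+y_2z_1)^2\le(y_1^2+y_2^2)(z_1^2+z_2^2)$ for all $\by,\bz$. Expanding, the difference of the two sides equals $(1-t^2)(y_1^2z_2^2+y_2^2z_1^2+y_2^2z_2^2)-2t(1+t)\,y_1y_2z_1z_2$; writing $y_1y_2z_1z_2=(y_1z_2)(y_2z_1)$ and using $2|ab|\le a^2+b^2$ bounds the last term below by $-|t(1+t)|(y_1^2z_2^2+y_2^2z_1^2)$, so the difference is at least $\bigl((1-t^2)-|t(1+t)|\bigr)(y_1^2z_2^2+y_2^2z_1^2)+(1-t^2)y_2^2z_2^2$. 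It then suffices to verify $1-t^2\ge0$ and $1-t^2\ge|t(1+t)|$ on $[-1,\tfrac12]$, which splits into the elementary inequalities $(2t-1)(t+1)\le0$ for $t\in[0,\tfrac12]$ and $t\ge-1$ for $t\in[-1,0)$. I expect this algebraic step — realizing that partial maximization collapses the tensor problem to a $2\times2$ form essentially governed by the Brahmagupta--Fibonacci identity, and picking the right pairing of terms before applying AM--GM — to be the main obstacle; a frontal assault via Lagrange multipliers on the degree-six form would be far messier.
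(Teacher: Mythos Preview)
Your proof is correct, but it takes a genuinely different route from the paper. The paper observes that both $\CX(t)$ and $\CZ+\CX(t)$ are symmetric tensors and invokes Banach's theorem to restrict to $\bx=\by=\bz$, reducing each spectral norm to a one-variable calculus problem: $\max_{|x|\le 1}3tx(1-x^2)$ for $\|\CX(t)\|_\sigma$ and $\max_{|x|\le 1}\bigl(x^3+3tx(1-x^2)\bigr)$ for $\|\CZ+\CX(t)\|_\sigma$, with the case analysis on $t$ falling out immediately. You instead bypass Banach entirely: you maximize over $\bx$ first via Cauchy--Schwarz, leaving a two-variable problem in $\by,\bz$, and then either (for $\CW$) exploit the trigonometric structure to reduce to a quadratic in $u=\sin^2(\gamma/2)$, or (for $\CZ+\CX(t)$) homogenize and verify the resulting quadratic-form inequality by an AM--GM pairing. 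Your approach is more elementary and self-contained, needing no external result on symmetric tensors; the paper's approach is much shorter and generalizes cleanly to the $d=4$ analogue in Example~\ref{ex:yuan4}, where your partial-maximization strategy would become considerably heavier.
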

\begin{proof}
     Since $\CX(t)$ is a symmetric tensor, it follows from Banach's theorem (see e.g.,~\cite{banach1938homogene} and~\cite[Corollary~4.2]{chen2012maximum}) that
     \begin{align*}
        \bigl\|\CX(t)\bigr\|_\sigma&=\max\bigl\{\bigl\langle\CX(t),\bx\otimes\bx\otimes\bx\bigr\rangle: \bx\in\SI^2\bigr\} \\
        &=\max\{3 t x_1 x_2^2:x_1^2+x_2^2=1\}\\
        &=\max\bigl\{3 t x (1-x^2):-1\le x\le 1\bigr\}\\
        &=\frac{2|t|}{\sqrt{3}}.
    \end{align*}     
     $\CZ+\CX(t)$ is also a symmetric tensor. By Banach's theorem again,
     \begin{align*}
        \bigl\|\CZ+\CX(t)\bigr\|_\sigma&=\max\bigl\{\bigl\langle\CZ+\CX(t),\bx\otimes\bx\otimes\bx\bigr\rangle: \bx\in\SI^2\bigr\} \\
        &=\max\bigl\{x_1^3+3 t x_1 x_2^2:x_1^2+x_2^2=1\bigr\}\\
        &=\max\bigl\{x^3+3 t x (1-x^2):-1\le x\le 1\bigr\}.
    \end{align*}
    It can be calculated that
     $$
        \max\bigl\{x^3+3 t x (1-x^2):-1\le x\le 1\bigr\}=\begin{dcases}
            1 & -1\le t\le {\textstyle\frac{1}{2}}\\
            2{\textstyle\sqrt{\frac{t^3}{-1+3t}}} & \text{otherwise}.
        \end{dcases}
     $$
     This, together with the fact that 
     $2\sqrt{\frac{t^3}{-1+3t}}>1$ for any $t\in(-\infty, -1)\cup(\frac{1}{2},\infty)$, immediately implies that $\bigl\|\CZ+\CX(t)\bigr\|_\sigma=1$ if and only if $-1\le t\le \frac{1}{2}$.
\end{proof}

We remark that $\bigl\|\CX(\frac{1}{\sqrt{3}})\bigr\|_\sigma$ has been calculated in~\cite[Lemma~6.2]{friedland2018nuclear} and can be easily extended to $\bigl\|\CX(t)\bigr\|_\sigma$ for any $t$. $\bigl\|\CZ+\CX(-1)\bigr\|_\sigma$ has also been calculated in~\cite[Lemma~6.1]{friedland2018nuclear}. In fact, the tensor $\CZ+\CX(-1)$ is known as an orthogonal tensor in the literature; see~\cite[Theorem~3.5]{li2018orthogonal} for more details. The result of $\bigl\|\CZ+\CX(t)\bigr\|_\sigma$ in Lemma~\ref{thm:E3} provides a better understanding of the generalization of such a tensor.


\section{Computing optimization problems}

\begin{lemma}\label{thm:optimization1} 
$\max\{
x_1y_2z_2+x_2: 
x_1y_2z_2+x_2\le 1+x_1y_1z_1,\,
\bx,\by,\bz\in\SI^2\cap\R_{+}^2
\}
=\frac{1+\sqrt{2}}{2}.$
\end{lemma}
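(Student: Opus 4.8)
The plan is to decouple the three unit vectors $\bx=(x_1,x_2)$, $\by=(y_1,y_2)$, $\bz=(z_1,z_2)$, each lying in $\SI^2\cap\R_+^2$, by introducing the abbreviations $p:=y_1z_1\ge 0$ and $q:=y_2z_2\ge 0$. The only properties of $\by,\bz$ needed for the upper bound are $p,q\ge 0$ together with $p+q=\langle\by,\bz\rangle\le\|\by\|_2\|\bz\|_2=1$ by the Cauchy--Schwarz inequality. In this notation the objective is $f:=x_1q+x_2$ and the constraint reads $f\le 1+x_1p$.

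For the upper bound I would add the objective value to the constraint inequality: since $f\le 1+x_1p$ and $f=x_1q+x_2$, summing yields $2f\le 1+x_1(p+q)+x_2\le 1+x_1+x_2$, where the last step uses $p+q\le 1$ and $x_1\ge 0$. Finally $x_1+x_2\le\sqrt{2(x_1^2+x_2^2)}=\sqrt 2$ by Cauchy--Schwarz again, so $f\le\tfrac{1+\sqrt 2}{2}$ at every feasible point, whence the maximum is at most $\tfrac{1+\sqrt 2}{2}$.

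For the matching lower bound I would exhibit an explicit feasible point attaining this value. Reverse-engineering the equality cases above (namely $x_1=x_2=\tfrac{1}{\sqrt 2}$, $\langle\by,\bz\rangle=1$ so that $\by=\bz$, and an active constraint), take $\bx=\bigl(\tfrac{1}{\sqrt 2},\tfrac{1}{\sqrt 2}\bigr)$ and $\by=\bz=\bigl(\sqrt{1-\tfrac{1}{\sqrt 2}},\,2^{-1/4}\bigr)$. One checks directly that $\by$ is a unit vector with nonnegative entries, that $x_1y_2z_2+x_2=\tfrac{1}{\sqrt 2}\cdot 2^{-1/2}+\tfrac{1}{\sqrt 2}=\tfrac12+\tfrac{1}{\sqrt 2}$ and $x_1y_1z_1=\tfrac{1}{\sqrt 2}\bigl(1-\tfrac{1}{\sqrt 2}\bigr)=\tfrac{1}{\sqrt 2}-\tfrac12$, so that the constraint $x_1y_2z_2+x_2\le 1+x_1y_1z_1$ holds with equality and the objective value equals $\tfrac12+\tfrac{1}{\sqrt 2}=\tfrac{1+\sqrt 2}{2}$. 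Together with the upper bound this establishes the claimed equality. (Compactness of the feasible region and continuity of the objective also guarantee the maximum is attained, but the explicit point renders this moot.)

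The calculations are entirely routine; the only mildly nonobvious ingredient is the device of adding the objective to the constraint so that the unfavorable term $x_1q$ is absorbed against the favorable term $x_1p$ via $p+q\le 1$, and then correctly identifying the extremal configuration. I do not anticipate any genuine obstacle.
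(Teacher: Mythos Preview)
Your proof is correct and notably cleaner than the paper's. Both proofs use the same witness point for the lower bound. For the upper bound, the paper splits into cases according to whether $x_1y_2z_2+x_2\le 1$, and in the nontrivial case squares the constraint to obtain $(x_1y_2z_2+x_2-1)^2\le x_1^2(1-y_2^2)(1-z_2^2)$, then after some algebraic manipulation (using $y_2^2+z_2^2\ge 2y_2z_2$) reduces to $x_1y_2z_2+x_2\le\frac{x_1x_2}{x_1+x_2-1}$, which is then optimized over $\bx\in\SI^2\cap\R_+^2$ with $x_1+x_2>1$. Your device of adding $f=x_1q+x_2$ to the constraint $f\le 1+x_1p$ and invoking $p+q=\langle\by,\bz\rangle\le 1$ short-circuits all of this: it avoids the case split, the squaring, and the auxiliary one-variable optimization. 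The paper's squaring technique does recur in the companion Lemma~\ref{thm:optimization2}, so it may be that the authors preferred a uniform method, but for this particular lemma your argument is strictly simpler.
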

\begin{proof}
Let $v$ be the optimal value of the problem. By letting $x_1=x_2=\frac{\sqrt{2}}{2}$, $y_1=z_1=\sqrt{1-\frac{\sqrt{2}}{2}}$, and $y_2=z_2=\sqrt{\frac{\sqrt{2}}{2}}$, we have $v\ge x_1y_2z_2+x_2=\frac{1+\sqrt{2}}{2}$. It remains to show that $v\le\frac{1+\sqrt{2}}{2}$.

If $x_1y_2z_2+x_2\le 1$, then $v\le 1\le\frac{1+\sqrt{2}}{2}$. It remains to consider the case that $x_1y_2z_2+x_2>1$. This also implies that $x_1+x_2\ge x_1y_2z_2+x_2>1$ by the obvious fact that $x_i,y_i,z_i\in[0,1]$ for $i\in[2]$. As a result, 
\begin{align*}
    x_1y_2z_2+x_2\le 1+x_1y_1z_1 
    &\implies (x_1 y_2 z_2+x_2-1)^2\le x_1^2 y_1^2 z_1^2=x_1^2 (1-y_2^2) (1-z_2^2)\\
    &\implies 2x_1 y_2 z_2(x_2-1)+x_1^2(y_2^2+z_2^2)\le x_1^2-(x_2-1)^2 \\
    &\implies 2x_1 y_2 z_2(x_2-1)+2x_1^2 y_2 z_2\le x_1^2-(x_2-1)^2 \\
    &\implies x_1 y_2 z_2+x_2\le \frac{x_1^2-(x_2-1)^2}{2(x_1+x_2-1)}+x_2=\frac{x_1 x_2}{x_1+x_2-1}.
\end{align*}
Therefore,
\begin{align*}
v&\le \max\mleft\{
\frac{x_1 x_2}{x_1+x_2-1}: 
x_1y_2z_2+x_2\le 1+x_1y_1z_1,\,x_1+x_2>1,\,
\bx,\by,\bz\in\SI^2\cap\R_{+}^2
\mright\}\\
&\le \max \mleft\{
\frac{x_1 x_2}{x_1+x_2-1}:x_1+x_2>1,\,
\bx\in\SI^2\cap\R_{+}^2
\mright\}=\frac{1+\sqrt{2}}{2},
\end{align*}
under the condition that $x_1y_2z_2+x_2>1$.
\end{proof}

\begin{lemma}\label{thm:optimization2} 
$\max\{x_1y_1z_2+x_1y_2+x_2: x_1y_1z_2+x_1y_2+x_2\le 1+x_1y_1z_1,\, \bx,\by,\bz\in\SI^2\cap\R_{+}^2
\}
=\frac{3}{2}.
$
\end{lemma}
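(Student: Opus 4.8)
The plan is to establish the two inequalities separately. For the lower bound $\ge\frac32$, I would exhibit the explicit feasible point
\[
\bx=\Bigl(\tfrac{\sqrt3}{2},\tfrac12\Bigr),\qquad \by=\Bigl(\sqrt{\tfrac23},\sqrt{\tfrac13}\Bigr),\qquad \bz=\Bigl(\tfrac1{\sqrt2},\tfrac1{\sqrt2}\Bigr),
\]
all of which lie in $\SI^2\cap\R_{+}^2$ by direct inspection. For this point $x_1y_1z_2=x_1y_2=x_2=\tfrac12$, so the objective equals $\tfrac32$, while $1+x_1y_1z_1=1+\tfrac12=\tfrac32$, so the constraint holds (with equality). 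Hence the optimal value $v$ satisfies $v\ge\tfrac32$.

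For the upper bound I would argue as in the proof of Lemma~\ref{thm:optimization1}. Fix any feasible $(\bx,\by,\bz)$ and write $P=x_1y_1z_2+x_1y_2+x_2$. If $P\le1$ there is nothing to do, so assume $P>1$; then the constraint $P\le1+x_1y_1z_1$ forces $x_1y_1>0$ (otherwise $P\le1$). Squaring $P-1\le x_1y_1z_1$ (both sides nonnegative), substituting $z_1^2=1-z_2^2$, and abbreviating $A=x_1y_1$ and $B=x_1y_2+x_2-1$, I obtain $2A^2z_2^2+2ABz_2+B^2-A^2\le0$. Viewed as an upward quadratic in $z_2$, the mere existence of a solution forces $2A^2-B^2\ge0$, and $z_2$ must lie below the larger root, giving $Az_2\le\frac{-B+\sqrt{2A^2-B^2}}{2}$ and therefore
\[
P=Az_2+(B+1)\le\frac{B+\sqrt{2A^2-B^2}}{2}+1 .
\]

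Next I would make the substitution $p=x_1y_2$, $q=x_2$, exploiting the key identity $A^2=x_1^2y_1^2=x_1^2(1-y_2^2)=(1-x_2^2)-(x_1y_2)^2=1-p^2-q^2$, together with $B=p+q-1$; note $p,q\ge0$, $p^2+q^2\le x_1^2+x_2^2=1$, and $p+q\le x_1+x_2\le\sqrt2<2$. The bound on $P$ becomes $P\le\frac12\bigl((p+q-1)+\sqrt{2(1-p^2-q^2)-(p+q-1)^2}\bigr)+1$, and $P\le\tfrac32$ is equivalent, after isolating the square root and squaring against the nonnegative quantity $2-p-q$, to $4p^2+4pq+4q^2-6p-6q+3\ge0$. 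This last inequality is an identity in disguise,
\[
4p^2+4pq+4q^2-6p-6q+3=4\Bigl[\Bigl(p-\tfrac12\Bigr)^2+\Bigl(p-\tfrac12\Bigr)\Bigl(q-\tfrac12\Bigr)+\Bigl(q-\tfrac12\Bigr)^2\Bigr]\ge0,
\]
since $a^2+ab+b^2=(a+\tfrac b2)^2+\tfrac34 b^2\ge0$. Thus $v\le\tfrac32$, and combined with the lower bound $v=\tfrac32$.

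The only delicate points are bookkeeping: justifying the sign conditions needed to square (handled by isolating the case $P>1$, which simultaneously disposes of the degenerate case $x_1y_1=0$), and spotting the identity $x_1^2y_1^2=1-(x_1y_2)^2-x_2^2$, which is precisely what collapses this spherical optimization in six variables to a two-variable quadratic; everything else is routine algebra parallel to Lemma~\ref{thm:optimization1}.
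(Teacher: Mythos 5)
Your proof is correct, and the lower-bound witness is identical to the paper's (just written differently: $\sqrt{2/3}=\sqrt{6}/3$ and $\sqrt{1/3}=\sqrt{3}/3$). For the upper bound, however, you take a genuinely different route. The paper's argument is a one-liner: after squaring the constraint to get $(P-1)^2\le 1-x_2^2-(x_1y_2)^2-(x_1y_1z_2)^2$ with $P=x_1y_1z_2+x_1y_2+x_2$, it invokes the power-mean (Cauchy--Schwarz) inequality $x_2^2+(x_1y_2)^2+(x_1y_1z_2)^2\ge P^2/3$ to collapse everything into the single-variable quadratic $(P-1)^2\le 1-P^2/3$, whence $P\le 3/2$ immediately. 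Your proof instead mirrors the mechanics of Lemma~\ref{thm:optimization1}: you eliminate $z_2$ by reading the constraint as a quadratic in $z_2$ (discriminant condition $2A^2\ge B^2$, larger root giving $Az_2\le\frac{-B+\sqrt{2A^2-B^2}}{2}$), then push to the two-variable reduction $p=x_1y_2$, $q=x_2$ via the key identity $A^2=1-p^2-q^2$, and finish by recognizing $4p^2+4pq+4q^2-6p-6q+3$ as the sum of squares $4\bigl[(p-\tfrac12)^2+(p-\tfrac12)(q-\tfrac12)+(q-\tfrac12)^2\bigr]$. Both arguments handle the degenerate case by observing $P\le1$ is trivial. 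The paper's version buys brevity from a clever symmetric inequality tailored to this specific three-term sum; yours buys uniformity with Lemma~\ref{thm:optimization1} and more transparent bookkeeping of where equality can occur (at $p=q=\tfrac12$), at the cost of a longer elimination chain. Either is acceptable.
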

\begin{proof}
Let $v$ be the optimal value of the problem. By letting $x_1=\frac{\sqrt{3}}{2}$, $x_2=\frac{1}{2}$, $y_1=\frac{\sqrt{6}}{3}$, $y_2=\frac{\sqrt{3}}{3}$, and $z_1=z_2=\frac{\sqrt{2}}{2}$, we have $v\ge x_1y_1z_2+x_1y_2+x_2=\frac{3}{2}$. It remains to show that $v\le\frac{3}{2}$.

If $x_1y_1z_2+x_1y_2+x_2\le 1$, then $v\le 1\le\frac{3}{2}$. It remains to consider the case that $x_1y_1z_2+x_1y_2+x_2>1$. This, together with the constraint $x_1y_1z_2+x_1y_2+x_2\le 1+x_1y_1z_1$, implies that
\begin{align*}
    (x_1y_1z_2+x_1y_2+x_2-1)^2 \le x_1^2 y_1^2 z_1^2=1-x_2^2-x_1^2 y_2^2 -x_1^2 y_1^2 z_2^2\le 1-\frac{(x_1y_1z_2+x_1y_2+x_2)^2}{3}.
\end{align*}
By solving the above inequality with respect to $x_1y_1z_2+x_1y_2+x_2$, we get $x_1y_1z_2+x_1y_2+x_2\le\frac{3}{2}$, implying that $v\le\frac{3}{2}$.
\end{proof}

\section{Subdifferential of the nuclear norm for fourth-order tensors}

The following example generalizes Example~\ref{ex:yuan3} from $d=3$ to $d=4$.
\begin{example}\label{ex:yuan4}
Let $\CT=\be_1\otimes\be_1\otimes\be_1\otimes\be_1\in\R^{2\times 2\times 2\times 2}$. We have $\spn_k(\CT)=\spn(\be_1)$ for $k\in[4]$. Let $\CZ=\CT\in\Z(\CT)$ since $\langle\CZ,\CT\rangle=1=\|\CT\|_*$ and $\|\CZ\|_\sigma=1$. 

Let $\CX(t)=t\be_1\otimes(\be_1\otimes\be_2\otimes\be_2 + \be_2\otimes\be_1\otimes\be_2 + \be_2\otimes\be_2\otimes\be_1) + t\be_2\otimes(\be_1\otimes\be_1\otimes\be_2 + \be_1\otimes\be_2\otimes\be_1 + \be_2\otimes\be_1\otimes\be_1)\in\bigoplus_{|\I|\ge2,\,\I\subseteq[4]}\TT^\I(\CT)$. It can be verified that $\bigl\|\CZ+\CX(t)\bigr\|_\sigma=1$ if and only if $-\frac{1+\sqrt{2}}{3}\le t\le \frac{1}{3}$, following a similar proof to that of Lemma~\ref{thm:E3}. Therefore, $\CZ+\CX(t)\in\partial\|\CT\|_*$ for any $-\frac{1+\sqrt{2}}{3}\le t\le \frac{1}{3}$. However, it can also be verified that $\bigl\|\CX(t)\bigr\|_\sigma=\frac{3|t|}{2}$, again following a similar proof to that of Lemma~\ref{thm:E3}.
\end{example}
The following result generalizes~\cite[Lemma~1]{yuan2016tensor} from $d=3$ to $d=4$.
\begin{lemma}\label{lemma:yuan-lemma1-d=4}
    If $\CT\in\R^{n_1\times n_2\times n_3\times n_4}$ is nonzero, then $\|\CZ+\CX\|_\sigma\le1$ for any $\CZ\in\Z(\CT)$ and $\CX\in\bigoplus_{|\I|\ge2,\,\I\subseteq[4]}\TT^\I(\CT)$ with $\|\CX\|_\sigma\le \frac{1}{3}$.    
\end{lemma}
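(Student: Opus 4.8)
The plan is to mirror the structure of the proof of Lemma~\ref{thm:bridge} but adapted to the fourth-order setting, decomposing the ambient space into nested $\U$- and $\TT$-pieces so that Theorem~\ref{thm:spec-decomp} can be applied iteratively. Concretely, fix $\CZ\in\Z(\CT)$ and $\CX\in\bigoplus_{|\I|\ge2,\,\I\subseteq[4]}\TT^\I(\CT)$ with $\|\CX\|_\sigma\le\frac13$. I would first choose an $\epsilon$-net argument or, better, work directly with an optimal triple and reduce to a spherical optimization problem as in Lemma~\ref{thm:bridge}. Let $\|\CZ+\CX\|_\sigma=\langle\CZ+\CX,\bv_1\otimes\bv_2\otimes\bv_3\otimes\bv_4\rangle$, set $\V_k=\spn_k(\CT)$, and introduce the coordinates $a_i=\|\proj_{\V_1}(\bv_1)\|_2$, $a_2=\|\proj_{\V_1^\perp}(\bv_1)\|_2$ and similarly $(b_1,b_2)$, $(c_1,c_2)$, $(d_1,d_2)$ for modes $2,3,4$, each lying on $\SI^2\cap\R_+^2$.

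The key structural observation is that $\CX$, living in $\bigoplus_{|\I|\ge2}\TT^\I(\CT)$, can only be "seen" by the test vector through components that use at least two of the $\proj_{\V_k^\perp}(\bv_k)$ factors, whereas $\CZ\in\TT(\CT)$ is seen only through the all-$\proj_{\V_k}$ component. Using $\|\CX\|_\sigma\le\frac13$, Lemma~\ref{thm:spec-subspace}, and the Cauchy--Schwarz-type expansion over which modes are "perp" versus "in," I expect to bound
$$
\langle\CX,\bv_1\otimes\bv_2\otimes\bv_3\otimes\bv_4\rangle\le \tfrac13\!\!\sum_{|\I|\ge 2}\prod_{k\in\I}(\text{perp factor})_k\prod_{k\notin\I}(\text{in factor})_k,
$$
and $\langle\CZ,\bv_1\otimes\bv_2\otimes\bv_3\otimes\bv_4\rangle\le a_1b_1c_1d_1$ (taking $\|\CZ\|_\sigma=1$). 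The sum over $|\I|\ge2$ of products of the squared-norm-paired quantities is, by $a_1^2+a_2^2=1$ etc., exactly $1-a_1b_1c_1d_1-(\text{the }|\I|\le 1\text{ terms})$, so the total is at most $a_1b_1c_1d_1+\frac13\bigl(1-a_1b_1c_1d_1-(a_1b_1c_1d_1\text{-type single-perp terms})\bigr)$. Since $a_1b_1c_1d_1\le 1$, this is $\le a_1b_1c_1d_1+\frac13(1-a_1b_1c_1d_1)=\frac13+\frac23 a_1b_1c_1d_1\le 1$. That gives the bound directly; the delicate point is verifying the coefficient $\frac13$ is what makes $\frac13+\frac23 t\le 1$ hold on $t\in[0,1]$, which it does with equality only at $t=1$.

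The main obstacle will be making rigorous the claim that "$\CX$ is only seen through components with at least two perp factors, each contributing its projection norm as a bound via $\|\CX\|_\sigma$." This requires decomposing $\CX=\sum_{|\I|\ge2}\CX_\I$ with $\CX_\I\in\TT^\I(\CT)$, noting $\|\CX_\I\|_\sigma\le\|\CX\|_\sigma$ (which follows as in the proof that $\D_2(\CT)\subseteq\D(\CT)$, using Lemma~\ref{thm:spec-subspace} and self-adjointness of the projection), and then handling the $\binom{4}{2}+\binom{4}{3}+\binom{4}{4}=11$ terms by grouping them according to which pair, triple, or quadruple of modes is perp. Once $\|\CX_\I\|_\sigma\le\frac13$ for every $\I$, each term $\langle\CX_\I,\bigotimes\bv_k\rangle$ is at most $\frac13\prod_{k\in\I}\|\proj_{\V_k^\perp}(\bv_k)\|_2\prod_{k\notin\I}\|\bv_k\|_2=\frac13\prod_{k\in\I}(\text{perp})_k$, and summing and using $1=\prod_k(a_{1,k}^2+a_{2,k}^2)=\sum_{\I\subseteq[4]}\prod_{k\in\I}(\text{perp})_k^2\prod_{k\notin\I}(\text{in})_k^2$ together with $(\text{perp})_k\le 1$ closes the estimate; alternatively one can route the whole thing through a four-variable spherical optimization lemma analogous to Lemma~\ref{thm:optimization2}, which is the safest packaging and is exactly the style already used in the proof of Theorem~\ref{thm:bounds}(vii).
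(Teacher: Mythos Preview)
Your argument has a real gap at the identity you invoke. After bounding each piece by $\|\CX_\I\|_\sigma\le\frac13$ you obtain
\[
\|\CZ+\CX\|_\sigma\le a_1b_1c_1d_1+\frac13\sum_{|\I|\ge2}\;\prod_{k\in\I}p_k\prod_{k\notin\I}i_k,
\]
with $i_k,p_k$ the ``in'' and ``perp'' norms. But $\sum_{\I\subseteq[4]}\prod_{k\in\I}p_k\prod_{k\notin\I}i_k=\prod_k(i_k+p_k)$, which is \emph{not} equal to $1$; only the sum of \emph{squared} products is $1$. So the step ``is exactly $1-a_1b_1c_1d_1-(\text{the }|\I|\le1\text{ terms})$'' fails. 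Concretely, at $i_k=p_k=\tfrac{1}{\sqrt2}$ your upper bound evaluates to $\tfrac14+\tfrac13\cdot 11\cdot\tfrac14=\tfrac{7}{6}>1$, so the $11$-term basic-subspace decomposition is simply too coarse to close the estimate, and routing it through a spherical optimization lemma will not help since the optimum of that relaxed problem genuinely exceeds $1$.

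The paper's proof avoids this by grouping the eleven basic subspaces into only \emph{six} nested pieces of the $\W^\I$-type (cf.\ the proof that $\D_2(\CT)\subseteq\D(\CT)$): three single basic subspaces $\TT^{\{i,4\}}$, two pairs $\TT^{\{i,3\}}\oplus\TT^{\{i,3,4\}}$, and the block $\U^{\{1,2\}}$. Each piece still has spectral norm at most $\|\CX\|_\sigma\le\tfrac13$ by Lemma~\ref{thm:spec-subspace}, but now the contribution of, e.g., $\CX_6\in\U^{\{1,2\}}$ is bounded by $\tfrac13 a_2b_2$ (modes $3,4$ contribute the full factor $1$) rather than four separate terms. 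The resulting expression $a_1b_1c_1d_1+\tfrac13(a_2b_1c_1d_2+a_1b_2c_1d_2+a_1b_1c_2d_2+a_2b_1c_2+a_1b_2c_2+a_2b_2)$ then maximizes to exactly $1$ over $\ba,\bb,\bc,\mathbf d\in\SI^2\cap\R_+^2$. The missing idea in your proposal is precisely this hierarchical grouping; once you adopt it, your spherical-optimization packaging works.
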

\begin{proof}
Let us denote
\begin{align*}
\CX_i&=\proj_{\TT^{\{i,4\}}(\CT)}(\CX) \text{ for }i\in[3], \\
\CX_4&=\proj_{\TT^{\{1,3\}}(\CT)\oplus\TT^{\{1,3,4\}}(\CT)}(\CX), \\
\CX_5&=\proj_{\TT^{\{2,3\}}(\CT)\oplus\TT^{\{2,3,4\}}(\CT)}(\CX), \\
\CX_6&=\proj_{\U^{\{1,2\}}(\CT)}(\CX).
\end{align*}
It is not difficult to see that $\CX=\sum_{i=1}^6\CX_i$ 
 and $\|\CX_i\|_\sigma\le\|\CX\|_\sigma\le \frac{1}{3}$ for $i\in[6]$ by Lemma~\ref{thm:spec-subspace}.

Let $\bv_k\in\SI^{n_k}$ for $k\in[4]$ such chat 
\begin{align*}
        \|\CZ+\CX\|_\sigma&=\langle\CZ+\CX,\bv_1\otimes\bv_2\otimes\bv_3\otimes\bv_4\rangle \\
       &=\langle \CZ,\bv_1\otimes\bv_2\otimes\bv_3\otimes\bv_4\rangle +\sum_{i=1}^6\langle \CX_i,\bv_1\otimes\bv_2\otimes\bv_3\otimes\bv_4\rangle\\
        &\le a_1 b_1 c_1 d_1+\frac{1}{3}(a_2 b_1 c_1 d_2+a_1 b_2 c_1 d_2+a_1 b_1 c_2 d_2+a_2 b_1 c_2+a_1 b_2 c_2+ a_2 b_2),
\end{align*}
where the inequality is due to that $\|\CZ\|_\sigma=1$ and $\CZ\in\TT(\CT)$ as $\CZ\in\Z(\CT)$, $\|\CX_i\|_\sigma\le \frac{1}{3}$ for $i\in[6]$, and 
$$
\begin{array}{llll}
    a_1=\bigl\|\proj_{\spn_1(\CT)}(\bv_1)\bigr\|_2, &b_1=\bigl\|\proj_{\spn_2(\CT)}(\bv_2)\bigr\|_2, &c_1=\bigl\|\proj_{\spn_3(\CT)}(\bv_3)\bigr\|_2, &d_1=\bigl\|\proj_{\spn_4(\CT)}(\bv_4)\bigr\|_2,\\
a_2=\bigl\|\proj_{\spn_1^\perp(\CT)}(\bv_1)\bigr\|_2, &b_2=\bigl\|\proj_{\spn_2^\perp(\CT)}(\bv_2)\bigr\|_2, &c_2=\bigl\|\proj_{\spn_3^\perp(\CT)}(\bv_3)\bigr\|_2, &d_2=\bigl\|\proj_{\spn_4^\perp(\CT)}(\bv_4)\bigr\|_2.
\end{array}
$$

Finally, by applying a similar proof to that of Lemma~\ref{thm:optimization2}, we obtain
$$
    \max_{\ba,\bb,\bc,\mathbf{d}\in\SI^2\cap\R_{+}^2} \mleft(a_1 b_1 c_1 d_1+\frac{1}{3}(a_2 b_1 c_1 d_2+a_1 b_2 c_1 d_2+a_1 b_1 c_2 d_2+a_2 b_1 c_2+a_1 b_2 c_2+ a_2 b_2)\mright)=1,
$$
implying that $\|\CZ+\CX\|_\sigma\le1$.
\end{proof}

\bibliographystyle{abbrv}
\bibliography{references.bib}{}

\end{document}